\documentclass{amsart}
\usepackage{amsmath, amssymb, amscd, amsthm, amsfonts, amstext, amsbsy, xparse,  mathrsfs, hyperref,  upgreek, mathtools, stmaryrd, enumitem, tensor, comment, lineno, xspace}
\hypersetup{colorlinks=true}
\numberwithin{equation}{section}




\renewcommand{\subset}{\subseteq}

\newcommand{\enquote}[1]{``#1''}

\newcommand{\A}{\mathcal{A}}
\newcommand{\B}{\mathcal{B}}

\newcommand{\D}{\mathcal{D}}

\newcommand{\N}{\mathcal{N}}

\renewcommand{\P}{\mathcal{P}}

\newcommand{\U}{\mathcal{U}}
\newcommand{\V}{\mathcal{V}}

\newcommand{\GG}{\mathbb{G}}

\newcommand{\LL}{\mathbb{L}}
\newcommand{\MM}{\mathbb{M}}
\newcommand{\NN}{\mathbb{N}}

\newcommand{\QQ}{\mathbb{Q}}
\newcommand{\RR}{\mathbb{R}}
\renewcommand{\SS}{\mathbb{S}}

\newcommand{\ccc}{\mathfrak{c}}

\newcommand{\MMM}{\mathfrak{M}}
\newcommand{\NNN}{\mathfrak{N}}

\newcommand{\pre}[2]{\tensor[^{#1}]{#2}{}}

\newcommand{\Nbhd}{\mathbf{N}}
\newcommand{\Bor}{\mathsf{Bor}}

\newcommand{\markdef}[1]{\textbf{#1}}
\newcommand{\imarkdef}[2][]{\textbf{#2}}
\newcommand{\Gdelta}[1]{G_\delta^{#1}}
\renewcommand{\int}[2][]{\IfNoValueTF{#1}{\mathrm{int}}{\mathrm{int}_{#1}}(#2)}


\newcommand{\supp}{\operatorname{supp}}
\newcommand{\cof}{\operatorname{cof}}

\newcommand{\leng}{\operatorname{lh}}

\DeclareMathOperator{\Deg}{Deg}

\DeclareMathOperator{\conc}{\mathbin{ {}^\smallfrown {} }}
\DeclareMathOperator{\Conc}{\mathrm{Conc}}
\DeclareMathOperator{\Sum}{\mathsf{Sum}}
\newcommand{\Exp}{\mathcal{EX}}



\newcommand{\On}{{\sf On}}



\newcommand{\Lev}{\mathrm{Lev}}

\DeclareMathOperator{\cl}{cl}

\DeclareMathOperator{\dom}{dom}

\DeclareMathOperator{\powerset}{\mathscr{P}}

\DeclareMathOperator{\weight}{\mathnormal{w}}
\DeclareMathOperator{\density}{\mathnormal{d}}
\DeclareMathOperator{\cellularity}{\mathnormal{c}}

\newcommand{\PTOMs}{positively totally ordered monoids\xspace}

\newtheorem{theorem}{Theorem}[section]
\newtheorem{lemma}[theorem]{Lemma}
\newtheorem{corollary}[theorem]{Corollary}
\newtheorem*{corollary*}{Corollary}
\newtheorem{proposition}[theorem]{Proposition}

\newtheorem*{question*}{Question}

\theoremstyle{definition}
\newtheorem{claim}{Claim}[theorem]
\newtheorem*{claim*}{Claim}
\newtheorem{definition}[theorem]{Definition}
\newtheorem{fact}[theorem]{Fact}
\newtheorem{example}[theorem]{Example}
\theoremstyle{remark}
\newtheorem{remark}[theorem]{Remark}


\newenvironment{enumerate-(a)}{\begin{enumerate}[label={\upshape (\alph*)}, leftmargin=2pc]}{\end{enumerate}}

\newenvironment{enumerate-(a)-r}{\begin{enumerate}[label={\upshape (\alph*)}, leftmargin=2pc,resume]}{\end{enumerate}}

\newenvironment{enumerate-(A)}{\begin{enumerate}[label={\upshape (\Alph*)}, leftmargin=2pc]}{\end{enumerate}}

\newenvironment{enumerate-(A)-r}{\begin{enumerate}[label={\upshape (\Alph*)}, leftmargin=2pc,resume]}{\end{enumerate}}

\newenvironment{enumerate-(i)}{\begin{enumerate}[label={\upshape (\roman*)}, leftmargin=2pc]}{\end{enumerate}}

\newenvironment{enumerate-(i)-r}{\begin{enumerate}[label={\upshape (\roman*)}, leftmargin=2pc,resume]}{\end{enumerate}}

\newenvironment{enumerate-(I)}{\begin{enumerate}[label={\upshape (\Roman*)}, leftmargin=2pc]}{\end{enumerate}}

\newenvironment{enumerate-(I)-r}{\begin{enumerate}[label={\upshape (\Roman*)}, leftmargin=2pc,resume]}{\end{enumerate}}

\newenvironment{enumerate-(1)}{\begin{enumerate}[label={\upshape (\arabic*)}, leftmargin=2pc]}{\end{enumerate}}

\newenvironment{enumerate-(1)-r}{\begin{enumerate}[label={\upshape (\arabic*)}, leftmargin=2pc,resume]}{\end{enumerate}}


\begin{document}

\title{On the problem of generalized measures: an impossibility result}

\author{Claudio Agostini}
\address[Claudio Agostini]
{HUN-REN Alfréd Rényi Institute of Mathematics, Reáltanoda utca 13-15, H-1053, Budapest}
\email[Claudio Agostini]{agostini.claudio@renyi.hu}

\author{Fernando Barrera}
\address[Fernando Barrera]
{Institut f\"ur Diskrete Mathematik und Geometrie, Technische Universit\"at Wien, Wiedner Hauptstra{\ss}e 8-10/104, 1040 Vienna, Austria}
\email[Fernando Barrera]{f.barrera.esteban@gmail.com}

\author{Vincenzo Dimonte}
\address[Vincenzo Dimonte]
{Universit\`a degli Studi di Udine Vie delle Scienze 206, 33100 Udine, Italy}
\email[Vincenzo Dimonte]{vincenzo.dimonte@uniud.it}

\begin{abstract}
This paper investigates the problem of extending measure theory to non-separable structures, from generalized descriptive set theory to a broader class of spaces beyond this framework. 
While various notions, such as the ideal of measure zero sets, have been generalized, the question of whether a satisfactory notion of $\lambda^+$-measure could be defined in generalized descriptive set theory has remained open. We introduce a broad class of $\lambda^+$-measures as functions taking values in arbitrary positively totally ordered monoids equipped with an infinitary sum. This definition relies on minimal assumptions and captures most natural generalizations of measures to this context. We then prove that, under certain cardinal assumptions, no continuous $\lambda^+$-measure of this kind exists on $\pre{\kappa}{\lambda}$, nor on any $\lambda^+$-Borel space or $T_0$ topological space of weight at most~$\lambda$. We also show the optimality of these cardinal assumptions.
\end{abstract}

\subjclass[2020]{Primary 03E15, 28E15, 54H05; Secondary 06F05}
\keywords{generalized descriptive set theory, measures, topology of the generalized Baire space, totally ordered monoids, infinitary operations}

\thanks{
This research was funded by the \textsf{Austrian Science Fund (FWF) P35655 and P35588}, by the \textsf{OeAD Ernst Mach grant - worldwide} and the \textsf{Austrian Science Fund (FWF) [10.55776/STA139]},
by \textsf{Universit\`a degli Studi of Udine (Italy)}, by \textsf{Progetto PRIN 2022 – Models, Sets and Classifications – Code no. 2022TECZJA CUP G53D23001890006, funded by the European Union – NextGenerationEU – PNRR M4 C2 I1.1}, by the \textsf{Gruppo Nazionale per le Strutture Algebriche, Geometriche e le loro Applicazioni (GNSAGA)} of the Istituto Nazionale di Alta Matematica (INdAM),
and by the \textsf{European Union \includegraphics[height=1em]{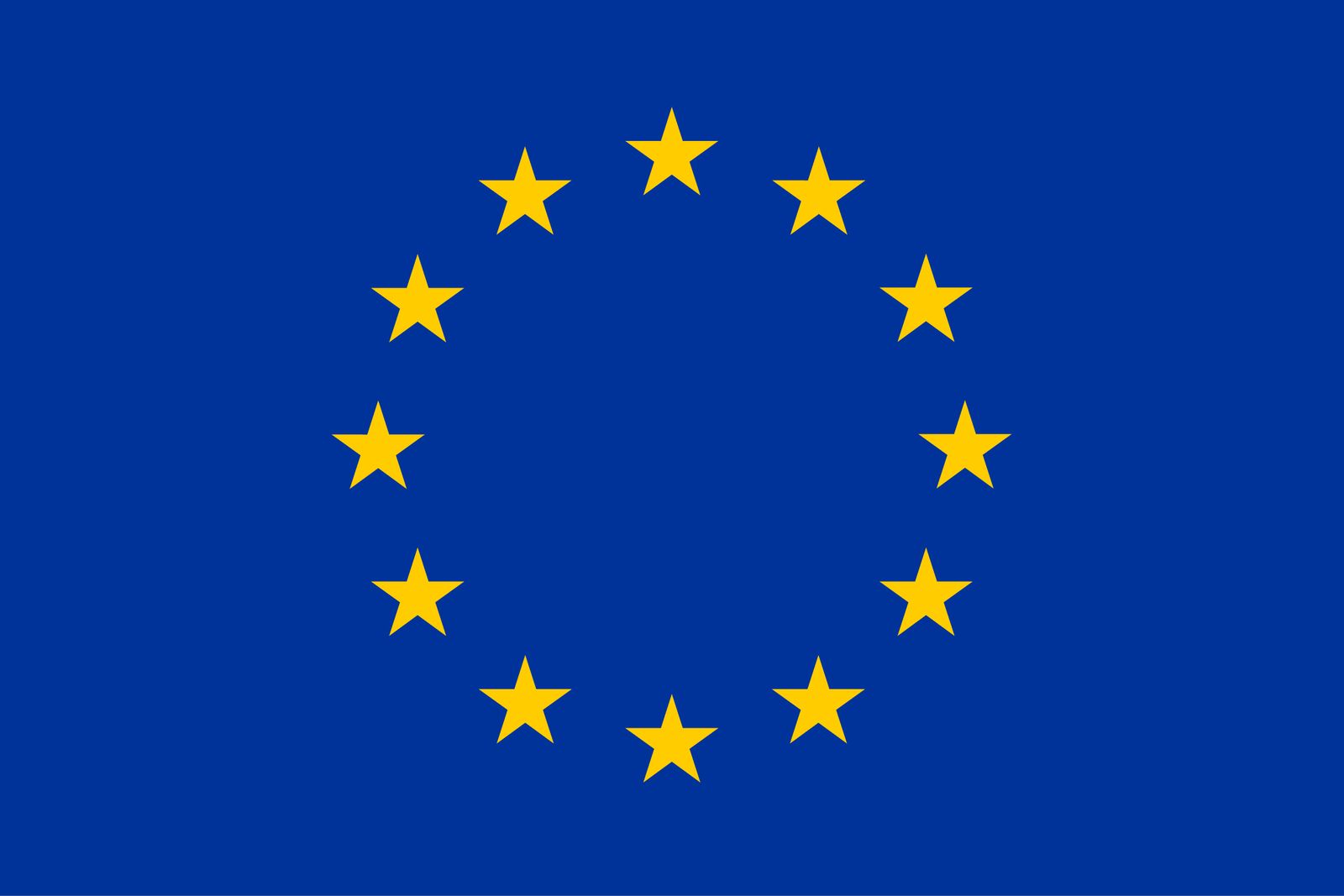} Horizon 2020 research and innovation programme, under the Marie Sklodowska-Curie action 101210902 (TopAspOfGDST)}. Views and opinions expressed are however those of the authors only and do not necessarily reflect those of the European Union or Horizon 2020 programme. Neither the European Union nor the granting authority can be held responsible for them. For open access purposes, the authors have applied a CC BY public copyright license to any author accepted manuscript version arising from this submission.
}

\maketitle

\tableofcontents

\section{Introduction}

Generalized descriptive set theory has roots dating back to the second half of the last century (see, e.g., \cite{stone1962non, MeklerVaananenMR1242054}). Over the past decade, the field has expanded rapidly, with an increasing number of notions and results being extended from classical descriptive set theory to its generalized counterpart. For instance, the reducibility of Borel equivalence relations on the generalized Baire space $\pre{\kappa}{\kappa}$, where $\kappa$ is a regular uncountable cardinal such that $2^{<\kappa} = \kappa$, was studied in \cite{FriedmanHyttKuli}, and classical results on Souslin quasi-orders were extended to this setting in \cite{AndrettaMottoRosMR4409720}. More recently, the framework was further developed (see, e.g.,  \cite{CoskSchlMR3453772, Gal19, AgosMottoSchlichtRegular}) to include topological spaces of a more general form than $\pre{\kappa}{\kappa}$, showing that it is possible to introduce a suitable generalization of metrics in this context and that the resulting theory remains well-behaved. Other projects (see, e.g., \cite{dimonte2025generalizeddescriptivesettheory, PhDThesisAgostini, agostiniGeneralizedBorelSets2025}) are aiming to further extend this theory to all cardinals $\lambda$ satisfying $2^{<\lambda} = \lambda$, including singular cardinals.

Various regularity properties, such as the Baire property, the perfect set property, the Hurewicz dichotomy, and more, have been extended and studied in the generalized setting (see, e.g., \cite{LuckeMottSchlMR3557473, Schlicht2017MR3743612, korchSpecialSubsetsGeneralized2020}). These results show that, at least consistently in certain cases, these properties can exhibit remarkably good behavior in the generalized context as well.

However, measure theory, a major topic of classical descriptive set theory with many applications to other areas of mathematics, has yet to find a proper analogue in the generalized setting. 
There have been efforts to generalize concepts related to measures. For example, \cite{friedman2017null} and \cite{cohen2019generalizing} proposed different generalizations of the ideal of null sets to the generalized Baire space. Yet, further progress has faced significant technical and conceptual obstacles. As a result, the question of whether a satisfactory notion of \enquote{measure} can be developed in this context has remained open.

In this paper, we settle this problem by showing that, under very mild assumptions and for a sufficiently large cardinal $\lambda$, it is impossible to define non-trivial generalized measures on a $\lambda^+$-Borel space. In particular, no such measure exists on the generalized Baire space.

We work in a slightly more general framework than the one usually adopted in generalized descriptive set theory (see Subsection~\ref{subsec:top_setup}).
Our results apply to any space $X$ of weight $\lambda$ of the form $X\subseteq \pre{\gamma}{A}$, for some set $A$ and ordinal $\gamma$, equipped with the bounded topology.
For spaces of this form, we argue that the appropriate notion of measure is that of a $\lambda^+$-additive measure (Proposition~\ref{prop:weight=cellularity}).

In practice, we can reduce to an equivalent and more familiar setting.
We fix two arbitrary cardinals $\lambda$ and $\kappa$, with $\kappa\leq\lambda$ and $\kappa$ regular, and work (until Section~\ref{sec:final_remarks}) with subspaces of $\pre{\kappa}{\lambda}$ of weight $\lambda$ (Fact~\ref{fct:homeo_to_subset_Baire} and Corollary~\ref{cor:embedding_into_Baire_kappa_leq_lambda}).
This way, when $\kappa=\cof(\lambda)$ and $2^{<\lambda}=\lambda$, we recover the standard setup used in other papers in generalized descriptive set theory on a cardinal $\lambda$ (see, e.g., \cite{PhDThesisAgostini, agostiniGeneralizedBorelSets2025}).

However, we do not impose any cardinal assumption a priori.
This allows us to study the spaces $\pre{\kappa}{\lambda}$ and $\pre{\lambda}{2}$ even when $\kappa\neq\cof(\lambda)$ or $2^{<\lambda}\neq\lambda$, a case that is usually not covered in generalized descriptive set theory.
In contrast, we show that the assumption $\lambda^{<\kappa}=\lambda$ can always be made without loss of generality:
up to homeomorphism, every space of the form $\pre{\kappa}{\lambda}$ is homeomorphic to a space $\pre{\kappa}{{\lambda'}}$ for some $\lambda'$ satisfying $(\lambda')^{<\kappa}=\lambda'$ (Proposition~\ref{prop:lambda<kappa=lambda_is_necessary}).
A similar statement holds for the generalized Cantor space (Proposition~\ref{prop:Cantor_always_subset_of_some_Baire}).

One of the main challenges in obtaining a meaningful impossibility theorem is to work with a class of \enquote{generalized measures} broad enough to encompass all reasonable generalizations of measure one might consider. We achieve this by relaxing the assumptions of classical measures as much as possible, introducing an \enquote{umbrella definition} that captures most (if not all) natural generalizations of measure to subspaces of $\pre{\kappa}{\lambda}$.

Recall that a \markdef{(classical) measurable space} on a set $X$ is a pair $(X,\MMM)$, where $X$ is a set and $\MMM$ is a $\sigma$-algebra on $X$. If $X$ is a topological space, it is further required that $\tau \subseteq \MMM$, i.e., that $\MMM$ extends the $\sigma$-algebra of Borel sets of $X$. In classical descriptive set theory, one typically restricts attention to \markdef{Borel spaces}, that is, pairs $(X,\MMM)$ where $X$ is a second countable $T_0$ topological space and $\MMM$ is the smallest $\sigma$-algebra generated by the topology of $X$.

These notions have straightforward generalizations to the uncountable setting.
We define a \markdef{$\lambda^+$-measurable space} as a pair $(X,\MMM)$ where $X$ is a topological space and $\MMM$ is a $\lambda^+$-algebra containing all open subsets of $X$ (that is, a family closed under unions and intersections of size at most $\lambda$, and under complements).
We call a pair $(X,\MMM)$ a \markdef{$\lambda^+$-Borel space} if $\MMM$ is a $\lambda^+$-algebra generated by a $T_0$ topology of weight at most $\lambda$, or, equivalently, if $\MMM$ separates the points of $X$ and is generated by a subfamily of size at most $\lambda$ (see, e.g., \cite[Proposition 3.24]{agostiniGeneralizedBorelSets2025}).

As our aim is to relax these assumptions as much as possible, we work with a larger class of structures that extends the classes of $\lambda^+$-measurable and $\lambda^+$-Borel spaces. In particular, we drop the requirement that  $\MMM$ is closed under complements or intersections, and we do not even require that $\Bor(X)\subseteq \MMM$.

\begin{definition}\label{def:weakly-lambda-measurable-space}
A \markdef{weakly $\lambda^+$-measurable space} is a pair $(X,\MMM)$ where $X$ is a topological space and $\MMM$ is a family of subsets of $X$ closed under unions of size $\leq \lambda$ and containing all open subsets of $X$.
\end{definition}

On a classical measurable space $(X,\MMM)$, one typically defines measures as functions on $\MMM$ taking values in $\RR_\infty = [0,\infty) \cup {\infty}$. More precisely, a \markdef{(classical) measure} $\mu$ on a measurable space $(X,\MMM)$ is a partial function $\mu:\powerset(X)\to \RR_\infty$
satisfying that $\MMM\subseteq \dom(\mu)$, $\mu(\emptyset)=0$, and
    \[
    \mu(\bigcup_{i\in\omega}A_i)=\sum_{i\in\omega}\mu(A_i)
    \]
    for any countable family $(A_i)_{i<\omega}\subset \MMM$ of disjoint sets.
We call $(X,\MMM,\mu)$ a \textbf{(classical) measure space}. To avoid trivialities, it is often required that $\mu(X)>0$ to ensure that $\mu$ is not constantly $0$.

To extend this definition to the uncountable case, one needs an ordered structure equipped with an operation extendable to sequences of the required (infinite) length. In the classical case, this role is played by the positively totally ordered monoid $\RR_\infty$ together with the infinitary operation $\sum$.

While there have been attempts to generalize the real numbers to uncountable settings while preserving as many of the properties of $\RR$ as possible (see, e.g., \cite{DalesWoodinMR1420859, Gal19, wontnerGeneralisationsDescriptiveSet}), we follow a different approach. Since our aim is to establish an impossibility theorem as general as possible, we work with an arbitrary positively totally ordered monoid $\SS = (S, 0, +, \leq)$, rather than restricting ourselves to a specific structure. Similar generalizations, where $\RR$ is replaced by monoids, have been successful in related contexts, such as that of metrics (see, e.g., \cite{ReichelTowardsUnifiedMR565844}).

As a generalization of infinite sums, we allow any partial function $\Sum : \pre{<\On}{\SS} \to \SS$ that agrees with $+$ on finite sequences in its domain and is \markdef{continuous} on sequences of limit length, that is, satisfying
\[
\Sum(s)=\sup\{\Sum(s\restriction \alpha) \mid \alpha<\leng(s),\ s\restriction \alpha\in \dom(\Sum)\}
\]
for all $s \in \dom(\Sum)$ such that $\leng(s)$ is a limit ordinal and $s\restriction \alpha \in \dom(\Sum)$ for cofinally many $\alpha < \leng(s)$ (see Definition~\ref{def:axioms_sum}). We refer to such an operation as an \markdef{infinitary operation} or \markdef{infinitary sum} on $\SS$.

Measures taking values in a monoid with a non-continuous function $\Sum$ typically fail to satisfy most of the fundamental properties that make the classical theory interesting. 
For this reason, the study of these objects is limited to Subsection~\ref{subsec:measures_with_non-continuous_sum}, while in the rest of the paper we restrict our attention to continuous infinitary sums.

This setup ensures a lot of freedom, as monoids include a much wider variety of examples than fields or groups, with much more varied behavior. For example, while a totally ordered group of uncountable degree can never be Dedekind complete, its Dedekind completion does exist within the category of monoids (Proposition~\ref{prop:completion_is_infinitary_ptom}).
At the same time, monoids can be extremely pathological.
They may fail the cancellation law and may contain idempotent elements, i.e.\ elements $a\in\SS$ such that $a+a=a$.
Moreover, the algebraic operation may interact poorly with the order structure and even be discontinuous: it can happen that there exists a bound $c\in\SS^+$ such that $a+a\geq c$ for every $a\in\SS^+$.
Since we aim to prove our impossibility theorem in full generality, we deliberately allow arbitrary monoids, including such ill-behaved examples.

The generality of this setting, however, requires us to impose additional conditions on the measures. These can be seen as analogues of properties that, in the classical case, follow from the algebraic structure of $\RR_\infty$ and the properties of $\sigma$-algebras (see, e.g., \cite[Theorem~1.19]{RudinRealAnalysisMR924157}). We incorporate a few of these properties directly into our definition. As customary in the literature, we write $x \in \MMM$ and $\mu(x)$ in place of $\{x\} \in \MMM$ and $\mu(\{x\})$.

\begin{definition}\label{def:axioms_measure}
Let $(X,\MMM)$ be a weakly $\lambda^+$-measurable space, and let $\SS$ be a positively totally ordered monoid with an infinitary sum $\Sum$. 
A \markdef{$\lambda^+$-measure} on $(X,\MMM)$ is a partial function $\mu:\powerset(X)\to (\SS,\Sum)$ such that $\MMM\subseteq \dom(\mu)$ and:
    \begin{enumerate}[label={\upshape (M\arabic*)}]      \item\label{ax_measure:emptyset} $\mu(\emptyset)=0$,
        \item\label{ax_measure:non-trivial} $\mu(X)>0$,
        \item\label{ax_measure:decreasing} $A\subseteq B$ implies $\mu(A)\leq \mu(B)$ for all $A,B\in \MMM$,
        \item\label{ax_measure:additive} for every family of disjoint sets $(A_i)_{i<\gamma}\subset \MMM$ of order type $\gamma< \lambda^+$, we have 
        \begin{itemize}
            \item $\langle \mu(A_i)\mid i<\gamma\rangle \in\dom(\Sum)$,
            \item $\mu(\bigcup_{i\in\gamma}A_i)=\Sum_{i\in\gamma}\mu(A_i)$,
        \end{itemize}
        \item\label{ax_measure:point-regular} $\mu$ is \markdef{point-regular}, i.e., for every $x\in X$ such that $x\in \MMM$ and for every (open) local basis $\{A_i\mid i<\gamma\}$ at $x$ of order type $\gamma< \lambda^{+}$, we have
        \[
        \mu(x)=\inf_{i<\gamma}\mu(A_i).
        \]
    \end{enumerate}
\end{definition}

We call $(X,\MMM, \mu)$ a \markdef{weak $\lambda^+$-measure space}. 
We say that $\mu$ is \markdef{continuous} if for every $x \in X$ we have $x \in \MMM$ and $\mu(x) = 0$.

It turns out that even under these weak conditions, it is already impossible to obtain a continuous measure on (any subspace of) the generalized Baire space.

\begin{theorem}\label{thm:impossibility_theorem}
    Let $\gamma$ be an ordinal, let $A$ be a set, and let $X\subseteq \pre{\gamma}{A}$ be a space of weight $\lambda\geq\ccc$. Then, there is no continuous weak $\lambda^+$-measure space $(X, \MMM,\mu)$.
\end{theorem}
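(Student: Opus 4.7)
The plan is to argue by contradiction: suppose $(X, \MMM, \mu)$ is a continuous weak $\lambda^+$-measure space with $\mu(X) > 0$. First, I apply the reduction results (Fact~\ref{fct:homeo_to_subset_Baire}, Corollary~\ref{cor:embedding_into_Baire_kappa_leq_lambda}, and Proposition~\ref{prop:lambda<kappa=lambda_is_necessary}) to assume $X \subseteq \pre{\kappa}{\lambda}$ with $\kappa \leq \lambda$ regular and $\lambda^{<\kappa}=\lambda$, under the bounded topology. Writing $X_\alpha = \{y \restriction \alpha : y \in X\}$, at each level $\alpha < \kappa$ the space $X$ partitions into $\leq \lambda$ pairwise disjoint basic clopen cylinders $[t] \cap X$ for $t \in X_\alpha$, a family within the scope of (M4). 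A first consequence of (M4), continuity of $\mu$, and continuity of $\Sum$ is that every $Y \subseteq X$ with $|Y| \leq \lambda$ has $\mu(Y) = 0$: write $Y$ as a disjoint union of at most $\lambda$ singletons of measure zero and use that the supremum of the zero partial sums is zero.

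The heart of the proof is a transfinite construction of a chain $(t_\alpha)_{\alpha \leq \kappa}$ with $t_\alpha \in X_\alpha$ along which $\mu([t_\alpha] \cap X) = \mu(X)$ for every $\alpha < \kappa$. In the favorable \emph{unique-branch} case — at every successor stage $\alpha$, exactly one one-step extension of $t_\alpha$ in $X_{\alpha+1}$ carries positive measure — I let $t_{\alpha+1}$ be this extension, so that (M4) applied to the one-step partition of $[t_\alpha] \cap X$ forces $\mu([t_{\alpha+1}] \cap X) = \mu([t_\alpha] \cap X)$; at a limit $\alpha$, I set $t_\alpha = \bigcup_{\beta < \alpha} t_\beta$. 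The key \emph{sibling argument} then establishes $\mu([t_\alpha] \cap X) = \mu(X)$: any $s \in X_\alpha$ with $s \neq t_\alpha$ first diverges from the chain at some $\gamma < \alpha$, so $s \restriction (\gamma+1)$ is a one-step sibling of $t_{\gamma+1}$, of measure zero by the unique-branch hypothesis; by monotonicity $\mu([s] \cap X) = 0$. Applying (M4) to the level-$\alpha$ partition of $X$ then yields $\mu([t_\alpha] \cap X) = \mu(X)$.

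When the chain reaches $\kappa$, $\{t_\kappa\} \cap X$ contains at most one point and has measure zero. The telescoping decomposition
\[
X = (\{t_\kappa\} \cap X) \sqcup \bigsqcup_{\alpha < \kappa}\bigl([t_\alpha] \cap X \setminus [t_{\alpha+1}] \cap X\bigr)
\]
writes $X$ as a disjoint union of $\leq \kappa + 1 \leq \lambda$ elements of $\MMM$; each shell is itself a disjoint union of $\leq \lambda$ zero-measure sibling cylinders at level $\alpha+1$. Applying (M4) twice gives $\mu(X) = 0$, contradicting (M2).

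The \emph{main obstacle} is the case where the unique-branch hypothesis fails at some stage $\alpha^*$: two or more one-step extensions of $t_{\alpha^*}$ carry positive measure. This is precisely where the hypothesis $\lambda \geq \ccc$ becomes essential: by iterating the splitting across the $\geq 2$ positive extensions for $\omega$ steps one produces $2^\omega = \ccc \leq \lambda$ pairwise disjoint positive-measure basic clopens, and then combines (M4), continuity of $\Sum$, and point-regularity (M5) either to pass to a sub-chain where the unique-branch argument applies or to force a measure-zero conclusion on the aggregated splitting tree, in both cases contradicting (M2). Navigating this without invoking cancellation or Archimedean properties of $\SS$ — which may genuinely fail in arbitrary PTOMs — is the delicate technical point.
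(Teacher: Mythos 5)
Your \enquote{unique-branch} half is fine: the sibling/telescoping argument correctly shows that if along some chain every cone $\Nbhd_{t_\alpha}\cap X$ has full measure and at each step only one successor has positive measure, then $X$ splits into at most $\kappa+1\le\lambda$ measurable pieces of measure zero, contradicting~\ref{ax_measure:non-trivial} (the domain/naturality issues for summing zeros are exactly what Proposition~\ref{prop:sum_is_natural_on_measurable_part} and Proposition~\ref{prop:sum_is_natural_infinitary_sum} settle, and the bound of $\le\lambda$ disjoint cones per level follows from Proposition~\ref{prop:weight=cellularity} rather than from your appeal to Proposition~\ref{prop:lambda<kappa=lambda_is_necessary}, which concerns the full space $\pre{\kappa}{\lambda}$, not a subspace of weight $\lambda$). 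But this case is not where the content of the theorem lies, and your treatment of the splitting case is a genuine gap, not a proof. First, the concrete step you propose is false as stated: iterating a binary splitting for $\omega$ steps produces only countably many positive-measure basic clopen sets (one per node of $2^{<\omega}$); the continuum-many objects are the branches, and the limit cones $\Nbhd_{s_x}\cap X$ they determine may be empty or of measure zero, so you do not get $\ccc$ pairwise disjoint \emph{positive-measure} clopens. Second, even if you had them, \enquote{$\ccc\le\lambda$ many disjoint positive-measure open sets} is not by itself contradictory for a measure valued in an arbitrary positively totally ordered monoid: the paper's Proposition~\ref{prop:small_measures_big_space} (and Theorem~\ref{thm:possibility_theorem_non_continuous_sum}) exhibit measure-like objects with full support on $\pre{\kappa}{\lambda}$ taking values in non-Archimedean monoids, so any contradiction must exploit the algebraic structure of $\SS$, which is exactly the point you defer (\enquote{navigating this without cancellation or Archimedean properties \dots is the delicate technical point}). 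Third, your sketch says nothing about $\kappa>\omega$, where limit levels break the $\omega$-step splitting picture and where, as the paper shows, the conclusion holds even without $\lambda\ge\ccc$.

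For comparison, the paper does not argue along a single branch at all. It first proves a trichotomy for monoids (Lemma~\ref{lem:three_types_of_monoids}) and rules out, for any positive-measure open set, the cases $\Deg(\SS)\neq\kappa$ (Lemma~\ref{lem:impossibility_for_monoids_of_wrong_degree}, via point-regularity), non-$0$-continuous $\SS$ (Corollary~\ref{cor:impossibility_for_non-0-continuous}), and nowhere-Archimedean coinitial behavior (Corollary~\ref{cor:impossibility_for_non-Archimedean_monoids}, where the key inequality $\mu(U)\le\sum_{i<\lambda}a\le b<\mu(U)$ for $a\ll b$ uses $\lambda^+$-additivity over a clopen refinement). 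This forces $\kappa=\omega$ and $\SS$ initially Archimedean of degree $\omega$; the Archimedean property then bounds the number of disjoint positive-measure open sets, so the support is piecewise second countable and the measure decomposes into $\omega_1$-Dirac pieces (Lemma~\ref{lem:splitting_into_dirac}, Proposition~\ref{prop:splitting_into_countable_induced_for_initially_Archimedean_monoids}); only at this final step does $\lambda\ge\ccc$ enter, via the counting argument of Lemma~\ref{lem:impossibility_for_countable_induced}. Your splitting case would have to reproduce essentially this monoid analysis, and as written it does not; the proposal is therefore incomplete at its central point.
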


In particular, if $\lambda^{<\kappa}=\lambda$ and $\lambda$ is at least the cardinality of the continuum $\ccc$, then there is no continuous weak $\lambda^+$-measure space $(\pre{\kappa}{\lambda},\MMM,\mu)$ on $\pre{\kappa}{\lambda}$ (Corollary~\ref{cor:impossibility_theorem_Baire}).

Working with a $\lambda^+$-measure and with a monoid of such a general form requires substantially different techniques from those used with classical measures on $\RR_\infty$. Therefore, in Sections~\ref{sec:setup}--\ref{sec:measures}, we analyze the basic properties of monoids, develop the definitions of measure and infinitary sum, and show how the problem can be reduced to a simpler setup.

In Section~\ref{sec:on-the-existence-of-trivial-measures}, we prove the optimality of the assumptions in Theorem~\ref{thm:impossibility_theorem}. 
In Subsection~\ref{subsec:Dirac}, we analyze the case $\lambda^{<\kappa}=\lambda < \ccc$, which implies $\kappa=\omega$. We show that in this case there are continuous weak $\lambda^+$-measure spaces $(X, \MMM, \mu)$ on $X = \pre{\omega}{\lambda}$ (Proposition~\ref{prop:existence_of_countable_induced_measure}). However, these measures, which we call $\lambda$-Dirac, are \enquote{trivial} in the sense that they are concentrated only on a small part of the space, in analogy with Dirac measures in the classical setting.
In Subsection~\ref{subsec:measures_with_non-continuous_sum} we show that if one further relaxes the assumptions on the monoid and allows an infinitary function $\Sum$ that is not continuous, then a continuous (non-$\lambda$-Dirac), weak $\lambda^+$-measure space $(X, \MMM, \mu)$ can again be obtained on $X = \pre{\kappa}{\lambda}$ even when $\lambda^{<\kappa} = \lambda\geq \ccc$ (Theorem~\ref{thm:possibility_theorem_non_continuous_sum}).
We argue, however, that measures obtained in this way are highly artificial, and that allowing a non-continuous function $\Sum$ generally produces pathological objects (see, e.g., Proposition~\ref{prop:trivial_measure_non_continuous_sum}); for this reason, we disregard this notion.

In Section~\ref{sec:on-the-non-existence-of-continuous-measures}, we prove Theorem~\ref{thm:impossibility_theorem}. In Proposition~\ref{prop:splitting_into_countable_induced_for_initially_Archimedean_monoids}, we establish a more technical impossibility result, showing that, even when $\lambda<\ccc$, every continuous measure can be decomposed into $\omega_1$-Dirac measures. This nicely complements Theorem~\ref{thm:impossibility_theorem} and the results of Subsection~\ref{subsec:Dirac}, showing that every continuous weak $\lambda^+$-measure is essentially obtained by starting from a countable measure on a separable subspace and then extending it to the whole space.
This rigidity phenomenon highlights the essential uniqueness of the classical countable case and reinforces the idea that the notion of measure fundamentally captures the separable situation.

In Section~\ref{sec:small_meaures_on_big_spaces}, we study continuous weak $\lambda$-measure spaces $(X,\MMM,\mu)$ on $X=\pre{\kappa}{\lambda}$.
We show that, if the structure $\SS$ is allowed to range over all positively totally ordered monoids, such $\lambda$-measures always exist (Proposition~\ref{prop:small_measures_big_space}).
In contrast, if $\SS$ is fixed to be the unit interval $[0,1]$, the existence of such $\lambda$-measure spaces is equivalent to the (real-valued-)measurability of $\lambda$ (Proposition~\ref{prop:real-valued_measurable_cardinal_iff_lambda_measure} and Corollary~\ref{cor:measurable_cardinal_iff_lambda_measure}).

Finally, in Section~\ref{sec:final_remarks}, we briefly discuss the consequences of our results.
In Corollaries~\ref{cor:impossibility_theorem_Baire}--\ref{cor:small_measures_big_space_Cantor}, we give a complete summary of the existence and non-existence of measures on the spaces $\pre{\kappa}{\lambda}$ and $\pre{\lambda}{2}$ across all possible cardinal scenarios.
We then show that, for $\lambda^+$-measure spaces, all results follow as consequences of the corresponding results for weak $\lambda^+$-measure spaces.
This allows us to extend our conclusions to a broader class of spaces, including $\lambda^+$-Borel spaces and more general topological spaces (that is, where $X$ is not necessarily a subset of $\pre{\kappa}{\lambda}$), showing for example the following.

\begin{corollary*}[{Corollary~\ref{cor:impossibility_for_all_top_spaces}}] 
    Assume $2^{<\lambda}=\lambda>\omega$. Let $X$ be a $T_0$ topological space of weight $\leq \lambda$. Then, there is no continuous $\lambda^+$-measure space $(X, \MMM,\mu)$.
\end{corollary*}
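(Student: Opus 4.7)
The plan is to reduce the corollary to Theorem~\ref{thm:impossibility_theorem} by embedding $X$ into $\pre{\lambda}{2}$ equipped with the bounded topology and pushing the measure forward. The assumption $2^{<\lambda}=\lambda>\omega$ yields $\ccc = 2^{\omega} \leq 2^{<\lambda} = \lambda$, so the cardinal hypothesis of Theorem~\ref{thm:impossibility_theorem} is available. Suppose for contradiction that $(X,\MMM,\mu)$ is a continuous $\lambda^+$-measure space, and fix a basis $\{U_i \mid i<\lambda\}$ of the topology on $X$. The $T_0$ axiom yields the representation $\{x\} = \bigcap_{i:x\in U_i}U_i \cap \bigcap_{j:x\notin U_j}(X\setminus U_j)$ for every $x\in X$, exhibiting $\{x\}$ as an intersection of at most $\lambda$ sets from $\MMM$; hence $\{x\}\in\MMM$ for every $x\in X$, since $\MMM$ is a $\lambda^+$-algebra.

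If $|X|\leq\lambda$, then $X=\bigsqcup_{x\in X}\{x\}$ is a disjoint union of at most $\lambda$ sets in $\MMM$. Applying axiom (M4), $\mu(X)=\Sum_{x\in X}\mu(\{x\})$, and an induction on the sequence length (using that $\Sum$ agrees with $+$ on finite sequences and is continuous at limits, together with the two-set case of (M4)) combined with the continuity of $\mu$ gives $\mu(X)=0$, contradicting (M2).

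Assume therefore $|X|>\lambda$, and define $e\colon X\to\pre{\lambda}{2}$ by $e(x)(i)=1$ iff $x\in U_i$, which is injective by $T_0$. Set $Y:=e(X)$ with the subspace topology from the bounded topology on $\pre{\lambda}{2}$, $\MMM':=\{B\subseteq Y \mid e^{-1}(B)\in\MMM\}$, and $\mu'(B):=\mu(e^{-1}(B))$. I would verify that $(Y,\MMM',\mu')$ is a continuous weak $\lambda^+$-measure space: the preimage of a basic open $[s]\cap Y$ with $s\in\pre{\alpha}{2}$ and $\alpha<\lambda$ is $\bigcap_{\beta<\alpha}V_\beta$ with $V_\beta\in\{U_\beta, X\setminus U_\beta\}$, which lies in $\MMM$, so $\MMM'$ contains every open subset of $Y$; axioms (M1)--(M4) transfer through $e^{-1}$ by the injectivity of $e$; and continuity is immediate from $\mu'(\{e(x)\})=\mu(\{x\})=0$. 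For point-regularity (M5), the key observation is that each $U_i$ equals $e^{-1}(\{f\in\pre{\lambda}{2}\mid f(i)=1\})$, the preimage of an open set in $\pre{\lambda}{2}$, so the topology on $X$ pulled back from $Y$ via $e$ is \emph{finer} than the original topology. Given any local basis $\{A_i\}$ of $e(x)$ in $Y$ and any local basis $\{V_j\mid j<\lambda\}$ of $x$ in the original topology of $X$ (which exists since $X$ has character $\leq\lambda$), each $V_j$ is a neighborhood of $x$ in the pulled-back topology and thus contains some $e^{-1}(A_{i(j)})$; monotonicity and the point-regularity of $\mu$ then give $\inf_i \mu'(A_i)\leq \inf_j\mu(V_j)=\mu(\{x\})=0$.

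To conclude, note that $|Y|=|X|>\lambda\geq\ccc$ combined with the $T_0$ bound $|Y|\leq 2^{w(Y)}$ and $2^{<\lambda}=\lambda$ forces $w(Y)\geq\ccc$, since otherwise $w(Y)<\ccc\leq\lambda$ would give $2^{w(Y)}\leq 2^{<\lambda}=\lambda$, contradicting $|Y|>\lambda$. Because $Y\subseteq\pre{\lambda}{2}$ carries the bounded topology, Theorem~\ref{thm:impossibility_theorem} applied to $Y$ with weight $w(Y)$ rules out any continuous weak $w(Y)^+$-measure on $Y$; but $\mu'$ is such a measure (a continuous $\lambda^+$-measure is automatically a continuous weak $w(Y)^+$-measure, since $w(Y)\leq\lambda$), yielding the desired contradiction. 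The main subtlety is the verification of (M5) for $\mu'$: the map $e$ is not generally a topological embedding—the pulled-back topology is strictly finer than the original—and the fix is the finer-topology comparison above, which critically uses that $X$ has character $\leq\lambda$ at every point.
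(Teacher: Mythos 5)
Your proposal is correct and takes essentially the same route as the paper: the paper's one-line proof cites the fact that every $T_0$ space of weight $\leq\lambda$ is $\lambda^+$-Borel-isomorphic to a subspace of $\pre{\lambda}{2}$ (this isomorphism is exactly your marker map $e$, and it is here that the full $\lambda^+$-algebra hypothesis is needed, just as in your verification that preimages of basic cones lie in $\MMM$), and you simply carry out the implicit transfer explicitly, including the one genuinely delicate point, namely Axiom~\ref{ax_measure:point-regular} for the pushforward, which you handle correctly via the observation that the topology pulled back through $e$ refines the original one. The only divergence is the endgame: because you invoke Theorem~\ref{thm:impossibility_theorem} directly, which requires the image to have weight at least $\ccc$, you need the case split on $|X|\leq\lambda$ with a separate $\lambda^+$-additivity argument, whereas the paper instead applies Corollary~\ref{cor:impossibility_with_non-weak_structures}, which already covers arbitrary subspaces of $\pre{\kappa}{\lambda}$ of weight $\leq\lambda$ and so makes that split unnecessary.
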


\begin{corollary*}[{Corollary~\ref{cor:impossibility_for_lambda-Borel_spaces}}]
    Assume $2^{<\lambda}=\lambda>\omega$. Let $(X,\MMM)$ be a $\lambda^+$-Borel space. Then, there is no continuous $\lambda^+$-measure on it.
\end{corollary*}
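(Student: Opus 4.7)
My plan is to reduce the statement immediately to the preceding Corollary~\ref{cor:impossibility_for_all_top_spaces}, which already treats arbitrary $T_0$ topological spaces of weight at most $\lambda$. Once that corollary is in hand, the content of the present statement is essentially definitional.

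Unfolding the definition of $\lambda^+$-Borel space given in the introduction, there exists a $T_0$ topology $\tau$ on $X$ of weight $\leq\lambda$ such that $\MMM$ is the $\lambda^+$-algebra generated by $\tau$. Fix such a $\tau$; this is also the topology with respect to which the $\lambda^+$-measure is implicitly taken (via the point-regularity axiom~\ref{ax_measure:point-regular}). Since $\tau\subseteq\MMM$, regarding $X$ as topologised by $\tau$ shows that $(X,\MMM)$ is a $\lambda^+$-measurable space in the sense of the paper. Therefore, any continuous $\lambda^+$-measure $\mu$ on $(X,\MMM)$ yields a continuous $\lambda^+$-measure space $(X,\MMM,\mu)$ whose underlying topological space $(X,\tau)$ is $T_0$ of weight $\leq\lambda$, directly contradicting Corollary~\ref{cor:impossibility_for_all_top_spaces}.

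All the substantive work is packaged upstream in that corollary. Its proof splits on the weight of $X$: when the weight equals $\lambda$, one embeds $X$ homeomorphically into some $\pre{\gamma}{A}$ via Fact~\ref{fct:homeo_to_subset_Baire} and invokes Theorem~\ref{thm:impossibility_theorem}, using that $\lambda\geq\ccc$ (which follows from $2^{<\lambda}=\lambda>\omega$) and that every $\lambda^+$-measure is a fortiori a weak $\lambda^+$-measure; when the weight is some $\nu<\lambda$, one uses $|X|\leq 2^{\nu}\leq 2^{<\lambda}=\lambda$, so that $X$ is a disjoint union of at most $\lambda$ singletons, and continuity of the measure together with continuity of $\Sum$ on limits (which forces $\Sum$ of a constantly-$0$ sequence to be $0$) give $\mu(X)=0$, contradicting axiom~\ref{ax_measure:non-trivial}. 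I expect no genuinely new obstacle at the level of the $\lambda^+$-Borel statement itself; the only point to keep track of is the explicit choice of a generating topology $\tau$ in the reduction.
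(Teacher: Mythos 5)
Your reduction is exactly the paper's proof: by the stated equivalence, $\MMM$ is the $\lambda^+$-algebra generated by a $T_0$ topology of weight at most $\lambda$, and then Corollary~\ref{cor:impossibility_for_all_top_spaces} applies verbatim; since that corollary is available as a black box, your argument for the statement at hand is correct and essentially identical to the paper's.

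One caveat about your parenthetical sketch of the upstream corollary: an arbitrary $T_0$ space of weight $\lambda$ need \emph{not} embed homeomorphically into any $\pre{\gamma}{A}$ (Fact~\ref{fct:homeo_to_subset_Baire} only applies to spaces already carrying the bounded topology; a connected space like $\RR$ embeds in no such zero-dimensional space), so your ``weight $=\lambda$'' case as described would fail. The paper instead transfers the structure via a $\lambda^+$-\emph{Borel isomorphism} onto a subspace of $\pre{\lambda}{2}$, hence of $\pre{\cof(\lambda)}{\lambda}$, which is precisely why Corollary~\ref{cor:impossibility_for_all_top_spaces} is stated for (non-weak) $\lambda^+$-measure spaces rather than weak ones. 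This does not affect the correctness of your proof of the present corollary, which only cites that result.
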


\section{Setup}\label{sec:setup}
We start by reviewing briefly the key notions we use throughout the paper.

\subsection{Topology and generalized descriptive set theory}\label{subsec:top_setup}
We refer the reader to \cite{EngelkingGenTopMR1039321} and to \cite{AndrettaMottoRosMR4409720, PhDThesisAgostini, AgosMottoSchlichtRegular} for the basic notions of, respectively, general topology and generalized descriptive set theory.

Recall that given a set $A$ and an ordinal $\gamma$, we denote by $\pre{\gamma}{A}$ the set of sequences of elements of $A$ of length $\gamma$. We also let $\pre{<\gamma}{A}=\bigcup_{\alpha<\gamma}\pre{\alpha}{A}$. 
The bounded topology on $\pre{\gamma}{A}$ is generated by the sets (called \markdef{cones}) $\Nbhd_s(\pre{\gamma}{A})=\{x\in \pre{\gamma}{A}\mid s\subseteq x\}$ for $s\in \pre{<\gamma}{A}$. 
For simplicity, we often just write $\Nbhd_s$ when the ambient space is clear from the context.
The family $\B=\{\Nbhd_s\mid s\in \pre{<\gamma}{A}\}$ consists of clopen sets and is closed under intersections of size less than $\cof(\gamma)$.
Therefore, $\B$ is a basis for the bounded topology on $\pre{\gamma}{A}$; we call it the \markdef{canonical basis}.
For any subspace $X\subseteq \pre{\gamma}{A}$, we also denote by $\B(X)$ the canonical basis of $X$ consisting of the clopen cones $\Nbhd_s(X)=\Nbhd_s\cap X$ such that $\Nbhd_s\cap X\neq\emptyset$. 
The closure of $\B$ (and $\B(X)$) under intersections of size less than $\cof(\gamma)$ ensures that the bounded topology itself is closed under intersections of size less than $\cof(\gamma)$, that is, it is \textbf{$\cof(\gamma)$-additive}.

The following well-known fact (see, e.g., \cite[Theorem~2.7]{NyikosTreeBasesMR1666807} or \cite[Proposition~2.2.24]{PhDThesisAgostini}) shows that the bounded topology is also zero-dimensional in a strong sense: every open cover can be refined by a clopen partition consisting of basic clopen cones.

\begin{fact}\label{fct:strong_zero_dimensionality}
     Let $\gamma$ be an ordinal, let $A$ be a set, and let $X\subseteq \pre{\gamma}{A}$. For every family $\V$ of open sets of $X$ there is a family $\P\subseteq \B(X)$ of disjoint cones which refines $\V$ and satisfies that $\bigcup \P=\bigcup \V$.
\end{fact}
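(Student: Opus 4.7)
The plan is to exploit two basic features of the canonical basis $\B(X)$: any two cones $\Nbhd_s(X)$ and $\Nbhd_t(X)$ are either disjoint or comparable by inclusion (depending on whether $s$ and $t$ are incomparable as sequences or one extends the other), and for every open set $U$ and every $x \in U$ there is some $\alpha < \gamma$ with $\Nbhd_{x\restr\alpha}(X) \subseteq U$. Given these, the natural idea is to attach to every $x \in \bigcup \V$ the shortest initial segment whose cone already lies inside some element of $\V$, and let $\P$ be the collection of the resulting cones. The tree structure of cones, combined with the minimality of the chosen length, will force these cones to be pairwise disjoint.

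More concretely, first I would set $U = \bigcup \V$ and, for each $x \in U$, define
\[
\alpha_x = \min\{\alpha < \gamma \mid \exists\, V \in \V,\ \Nbhd_{x\restr\alpha}(X) \subseteq V\},
\]
which is well-defined because $\V$ is a family of open sets and $\B(X)$ is a basis; then set $s_x = x \restr \alpha_x$ and
\[
\P = \{\Nbhd_{s_x}(X) \mid x \in U\}.
\]
By construction each element of $\P$ is a nonempty cone in $\B(X)$ contained in some member of $\V$, so $\P \subseteq \B(X)$ and $\P$ refines $\V$. Moreover $x \in \Nbhd_{s_x}(X)$ for every $x \in U$, so $U \subseteq \bigcup \P \subseteq U$, giving $\bigcup \P = \bigcup \V$.

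The only step requiring actual argument is disjointness. Suppose $\Nbhd_{s_x}(X) \cap \Nbhd_{s_y}(X) \neq \emptyset$; then $s_x$ and $s_y$ are comparable, and without loss of generality $s_x \subseteq s_y$, i.e.\ $\alpha_x \leq \alpha_y$ and $y \restr \alpha_x = s_x$. Then $\Nbhd_{y\restr\alpha_x}(X) = \Nbhd_{s_x}(X)$ is already contained in some $V \in \V$, so by the minimality in the definition of $\alpha_y$ we get $\alpha_y \leq \alpha_x$, hence $\alpha_x = \alpha_y$ and $\Nbhd_{s_x}(X) = \Nbhd_{s_y}(X)$. This concludes the proof. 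The main (and only) subtlety is the minimality argument ensuring disjointness; everything else is essentially a direct unpacking of the definition of the bounded topology, which is why the result can be quoted as a fact.
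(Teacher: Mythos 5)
Your argument is correct, and it is the standard proof of this fact (which the paper itself only quotes, with references, rather than proving): assign to each point $x\in\bigcup\V$ the cone $\Nbhd_{x\restriction\alpha_x}(X)$ of minimal length contained in some member of $\V$, and use minimality together with the tree structure of cones to see that two such cones are either equal or disjoint. Nothing further is needed.
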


Recall that, given a cardinal $\delta$, a space is called \markdef{$\delta$-Lindel\"of} if every open cover has a subcover of size strictly less than $\delta$.
From Fact~\ref{fct:strong_zero_dimensionality} we can obtain the following immediate corollary.

\begin{corollary}\label{cor:equivalent_conditions_lindelof}
 Let $\gamma$ be an ordinal, let $A$ be a set, and let $X\subseteq \pre{\gamma}{A}$. The following are equivalent:
\begin{enumerate-(1)}
    \item\label{cor:equivalent_conditions_lindelof-1} $X$ is not $\lambda$-Lindel\"of.
    \item\label{cor:equivalent_conditions_lindelof-2} $X$ contains a clopen partition of size at least $\lambda$.
    \item\label{cor:equivalent_conditions_lindelof-3} $X$ contains a closed discrete set of size (at least) $\lambda$.
\end{enumerate-(1)}
\end{corollary}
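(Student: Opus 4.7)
The plan is to prove the cyclic chain $\ref{cor:equivalent_conditions_lindelof-1}\Rightarrow\ref{cor:equivalent_conditions_lindelof-2}\Rightarrow\ref{cor:equivalent_conditions_lindelof-3}\Rightarrow\ref{cor:equivalent_conditions_lindelof-1}$, exploiting Fact~\ref{fct:strong_zero_dimensionality} for the first implication and standard topological arguments for the other two.

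For $\ref{cor:equivalent_conditions_lindelof-1}\Rightarrow\ref{cor:equivalent_conditions_lindelof-2}$, I would start from an open cover $\V$ of $X$ with no subcover of size $<\lambda$. Applying Fact~\ref{fct:strong_zero_dimensionality} to $\V$ yields a disjoint family $\P\subseteq \B(X)$ of basic cones with $\bigcup \P=\bigcup\V=X$, i.e.\ a clopen partition of $X$. If $|\P|<\lambda$, then choosing for each $P\in \P$ some $V_P\in \V$ containing $P$ (which exists since $\P$ refines $\V$) would produce a subcover of $\V$ of size $<\lambda$, contradicting the choice of $\V$. Hence $|\P|\geq \lambda$.

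For $\ref{cor:equivalent_conditions_lindelof-2}\Rightarrow\ref{cor:equivalent_conditions_lindelof-3}$, let $\P=\{C_i\mid i\in I\}$ be a clopen partition of $X$ with $|I|\geq\lambda$, and pick a point $x_i\in C_i$ for each $i\in I$. The disjointness of the $C_i$'s gives that the $x_i$'s are distinct and that $D:=\{x_i\mid i\in I\}$ is discrete (each $C_i$ is a clopen neighborhood of $x_i$ meeting $D$ only in $\{x_i\}$). To see that $D$ is closed, fix $y\in X\setminus D$ and let $j$ be the unique index with $y\in C_j$; since the bounded topology on any $X\subseteq \pre{\gamma}{A}$ is Hausdorff (two distinct sequences differ at some coordinate $\alpha<\gamma$, and the basic cones indexed by their restrictions to $\alpha+1$ separate them), singletons are closed, so $C_j\setminus\{x_j\}$ is an open neighborhood of $y$ disjoint from $D$.

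For $\ref{cor:equivalent_conditions_lindelof-3}\Rightarrow\ref{cor:equivalent_conditions_lindelof-1}$, let $D\subseteq X$ be closed discrete with $|D|\geq\lambda$. For each $x\in D$ pick an open $U_x\subseteq X$ with $U_x\cap D=\{x\}$, and consider the open cover $\V=\{U_x\mid x\in D\}\cup\{X\setminus D\}$. Any subcover must include $U_x$ for every $x\in D$, since $x\notin X\setminus D$ and $x\notin U_{x'}$ for $x'\neq x$; hence every subcover has size at least $|D|\geq\lambda$, showing that $X$ is not $\lambda$-Lindel\"of.

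I do not expect a serious obstacle here: the only delicate point is verifying that the set $D$ constructed in $\ref{cor:equivalent_conditions_lindelof-2}\Rightarrow\ref{cor:equivalent_conditions_lindelof-3}$ is closed, which requires invoking the Hausdorff (in fact $T_1$) character of the bounded topology on subsets of $\pre{\gamma}{A}$.
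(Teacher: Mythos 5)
Your proof is correct and follows essentially the same route as the paper: Fact~\ref{fct:strong_zero_dimensionality} for \ref{cor:equivalent_conditions_lindelof-1}$\Rightarrow$\ref{cor:equivalent_conditions_lindelof-2}, choosing one point per partition piece for \ref{cor:equivalent_conditions_lindelof-2}$\Rightarrow$\ref{cor:equivalent_conditions_lindelof-3}, and the closed-discrete obstruction for \ref{cor:equivalent_conditions_lindelof-3}$\Rightarrow$\ref{cor:equivalent_conditions_lindelof-1}. The only cosmetic differences are that you spell out the closedness of $D$ (which the paper simply asserts) and that you unwind directly the fact that a closed discrete set of size $\lambda$ defeats $\lambda$-Lindel\"ofness, where the paper instead cites that closed subspaces of $\lambda$-Lindel\"of spaces are $\lambda$-Lindel\"of.
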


\begin{proof}
    \ref{cor:equivalent_conditions_lindelof-1}$\Rightarrow$\ref{cor:equivalent_conditions_lindelof-2}. Let $\U$ be an open cover that does not contain any subcover of size smaller than $\lambda$. Let $\P$ be a partition of $X$ refining $\U$ as given by Fact~\ref{fct:strong_zero_dimensionality}. If $|\P|<\lambda$, then choosing $U_P\in \U$ such that $P\subseteq U_P$ for every $P\in \P$ would lead to a contradiction. Therefore, $|\P|\geq\lambda$.

    \ref{cor:equivalent_conditions_lindelof-2}$\Rightarrow$\ref{cor:equivalent_conditions_lindelof-3}. Let $\P$ be a partition of $X$ of size (at least) $\lambda$, and choose $x_p\in P$ for every $P\in \P$. Then $\{x_P\mid P\in\P\}$ is closed and discrete, as wanted.

    \ref{cor:equivalent_conditions_lindelof-3}$\Rightarrow$\ref{cor:equivalent_conditions_lindelof-1}. It is enough to notice that every closed subspace of a $\lambda$-Lindel\"of space is again $\lambda$-Lindel\"of, while a discrete space of size $\lambda$ is not $\lambda$-Lindel\"of.
\end{proof}

Recall that the weight $\weight$ of a topological space is the minimal size of a basis for its topology; the density $\density$ is the minimal size of a dense subset; and the cellularity $\cellularity$ is the minimal cardinal $\delta$ such that every family of pairwise disjoint open sets has size at most $\delta$.

It is well known (see, e.g., \cite[Exercise 1.7.12(a)]{EngelkingGenTopMR1039321}) that for every topological space $X$ we have $\cellularity(X)\leq \density(X)\leq \weight(X)$.
In any subspace $X\subseteq \pre{\gamma}{A}$ with the bounded topology, these three cardinals coincide.

\begin{proposition}\label{prop:weight=cellularity}
    Let $\gamma$ be an ordinal, let $A$ be a set, and let $X\subseteq \pre{\gamma}{A}$. If $X$ has weight $\lambda$, then it contains a family of pairwise disjoint basic clopen sets of size $\lambda$.
    
    In particular, $\cellularity(X)= \density(X)= \weight(X)$.
\end{proposition}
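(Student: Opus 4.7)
The plan is to control the size of the canonical basis of $X$ through its level partitions. For each $\alpha<\gamma$, the collection
\[
L_\alpha := \{\Nbhd_s\cap X : s\in\pre{\alpha}{A},\ \Nbhd_s\cap X\neq\emptyset\}
\]
is a partition of $X$ into pairwise disjoint basic clopen cones, and the union $\bigcup_{\alpha<\gamma} L_\alpha$ coincides with $\B(X)$ regarded as a family of subsets of $X$. The partitions refine as $\alpha$ increases, and a refinement of a partition by another of the same cardinality is necessarily trivial, so $L_\alpha$ and $L_\beta$ coincide as partitions of $X$ whenever $|L_\alpha|=|L_\beta|$. Consequently, the distinct partitions in the sequence are in bijection with the distinct values of the non-decreasing map $\alpha\mapsto|L_\alpha|$.

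Setting $\mu := \sup_\alpha|L_\alpha|$, the number of distinct partitions is thus at most the number of cardinals below $\mu$, which is bounded by $\mu$ in the infinite regime, while each individual partition has at most $\mu$ cells; hence $|\B(X)|\le\mu\cdot\mu=\mu$. Combined with $\weight(X)\le|\B(X)|$, this yields $\mu\ge\lambda$.

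If the supremum $\mu$ is attained — which is forced, in particular, whenever $\mu>\lambda$ or $\lambda$ is a successor cardinal (otherwise the bound just obtained would give $\weight(X)<\lambda$) — then some $|L_\alpha|\ge\lambda$, and any $\lambda$-subfamily of $L_\alpha$ is the required pairwise disjoint family. Otherwise $\mu=\lambda$ is a proper supremum, so $\lambda$ is a limit cardinal; in that case I plan to build the family by a transfinite recursion of length $\cof(\lambda)$ along a cofinal sequence of levels $\alpha_0<\alpha_1<\cdots$ with $|L_{\alpha_i}|$ strictly increasing. At each successor stage a pigeonhole argument on how the $|L_{\alpha_{i+1}}|$ new cells are distributed among the $|L_{\alpha_i}|$ old ones selects a cell of the current antichain whose next-level refinement is maximally fine, and that cell is replaced by its sub-cones; at limit stages one takes unions. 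The main obstacle will be verifying that the antichain property persists through the limits and that the final family attains the full size $\lambda$ rather than collapsing. The ``in particular'' clause then follows at once from the produced disjoint family together with the standard chain $\cellularity(X)\le\density(X)\le\weight(X)$.
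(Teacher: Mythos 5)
There is a genuine gap, and it occurs at the very first step on which everything else rests. The claim that ``a refinement of a partition by another of the same cardinality is necessarily trivial'' is true only for finite partitions; for infinite ones it fails (already in $\pre{\omega}{\omega}$ the level-$1$ and level-$2$ partitions are both countably infinite but distinct). Consequently your bound $|\B(X)|\le\mu$ with $\mu=\sup_\alpha|L_\alpha|$ is unjustified, and in fact the conclusion you draw from it, $\mu\ge\lambda$, is false in general. A concrete counterexample: inside $\pre{\omega_1}{2}$ take $X$ to consist of the constant-zero sequence together with all characteristic functions of singletons $\{\alpha\}$, $\alpha<\omega_1$ (this is homeomorphic to the ordinal $\omega_1+1$). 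Every level partition $L_\beta$ has at most $|\beta|+1\le\omega$ cells, so $\mu=\omega$, yet $\weight(X)=\omega_1$ (there are $\omega_1$ isolated points), and indeed $|\B(X)|=\omega_1>\mu$. Here your strategy --- find a single level of size $\ge\lambda$, or run a recursion along levels whose sizes increase cofinally to $\lambda$ --- cannot even begin, since all level sizes are bounded by $\omega<\lambda$. Nevertheless the proposition holds for this $X$ (the singletons $\Nbhd_{x\restriction\alpha+1}\cap X$ are a disjoint family of basic clopen sets of size $\omega_1$), so what is missing is an argument for the case in which the levels are small; this is exactly the case $\cof(\gamma)=\lambda$ (plus the discrete case), which the paper treats separately by fixing a non-isolated point $x$ and extracting $\cof(\gamma)$-many pairwise disjoint cones from the differences $\Nbhd_{x\restriction\alpha}(X)\setminus\Nbhd_{x\restriction\alpha+1}(X)$ along the branch of $x$.

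For the remaining case ($\kappa=\cof(\gamma)<\lambda$, so that $\cof(\lambda)=\kappa$ when no level reaches size $\lambda$), your recursion sketch is in the right spirit and is close to the paper's argument, but note two points. First, the correct starting point there is not $\mu\ge\lambda$ derived from a cardinality count of $\B(X)$, but the observation that the cones at a fixed cofinal set of levels already form a basis, so that if no level has $\lambda$ cells then the level sizes must be cofinal in $\lambda$. Second, the paper avoids the limit-stage and ``antichain persistence'' issues you flag: for each $i$ it picks by pigeonhole a node $s(i)$ at level $\alpha_i$ with more than $|\Lev_{\alpha_i}(T)|$ extensions at level $\alpha_{i+1}$, discards the (at most $\kappa$) extensions that are initial segments of some $s(j)$, and takes the union of the resulting sets $Z_i$; disjointness across different $i$'s is then a short direct verification rather than something maintained through a transfinite construction. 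As written, your proposal both relies on a false counting lemma and omits the $\cof(\gamma)=\lambda$ (and discrete) configurations entirely, so it does not prove the statement.
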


\begin{proof}
If $X$ is discrete we are done, so suppose not. Then, $\gamma$ is limit and $\kappa=\cof(\gamma)$ is infinite, and we must have $\kappa\leq\lambda$, since $X$ is $\kappa$-additive and not discrete. 

If $\lambda=\kappa$ we are done: for any non-isolated point $x\in X$, we have that for cofinally many $\alpha<\gamma$ the set $\Nbhd_{x\restriction \alpha}(X) \setminus \Nbhd_{x\restriction \alpha+1}(X)$ must be non-empty. Let $\{\alpha_i\mid i<\kappa\}$ be an enumeration of (some of) these $\alpha$s. Since $\B(X)$ is a basis for $X$ and $\Nbhd_{x\restriction \alpha+1}(X) \setminus \Nbhd_{x\restriction \alpha}(X)$ is clopen, for every $i<\kappa$ there is $s_i\in \pre{<\kappa}{\lambda}$ such that $\Nbhd_{s_i}(X)\neq\emptyset$ and $\Nbhd_{s_i}(X)\subseteq \Nbhd_{x\restriction {\alpha_i}}(X) \setminus \Nbhd_{x\restriction {\alpha_i+1}}(X)$, which gives a family $\{\Nbhd_{s_i}(X)\mid i<\kappa\}$ of non-empty disjoint basic clopen sets of size $\kappa=\lambda$, as wanted.

Therefore, assume $\kappa<\lambda$. Let $\langle \gamma_\alpha\mid \alpha<\kappa\rangle$ be a cofinal sequence in $\gamma$.
Then, the family 
\[
\tilde{\B}(X)=\{\Nbhd_s(X)\mid s\in \bigcup_{\alpha<\kappa} \pre{{\gamma_\alpha}}{A}\}\subseteq \B(X)
\]
is still a basis for $X$.
Since $X$ has weight $\lambda$, we can find $T\subseteq \bigcup_{\alpha<\kappa} \pre{{\gamma_\alpha}}{A}$ such that $\Nbhd_s(X)=\Nbhd_s\cap X\neq \emptyset$ for every $s\in T$ and 
\[
\B'=\{\Nbhd_s(X)\mid s\in T\}\subseteq \tilde{\B}(X)
\]
is a basis for $X$ of minimal size $|\B'|=|T|=\lambda$.
Without loss of generality, we may assume that $s\restriction \gamma_\alpha\in T$ for every $s\in T$ and $\alpha<\kappa$.
For every $\alpha<\kappa$, let $\Lev_\alpha(T)=\pre{\gamma_\alpha}{A}\cap T$. If there is $\alpha<\kappa$ such that $|\Lev_\alpha(T)|=\lambda$ we are done, so suppose not. 
Then, $| \Lev_\alpha(T)|<\lambda$ for every $\alpha<\kappa$, and $\lambda=\sup_{\alpha<\kappa} | \Lev_\alpha(T)|$, thus $\cof(\lambda)=\kappa<\lambda$.

Proceeding recursively, we can build a cofinal set $I\subseteq \kappa$ such that 
\[
\kappa<|\Lev_\alpha(T)|<| \Lev_\beta(T)|<\lambda
\]
for every $\alpha,\beta\in I$ satisfying $\alpha<\beta$. Let $I=\{\alpha_i\mid i<\kappa\}$ be an increasing enumeration of it.
Since for every $s\in T$ and $\alpha<\kappa$ we have $s\restriction \gamma_\alpha\in T$, by the pigeonhole principle for every $i<\kappa$ there is $s(i)\in \Lev_{\alpha_i}(T)$ such that 
\[
|\{t\in \Lev_{\alpha_{i+1}}(T)\mid s(i)\subseteq t\}| >\Lev_{\alpha_i}(T).
\]
Since we assumed $|\Lev_{\alpha_{i}}(T)|>\kappa$ and since for every $i,j<\kappa$ there is at most one $t\in \Lev_{\alpha_{i+1}}(T)$ such that $t\subseteq s(j)$, we have that 
\[
Z_i=\{t\in \Lev_{\alpha_{i+1}}(T)\mid s(i)\subseteq t\text{ and for every } j<\kappa \text{ we have } t\nsubseteq s(j) \}
\]
has still size $|Z_i|>|\Lev_{\alpha_i}(T)|$ for every $i<\kappa$. Since $\lambda=\sup_{\alpha<\kappa} | \Lev_\alpha(T)|$, we have that $|\bigcup_{i<\kappa} Z_i|=\lambda$, and thus 
\[
\{\Nbhd_t(X)\mid t\in \bigcup_{i<\kappa} Z_i\}
\]
is a family of disjoint basic clopen sets of size $\lambda$, as wanted.
\end{proof}

Generalized descriptive set theory at a cardinal $\lambda$ usually works with (subspaces of) the spaces $\pre{\lambda}{2}$ and $\pre{\cof(\lambda)}{\lambda}$, equipped with the bounded topology.
It is customary in this setting to assume $2^{<\lambda}=\lambda$, which implies the weaker $\lambda^{<\cof(\lambda)}=\lambda$, in order to ensure, for example, that $\pre{\lambda}{2}$ and $\pre{\cof(\lambda)}{\lambda}$ both have weight $\lambda$.

Here, we adopt a slightly more general framework: we work with any space of weight $\lambda$ of the form $X\subseteq\pre{\gamma}{A}$, for some set $A$ and some ordinal $\gamma$, equipped with the bounded topology it inherits from $\pre{\gamma}{A}$.
This class of spaces is quite broad. For example, it includes all Lebesgue zero-dimensional metric spaces (equivalently, all ultrametric spaces) \cite[Theorem 7.3.15 and Exercise 7.3.F]{EngelkingGenTopMR1039321} and all $\mu$-metrizable spaces for $\mu>\omega$ \cite[Theorem~3.3]{NyikosTreeBasesMR1666807} of the required weight, among others (see also \cite[Theorem~2.2.1]{PhDThesisAgostini}).

Thanks to Proposition~\ref{prop:weight=cellularity}, it is evident why in this context of a topological space $X\subseteq \pre{\gamma}{A}$ of weight $\lambda$, the appropriate notion of measure is that of $\lambda^+$-measures: any such space contains an open set that can be partitioned into $\lambda$ many open sets, making any $\gamma$-measure for $\gamma<\lambda^+$ not informative enough in this context. 
This is particularly relevant when studying the space $\pre{\kappa}{\lambda}$ without the assumption $\lambda^{<\kappa}=\lambda$, and it shows that the correct objects to study on $\pre{\kappa}{\lambda}$ are $(\lambda')^+$-measures, rather than $\lambda^+$-measures, for $\lambda'=\lambda^{<\kappa}$ the weight of $\pre{\kappa}{\lambda}$.

Operationally, our setting can be reduced to a more familiar one, thanks to the following well-known fact (see, e.g., \cite[Proposition~2.2.33]{PhDThesisAgostini}).

\begin{fact}\label{fct:homeo_to_subset_Baire}
    Let $\gamma$ be an ordinal, let $A$ be a set, and let $X\subseteq \pre{\gamma}{A}$. Let $\lambda$ be the weight of $X$ and let $\kappa=\cof(\gamma)$. Then, $X$ is homeomorphic to a subspace of $\pre{\kappa}{\lambda}$.
\end{fact}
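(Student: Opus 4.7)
The plan is to construct an explicit homeomorphic embedding $\phi \colon X \to \pre{\kappa}{\lambda}$ by assigning, at each level of a cofinal sequence in $\gamma$, an injective labelling of the non-empty basic cones into $\lambda$. We may freely restrict to the case where $\gamma$ is a limit ordinal and $X$ is not discrete (if $X$ is discrete of size $\leq\lambda$ it trivially embeds in $\pre{\kappa}{\lambda}$, and successor $\gamma$ reduces to a one-level version of the same argument).

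Fix a strictly increasing cofinal sequence $\langle \gamma_\alpha \mid \alpha<\kappa \rangle$ in $\gamma$; note that by the proof of Proposition~\ref{prop:weight=cellularity} we have $\kappa\leq\lambda$. For each $\alpha<\kappa$ the family
\[
\P_\alpha=\{\Nbhd_s(X)\mid s\in\pre{\gamma_\alpha}{A},\ \Nbhd_s(X)\neq\emptyset\}
\]
is a partition of $X$ into basic clopen sets, so Proposition~\ref{prop:weight=cellularity} yields $|\P_\alpha|\leq\cellularity(X)=\lambda$. Pick an injection $f_\alpha\colon\{s\in\pre{\gamma_\alpha}{A}\mid\Nbhd_s(X)\neq\emptyset\}\to\lambda$ and define
\[
\phi\colon X\to\pre{\kappa}{\lambda},\qquad \phi(x)(\alpha)=f_\alpha(x\restr\gamma_\alpha).
\]

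Three routine verifications remain. Injectivity is immediate from cofinality: if $x\neq y$ they differ at some $\beta<\gamma$, and any $\gamma_\alpha>\beta$ satisfies $x\restr\gamma_\alpha\neq y\restr\gamma_\alpha$, whence $\phi(x)(\alpha)\neq\phi(y)(\alpha)$ by the injectivity of $f_\alpha$. For continuity, given $t\in\pre{<\kappa}{\lambda}$ with $\dom(t)=\delta<\kappa$,
\[
\phi^{-1}(\Nbhd_t(\pre{\kappa}{\lambda}))=\bigcap_{\alpha<\delta}\{x\in X\mid f_\alpha(x\restr\gamma_\alpha)=t(\alpha)\},
\]
and each factor is either empty or, by injectivity of $f_\alpha$, a single basic clopen $\Nbhd_{s_\alpha}(X)$ with $f_\alpha(s_\alpha)=t(\alpha)$. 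Since $\pre{\gamma}{A}$ carries a $\kappa$-additive topology and $\delta<\kappa$, this intersection is open in $X$.

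Finally, for openness of $\phi$ onto $\phi(X)$, fix a basic cone $\Nbhd_s(X)$ with $s\in\pre{<\gamma}{A}$ and choose $\alpha<\kappa$ with $\leng(s)\leq\gamma_\alpha$. Decomposing at level $\gamma_\alpha$ gives
\[
\Nbhd_s(X)=\bigcup\{\Nbhd_t(X)\mid s\subseteq t\in\pre{\gamma_\alpha}{A},\ \Nbhd_t(X)\neq\emptyset\},
\]
and by injectivity of $f_\alpha$ each piece satisfies
\[
\phi(\Nbhd_t(X))=\phi(X)\cap\{y\in\pre{\kappa}{\lambda}\mid y(\alpha)=f_\alpha(t)\},
\]
which is relatively open in $\phi(X)$ (the set $\{y\mid y(\alpha)=f_\alpha(t)\}$ being the preimage of a singleton under the continuous $\alpha$-th projection). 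Thus $\phi(\Nbhd_s(X))$ is a union of relatively open sets, and $\phi$ is a topological embedding. The only subtle ingredient is the bound $|\P_\alpha|\leq\lambda$, which rests entirely on the coincidence $\cellularity(X)=\weight(X)$ established in Proposition~\ref{prop:weight=cellularity}; everything else is bookkeeping.
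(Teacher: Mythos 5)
Your main construction is correct, and it is essentially the standard argument for this fact (the paper gives no proof, citing Agostini's thesis; the same idea of relabelling, level by level along a cofinal sequence, the nonempty cones by ordinals below $\lambda$ is also how the paper argues in Propositions~\ref{prop:lambda<kappa=lambda_is_necessary} and~\ref{prop:Cantor_always_subset_of_some_Baire}). The three verifications — injectivity from cofinality, continuity from $\kappa$-additivity, openness onto the image via the level-$\gamma_\alpha$ decomposition — are all sound for limit $\gamma$; note that for the bound $|\P_\alpha|\leq\lambda$ you only need the standard inequality $\cellularity(X)\leq\weight(X)$ quoted before Proposition~\ref{prop:weight=cellularity}, not that proposition itself, and the remark that $\kappa\leq\lambda$ is never actually used.

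One caveat: the parenthetical claim that \enquote{successor $\gamma$ reduces to a one-level version of the same argument} is not right. If $\gamma=\beta+1$ then $\kappa=\cof(\gamma)=1$ and $\pre{1}{\lambda}$ carries the indiscrete topology, whereas a subspace $X\subseteq\pre{\gamma}{A}$ meeting two distinct cones $\Nbhd_t$ with $t\in\pre{\beta}{A}$ has two disjoint nonempty open sets and hence does not embed into $\pre{1}{\lambda}$ at all (likewise a discrete $X$ with more than one point does not embed when $\kappa\leq 1$). This is a degeneracy of the literal statement rather than a flaw in your real argument: the fact is only meaningful, and is only ever used in the paper, for limit $\gamma$, i.e.\ infinite $\kappa$, in line with the standing convention that $\kappa$ is an infinite regular cardinal; in that case your proof is complete, and your separate discrete case is in fact subsumed by the general construction.
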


When $X$ is not discrete, the cardinal $\kappa$ obtained in this way is unique and satisfies $\kappa\leq\lambda$. Indeed, recall that the \markdef{character} of a topological space is the smallest cardinal $\kappa$ such that every point of $X$ has a local base of size at most $\kappa$, and the \markdef{additivity} of a topological space is the unique cardinal $\kappa$ such that the topology is $\kappa$-additive but not $\kappa^+$-additive\footnote{Equivalently, the smallest cardinal $\kappa$ such that the topology is not closed under intersections of size $\kappa$.}.
Every non-discrete subspace of a space of the form $\pre{\gamma}{A}$ has both character and additivity equal to $\cof(\gamma)$. Since these notions are invariant under homeomorphisms, it follows that if $X$ is a non-discrete space that embeds both into $\pre{\gamma}{A}$ and into $\pre{\delta}{B}$, then $\cof(\gamma)=\cof(\delta)$, and this number coincides with the character and additivity of $X$.
Moreover, since every $\kappa$-additive space of weight less than $\kappa$ is discrete, we also get $\kappa\leq\lambda$ for every non-discrete space $X$ of weight $\lambda$ and additivity $\kappa$.

On the other hand, if $X\subseteq \pre{\gamma}{A}$ is discrete, then $X$ embeds into $\pre{\omega}{\lambda}$, where $\lambda$ is the weight of $X$.

Putting these observations together, we obtain the following corollary of Fact~\ref{fct:homeo_to_subset_Baire}.

\begin{corollary}\label{cor:embedding_into_Baire_kappa_leq_lambda}
    Let $\gamma$ be an ordinal, let $A$ be a set, and let $X\subseteq \pre{\gamma}{A}$. Let $\lambda$ be the weight of $X$. Then, there is a regular cardinal $\kappa\leq \lambda$ such that $X$ is homeomorphic to a subspace of $\pre{\kappa}{\lambda}$.
\end{corollary}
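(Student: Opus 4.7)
The plan is to combine Fact~\ref{fct:homeo_to_subset_Baire} with the dichotomy between the discrete and non-discrete cases that is already sketched in the paragraph preceding the corollary. First, I would apply Fact~\ref{fct:homeo_to_subset_Baire} to obtain directly that $X$ is homeomorphic to a subspace of $\pre{\kappa}{\lambda}$ for $\kappa = \cof(\gamma)$; note that $\kappa$ is automatically regular, as the cofinality of any ordinal is. The whole content of the corollary is therefore to upgrade this to $\kappa \le \lambda$, possibly after replacing $\kappa$ by a different regular cardinal.

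Next I would split into two cases. If $X$ is discrete, then $|X| \le \lambda$ (since a discrete space has weight equal to its cardinality), and any discrete space of cardinality $\le \lambda$ embeds as a closed discrete subspace of $\pre{\omega}{\lambda}$: just send each point to a distinct constant sequence in $\pre{\omega}{\lambda}$. In this case I would take $\kappa = \omega \le \lambda$. If $X$ is not discrete, I would use the observation recorded in the paragraph above the corollary: a non-discrete subspace of $\pre{\gamma}{A}$ has both character and additivity equal to $\cof(\gamma)$, while any $\kappa$-additive topological space of weight strictly less than $\kappa$ is necessarily discrete (because basic open neighborhoods around a point would intersect in the singleton). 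Since $X$ is non-discrete of weight $\lambda$ and additivity $\cof(\gamma)$, this forces $\cof(\gamma) \le \lambda$, and Fact~\ref{fct:homeo_to_subset_Baire} gives what we want with $\kappa = \cof(\gamma)$.

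I do not expect any real obstacle here; the corollary is essentially a repackaging of Fact~\ref{fct:homeo_to_subset_Baire} together with the two observations about character/additivity and about discrete spaces, both of which are explicitly made in the excerpt immediately before the statement. The only minor point to be careful about is that when $X$ is discrete of cardinality $<\lambda$, the weight convention and the choice $\kappa = \omega$ still yield an embedding into $\pre{\omega}{\lambda}$, so no separate subcase is needed.
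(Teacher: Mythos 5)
Your proposal is correct and follows essentially the same route as the paper: apply Fact~\ref{fct:homeo_to_subset_Baire} with $\kappa=\cof(\gamma)$, then split into the discrete case (embed into $\pre{\omega}{\lambda}$) and the non-discrete case, where the character/additivity observation forces $\cof(\gamma)\leq\lambda$. This is exactly the argument the paper gives in the paragraphs preceding the corollary.
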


In particular, without loss of generality we may restrict attention to subspaces $X\subseteq \pre{\kappa}{\lambda}$ of weight $\lambda$, where $\kappa$ is regular and $\kappa\leq\lambda$.
For completeness, and in line with standard practice in generalized descriptive set theory, we also allow spaces of weight at most $\lambda$, although most results of this paper become trivial when considering a $\lambda^+$-measure on a space of weight strictly less than $\lambda$.

\begin{center}
\textit{For the rest of the paper, we use the symbols $\lambda$ and $\kappa$ to denote infinite cardinals such that $\kappa$ is regular and $\kappa\leq \lambda$.}
\end{center}

\begin{center}
\textit{We denote by $(X,\tau)$ (or $X$ for short) a subspace of $\pre{\kappa}{\lambda}$ of weight (at most) $\lambda$, always equipped with the bounded topology.}
\end{center}

In the rest of this subsection, we compare our setup with those adopted in the literature, with particular emphasis on the generalized Baire and Cantor spaces $\pre{\kappa}{\lambda}$ and $\pre{\lambda}{2}$, and on the role and necessity of cardinal assumptions $2^{<\lambda}=\lambda$ and $\lambda^{<\cof(\lambda)}=\lambda$, and most importantly, $\kappa=\cof(\lambda)$.

Classically, generalized descriptive set theory at a cardinal $\lambda$ has focused on the study of the spaces $\pre{\lambda}{\lambda}$ and $\pre{\lambda}{2}$ under the assumption $\lambda^{<\lambda}=\lambda$; this condition is equivalent to $2^{<\lambda}=\lambda$ and $\lambda$ regular.
More recent work has aimed at extending the theory to the spaces $\pre{\cof(\lambda)}{\lambda}$ and $\pre{\lambda}{2}$ for arbitrary cardinals $\lambda$ (possibly singular) satisfying $2^{<\lambda}=\lambda$.
This condition implies the weaker $\lambda^{<\cof(\lambda)}=\lambda$.
Since the space $\pre{\cof(\lambda)}{\lambda}$ has weight $\lambda^{<\cof(\lambda)}$, and the space $\pre{\lambda}{2}$ has weight $2^{<\lambda}$, this setup also ensures that both spaces have weight $\lambda$.

In this framework, the same cardinal $\lambda$ plays several distinct roles simultaneously.
First, it determines the \enquote{dimension}, i.e., the weight, of the spaces under consideration.
Second, it fixes the appropriate \enquote{size} of the associated notions to be studied, such as $\lambda^+$-Borel sets, $\lambda$-compactness, $\lambda^+$-measures, and related concepts.
Finally, together with its cofinality, it is also used to define the base set and the exponent of the spaces $\pre{\cof(\lambda)}{\lambda}$ and $\pre{\lambda}{2}$ themselves.

This identification of roles is precisely what made it necessary to adopt additional cardinal assumptions.
As argued after Proposition~\ref{prop:weight=cellularity} in the context of measures, the right notions for a topological space of the form $X\subseteq \pre{\gamma}{A}$ (and in particular for $\pre{\cof(\lambda)}{\lambda}$) are arguably those defined relative to its weight.
However, this cardinal is a priori independent of the cardinals appearing in the definition of $X$, and forcing them to coincide amounts to assuming cardinal equalities such as $\lambda^{<\cof(\lambda)}=\lambda$ and $2^{<\lambda}=\lambda$.

The setup we adopt instead is strictly more general and allows us to work without cardinal assumptions.
First, when $\kappa=\cof(\lambda)$ and $2^{<\lambda}=\lambda$, we recover the usual setting described above, since in this case the space $\pre{\kappa}{\lambda}$ itself (and all its subspaces) has weight at most $\lambda$, and $\pre{\lambda}{2}$ is homeomorphic to a subspace of $\pre{\kappa}{\lambda}$.
However, we do not require that $\kappa=\cof(\lambda)$, nor that $2^{<\lambda}=\lambda$ or $\lambda^{<\cof(\lambda)}=\lambda$. As a consequence, we can study the spaces $\pre{\lambda}{2}$ and $\pre{\cof(\lambda)}{\lambda}$ and their subspaces even when both cardinal assumptions $2^{<\lambda}=\lambda$ and $\lambda^{<\cof(\lambda)}=\lambda$ fail, a situation that is usually not covered in generalized descriptive set theory.

Indeed, $\pre{\cof(\kappa)}{\lambda}$ always contains subspaces of weight at most $\lambda$, regardless of whether $\lambda^{<\cof(\kappa)}=\lambda$ holds or not, and such spaces need not be discrete.
One example is the subspace $X_0$ of $\pre{\lambda}{2}$ described in \cite[Equation~2.3]{AgosMottoSchlichtRegular} (or, similarly, the subspace of $\pre{\cof(\lambda)}{\lambda}$ consisting of sequences that are constant except at finitely many coordinates).
A more interesting example is the body $[T]$ of a $\lambda$-Kurepa tree $T$ (when such a tree exists), since the definition of a $\lambda$-Kurepa tree guarantees that $[T]$ has weight at most $\lambda$, independently of whether $\lambda^{<\cof(\kappa)}=\lambda$ holds.
The results developed in our framework apply to all these spaces.

Moreover, our setup also allows us to analyze the space $\pre{\cof(\lambda)}{\lambda}$ itself even when the relevant cardinal assumption $\lambda^{<\cof(\lambda)}=\lambda$ fails.
This already follows from Fact~\ref{fct:homeo_to_subset_Baire}, which shows that this space can always be reduced to a subspace of some $\pre{\cof(\kappa)}{{\lambda'}}$, where $\lambda'=\lambda^{<\cof(\lambda)}$ is the weight of $\pre{\cof(\lambda)}{\lambda}$.
Notice that this new $\lambda'$ need not satisfy $\kappa=\cof(\lambda')$, and thus we crucially exploit the fact that the cardinal $\kappa$ is allowed to differ from $\cof(\lambda)$.

In practice, the situation is even simpler and more elegant.
Once $\kappa\neq\cof(\lambda)$ is allowed, up to homeomorphism the space $\pre{\cof(\lambda)}{\lambda}$, and more generally any space of the form $\pre{\kappa}{\lambda}$, can be identified with a space of the form $\pre{\kappa}{{\lambda'}}$ for which the relevant cardinal assumption $(\lambda')^{<\kappa}=\lambda'$ holds, as shown by the following result.

\begin{proposition}\label{prop:lambda<kappa=lambda_is_necessary}
    Let $\lambda'=\lambda^{<\kappa}$. Then, $(\lambda')^{<\kappa}=\lambda'$, and the space $\pre{\kappa}{\lambda}$ is homeomorphic to $\pre{\kappa}{{\lambda'}}$.
\end{proposition}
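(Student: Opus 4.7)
My plan is to prove the two statements separately: first the cardinal identity $(\lambda')^{<\kappa}=\lambda'$, and then the homeomorphism $\pre{\kappa}{\lambda}\cong\pre{\kappa}{\lambda'}$.

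The cardinal identity should follow by a standard regularity argument. Fix $\alpha<\kappa$ and consider any function $f\colon \alpha \to \lambda' = \sup_{\beta<\kappa}\lambda^\beta$. For each $i<\alpha$ there is $\beta_i<\kappa$ with $f(i)<\lambda^{\beta_i}$; since $\alpha<\kappa$ and $\kappa$ is regular, $\beta:=\sup_{i<\alpha}\beta_i$ is still $<\kappa$, so $f\in\pre{\alpha}{\lambda^\beta}$. Counting gives $(\lambda')^\alpha\leq \sum_{\beta<\kappa}\lambda^{\beta\cdot\alpha}$, where the ordinal product $\beta\cdot\alpha$ is still $<\kappa$ by regularity, hence each summand is at most $\lambda'$. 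Thus $(\lambda')^\alpha\leq \kappa\cdot\lambda'=\lambda'$; taking the supremum over $\alpha<\kappa$ yields $(\lambda')^{<\kappa}\leq \lambda'$, with the reverse inequality trivial.

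For the homeomorphism, I would produce a concatenation map. Suppose first that there is an ordinal $\mu_0<\kappa$ with $\lambda^{\mu_0}=\lambda'$; this is automatic when $\lambda'=\lambda$ (take $\mu_0=1$) and, more generally, when $\kappa$ is a successor cardinal $\nu^+$ (take $\mu_0=\nu$). Fix a bijection $c\colon \lambda'\to\pre{\mu_0}{\lambda}$. Since $\mu_0<\kappa$ and $\kappa$ is regular, we have $\mu_0\cdot\kappa=\kappa$ as ordinals, so $\kappa=\bigsqcup_{\alpha<\kappa}[\mu_0\alpha,\mu_0\alpha+\mu_0)$ is a partition into $\kappa$-many intervals of order type $\mu_0$. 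Define $\Phi\colon \pre{\kappa}{\lambda'}\to \pre{\kappa}{\lambda}$ by $\Phi(x)(\mu_0\alpha+i)=c(x(\alpha))(i)$. This is a bijection, with inverse obtained by slicing each $\kappa$-sequence into consecutive $\mu_0$-blocks. A direct check shows that $\Phi$ sends each basic cone $\Nbhd_s\subseteq \pre{\kappa}{\lambda'}$ of length $\delta$ to the basic cone $\Nbhd_t\subseteq \pre{\kappa}{\lambda}$ of length $\mu_0\delta<\kappa$ (where $t$ is the concatenation of the $c(s(\alpha))$), and that the preimage of $\Nbhd_t\subseteq \pre{\kappa}{\lambda}$ of length $\gamma$ is a union of basic cones in $\pre{\kappa}{\lambda'}$ (fixing $x(\alpha)$ whenever the whole block $[\mu_0\alpha,\mu_0\alpha+\mu_0)$ is contained in $\gamma$, and constraining $x$ on the trailing partial block); thus $\Phi$ is continuous and open, hence a homeomorphism.

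The main obstacle is the remaining case, which can occur only when $\kappa$ is a regular limit cardinal and $\lambda^\mu<\lambda'$ for every $\mu<\kappa$. A uniform block length no longer suffices, so I would switch to a self-delimiting encoding: fix a distinguished symbol $0\in\lambda$ and a bijection $\hat c\colon \lambda'\to T$, where $T\subseteq \pre{<\kappa}{\lambda}$ is the set of non-empty sequences ending with $0$ and containing no other occurrence of $0$ (this set still has cardinality $\lambda'$, since $\lambda$ is infinite). The associated concatenation map $\Phi$ remains a continuous open injection whose image is the subspace $Y\subseteq \pre{\kappa}{\lambda}$ of sequences having cofinally many $0$'s, so $\Phi$ is a homeomorphism onto $Y$. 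The concluding step would be to show $Y\cong \pre{\kappa}{\lambda}$, either through an Alexandrov--Urysohn-style characterization of $\pre{\kappa}{\lambda}$ as the unique (up to homeomorphism) $T_0$, zero-dimensional, $\kappa$-additive space of weight $\lambda'$ and point character $\kappa$ satisfying a suitable completeness condition, or via a Cantor--Bernstein-type argument using the obvious two-sided embeddings combined with the self-similarity $\pre{\kappa}{\lambda}\cong \pre{\kappa}{\lambda}\times \pre{\kappa}{\lambda}$.
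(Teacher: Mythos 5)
Your cardinal computation is correct, and in fact slightly cleaner than the paper's: the paper splits into the attained and non-attained cases and invokes \cite[Theorem~5.20]{jechSetTheory2003} in the latter, whereas your regularity-plus-counting argument handles both uniformly. Your ``attained'' case of the homeomorphism (some $\mu_0<\kappa$ with $\lambda^{\mu_0}=\lambda'$) is also correct and complete, and it is a genuinely different route from the paper: you write down an explicit fixed-block coding bijection and check directly that it maps cones to cones and pulls cones back to unions of cones, while the paper instead builds a refining sequence of clopen partitions, each cone splitting into exactly $\lambda'$ cones, and then cites the tree-isomorphism-to-homeomorphism lemma. Your version is more concrete; the paper's is uniform in the two cases.

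The gap is the remaining case, $\kappa$ a regular limit with $\lambda^{\mu}<\lambda'$ for all $\mu<\kappa$, which a complete proof must handle and which the paper does handle. There your terminator code only yields a homeomorphism of $\pre{\kappa}{{\lambda'}}$ onto the proper subspace $Y\subseteq\pre{\kappa}{\lambda}$ of sequences with cofinally many $0$'s, and the concluding claim $Y\cong\pre{\kappa}{\lambda}$ is exactly as hard as the proposition itself (indeed it is equivalent to it, given your homeomorphism), yet it is not proved. Neither suggested completion is carried out: a Cantor--Bernstein-type argument needs the two embeddings to be clopen (or otherwise complemented), and the ``obvious'' ones are not --- $\pre{\kappa}{\lambda}$ sits in $\pre{\kappa}{{\lambda'}}$ as a closed set with empty interior, and $Y$ is dense and not closed in $\pre{\kappa}{\lambda}$ --- while topological Schr\"oder--Bernstein fails in general; and the ``Alexandrov--Urysohn-style'' characterization is left unstated, and verifying any such criterion for $Y$ (producing a tree base of relative clopen sets isomorphic to $\pre{<\kappa}{{\lambda'}}$ whose branches converge precisely to the points of $Y$) is precisely the work the paper does with its partitions $\P_s$. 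The repair inside your own framework is to abandon the terminator symbol and use a prefix code in which the first symbol announces the block length: enumerate the cardinals below $\kappa$ as $(\gamma_\alpha)_{\alpha<\kappa}$ and code each element of $\lambda'$ either by $\langle\alpha\rangle$ with $\kappa\leq\alpha<\lambda$ or by $\langle\alpha\rangle\conc t$ with $\alpha<\kappa$ and $t\in\pre{\gamma_\alpha}{\lambda}$ (there are $\lambda'$ such codes). Every $y\in\pre{\kappa}{\lambda}$ then parses uniquely and exhaustively into $\kappa$ blocks (by regularity of $\kappa$), so the concatenation map becomes a bijection onto all of $\pre{\kappa}{\lambda}$ with no residual set, and your cone computations go through verbatim; this is exactly the coding hidden in the paper's Claim~\ref{claim:big_clopen_partition}.
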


\begin{proof}
    Let $\lambda'=\sup\{\lambda^{\gamma}\mid \gamma<\kappa \text{ cardinal}\}$. If $\lambda'=\lambda^{\gamma}$ for some $\gamma<\kappa$, then \[
    (\lambda')^{<\kappa}=\sup\{(\lambda^{\gamma})^{\delta}\mid \delta<\kappa \text{ cardinal}\}=\sup\{\lambda^{\delta}\mid \delta<\kappa \text{ cardinal}\}=\lambda'
    \]
    as wanted.
    Assume instead that $\lambda^{\gamma}<\lambda'$ for every $\gamma<\kappa$. Then the family $\{\lambda^{\gamma}\mid \gamma<\kappa \text{ cardinal}\}$ is cofinal in $\lambda'$, so in particular $\cof(\lambda')=\kappa$ and $\delta^\gamma<\lambda'$ for every $\gamma<\kappa$ and $\delta<\lambda'$.
    Then, by \cite[Theorem~5.20]{jechSetTheory2003} we have $(\lambda')^\gamma=\lambda'$ for every $\gamma<\kappa$, as wanted.

    To prove that $\pre{\kappa}{\lambda}$ is homeomorphic to $\pre{\kappa}{{\lambda'}}$, assume $\kappa\leq \lambda$.
    \begin{claim}\label{claim:big_clopen_partition}
        For every $s\in \pre{<\kappa}{\lambda}$ there is a partition $\P_s\subseteq\B$ of $\Nbhd_s(\pre{\kappa}{{\lambda}})$ of size $\lambda'$ made of basic clopen sets.
    \end{claim}

    \begin{proof}
        If $\lambda'=\lambda^{\gamma}$ for some $\gamma<\kappa$, then 
        \[
        \P_s=\{\Nbhd_{s\conc t}\mid t\in \pre{\gamma}{\lambda}\}
        \]
        is as wanted.
        Otherwise, assume that the family $\{\lambda^{\gamma}\mid \gamma<\kappa \text{ cardinal}\}$ is cofinal in $\lambda'$, and let $\{\gamma_\alpha\mid \alpha<\kappa\}$ be an enumeration of the cardinals below $\kappa$. For every $\alpha<\kappa$, let $\delta_\alpha=\lambda^{\gamma_\alpha}$.
        Define
        \[
        \P_s=\{\Nbhd_{s\conc \langle \alpha\rangle }\mid \kappa\leq \alpha<\lambda\}\cup \{\Nbhd_{s\conc \langle \alpha\rangle\conc t }\mid t\in \pre{\gamma_\alpha}{\lambda}, \alpha<\kappa\}.
        \]
        Since $\kappa\leq \lambda$, the family $\P_s$ has size $|\P_s|=\sup_{\alpha<\kappa} \delta_\alpha=\lambda'$, and it is a cover of $\Nbhd_s(\pre{\kappa}{{\lambda}})$ made of basic clopen sets by construction, as wanted. 
    \end{proof}
    
    Then, applying Claim~\ref{claim:big_clopen_partition} recursively we can obtain a family $\{\P_\alpha\mid \alpha<\kappa\}$ of partitions of $\pre{\kappa}{{\lambda}}$ made of basic clopen sets such that
    \begin{itemize}
        \item $\hat{\B}=\bigcup_{\alpha<\kappa}\P_\alpha$ is a basis for $\pre{\kappa}{{\lambda}}$,
        \item $\P_\beta$ refines $\P_\alpha$ for each $\alpha<\beta<\kappa$, and
        \item $\{P'\in \P_{\alpha+1}\mid P\subseteq P'\}$ has size $\lambda'$ for every $\alpha<\kappa$ and $P\in \P_\alpha$.
    \end{itemize}
    Then, it is clear that $(\hat{\B}, \supseteq)$ is isomorphic as a tree to $\pre{<\kappa}{{\lambda'}}$, and this isomorphism induces a homeomorphism of topological spaces between $\pre{\kappa}{\lambda}$ and $\pre{\kappa}{{\lambda'}}$ (see also \cite[Fact 2.1.5]{PhDThesisAgostini} or  \cite[Lemma~2.1]{HungNegrepontis2MR370463}), as wanted.
\end{proof}

Notice that Proposition~\ref{prop:lambda<kappa=lambda_is_necessary} relies on our implicit hypothesis $\kappa\leq \lambda$. 
If $\lambda=\omega$ and $\kappa>\omega$ is weakly compact (see \cite[Definition~9.8]{jechSetTheory2003}), then $2^{<\kappa}=\kappa>\lambda$; however, the space $\pre{\kappa}{\omega}$ (which is homeomorphic to $\pre{\kappa}{2}$) is not homeomorphic to $\pre{\kappa}{{\lambda'}}$ for any $\lambda'\geq \kappa$, since the former is $\kappa$-Lindel\"of whereas the latter is not (see, e.g., \cite{NyikosTreeBasesMR1666807}). 
On the other hand, if $\kappa$ is not weakly compact, then $\pre{\kappa}{\lambda}$ is homeomorphic to $\pre{\kappa}{\kappa}$ for every $\lambda$ satisfying $2\leq \lambda\leq \kappa$; thus in this case Proposition~\ref{prop:lambda<kappa=lambda_is_necessary} also holds without the hypothesis $\kappa\leq \lambda$.

We can obtain an analogue of Proposition~\ref{prop:lambda<kappa=lambda_is_necessary} for the space $\pre{\lambda}{2}$ as well.
First, it is well known that while $\pre{\lambda}{2}$ may not itself be a subspace of $\pre{\kappa}{\lambda}$, under the assumption $2^{<\lambda}=\lambda$ it is always homeomorphic to a subspace of $\pre{\kappa}{\lambda}$ (see, e.g., \cite[Proposition~2.2.54]{PhDThesisAgostini}).

\begin{fact}\label{fct:Cantor_subset_Baire_with_cardinal_assumption}
    Assume $2^{<\lambda}=\lambda$. Then $\pre{\lambda}{2}$ is homeomorphic to a closed subspace of $\pre{\cof(\lambda)}{\lambda}$.
\end{fact}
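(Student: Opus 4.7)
The plan is to construct an explicit closed homeomorphic embedding $F \colon \pre{\lambda}{2} \to \pre{\cof(\lambda)}{\lambda}$ by coding consecutive blocks of a binary sequence as single ordinals below $\lambda$. Set $\kappa = \cof(\lambda)$ and fix a strictly increasing cofinal sequence $(\lambda_\alpha)_{\alpha<\kappa}$ in $\lambda$ with $\lambda_0 = 0$; this partitions $\lambda$ into the intervals $I_\alpha = [\lambda_\alpha, \lambda_{\alpha+1})$, each of cardinality strictly less than $\lambda$. The assumption $2^{<\lambda}=\lambda$ then guarantees $|\pre{I_\alpha}{2}| \leq \lambda$, so for each $\alpha < \kappa$ I can fix an injection $\phi_\alpha \colon \pre{I_\alpha}{2} \to \lambda$. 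Define
\[
F \colon \pre{\lambda}{2} \to \pre{\kappa}{\lambda}, \qquad F(x)(\alpha) = \phi_\alpha(x \restr I_\alpha).
\]
Since the $I_\alpha$ partition $\lambda$ and each $\phi_\alpha$ is injective, $F$ is itself injective.

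Next I would verify that $F$ is a homeomorphism onto its image by analyzing basic cones on both sides. Given $t \in \pre{<\kappa}{\lambda}$, the preimage $F^{-1}(\Nbhd_t)$ consists of the $x$ with $x \restr I_\alpha = \phi_\alpha^{-1}(t(\alpha))$ for all $\alpha < \leng(t)$ with $t(\alpha) \in \ran(\phi_\alpha)$; this is either empty (if some $t(\alpha) \notin \ran(\phi_\alpha)$) or equals the single cone $\Nbhd_{s}$ in $\pre{\lambda}{2}$, where $s$ is the binary string of length $\sup_{\alpha < \leng(t)} \lambda_\alpha < \lambda$ obtained by concatenating the blocks $\phi_\alpha^{-1}(t(\alpha))$. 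This gives continuity of $F$. Conversely, for $s \in \pre{<\lambda}{2}$ with $\leng(s) = \mu$, pick $\beta < \kappa$ minimal with $\lambda_\beta \geq \mu$; then
\[
F(\Nbhd_s) = F(\pre{\lambda}{2}) \cap \bigcup_{s' \in \pre{\lambda_\beta}{2},\, s' \supseteq s} \Nbhd_{t_{s'}},
\]
where $t_{s'}(\alpha) = \phi_\alpha(s' \restr I_\alpha)$ for $\alpha < \beta$. Since $\phi_\alpha$ is injective on each block, the right-hand union is open in $\pre{\kappa}{\lambda}$ and its intersection with $F(\pre{\lambda}{2})$ equals exactly $F(\Nbhd_s)$, so $F^{-1}$ is also continuous.

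Finally, observe that $F(\pre{\lambda}{2}) = \prod_{\alpha < \kappa} \ran(\phi_\alpha)$ set-theoretically: any $y$ with $y(\alpha) \in \ran(\phi_\alpha)$ for every $\alpha$ is the image of the unique $x$ obtained by gluing the blocks $\phi_\alpha^{-1}(y(\alpha))$. To see this is closed in $\pre{\kappa}{\lambda}$, take any $y \notin F(\pre{\lambda}{2})$; then there is some $\alpha < \kappa$ with $y(\alpha) \notin \ran(\phi_\alpha)$, so the basic cone $\Nbhd_{y \restr (\alpha+1)}$ is a neighborhood of $y$ disjoint from the image. I do not anticipate any serious obstacle in this proof: the argument is the natural blockwise coding, and the use of $\cof(\lambda)$ rather than $\lambda$ is precisely what lets the same construction cover the singular case uniformly, since all that is needed is a cofinal partition of $\lambda$ into pieces of size $<\lambda$ together with the cardinal arithmetic identity $2^{<\lambda}=\lambda$.
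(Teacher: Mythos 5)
Your construction is correct and is essentially the explicit form of the argument the paper relies on (compare the proof of Proposition~\ref{prop:Cantor_always_subset_of_some_Baire}): coding the blocks of a binary sequence determined by cofinally many levels amounts to identifying $\pre{\lambda}{2}$ with the body of a subtree of $\pre{<\kappa}{\lambda}$, which is closed. One small point to fix: take the cofinal sequence $(\lambda_\alpha)_{\alpha<\kappa}$ to be continuous (closed under suprema), since otherwise the intervals $I_\alpha=[\lambda_\alpha,\lambda_{\alpha+1})$ need not cover $\lambda$ and the injectivity of $F$ fails; with that adjustment every step goes through as you describe.
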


Following Proposition~\ref{prop:lambda<kappa=lambda_is_necessary}, we can obtain a similar result regardless of the assumption $2^{<\lambda}=\lambda$.

\begin{proposition}\label{prop:Cantor_always_subset_of_some_Baire}
    Let $\lambda'=2^{<\lambda}$. Then, $(\lambda')^{<\cof(\lambda)}=\lambda'$, and the space $\pre{\lambda}{2}$ is homeomorphic to a closed subspace of $\pre{\cof(\lambda)}{{\lambda'}}$.
\end{proposition}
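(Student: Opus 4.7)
The proof plan naturally splits into two parts: a cardinal arithmetic claim and a topological construction, mirroring the structure of Proposition~\ref{prop:lambda<kappa=lambda_is_necessary}.

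For the cardinal arithmetic $(\lambda')^{<\cof(\lambda)}=\lambda'$, I would case-split on whether the supremum defining $\lambda'=2^{<\lambda}$ is attained. If $\lambda'=2^\gamma$ for some cardinal $\gamma<\lambda$, then for any cardinal $\delta<\cof(\lambda)$ one has $(\lambda')^\delta=2^{\gamma\delta}=2^{\max(\gamma,\delta)}\leq\lambda'$, since $\max(\gamma,\delta)<\lambda$. If the sup is not attained, then $\{2^\gamma \mid \gamma<\lambda\}$ is cofinal in $\lambda'$; a short counting argument shows $\cof(\lambda')=\cof(\lambda)$ (any cofinal sequence in $\lambda'$ of length strictly less than $\cof(\lambda)$ would, by lifting its entries to exponents below $\lambda$ via $\gamma\mapsto 2^\gamma$, bound the sup strictly below $\lambda'$, a contradiction). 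Moreover, for any $\beta<\lambda'$ and $\delta<\cof(\lambda)$, picking $\gamma<\lambda$ with $\beta\leq 2^\gamma$ gives $\beta^\delta\leq 2^{\max(\gamma,\delta)}<\lambda'$, so the same application of \cite[Theorem~5.20]{jechSetTheory2003} used in Proposition~\ref{prop:lambda<kappa=lambda_is_necessary} yields $(\lambda')^\delta=\lambda'$.

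For the embedding, the plan is to construct an explicit block-encoding map. Fix an increasing continuous cofinal sequence $\langle \gamma_\alpha \mid \alpha<\cof(\lambda)\rangle$ in $\lambda$ with $\gamma_0=0$. For each $\alpha<\cof(\lambda)$ the interval $[\gamma_\alpha,\gamma_{\alpha+1})$ has order type $\delta_\alpha<\lambda$, so $2^{\delta_\alpha}\leq\lambda'$; fix bijections $\phi_\alpha\colon \pre{\delta_\alpha}{2}\to 2^{\delta_\alpha}$ (identifying each cardinal with its initial ordinal). Define $\phi\colon\pre{\lambda}{2}\to\pre{\cof(\lambda)}{{\lambda'}}$ by $\phi(x)(\alpha)=\phi_\alpha(x\restriction [\gamma_\alpha,\gamma_{\alpha+1}))$, and set
\[
Y=\{y\in\pre{\cof(\lambda)}{{\lambda'}}\mid y(\alpha)<2^{\delta_\alpha}\text{ for all }\alpha<\cof(\lambda)\}.
\]

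Three routine verifications then finish the proof. First, $Y$ is closed: if $y\notin Y$, at the least $\alpha$ with $y(\alpha)\geq 2^{\delta_\alpha}$ the basic cone $\Nbhd_{y\restriction(\alpha+1)}(\pre{\cof(\lambda)}{{\lambda'}})$ is disjoint from $Y$. Second, $\phi$ is a bijection onto $Y$ by blockwise reconstruction. Third, $\phi$ is a homeomorphism: the preimage of $\Nbhd_t(Y)$ for $t$ of length $\beta<\cof(\lambda)$ is the basic cone $\Nbhd_s(\pre{\lambda}{2})$ where $s\in\pre{\gamma_\beta}{2}$ is the concatenation of $\phi_\alpha^{-1}(t(\alpha))$ for $\alpha<\beta$; conversely, a basic cone $\Nbhd_s(\pre{\lambda}{2})$ with $s$ of length $\eta<\lambda$ is, for any $\alpha$ with $\gamma_\alpha\geq\eta$, the union of the basic cones $\Nbhd_{s'}$ for $s'\in\pre{\gamma_\alpha}{2}$ extending $s$, each mapping to a basic open of $Y$. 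The only real subtlety is the cardinal arithmetic in the non-attained case; the topological part is essentially routine once one observes that basic cones at level $\gamma_\alpha$ of $\pre{\lambda}{2}$ correspond bijectively to basic cones at level $\alpha$ of $\pre{\cof(\lambda)}{{\lambda'}}$ restricted to $Y$.
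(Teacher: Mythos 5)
Your proposal is correct and takes essentially the same route as the paper: the identical case split and appeal to \cite[Theorem~5.20]{jechSetTheory2003} for the cardinal arithmetic, and for the embedding the same idea of matching basic cones of $\pre{\lambda}{2}$ at a (continuous) cofinal sequence of levels with sequences in $\pre{<\cof(\lambda)}{{\lambda'}}$. The only difference is presentational: where the paper packages this as a tree isomorphism between the basis of cones and a subtree $T\subseteq\pre{<\cof(\lambda)}{{\lambda'}}$ and cites a general fact that such an isomorphism induces a homeomorphism onto the closed set $[T]$, you unwind that fact into an explicit block-coding map and verify closedness of its image directly.
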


\begin{proof}
    Let $\lambda'=\sup\{2^{\gamma}\mid \gamma<\lambda \text{ cardinal}\}$. If $\lambda'=2^{\gamma}$ for some $\gamma<\lambda$, then 
    \[
    \lambda'\leq (\lambda')^{<\cof(\lambda)}=\sup\{(2^{\gamma})^{\delta}\mid \delta<\cof(\lambda) \text{ cardinal}\}\leq \sup\{2^{\delta}\mid \delta<\lambda \text{ cardinal}\}=\lambda'
    \]
    as wanted.
    Assume instead that $2^{\gamma}<\lambda'$ for every $\gamma<\lambda$. Then the family $\{2^{\gamma}\mid \gamma<\lambda \text{ cardinal}\}$ is cofinal in $\lambda'$, so in particular $\cof(\lambda')=\cof(\lambda)$ and $\delta^\gamma<\lambda'$ for every $\gamma<\cof(\lambda)$ and $\delta<\lambda'$.
    Then, by \cite[Theorem~5.20]{jechSetTheory2003} we have $(\lambda')^\gamma=\lambda'$ for every $\gamma<\cof(\lambda)$.
    
    The remaining part of the proof proceeds essentially as in the usual argument used to embed the Cantor space into the Baire space.
    Let $\kappa=\cof(\lambda)$ and let $(\lambda_i)_{i<\kappa}$ be a cofinal sequence of ordinals in $\lambda$. Then, the family $\hat{\B}=\bigcup_{i<\kappa}\{\Nbhd_s(\pre{\lambda}{2})\mid s\in \pre{{\lambda_i}}{2}\}$ is a basis for $\pre{\lambda}{2}$. Since $|\{\Nbhd_s(\pre{\lambda}{2})\mid s\in \pre{{\lambda_i}}{2}\}|\leq \lambda'$ for every $i<\kappa$, there is a tree $T\subseteq\pre{<\kappa}{{\lambda'}}$ closed under initial segment such that $(\hat{\B},\supseteq)$ is isomorphic (as a tree) to $T$. 
    This isomorphism induces a homeomorphism of topological spaces between $\pre{\lambda}{2}$ and $[T]\subseteq\pre{\kappa}{{\lambda'}}$ (see, e.g., \cite[Fact 2.1.5]{PhDThesisAgostini} or \cite[Lemma~2.1]{HungNegrepontis2MR370463}), as wanted.
\end{proof}

This will allow us, in Section~\ref{sec:final_remarks}, to extend our work to the generalized Cantor space as well.

\subsection{Totally ordered monoids}\label{subsec:totally-ordered-monoids}

We now recall some basic notions and definitions about totally ordered monoids. Our main reference here is \cite[Chapter X]{fuchsPartiallyOrderedAlgebraic1963}.

A  \markdef{totally ordered monoid} is a structure $\SS=( S, 0, {+}, {\leq} )$ such that
\begin{enumerate}[label=(\arabic*)]
    \item $+$ is a binary associative operation on $\SS$ (i.e., $( S, {+})$ is a semigroup),
    \item $0$ is the neutral element of the operation $+$ (i.e., $( S, {+}, 0)$ is a monoid),
    \item $( S, {\leq} )$ is a total order,
    \item the order ${\leq}$ is (weakly) translation invariant (i.e., for every $a,b,c,d\in \SS$, if $a\leq b$ and $c\leq d$, then $a + c \leq b + d$).\label{ax_sum:weakly_translation_invariant}
\end{enumerate}

A \markdef{totally ordered group} $\GG = ( G, 0, +, \leq )$ is a totally ordered monoid such that $(G, +)$ is a group.

The \markdef{positive cone} $\SS^+$ is defined as $\SS^+=\{s\in \SS\mid 0< s\}$, and the \imarkdef{degree of \( \SS \)}, denoted by $\Deg(\SS)$, is the coinitiality of the positive cone $\SS^+$ with respect to \( \leq \), i.e. the minimal size of a set $A\subseteq \SS^+$ such that for every $\varepsilon\in \SS^+$ there is some $a\in A$ satisfying $a\leq \varepsilon$. By definition, \(\Deg(\SS) \) is always a regular cardinal. We say that a totally ordered monoid $\SS$ is \markdef{positively ordered} if $\SS^+\neq\emptyset$ and $\SS=\SS^+\cup\{0\}$, i.e. if $0$ is the minimum of the order and $\SS$ is not the trivial monoid $\{0\}$. Notice that for every totally ordered monoid $\SS$, we have that $\SS^+\cup \{0\}$ is a positively totally ordered monoid (or the trivial monoid $\{0\}$). Since for the purpose of this paper we only work with $\SS^+\cup \{0\}$, we can restrict our attention to positively ordered monoids.

A subset $A\subseteq \SS$ of a positively totally ordered monoid $\SS$ is called \markdef{Archimedean} if for every $a,b\in A\setminus \{0\}$ such that $a<b$ there is $n\in\omega$ such that $n\cdot a\geq b$, where $n\cdot a=a+ ... +a$ denotes the sum of $n$-many elements equal to $a$. 
We call $\SS$ Archimedean if $A=\SS$ is Archimedean.
For $a,b\in \SS$ with $a<b$, we write $a\ll b$ when, instead, $n\cdot a< b$ for every $n\in\omega$. This way, a (positively totally ordered) monoid is Archimedean if and only if $a\ll b$ implies $a=0$.

\begin{definition}
    A positively totally ordered monoid $\SS$ is said to be \markdef{initially Archimedean} if there is $a\in \SS^+$ such that the open interval $(0,a)$ is Archimedean. 
\end{definition}

\begin{definition}
    Let $\SS=(S, 0, +, \leq)$ be a positively totally ordered monoid. Let $\bar{a}=\langle a_i\mid i<\gamma\rangle$ be a sequence of elements in $\SS$. We say that $\bar{a}$ is:
    \begin{enumerate-(1)}
    \item \textbf{decreasing} if $a_i\leq a_j$ for all $j<i<\gamma$,\label{def:decreasing}
    \item \textbf{strictly decreasing} if $a_i<a_j$ for all $j<i<\gamma$,\label{def:strictly-decreasing}
    \item \textbf{strongly decreasing} if $a_i+a_i<a_j$ for all $j<i<\gamma$,\label{def:strongly-decreasing}
    \item \textbf{nowhere Archimedean (decreasing)} if $a_i\ll a_j$ for all $j<i<\gamma$.\label{def:nowhere-archimedean}
    \end{enumerate-(1)}
\end{definition}

\begin{remark}
    Note that, for a sequence $\bar{a}$ of elements in a positively totally ordered monoid $\SS$, we have 
$
\text{\ref{def:nowhere-archimedean}} \Rightarrow
\text{\ref{def:strongly-decreasing}} \Rightarrow
\text{\ref{def:strictly-decreasing}} \Rightarrow
\text{\ref{def:decreasing}}.
$
\end{remark}

The following lemma shows that every monoid is either pathological, initially Archimedean of countable degree, or non-Archimedean in a strong way.

\begin{lemma}\label{lem:three_types_of_monoids}
    Let $\SS$ be a positively totally ordered monoid.
    Then, exactly one of the following holds:
    \begin{enumerate-(1)}
        \item\label{lem:three_types_of_monoids - pathological} There is $c\in \SS^+$ such that $b+b\geq c$ for every $b\in \SS^+$.
        \item\label{lem:three_types_of_monoids - Archimedean} $\Deg(\SS)=\omega$ and $\SS^+$ contains a countable strongly decreasing Archimedean sequence $\bar{a}=\langle a_i\mid i<\omega\rangle$ coinitial in $\SS^+$.  
        \item\label{lem:three_types_of_monoids - non-Archimedean} $\Deg(\SS)=\delta\geq \omega$ and $\SS^+$ contains a nowhere Archimedean decreasing sequence $\bar{a}=\langle a_i\mid i<\delta\rangle$ coinitial in $\SS^+$.
    \end{enumerate-(1)}
\end{lemma}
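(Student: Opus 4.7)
The plan is to check that (1), (2), (3) are pairwise incompatible and that at least one always holds. Incompatibility of (1) with (2) or (3) is straightforward: if $\langle a_i\rangle$ is the coinitial sequence of (2) or (3), then for any candidate $c\in\SS^+$ I pick $a_i\leq c$, and $a_{i+1}+a_{i+1}<a_i\leq c$ (directly in case (2), and from $a_{i+1}\ll a_i$ in case (3)) contradicts (1) with $b=a_{i+1}$. The incompatibility of (2) and (3) is handled by showing that (2) forces $\SS$ to be initially Archimedean while (3) forces it not to be. In (2), $(0,a_0]$ is Archimedean: for any $x<y$ there, coinitiality of $\{a_i\}$ supplies some $a_i\leq x$, necessarily with $a_i<a_0$, and the Archimedean-ness of the sequence combined with weak translation invariance yields $m\cdot x\geq m\cdot a_i\geq a_0\geq y$. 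In (3), for any $a\in\SS^+$ some $b_j<a$ appears in the sequence and $b_{j+1}\ll b_j$ witnesses that $(0,a)$ is not Archimedean.

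For existence, assume (1) fails, so that for every $c\in\SS^+$ there is $b\in\SS^+$ with $b+b<c$. In particular $\SS^+$ has no minimum, so $\Deg(\SS)\geq\omega$. I split on whether $\SS$ is initially Archimedean. If it is, with witness $a\in\SS^+$, I iterate the failure of (1) to produce a sequence $\langle a_i\mid i<\omega\rangle\subseteq(0,a)$ with $a_{i+1}+a_{i+1}<a_i$. This sequence is Archimedean (inherited from $(0,a)$) and coinitial: a positive lower bound $c\leq a_i$ for all $i$ would give $2^k\cdot c\leq 2^k\cdot a_k<a_0$ by a routine induction using weak translation invariance, so $n\cdot c<a_0$ for all $n$, contradicting Archimedean-ness of $(0,a)$ applied to $c<a_0$. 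This also forces $\Deg(\SS)=\omega$ and establishes (2).

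If $\SS$ is not initially Archimedean, let $\delta=\Deg(\SS)$ and fix a strictly decreasing coinitial sequence $\langle c_j\mid j<\delta\rangle$ in $\SS^+$. I recursively build $b_j\leq c_j$ with $b_j\ll b_i$ for all $i<j$, starting from $b_0=c_0$. At a successor $j+1$, I use that $(0,\min(c_{j+1},b_j))$ is not Archimedean to obtain $x\ll y$ in that interval and set $b_{j+1}=x$; the chain $n\cdot b_{j+1}<y<b_j$ delivers $b_{j+1}\ll b_j$, and transitivity from the inductive hypothesis extends this to $b_{j+1}\ll b_i$ for all $i\leq j$. At a limit $\eta<\delta$, regularity of $\delta$ ensures that the set $\{b_j\mid j<\eta\}\cup\{c_\eta\}$, of size less than $\delta$, fails to be coinitial, so some $c\in\SS^+$ lies strictly below all of it; the same non-Archimedean construction in $(0,c)$ then produces the required $b_\eta$. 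Coinitiality of $\langle b_j\rangle$ is inherited from $b_j\leq c_j$, proving (3).

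The step I expect to be trickiest is the limit case of the last construction, where one must ensure not just that $b_\eta$ lies below every earlier $b_j$, but that it is infinitesimally smaller than each. This is precisely where regularity of $\delta$ and the uniform failure of Archimedean-ness of every positive interval combine: the former provides room below the partial sequence, and the latter allows a descent infinitesimally smaller within that room.
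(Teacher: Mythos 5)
Your proposal is correct; all the steps check out (the $2^k$ doubling estimate for coinitiality in the Archimedean case, and the successor/limit recursion for the $\ll$-chain in the non-Archimedean case, both go through using only weak translation invariance and regularity of $\Deg(\SS)$). It is, however, organized differently from the paper's proof. The paper starts from an arbitrary coinitial sequence of length $\Deg(\SS)$ and extracts from it a strongly decreasing subsequence, choosing at each step an element $\ll$ the previous ones whenever possible; coinitiality is then automatic, and only afterwards does it split into cases: if every positive $c$ has some $b\ll c$ the subsequence is already nowhere Archimedean (your case (3)), while otherwise some interval $[0,c]$ is Archimedean and the countability of the degree is forced by the observation that an uncountable strongly decreasing sequence converging to $0$ has a nowhere Archimedean subsequence along limit indices. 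You instead case-split on initial Archimedean-ness at the outset and run two separate constructions: in the Archimedean case you build the sequence by iterating the failure of (1) and must then prove coinitiality by hand (your $2^k\cdot c\leq 2^k\cdot a_k<a_0$ argument replaces the paper's "subsequence of a coinitial family" shortcut and its limit-indices trick), and in the non-Archimedean case you build a fresh $\ll$-descending chain dominated by a strictly decreasing coinitial sequence rather than thinning an existing one. A further difference is that you prove pairwise exclusivity explicitly, via "(2) implies initially Archimedean, (3) implies not", whereas the paper leaves this essentially implicit; that is a genuine addition, since the statement claims \emph{exactly} one case holds. The only nitpicks are cosmetic: in your case (2) construction you should note that $a_i+a_i<a_{i-1}<\dots<a_j$ gives strong decrease against \emph{all} earlier indices, and in the exclusivity argument for (3) coinitiality only gives $b_j\leq a$, so one should pass to $b_{j+1},b_{j+2}<a$ before quoting $\ll$; both are one-line fixes.
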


\begin{proof}
    Assume that we are not in the first case. Then, for every $c\in \SS^+$ there is $b\in \SS^+$ such that $b+b<c$. This implies that $\delta=\Deg(\SS)\geq \omega$ is infinite (and regular). Let thus $\bar{a}=\langle a_i\mid i<\delta\rangle$ be coinitial in $\SS^+$, with $\delta=\Deg(\SS)$.
    
    We recursively define a strongly decreasing subsequence of $\bar{a}$. Let $\alpha_0=0$. Now consider $i<\delta$, and assume that we have already defined $\alpha_j\geq j$ for all $j<i$. Let $\gamma=\sup\{\alpha_j\mid j<i\}$. We have that $i\leq \gamma<\delta$, by the regularity of $\delta$. Since $\gamma<\delta=\Deg(\SS)$, there is $c\in \SS^+$ such that $c\leq a_j$ for every $j\leq \gamma$. Thus, by assumption, we can find some $b\in \SS^+$ such that $b+b< c$. Now, if it exists, let $b_\gamma$ be such that $b_\gamma\ll c$. Otherwise, let $b_\gamma=b$. Since $\bar{a}$ is coinitial in $\SS^+$, there is an $\alpha_i<\delta$ such that $a_{\alpha_i}\leq b_\gamma$, and thus $a_{\alpha_i} + a_{\alpha_i} \leq  b_\gamma+b_\gamma <a_j$ for every $j\leq \gamma$. This also implies $\alpha_i>\gamma$. It follows that the sequence $\bar{a}'=\langle a_{\alpha_i}\mid i<\delta\rangle$ obtained in this way is strongly decreasing and coinitial. 

    Now suppose first that 
    for every $c\in \SS^+$ there is $b\in \SS^+$ such that $b\ll c$.
    By construction, this implies that $\bar{a}'=\langle a_{\alpha_i}\mid i<\delta\rangle$ is nowhere Archimedean, as wanted.

    Conversely, assume that there is $c\in \SS^+$ such that for all $b\in \SS^+$ we have $n\cdot b\geq c$ for some $n\in\omega$.
    This implies that the whole interval $[0,c]$ is Archimedean.
    Notice that if $\bar{a}'=\langle a_{\alpha_i}\mid i<\delta\rangle$ is a strongly decreasing sequence converging to $0$ for an uncountable cardinal $\delta$, then $\bar{a}'=\langle a_{\alpha_i}\mid i<\delta, i\text{ limit}\rangle$ is a nowhere Archimedean sequence of the same length converging to $0$. Thus, in order for $[0,c]$ to be Archimedean, we must have $\Deg(\SS)=\delta=\omega$.
    Let $j<\delta$ be such that $a_{\alpha_j}<c$. Then, we have that $\bar{a}'=\langle a_{\alpha_{j+i}}\mid i<\omega\rangle$ is Archimedean, strongly decreasing, and strongly convergent to $0$, as wanted.
\end{proof}

Notice that monoids of infinite, countable degree $\Deg(\SS)=\omega$ can be of any of the three types described in  Lemma~\ref{lem:three_types_of_monoids}. In this case, the only thing that makes the three cases mutually exclusive is the behavior of the operation $+$.

Following the notation from \cite{ReichelTowardsUnifiedMR565844}, we call $0$-continuous the non-pathological monoids, i.e. those that satisfy point~\ref{lem:three_types_of_monoids - Archimedean} or point~\ref{lem:three_types_of_monoids - non-Archimedean} of Lemma~\ref{lem:three_types_of_monoids}.  
The name is based on the fact that, in a $0$-continuous monoid $(\SS,0,+,\leq)$, the operation $+$ is continuous at $0$: whenever two sequences of the same length converge to $0$ in the order topology, their point-wise sum also converges to $0$.

\begin{definition}
    We call \textbf{$0$-continuous} a totally ordered monoid $\SS$ of infinite degree such that $\SS^+$ contains a strongly decreasing coinitial sequence.
\end{definition}

From Lemma~\ref{lem:three_types_of_monoids}, we also get the following interesting corollary. 

\begin{corollary}
    Every $0$-continuous positively totally ordered monoid of uncountable degree is not Archimedean (and not even initially Archimedean).
\end{corollary}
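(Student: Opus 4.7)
The plan is to derive the result directly from the trichotomy established in Lemma~\ref{lem:three_types_of_monoids}. Let $\SS$ be a $0$-continuous positively totally ordered monoid with $\Deg(\SS)=\delta > \omega$. First I would observe that, by $0$-continuity, $\SS$ does not satisfy clause~\ref{lem:three_types_of_monoids - pathological} of Lemma~\ref{lem:three_types_of_monoids}: indeed, the pathological clause is incompatible with the existence of a strongly decreasing coinitial sequence in $\SS^+$ (such a sequence $\langle b_i \rangle$ witnesses $b_i + b_i < b_0$ for all $i>0$, so no uniform lower bound $c$ on sums $b+b$ can exist). Since $\Deg(\SS)=\delta$ is uncountable, clause~\ref{lem:three_types_of_monoids - Archimedean} is also excluded. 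Therefore $\SS$ must satisfy clause~\ref{lem:three_types_of_monoids - non-Archimedean}: there is a nowhere Archimedean decreasing sequence $\langle a_i \mid i < \delta \rangle$ which is coinitial in $\SS^+$.

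Now I would argue by contradiction. Suppose $\SS$ were initially Archimedean, so there exists $c \in \SS^+$ such that the interval $(0,c)$ is Archimedean. By coinitiality of $\langle a_i \mid i<\delta \rangle$, pick an index $i<\delta$ with $a_i < c$. Then both $a_i$ and $a_{i+1}$ lie in the Archimedean interval $(0,c)$, and $a_{i+1} < a_i$, so Archimedean-ness would demand an $n \in \omega$ with $n \cdot a_{i+1} \geq a_i$. But the sequence is nowhere Archimedean, giving $a_{i+1} \ll a_i$, i.e.\ $n \cdot a_{i+1} < a_i$ for every $n \in \omega$, a contradiction. Hence $\SS$ is not initially Archimedean, and in particular not Archimedean.

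I do not expect any genuine obstacle here: the corollary is essentially a bookkeeping consequence of Lemma~\ref{lem:three_types_of_monoids}. The only mildly subtle point is verifying that $0$-continuity rules out clause~\ref{lem:three_types_of_monoids - pathological}; this follows because translation invariance~\ref{ax_sum:weakly_translation_invariant} gives $b_i + b_i \leq b_1 + b_1 < b_0$ along a strongly decreasing coinitial sequence, so the values $b + b$ for $b \in \SS^+$ have no positive lower bound.
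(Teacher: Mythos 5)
Your proposal is correct and follows what the paper intends: the corollary is stated as an immediate consequence of Lemma~\ref{lem:three_types_of_monoids}, and your derivation (exclude clause~\ref{lem:three_types_of_monoids - pathological} by $0$-continuity, exclude clause~\ref{lem:three_types_of_monoids - Archimedean} by uncountable degree, then contradict initial Archimedean-ness with the nowhere Archimedean coinitial sequence from clause~\ref{lem:three_types_of_monoids - non-Archimedean}) is exactly that route. One small repair: your justification for excluding clause~\ref{lem:three_types_of_monoids - pathological} only shows that no lower bound $c\geq b_0$ on the sums $b+b$ exists; the correct one-line argument uses coinitiality directly — given any candidate $c\in\SS^+$, pick $b_j\leq c$ from the strongly decreasing coinitial sequence, and then $b_{j+1}+b_{j+1}<b_j\leq c$, contradicting clause~\ref{lem:three_types_of_monoids - pathological}.
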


Notice that (densely ordered) groups are always $0$-continuous, and if a monoid contains a dense, $0$-continuous submonoid, then it is $0$-continuous itself.

\section{Infinite operations on monoids}\label{sec:infinite-operatios-on-monoids}

To define $\lambda^+$-measures, we need a structure where sums of $\lambda$-many elements can be computed: ideally, a monoid with a sum of arity at least $\lambda$. 
Infinitary operations and their properties have been widely studied (mostly for the case of groups and fields), even with a specific focus on the purpose of generalizing analysis and descriptive set theory (see, e.g., \cite{wontnerGeneralisationsDescriptiveSet}). As anticipated in the introduction, we work here with a definition of infinitary sum that requires only minimal axioms.

Recall that for every set $A$, a sequence of elements of $A$ is a function from an ordinal $\gamma$ into $A$.
We denote by $\pre{<\On}{A}=\bigcup_{\gamma\in\On}\pre{\gamma}{A}$ the class of sequences of elements of $A$ of any length.
Given a sequence $s=(s_\alpha)_{\alpha<\gamma}\in\pre{<\On}{A}$, a sequence $t$ is a \markdef{subsequence} of $s$ if there is a set $I=\{\alpha_i\mid i<\delta\}\subseteq \gamma$ such that $\alpha_i<\alpha_j$ for all $i<j<\delta$ and $t=\langle s_{\alpha_i}\mid i<\delta\rangle$. 
Given $B\subseteq A$, denote by $s\restriction B$ the subsequence of $s$ obtained by removing all elements outside of $B$ from $s$, i.e. $s\restriction B=(s_{\alpha_i})_{i<\delta}$ where $\{\alpha_i\mid i<\delta\}$ is the increasing enumeration of the ordinals of $\{\alpha<\gamma\mid s_\alpha\in B\}$.
A \textbf{reordering} or \textbf{permutation} of $s$ is a sequence of the form $(s_{\pi(j)})_{j<\delta}$ for some bijection $\pi:\delta\to \gamma$.

We also denote by $\Conc$ the extension of the binary operation of concatenation of sequences  $\conc$ to all sequences of sequences in $\pre{<\On}{({\pre{<\On}{A}})}$.
Formally, we define $\Conc(\emptyset)=\emptyset$. Then, proceeding recursively on the length, given a sequence $\bar{s}=\langle s^\alpha\mid \alpha<\beta\rangle$ of sequences $s^\alpha=(a^\alpha_i)_{i<\leng(s^\alpha)}\in \pre{<\On}{A}$, if $\beta$ is limit, define
\[
\Conc (\bar{s})=\Conc_{\alpha<\beta}{s}^\alpha=\bigcup_{\varepsilon<\beta} \Conc_{\alpha<\varepsilon}{s}^\alpha=\bigcup_{\varepsilon<\beta} \Conc(\bar{s}\restriction \varepsilon),
\]
while if $\beta=\gamma+1$, define 
\[
\Conc (\bar{s})=\Conc_{\alpha<\beta}{s}^\alpha=(\Conc_{\alpha<\gamma}{s}^\alpha)\conc s^{\gamma}=\Conc(\bar{s}\restriction \gamma)\conc s^{\gamma}.
\]

\begin{definition}\label{def:axioms_sum}
   Let $\SS=(S, 0, +, \leq)$ be a positively totally ordered monoid.
   An \markdef{infinitary operation} or \markdef{infinitary sum} is a partial function $\Sum:\pre{<\On}{\SS}\to \SS$ satisfying:
   \begin{enumerate}[label={\upshape (S\arabic*)}]
    \item\label{ax_sum:extend_plus} $\Sum$ is \markdef{compatible with} $+$, i.e., 
    \begin{itemize}
    \item $\Sum(\emptyset)=0$,
    \item $\Sum(s)=s_0 + ... + s_n$ for every non-empty finite $s=(s_i)_{i\leq n}\in \dom(\Sum)$,
    \end{itemize}
    \item\label{ax_sum:continuous}  $\Sum$ is \markdef{continuous}, i.e., for all $s\in \dom(\Sum)$ such that $\leng(s)$ is limit, if $s\restriction \alpha\in \dom(\Sum)$ for cofinally many $\alpha<\leng(s)$, then
    \[
    \Sum(s)=\sup\{\Sum(s\restriction \alpha)\mid \alpha<\leng(s), s\restriction \alpha\in \dom(\Sum)\}.
    \]
    \end{enumerate}
    Furthermore, we say that $\Sum$ is \markdef{natural} if additionally
    \begin{enumerate}[resume, label={\upshape (S\arabic*)}]
    \item\label{ax_sum:dom_downward_closed}  $\dom(\Sum)$ is \markdef{downward closed}, i.e., for all $s\in \dom(\Sum)$ and $\alpha<\leng(s)$ we have
    \[
    s\restriction \alpha\in \dom(s),
    \]
    \item\label{ax_sum:associative} $\Sum$ is \markdef{associative}, i.e., for every $\bar{s}=(s^\alpha)_{\alpha<\gamma}\in\pre{<\On}{\dom(\Sum)}$ such that $\Conc(\bar{s})\in \dom(\Sum)$, we have
       \begin{itemize}
        \item $\langle \Sum(s^\alpha)\mid \alpha<\gamma\rangle\in \dom(\Sum)$, 
           \item $\Sum(\Conc (\bar{s}))=\Sum(\langle \Sum(s^\alpha)\mid \alpha<\gamma\rangle)$,
       \end{itemize}
    \item\label{ax_sum:commutative}  $\Sum$ is \markdef{commutative}, i.e., for every $s\in \dom(\Sum)$ and any reordering $s'$ of $s$ we have 
    \begin{itemize}
        \item $s'\in \dom(\Sum)$,
        \item $\Sum(s)=\Sum(s')$.
    \end{itemize}
\end{enumerate}
\end{definition}

In Definition~\ref{def:axioms_measure}, we defined measures using infinitary sums of arbitrary form. One of the goals of this and the next section is to show that, even when a measure is defined using a non-natural sum, this sum must be natural on the subset of its domain where the measure is defined (Proposition~\ref{prop:sum_is_natural_on_measurable_part}).

Every (positively totally ordered) monoid comes naturally equipped with an infinitary operation.

\begin{definition}\label{def:sum}
    Let $\SS=(S,0,+,\leq)$ be a positively totally ordered monoid. 
    The partial function $\sum:\pre{<\On}{\SS}\to \SS$ is defined by 
    \[
    \sum (s)=\sup\{s_{\alpha_0} + ... + s_{\alpha_k}\mid \alpha_0< ... < \alpha_k<\gamma\}
    \]
    for every ordinal $\gamma$ and every sequence $s=\langle s_\alpha\mid \alpha<\gamma\rangle\in \pre{\gamma}{\SS}$ such that the supremum $\sup\{s_{\alpha_0} + ... + s_{\alpha_k}\mid \alpha_0< ... < \alpha_k<\gamma\}$ exists in $\SS$.
\end{definition}

Recall that a linear order is called \markdef{Dedekind-complete} if every set has a supremum and an infimum. Every linear order $({\SS},\leq)$  has a (unique, up to isomorphism) Dedekind-completion $(\hat{\SS},\preceq)$ in which it is dense (see, e.g., \cite{macneille1937partially,fuchsPartiallyOrderedAlgebraic1963, FornasieroMaminoMR2484481}).

Given $A,B\subseteq \SS$, we write  $\sup(A)\leq \sup(B)$ (even when $\sup(A)$ or $\sup(B)$ is not defined) if for every $c\in \SS$, if $b\leq c$ for every $b\in B$, then $a\leq c$ for every $a\in A$. 
It is easy to check that when both  $\sup(A)$ and $\sup(B)$ are defined, this coincides with the usual $\leq$ relationship of $\SS$.

We also write $\sup(A)=\sup(B)$ if both $\sup(A)\leq \sup(B)$ and $\sup(A)\geq \sup(B)$, and similarly for other expressions like $\sup(A)\leq b$ for $b\in \SS$ (using $b=\sup(\{b\})$), and so on.
With the same meaning, given $s,t\in\pre{<\On}{\SS}$ we write $\sum(s)\leq \sum(t)$, $\sum(s)= \sum(t)$, $\sum(s)\leq b$ for some $b\in \SS$, and so on, even when $\sum(s)$ or $\sum(t)$ is not defined.

\begin{remark}\label{rmk:compare_sup}
This coincides with comparing $\sup$ of sets and $\sum$ of sequences in the Dedekind-completion\footnote{The Dedekind-completion $(\hat{\SS},\leq)$ is a total order, but this is sufficient to define $\sum$ on all sequences of $\SS$. We do not need to define $\sum$ on sequences of $\hat{\SS}$}.
\end{remark}

Recall that a monoid $\SS$ is \markdef{lower semi-continuous} if for every $A,B\subseteq \SS$ such that $a=\sup A$ and $b=\sup B$ exist, we have that $a+b=\sup\{x+y\mid x\in A, y\in B\}$ (\cite[Chapter XI.7]{fuchsPartiallyOrderedAlgebraic1963}).

\begin{proposition}\label{prop:sum_is_natural_infinitary_sum}
For every positively totally ordered monoid $\SS=(S,0,+,\leq)$, the operation $\sum$ is an infinitary sum satisfying additionally:
\begin{enumerate}[label=(\alph*)]
    \item\label{prop:sum_is_natural_infinitary_sum-1} $0$ is the \markdef{neutral element} of $\sum$, i.e., $\sum(s)=\sum(s\restriction (\SS\setminus \{0\}))$ for every $s\in \pre{<\On}{\SS}$.\\
    In particular, $s\in \dom(\sum)$ if and only if $s\restriction (\SS\setminus \{0\})\in \dom(\sum)$.
    \item\label{prop:sum_is_natural_infinitary_sum-2} $\sum$ is \markdef{infinitary-translation invariant}, i.e.
    \[
    \sum(s)\leq \sum(t)
    \]
    for every $s=(s_i)_{i<\gamma}, t=(t_i)_{i<\delta}\in \pre{<\On}{\SS}$ such that $\gamma\leq\delta$ and $s_i\leq t_i$ for every $i<\gamma$.
    \item\label{prop:sum_is_natural_infinitary_sum-3} If the order of $\SS$ is Dedekind-complete, then $\sum$ is \markdef{total}, i.e., it is defined on every sequence of any length.\\
    In particular, in this case $\sum$ satisfies Axiom~\ref{ax_sum:dom_downward_closed}.
    \item\label{prop:sum_is_natural_infinitary_sum-4} If the order of $\SS$ is Dedekind-complete and the operation $+$ of $\SS$ is lower-semicontinuous, then $\sum$ satisfies Axiom~\ref{ax_sum:associative}.
    \item\label{prop:sum_is_natural_infinitary_sum-5} The operation $+$ of $\SS$ is commutative if and only if $\sum$ satisfies Axiom~\ref{ax_sum:commutative}.
\end{enumerate}
\end{proposition}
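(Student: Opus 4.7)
The plan is to verify the six claims in the order stated, starting with the two defining axioms of an infinitary sum and then proceeding to the additional properties. For Axiom~\ref{ax_sum:extend_plus}, I would note that the set of finite subsums of the empty sequence reduces to the empty subsum $0$, so $\sum(\emptyset) = 0$; for a finite sequence $s = (s_0, \ldots, s_n)$, positivity together with weak translation invariance implies $s_{\alpha_0} + \cdots + s_{\alpha_k} \leq s_0 + \cdots + s_n$ for every subsum, while the full sum is itself among the subsums, so the supremum equals $s_0 + \cdots + s_n$. For continuity~\ref{ax_sum:continuous}, each finite subsum of $s$ uses only finitely many indices, which are bounded above by some $\alpha < \leng(s)$ with $s \restriction \alpha \in \dom(\sum)$ (such $\alpha$ being cofinal); thus the finite subsums of $s$ are exactly the union of finite subsums of the $s \restriction \alpha$'s, and taking suprema yields the desired equality.

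For the additional properties, parts (a), (b), (c), and (e) reduce to direct manipulations of finite subsums. Part~(a) holds because replacing a finite subsum by the one obtained by dropping its $0$-entries does not alter its value, so the set of values of finite subsums of $s$ and of $s \restriction (\SS \setminus \{0\})$ coincide. Part~(b) follows by iterating weak translation invariance: every finite subsum $s_{\alpha_0} + \cdots + s_{\alpha_k}$ of $s$ is dominated by $t_{\alpha_0} + \cdots + t_{\alpha_k}$ (using $\gamma \leq \delta$), which is a finite subsum of $t$, so the comparison of suprema $\sum(s) \leq \sum(t)$ in the sense of Remark~\ref{rmk:compare_sup} holds. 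Part~(c) is immediate, since Dedekind-completeness guarantees that every subset of $\SS$ has a supremum, so $\dom(\sum) = \pre{<\On}{\SS}$ and Axiom~\ref{ax_sum:dom_downward_closed} is trivial. For~(e), commutativity of $+$ makes finite sums invariant under permutation of summands, so any reordering of $s$ produces the same set of finite subsum values and hence the same supremum; conversely, applying Axiom~\ref{ax_sum:commutative} to two-element sequences $(a,b)$ and $(b,a)$ forces $a + b = b + a$.

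The main work lies in part~(d). Assume $\SS$ is Dedekind-complete and $+$ is lower semi-continuous, and fix $\bar{s} = (s^\alpha)_{\alpha < \gamma}$ with $\Conc(\bar{s}) \in \dom(\sum)$. By~(c), membership in $\dom(\sum)$ is automatic for every sequence, so only the equality between $T := \sum(\Conc(\bar{s}))$ and $S := \sum(\langle \sum(s^\alpha) \mid \alpha < \gamma \rangle)$ remains. The inequality $T \leq S$ is direct: any finite subsum of $\Conc(\bar{s})$ distributes across finitely many blocks $s^{\alpha_0}, \ldots, s^{\alpha_k}$, each contributing at most $\sum(s^{\alpha_j})$, so the subsum is bounded by $\sum(s^{\alpha_0}) + \cdots + \sum(s^{\alpha_k}) \leq S$. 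For $S \leq T$, I would fix indices $\alpha_0 < \cdots < \alpha_k$ and apply lower semi-continuity (iterated $k$ times from its binary formulation) to rewrite $\sum(s^{\alpha_0}) + \cdots + \sum(s^{\alpha_k})$ as the supremum of $f_0 + \cdots + f_k$ over finite subsums $f_j$ of $s^{\alpha_j}$; each such combination is itself a finite subsum of $\Conc(\bar{s})$, hence bounded by $T$. Taking the supremum over the finite index tuples yields $S \leq T$.

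The step I expect to be most delicate is part~(d): lifting lower semi-continuity from its stated binary form to a $(k{+}1)$-ary version is a routine induction, but one must handle the supremum conventions of Remark~\ref{rmk:compare_sup} with care, verifying that the suprema $S$ and $T$ really exist in $\SS$ (as secured by Dedekind-completeness) rather than only in the completion, and that the iterated use of lower semi-continuity does not escape the original monoid. Once that bookkeeping is in place, the remaining verifications collapse to manipulations of finite subsums.
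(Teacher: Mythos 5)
Your proposal is correct and follows essentially the same route as the paper's proof: axioms \ref{ax_sum:extend_plus} and \ref{ax_sum:continuous} and parts \ref{prop:sum_is_natural_infinitary_sum-1}--\ref{prop:sum_is_natural_infinitary_sum-3}, \ref{prop:sum_is_natural_infinitary_sum-5} are read off from the definition of $\sum$ as a supremum of finite subsums, and part \ref{prop:sum_is_natural_infinitary_sum-4} is proved by the same two inequalities, one by definition and one via lower semi-continuity applied to finitely many blocks. The only cosmetic difference is that the paper packages the lower semi-continuity step as the identity $\sum(s^{\alpha_0}\conc\cdots\conc s^{\alpha_n})=\sum(s^{\alpha_0})+\cdots+\sum(s^{\alpha_n})$ proved by induction on finite concatenations, whereas you unfold the same iteration directly inside the supremum.
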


\begin{proof} 
First, let us prove that $\sum$ is an infinitary operation. It is clear that $\sum$ satisfies Axioms~\ref{ax_sum:extend_plus}.

To see that $\sum$ satisfies Axiom~\ref{ax_sum:continuous}, suppose that $s\in \dom(\sum)$ has limit length, and that there is a sequence $\langle \beta_i\mid i<\gamma\rangle$ cofinal in $\leng(s)$ such that $s\restriction\beta_i\in \dom(\sum)$ for all $i<\gamma$.
For every $\alpha<\gamma$, let $i(\alpha)$ be minimal such that $\alpha<\beta_{i(\alpha)}$.
Then, for each $\alpha_0<...<\alpha_n<\gamma$ we have  $s_{\alpha_0} + ... + s_{\alpha_n}\leq \sum(s\restriction \beta_{i(\alpha_n)})\leq \sum(s)$, by definition of $\sum$.
This shows that $\sup\{\sum(s\restriction \beta_{i})\mid i<\gamma\}$ exists and
\[
\sum(s)=\sup\{s_{\alpha_0} + ... + s_{\alpha_n}\mid n<\omega\}\leq \sup\{\sum(s\restriction \beta_{i})\mid i<\gamma\} \leq \sum(s),
\]
and Axiom~\ref{ax_sum:continuous} is satisfied, as wanted.

That $\sum$ is infinitary-translation invariant and that $0$ is the neutral element follow from the definition. Thus statements~\ref{prop:sum_is_natural_infinitary_sum-1} and  \ref{prop:sum_is_natural_infinitary_sum-2} hold. 
It is also immediate that if the order of $\SS$ is Dedekind-complete, then $\sum$ is total, and that the operation $+$ of $\SS$ is commutative iff $\sum$ satisfies Axiom~\ref{ax_sum:commutative}.
Thus statements~\ref{prop:sum_is_natural_infinitary_sum-3} and  \ref{prop:sum_is_natural_infinitary_sum-5} hold.

Finally, to prove statement~\ref{prop:sum_is_natural_infinitary_sum-4} (and that $\sum$ satisfies Axiom~\ref{ax_sum:associative}), assume $+$ is lower semi-continuous and $\leq$ is Dedekind-complete. Then, by previous point $\sum$ is total.
Consider first $s,t\in \pre{<\On}{\SS}$. Then, 
\[
\begin{gathered}
\sum s + \sum t =\\
=\sup\{s_{i_0} + ... + s_{i_k}\mid i_0{<} ... {<} i_k{<}\leng(s)\} +  \sup\{t_{j_0} + ... + t_{j_k}\mid j_0{<} ... {<} j_k{<}\leng(t)\}=\\
 =\sup\{s_{i_0} + ... + s_{i_k} + t_{j_0} + ... + t_{j_k}\mid i_0< ... < i_k<\leng(s), j_0< ... < j_k<\leng(t)\} =\\ 
=\sum (s\conc t)
\end{gathered}
\]
by definition of $\sum$ and of lower semi-continuity. By induction, it is easy to see that $\sum (s^0\conc ... \conc s^k) = \sum s^0 + ... + \sum s^k$ for all finite $\bar{s}=(s^i)_{i\leq k}\in\pre{<\omega}{\dom(\Sum)}$.

Now fix $\beta\geq \omega$.
Let $\bar{s}=\langle s^{\alpha}:\alpha<\beta\rangle$ be such that $s^\alpha\in\pre{<\On}{\SS}$ for every $\alpha<\beta$. 
Notice that 
\[
\sum(\Conc(\bar{s})) \leq \sum(\langle\sum (s^{\alpha}):\alpha<\beta\rangle),
\]
by definition of $\sum$. 
On the other hand, we have 
\[
\sum(\langle\sum (s^{\alpha}):\alpha<\beta\rangle)=\sup\{\sum (s^{\alpha_0}) + ... + \sum (s^{\alpha_n}):\alpha_0<...<\alpha_n<\beta\},
\]
and by previous argument for finite sequences, we get 
\[
\sum (s^{\alpha_0}) + ... + \sum (s^{\alpha_n})=\sum (s^{\alpha_0}\conc  ... \conc s^{\alpha_n})\leq \sum(\Conc (\bar{s})).
\]
All together, these show that $\sum(\Conc(\bar{s})) = \sum(\langle\sum (s^{\alpha}):\alpha<\beta\rangle)$, as wanted.
\end{proof}  

\begin{corollary}
    For every Dedekind-complete, lower semi-continuous, commutative positively totally ordered monoid $\SS=(S,0,+,\leq)$, the operation $\sum$ is a natural infinitary sum.
\end{corollary}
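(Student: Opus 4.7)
The plan is to simply read off the corollary from the preceding Proposition~\ref{prop:sum_is_natural_infinitary_sum}, which already packages all the work. Recall that to verify $\sum$ is a \emph{natural} infinitary sum, we must check the five axioms \ref{ax_sum:extend_plus}--\ref{ax_sum:commutative}. Axioms~\ref{ax_sum:extend_plus} and \ref{ax_sum:continuous} hold for $\sum$ on \emph{any} positively totally ordered monoid, as stated in the first sentence of Proposition~\ref{prop:sum_is_natural_infinitary_sum} (these are the conditions making $\sum$ an infinitary sum at all). So the three remaining axioms to verify are \ref{ax_sum:dom_downward_closed} (downward-closed domain), \ref{ax_sum:associative} (associativity), and \ref{ax_sum:commutative} (commutativity), and each one matches exactly one of the three hypotheses of the corollary.

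Specifically, I would proceed as follows. First, since $\SS$ is Dedekind-complete, Proposition~\ref{prop:sum_is_natural_infinitary_sum}\ref{prop:sum_is_natural_infinitary_sum-3} gives that $\sum$ is total, i.e., $\dom(\sum) = \pre{<\On}{\SS}$; in particular $\dom(\sum)$ is trivially downward closed, giving Axiom~\ref{ax_sum:dom_downward_closed}. Second, since $\SS$ is both Dedekind-complete and lower semi-continuous, Proposition~\ref{prop:sum_is_natural_infinitary_sum}\ref{prop:sum_is_natural_infinitary_sum-4} directly yields Axiom~\ref{ax_sum:associative}. Third, since $+$ is commutative, the ``if'' direction of Proposition~\ref{prop:sum_is_natural_infinitary_sum}\ref{prop:sum_is_natural_infinitary_sum-5} gives Axiom~\ref{ax_sum:commutative}.

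There is no real obstacle here: the corollary is essentially a bookkeeping statement, collecting the three implications from Proposition~\ref{prop:sum_is_natural_infinitary_sum}\ref{prop:sum_is_natural_infinitary_sum-3}--\ref{prop:sum_is_natural_infinitary_sum-5} under the matching hypotheses. The only thing one might double-check is that the hypotheses line up exactly as stated: Dedekind-completeness is used twice (once for totality and once in the associativity argument), lower semi-continuity is used once (to pass the supremum through $+$ in the key identity $\sum s + \sum t = \sum(s \conc t)$ inside the proof of part~\ref{prop:sum_is_natural_infinitary_sum-4}), and commutativity of $+$ is the single hypothesis needed for permutation invariance. Since all three hypotheses are assumed, the proof is a one-line deduction with no further content to establish.
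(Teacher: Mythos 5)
Your proposal is correct and matches the paper's intent exactly: the corollary is stated without proof precisely because it is the immediate conjunction of Proposition~\ref{prop:sum_is_natural_infinitary_sum}\ref{prop:sum_is_natural_infinitary_sum-3}--\ref{prop:sum_is_natural_infinitary_sum-5} (together with the fact that $\sum$ is always an infinitary sum), applied under the three stated hypotheses. Nothing further is needed.
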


In Proposition~\ref{prop:completion_is_infinitary_ptom}, we show that there are Dedekind-complete, lower semi-continuous, commutative positively totally ordered monoids of any desired degree. In particular, this shows that there are positively totally ordered monoids of any degree admitting a natural infinitary sum.

The following proposition shows that all natural infinitary sums come from restrictions of $\sum$. 

\begin{proposition}\label{prop:sum_is_the_only_natural_infinitary_sum}
   Let $\SS=(S, 0, +, \leq)$ be a positively totally ordered monoid with a natural infinitary sum $\Sum:\pre{<\On}{\SS}\to \SS$. 
   Then, we have that $\dom(\Sum)\subseteq \dom(\sum)$ and $\Sum(s)=\sum(s)$ for every $s\in \dom(\Sum)$.
\end{proposition}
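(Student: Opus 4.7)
My plan is to prove both $\dom(\Sum) \subseteq \dom(\sum)$ and $\Sum(s) = \sum(s)$ simultaneously for every $s \in \dom(\Sum)$, by transfinite induction on $\gamma := \leng(s)$. The claim splits into showing that $\Sum(s)$ is an upper bound (in $\SS$) for the set of finite partial sums of $s$, and that it is the least such upper bound, so the supremum exists in $\SS$ and equals $\Sum(s)$.

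I would begin with a monotonicity lemma: for every $s \in \dom(\Sum)$ and $\alpha \leq \leng(s)$, $\Sum(s \restriction \alpha) \leq \Sum(s)$, where $s \restriction \alpha \in \dom(\Sum)$ by \ref{ax_sum:dom_downward_closed}. This follows by transfinite induction on $\leng(s)$, using \ref{ax_sum:continuous} at limit stages and, at successor stages, \ref{ax_sum:associative} applied to the decomposition $s = (s \restriction \delta) \conc (s_\delta)$ --- where the singleton $(s_\delta)$ lies in $\dom(\Sum)$ by reordering $s$ via \ref{ax_sum:commutative} so that $s_\delta$ is at position $0$ and then applying \ref{ax_sum:dom_downward_closed} --- together with the positivity $x+y \geq x$ in $\SS^+$. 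For the upper-bound direction, given any finite $I = \{\alpha_0 < \cdots < \alpha_k\} \subseteq \gamma$, I use \ref{ax_sum:commutative} to reorder $s$ into $s' \in \dom(\Sum)$ with $s' \restriction (k{+}1) = (s_{\alpha_0}, \ldots, s_{\alpha_k})$; combining \ref{ax_sum:dom_downward_closed}, \ref{ax_sum:extend_plus}, and the monotonicity lemma then yields $s_{\alpha_0} + \cdots + s_{\alpha_k} = \Sum(s' \restriction (k{+}1)) \leq \Sum(s') = \Sum(s)$.

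For the least-upper-bound direction I would argue by contradiction, assuming some $a < \Sum(s)$ dominates every finite partial sum of $s$, and proceed by induction on $\gamma$. The base and finite-length cases are immediate since $\Sum(s)$ is itself a finite partial sum. For $\gamma$ a limit, \ref{ax_sum:continuous} expresses $\Sum(s)$ as $\sup_{\alpha < \gamma} \Sum(s \restriction \alpha)$, so some $\Sum(s \restriction \alpha) > a$; the inductive hypothesis then furnishes a finite partial sum of $s \restriction \alpha$, hence of $s$, exceeding $a$ --- contradiction. For $\gamma = \beta + 1$ a successor, \ref{ax_sum:associative} applied to $s = (s \restriction \beta) \conc (s_\beta)$ gives $\Sum(s) = \Sum(s \restriction \beta) + s_\beta$, which disposes of the case of finite $\beta$.

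The main obstacle is the infinite-successor case: one must pass from the finitely indexed bounds $F + s_\beta \leq a$ (for every finite partial sum $F$ of $s \restriction \beta$) to the infinitary bound $\Sum(s \restriction \beta) + s_\beta \leq a$, i.e., show that left-translation by $s_\beta$ preserves the supremum of partial sums of $s \restriction \beta$. My plan is to construct, for each $\alpha < \beta$, a reordering $s^{(\alpha)}$ of $s$ whose initial segment of length $\alpha + 1$ coincides with $(s \restriction \alpha) \conc (s_\beta)$; applying \ref{ax_sum:associative} and monotonicity yields $\Sum(s \restriction \alpha) + s_\beta \leq \Sum(s^{(\alpha)}) = \Sum(s)$ for every $\alpha < \beta$. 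The subtle final step is to exploit the combined constraints imposed by \ref{ax_sum:associative}, \ref{ax_sum:commutative}, and \ref{ax_sum:continuous} on the various decompositions of $s$ to conclude $\Sum(s) = \sup_{\alpha < \beta}(\Sum(s \restriction \alpha) + s_\beta)$ --- a form of lower semi-continuity of translation by $s_\beta$ on the partial-sum lattice that is not postulated directly but is compelled by the naturalness axioms acting on the multiple representations of $\Sum(s)$ --- from which, together with the inductive hypothesis applied to each $\Sum(s \restriction \alpha)$, the desired contradiction follows.
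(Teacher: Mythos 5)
Your scaffolding (upper bound via \ref{ax_sum:commutative}, \ref{ax_sum:dom_downward_closed}, \ref{ax_sum:extend_plus} plus a monotonicity lemma; least upper bound by induction; continuity \ref{ax_sum:continuous} at limit lengths) agrees with the paper's proof through the limit case, and the upper-bound direction is fine. The genuine gap is exactly where you locate it, the case $\gamma=\beta+1$ with $\beta$ infinite, but your proposed resolution cannot be carried out: the identity you hope the axioms \enquote{compel}, namely $\Sum(s)=\sup_{\alpha<\beta}\bigl(\Sum(s\restriction\alpha)+s_\beta\bigr)$, is false in general. Take $\SS=\RR_\infty$ with its natural sum $\sum$ and $s$ of length $\omega+2$ with $s_n=2^{-n}$ for $n<\omega$, $s_\omega=5$, $s_{\omega+1}=7$: then $\Sum(s)=2+5+7=14$, while $\sup_{\alpha<\omega+1}\bigl(\Sum(s\restriction\alpha)+s_{\omega+1}\bigr)=2+7=9$. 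Your reorderings $s^{(\alpha)}$ only yield the inequality $\Sum(s\restriction\alpha)+s_\beta\leq\Sum(s)$, and \ref{ax_sum:continuous} speaks only about limit lengths, so no combination of \ref{ax_sum:continuous}, \ref{ax_sum:associative}, \ref{ax_sum:commutative} forces the converse; the \enquote{subtle final step} is not a proof and cannot become one. Moreover, even for limit $\beta$, where the identity does hold, your closing move (apply the inductive hypothesis to $s\restriction\alpha$) runs into the very obstruction you are trying to avoid: from $\Sum(s\restriction\alpha)+s_\beta>a$ and $\Sum(s\restriction\alpha)=\sup F$ over finite partial sums $F$, you cannot conclude $F+s_\beta>a$ for some $F$, because translation by $s_\beta$ need not preserve suprema in these monoids.

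The paper's fix is short and sidesteps translation-continuity entirely: since $\beta$ is infinite, $1+\beta=\beta$, so the reordering $s'=s_\beta\conc(s\restriction\beta)$ of $s$ has length $\beta<\gamma$. By \ref{ax_sum:commutative}, $s'\in\dom(\Sum)$ and $\Sum(s')=\Sum(s)$, so the inductive hypothesis applies to $s'$ and gives $\Sum(s)=\sum(s')$; and since finite sums of the terms $s_\alpha$ are permutation-invariant (initial segments of reorderings of $s$ lie in $\dom(\Sum)$ by \ref{ax_sum:commutative} and \ref{ax_sum:dom_downward_closed}, and \ref{ax_sum:extend_plus} identifies their $\Sum$ with the finite $+$-sum), the finite partial sums of $s'$ and of $s$ have the same supremum, i.e.\ $\sum(s')=\sum(s)$. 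The same trick repairs your limit-case descent: apply the inductive hypothesis to $(s\restriction\alpha)\conc(s_\beta)$, of length $\alpha+1<\gamma$, rather than to $s\restriction\alpha$ alone.
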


\begin{proof}
Assume $\Sum:\pre{<\On}{\SS}\to \SS$ is a natural infinitary sum.
Fix $s=(s_{\alpha})_{\alpha<\gamma}\in \dom(\Sum)$.
By induction on $\gamma$, we now show that $s\in \dom(\sum)$ and 
    \[
    \Sum(s)= \sup\{s_{\alpha_0} + ... + s_{\alpha_k}\mid \alpha_0< ... < \alpha_k<\gamma\}=\sum(s),
    \]
    as wanted.

    It is clear that this is true if $\gamma$ is finite, by Axiom~\ref{ax_sum:extend_plus}.
So suppose $\gamma\geq \omega$.

Let $\alpha_0< ... < \alpha_k<\gamma$, and let $t=(s_{\alpha_i})_{i<k}$. By Axiom~\ref{ax_sum:commutative}, there is a reordering $s'\in \dom(\Sum)$ of $s$ of limit length such that $t=s'\restriction k$. This shows that $t\in \dom(\Sum)$, by Axiom~\ref{ax_sum:dom_downward_closed}, and consequently that $+$ and $\sum$ are commutative when restricted to $\{s_\alpha\mid \alpha<\leng(s)\}$, since $\Sum$ satisfies Axiom~\ref{ax_sum:commutative}.

    Now if $\gamma=\beta+1$ is infinite and the induction hypothesis holds for $1+\beta=\beta$, then 
    \[
    \Sum(s)=\Sum(s_\beta\conc (s\restriction \beta))=\sum (s_\beta\conc (s\restriction \beta))=\sum(s),
    \]
    by Axiom~\ref{ax_sum:commutative}, induction hypothesis, and commutativity of $\sum$ on $\{s_\alpha\mid \alpha<\leng(s)\}$.
    
    Suppose instead that $\gamma$ is limit and the statement holds for all $\beta<\gamma$.
    By Axioms~\ref{ax_sum:continuous} and~\ref{ax_sum:dom_downward_closed}, we have $s\restriction\alpha\in \dom(\Sum)$ for every $\alpha<\gamma$ and
    \[
\Sum(s)=\sup\{\Sum(s\restriction\alpha)\mid \alpha<\gamma\}.
    \]
    By induction hypothesis, we get $s\restriction\alpha\in \dom(\sum)$ for every $\alpha<\gamma$ and 
    \[
\Sum(s)=\sup\{\sum(s\restriction\alpha)\mid \alpha<\gamma\}.
\]
    This shows that
    \[
    \sup\{\sum(s\restriction\alpha)\mid \alpha<\gamma\}=\sup\{s_{\alpha_0} + ... + s_{\alpha_k}\mid \alpha_0< ... < \alpha_k<\gamma\}
    \]
    is defined, and thus $s\in \dom(\sum)$ and $\Sum(s)=\sup\{\sum(s\restriction\alpha)\mid \alpha<\gamma\}=\sum(s)$ as wanted.
\end{proof}

In particular, by Proposition~\ref{prop:sum_is_the_only_natural_infinitary_sum} we get the following.

\begin{remark}
Every natural infinitary sum satisfies the same properties of $\sum$ (e.g., those of Proposition~\ref{prop:sum_is_natural_infinitary_sum}), restricted to its domain.
\end{remark}

\subsection{Examples of monoids}

In the following, we provide different examples of \PTOMs. The first example is the well-known monoid $[0,\infty]$ used in classical measure theory.

\begin{example}\label{ex:R_infty}
    We denote by $\RR_\infty=([0,\infty], 0, +, \leq)$ the totally ordered monoid where $[0,\infty]\setminus \{\infty\}=[0,\infty)$ is the positive cone of the real numbers, with usual order and operation, and $\infty$ is the maximum of the order and an absorbing element for the operation (i.e. $a+\infty= \infty + a=\infty$ for every $a\in [0,\infty]$).

    Then, $\RR_\infty$ is an initially Archimedean, Dedekind-complete, lower semi-continuous, $0$-continuous, commutative, positively totally ordered monoid, thus $\sum$ is a total natural infinitary sum on it, by Proposition~\ref{prop:sum_is_natural_infinitary_sum}.

    The operation $\sum$ can be explicitly defined for any sequence $r=\langle r_\beta\mid \beta<\gamma\rangle$ of elements of $[0,\infty]$ in the following way: 
    \begin{itemize}
    \item $\sum r=0$ if $r$ is empty or all elements of $r$ are zero,
    \item $\sum r=r_{n_1} + ... + r_{n_k}$ if $r$ does not contain $\infty$ and has exactly $k$-many elements $\{r_{n_1}, ..., r_{n_k}\}$ different from $0$,
    \item $\sum r=\sum_{i=0}^{\infty} r_{n_i}$ if $r$ does not contain $\infty$ and it has countably many elements that are different from $0$ and $\{r_{n_i}\mid i<\omega\}$ is an enumeration of them (by the absolute convergence theorem, the sum $\sum_{i=0}^{\infty} r_{n_i}$ is independent of the chosen enumeration), 
    \item $\sum r=\infty$ otherwise.
    \end{itemize}
\end{example}

Another class of examples is given by Dedekind-complete total orders.

\begin{example}\label{ex:complete_linear_orders_are_infinitary_monoids}
    Let $(S,\leq)$ be a Dedekind-complete linear order with a minimum $0$.
    Then, $\SS=(S, 0, \max,\leq)$ is a Dedekind-complete, lower semi-continuous, commutative, positively totally ordered monoid, and the infinitary operation $\sum=\sup$ is a total natural infinitary sum.
    If $\Deg(\SS)$ is infinite, then $\SS$ is also $0$-continuous.
\end{example}

Notice that there are Dedekind-complete linear orders of any coinitiality (e.g., $\kappa+1$ with reverse order is a Dedekind-complete linear order of coinitiality $\kappa$ for any cardinal $\kappa$). This shows that for any $\kappa$, there exists a Dedekind-complete, positively totally ordered commutative monoid $\SS$ of degree $\Deg(\SS)=\kappa$ with a total natural infinitary sum.

Other, more interesting examples are given by the completion of groups.

\begin{proposition}\label{prop:completion_is_infinitary_ptom}
    Let $\GG=(G,0,+,\leq)$ be a commutative densely totally ordered group. Then, the operation $+$ and the order $\leq$ can be extended to the Dedekind-completion $S_\GG$ of $\GG$ so that $\SS_\GG=(S_\GG,0,+,\leq)$ is a Dedekind-complete, lower semi-continuous, totally ordered commutative monoid having $\GG$ as dense subgroup.

    In particular, $\{0\}\cup \SS_\GG^+$ is a Dedekind-complete, lower semi-continuous, $0$-con\-tinu\-ous, positively totally ordered commutative monoid such that $\Deg(\SS_\GG)=\Deg(\GG)$.
\end{proposition}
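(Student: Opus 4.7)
The plan is to take $S_\GG$ to be the MacNeille completion of $(G,\leq)$: since $\GG$ is densely ordered, $G$ sits as a dense subset in $S_\GG$, and $S_\GG$ is obtained by filling in every cut of $G$ with a supremum/infimum element and adding a top $+\infty$ and a bottom $-\infty$. The total order extends canonically and every subset of $S_\GG$ acquires both a supremum and an infimum. Addition will be extended by
\[
x + y \;=\; \sup\{a + b : a, b \in G,\; a \leq x,\; b \leq y\}
\]
for $x, y \in S_\GG$, with the convention $(\pm\infty) + z = \pm\infty$ and, by fiat, $\infty + (-\infty) = \infty$ where needed. The first step is to verify, using density of $G$ in $S_\GG$ and translation invariance of $+$ on $G$, that this definition agrees with the original operation when both arguments lie in $G$, keeps $0$ as the neutral element, and is commutative.

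Next, I will check the remaining monoid and order-theoretic axioms on $S_\GG$. Weak translation invariance is immediate from monotonicity of the sup under set inclusion. For associativity, the key point is that for any $x, y, z \in S_\GG$ both $(x+y)+z$ and $x+(y+z)$ equal $\sup\{a+b+c : a,b,c \in G,\ a \leq x,\ b \leq y,\ c \leq z\}$, which follows from density together with associativity of $+$ on $G$. For lower semi-continuity, namely $\sup A + \sup B = \sup\{x+y : x \in A, y \in B\}$ whenever $\sup A$ and $\sup B$ exist, I will use density twice: any $g \in G$ below $\sup A + \sup B$ can be split as $g = g_1 + g_2$ with $g_1 < \sup A$ and $g_2 < \sup B$ in $G$, and then each $g_i$ lies below some element of $A$ (resp.\ $B$) by the definition of $\sup$.

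For the \enquote{in particular} clause, restricting to $\{0\}\cup\SS_\GG^+$ yields a subset closed under $+$ by weak translation invariance, which inherits Dedekind-completeness, lower semi-continuity, and commutativity from $\SS_\GG$. Since $\GG$ is densely ordered, $\GG^+$ has no minimum, and density of $G$ in $S_\GG$ implies that $\GG^+$ is coinitial in $\SS_\GG^+$, giving $\Deg(\SS_\GG)=\Deg(\GG)$. For $0$-continuity, note that $\GG$ itself falls into case~\ref{lem:three_types_of_monoids - Archimedean} or~\ref{lem:three_types_of_monoids - non-Archimedean} of Lemma~\ref{lem:three_types_of_monoids} (for every positive $c$, density of $\GG$ yields $b \in \GG^+$ with $2b < c$, ruling out case~\ref{lem:three_types_of_monoids - pathological}), producing a strongly decreasing coinitial sequence in $\GG^+$; this same sequence witnesses $0$-continuity of $\SS_\GG$.

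The main subtlety will be the careful interaction between the extended $+$ and suprema in the completion when establishing associativity and lower semi-continuity; throughout, the density of $G$ in $S_\GG$ is the essential tool that reduces every such computation back to the group operation on $G$, where all the relevant identities are already available.
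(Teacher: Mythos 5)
Your proposal is correct in substance but takes a genuinely different route from the paper. The paper does not construct the extension by hand: it observes that every totally ordered group is a topological group (citing Fuchs), so $+$ is lower semi-continuous on $\GG$, and then invokes Fornasiero--Mamino for the fact that $+$ and $\leq$ extend to the Dedekind completion as a Dedekind-complete, lower semi-continuous, commutative totally ordered monoid with $\GG$ dense; the \enquote{in particular} clause is left implicit. Your explicit formula $x+y=\sup\{a+b \mid a,b\in G,\ a\leq x,\ b\leq y\}$ is exactly the \enquote{lower} addition of cuts used in that reference, so you are in effect re-proving the cited result; the gain is a self-contained argument in which lower semi-continuity is visible from the definition, the price is that the verifications of associativity and semicontinuity need not only density but also the group structure of $G$ (strict translation invariance and cancellation). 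In particular your splitting step is not quite right as stated: from $g<\sup A+\sup B$ you only get $a\leq\sup A$, $b\leq\sup B$ in $G$ with $g<a+b$, and the decomposition $g=(g-b)+b$ gives $g-b<\sup A$ strictly but only $b\leq\sup B$; the standard repair is to pick $x\in A$ with $g-b<x$, deduce $g-x<b\leq\sup B$, and then pick $y\in B$ with $g-x<y$, so $g<x+y$. The same cancellation trick handles associativity. Your treatment of the positive cone (coinitiality of $\GG^+$ in $\SS_\GG^+$, hence $\Deg(\SS_\GG)=\Deg(\GG)$, and $0$-continuity from density via Lemma~\ref{lem:three_types_of_monoids}) is fine and covers what the paper leaves implicit.

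One convention you should change: declaring $\infty+(-\infty)=\infty$ \enquote{by fiat} destroys lower semi-continuity on the full completion. Indeed, take $A=\{-\infty\}$ and $B=G$ (unbounded above, since $\GG$ is a nontrivial ordered group): then $\sup A+\sup B=(-\infty)+\infty=\infty$, while $\sup\{x+y \mid x\in A,\ y\in B\}=-\infty$. If you instead let the sup-formula govern everywhere, with $\sup\emptyset$ equal to the bottom element, you get $x+(-\infty)=-\infty$ for every $x$, and with this convention (the one implicit in the cut arithmetic the paper cites) neutrality of $0$, monotonicity, associativity and lower semi-continuity all hold. The slip is invisible in the \enquote{in particular} part, since $\{0\}\cup\SS_\GG^+$ has bottom $0\in G$, but the first assertion of the proposition concerns all of $\SS_\GG$, so the endpoint convention must be fixed as above.
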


\begin{proof}
    Notice that every totally ordered group is a topological group (see, e.g., Theorem 10 and the following paragraph of \cite[Chapter II.8]{fuchsPartiallyOrderedAlgebraic1963}). In particular, this shows that the operation $+$ is lower semi-continuous on $\GG$.
    By \cite{FornasieroMaminoMR2484481}, we can extend the operation $+$ and the order $\leq$ to $\SS_\GG$ so that it is a Dedekind-complete, commutative totally ordered monoid having $\GG$ as dense subgroup, and the operation is lower semi-continuous by definition. 
\end{proof}

In general the operation $\sum$ need not be natural. The following example shows that $\sum$ does not necessarily satisfy Axiom~\ref{ax_sum:dom_downward_closed} nor Axiom~\ref{ax_sum:associative}, even among lower semi-continuous, commutative, positively totally ordered monoids.

\begin{example}
    Let $\QQ_\infty=(\QQ\cap [0,\infty))\cup\{\infty\}\subset \RR_\infty$. Then, $\QQ_\infty$ is a lower semi-continuous, commutative, positively totally ordered submonoid of $\RR_\infty$.
    Let $(s_n)_{n<\omega}$ be a sequence of rational numbers summing up to $\pi$ (or your favorite irrational number). Let also $t_{\gamma+n}=s_n$ for every limit ordinal $\gamma$ and natural number $n<\omega$.
    Let $t=(t_\alpha)_{\alpha<\omega^2}$.
    Then, we have $\sum(t)=\infty$, however $t\restriction \alpha\notin \dom(\sum)$ for every $\alpha<\leng(t)$.
\end{example}

The following examples show that $\sum$ does not necessarily satisfy Axiom~\ref{ax_sum:associative}, even among Dedekind-complete, commutative, positively totally ordered monoids.

\begin{example}
    Let $\SS=([0,\infty)\cup \{\infty_0, \infty_1\}, 0, +, \leq)$ where $[0,\infty)$ is the positive cone of the real numbers and $+$ and $\leq$ are its usual operation and order. 
    Define also $a< \infty_0<\infty_1$ for all $a\in [0,\infty)$, and $a+\infty_i=\infty_i+a=\infty_i$ for any $a\in [0,\infty)$, and $\infty_i+\infty_j= \infty_1$ for every $i,j\in\{0,1\}$. Then, $\SS$ is a Dedekind-complete, commutative, positively totally ordered monoid, yet $\sum$ is not associative, as 
    \[
    \sum(1^{(\omega)}\conc 1^{(\omega)})=\sum(1^{(\omega+\omega)})=\infty_0\neq \infty_1=\infty_0+\infty_0= \sum(1^{(\omega)})+\sum(1^{(\omega)}).
    \]
\end{example}

\begin{example}
    Let $(\MM, 0, + , \leq)$ be a commutative totally ordered monoid without maximum. 
    Let $\SS=\{0\}\times \MM \cup \{1\}\times \MM \sqcup \{\infty\}$, where the order is given by $(i,a)\leq(i,b)$ for any $i\in\{0,1\}$ and $a,b\in \MM$ satisfying $a\leq b$, and $(0,a)\leq (1,b)\leq \infty$ for any $a,b\in \MM$.
    Let $\ast$ be defined by $(i,a)\ast (j,b)=(i+j,a+b)$ if $i+j\leq 1$, $(1,a)\ast (1,b)=\infty$, and $(i,a)\ast \infty = \infty \ast (i,a)=\infty$.
    Then, $(\SS,(0,0),\ast,\leq)$ is a commutative, positively totally ordered monoid, and it is Dedekind-complete if $\MM $ plus a maximum is. Yet, $\sum$ is not associative, as if $\langle a_i\mid i<\gamma\rangle$ is a cofinal sequence in $\MM$, then 
    \[
    \sum(\langle (0,a_i)\mid i<\gamma\rangle\conc \langle (0,a_i)\mid i<\gamma\rangle)=(1,0),
    \]
    \[
    \sum(\langle (0,a_i)\mid i<\gamma\rangle) +\sum(\langle (0,a_i)\mid i<\gamma\rangle)=(1,0)+(1,0)=\infty.
    \]
\end{example}

\section{Measures}\label{sec:measures}

For the classical theory of measures, we refer the reader to, e.g., \cite{KechrisMR1321597, RudinRealAnalysisMR924157}.
Recall the definition of $\lambda^+$-measure, Definition~\ref{def:axioms_measure}. 
Notice that the notion of $\omega^+$-measure properly extends the notion of classical measure by allowing one to use monoids other than $\RR_\infty$ and measurable structures that are not $\sigma$-algebras.

Recall also that, adopting the notation of \cite[Exercise 17.4]{KechrisMR1321597},  given a weak $\lambda^+$-measure space $(X,\MMM, \mu)$, we say that $\mu$ is continuous if all points are measurable of measure $\mu(x)=0$.

Even though the definition of a weak $\lambda^+$-measure can use any kind of infinitary sum $\Sum$, the next proposition shows that this $\Sum$ must be natural at least on the range of $\mu$. 
Since the values of $\Sum$ outside the range of $\mu$ do not matter in this context, Proposition~\ref{prop:sum_is_the_only_natural_infinitary_sum} then implies that $\sum$ is essentially the only infinitary sum that can be used to define measures.

\begin{proposition}\label{prop:sum_is_natural_on_measurable_part}
    Let $\SS$ be a positively totally ordered monoid with an infinitary operation $\Sum$, and let $(X,\MMM, \mu)$ be a  weak $\lambda^+$-measure space such that $\mu$ takes values in $(\SS,\Sum)$. Let 
    \[
    \D=\{\langle \mu(A_i)\mid i<\gamma\rangle\mid (A_i)_{i<\gamma}\text{ is a family of disjoint sets of }\MMM \text{ of length } \gamma< \lambda^+\}.
    \]
    Then, we have that $\Sum\restriction \D$ is natural, and thus
    \[
    \mu(\bigcup_{i\in\gamma}A_i)=\sum_{i\in\gamma}\mu(A_i)
    \]
    for every family $(A_i)_{i<\gamma}$ of disjoint sets of $\MMM$ of length $\gamma< \lambda^+$.
\end{proposition}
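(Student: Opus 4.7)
The plan is to verify that $\Sum\restriction \D$ satisfies the five axioms of a natural infinitary sum; once this is done, Proposition~\ref{prop:sum_is_the_only_natural_infinitary_sum} applied to $\Sum\restriction\D$ forces $\Sum(s)=\sum(s)$ for every $s\in \D$, and combining with axiom~\ref{ax_measure:additive} yields the closing formula $\mu(\bigcup_{i<\gamma}A_i)=\sum_{i<\gamma}\mu(A_i)$. Axiom~\ref{ax_sum:extend_plus} for $\Sum\restriction\D$ is inherited directly from $\Sum$; axiom~\ref{ax_sum:continuous} for $\Sum\restriction\D$ is also inherited, once axiom~\ref{ax_sum:dom_downward_closed} on $\D$ has been verified (which ensures that for $s\in\D$ of limit length, \emph{all} truncations $s\restriction\alpha$ lie in $\D$, so the cofinal condition in~\ref{ax_sum:continuous} is automatic and the two relevant suprema agree).

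The two bookkeeping axioms are direct. If $s=\langle\mu(A_i)\mid i<\gamma\rangle\in\D$ is witnessed by a disjoint family $(A_i)_{i<\gamma}\subseteq\MMM$ with $\gamma<\lambda^+$, then for every $\alpha<\gamma$ the truncated family $(A_i)_{i<\alpha}$ witnesses $s\restriction\alpha\in\D$, establishing~\ref{ax_sum:dom_downward_closed}. Similarly, any reordering of the index set produces a family consisting of the same sets, hence disjoint and of length $<\lambda^+$, so it witnesses the corresponding reordering of $s$ in $\D$; the equality of sums under reordering then follows from~\ref{ax_measure:additive}, since $\Sum(s)=\mu(\bigcup_i A_i)$ does not depend on the indexing. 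This gives~\ref{ax_sum:commutative}.

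The core step is axiom~\ref{ax_sum:associative}. Let $\bar{s}=(s^\alpha)_{\alpha<\gamma}\in\pre{<\On}{\D}$ with $\Conc(\bar{s})\in\D$, and pick a witnessing disjoint family $(C_\xi)_{\xi<\delta}\subseteq\MMM$ for $\Conc(\bar{s})$, with $\delta<\lambda^+$. The key idea is to exploit the block structure of the concatenation: setting $\gamma_\alpha=\leng(s^\alpha)$, $\delta_\alpha=\sum_{\beta<\alpha}\gamma_\beta$ (ordinal sum), and $C^\alpha_i=C_{\delta_\alpha+i}$ for $\alpha<\gamma$ and $i<\gamma_\alpha$, one obtains a \emph{collectively} pairwise disjoint family $(C^\alpha_i)_{\alpha<\gamma,\,i<\gamma_\alpha}\subseteq\MMM$ satisfying $\mu(C^\alpha_i)=(s^\alpha)_i$. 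Applying~\ref{ax_measure:additive} within each block gives $\Sum(s^\alpha)=\mu(\bigcup_i C^\alpha_i)$, and the outer family $(\bigcup_i C^\alpha_i)_{\alpha<\gamma}$ is disjoint in $\MMM$ and of length $\gamma<\lambda^+$, hence witnesses $\langle\Sum(s^\alpha)\mid\alpha<\gamma\rangle\in\D$. A further double application of~\ref{ax_measure:additive}, to this outer family and to the whole family $(C^\alpha_i)_{(\alpha,i)}$ in its concatenation order, yields
\[
\Sum(\langle\Sum(s^\alpha)\mid\alpha<\gamma\rangle)=\mu\bigl(\bigcup_{\alpha<\gamma}\bigcup_{i<\gamma_\alpha}C^\alpha_i\bigr)=\Sum(\Conc(\bar{s})),
\]
as required.

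The main obstacle is precisely this associativity step: arbitrary witnesses for the individual $s^\alpha\in\D$ are chosen independently and need not be mutually compatible across different $\alpha$, so they cannot naively be combined into a single disjoint family. The hypothesis $\Conc(\bar{s})\in\D$ is essential, as it supplies exactly such a global disjoint family, whose block decomposition realizes every $s^\alpha$ simultaneously and makes~\ref{ax_measure:additive} applicable across levels.
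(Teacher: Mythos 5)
Your proposal is correct and follows essentially the same route as the paper: verify that $\Sum\restriction\D$ satisfies the axioms of a natural infinitary sum (with the associativity step handled, exactly as in the paper, by taking the global disjoint witness of $\Conc(\bar{s})$ and decomposing it into blocks, then applying Axiom~\ref{ax_measure:additive} within each block and across blocks), and then invoke Proposition~\ref{prop:sum_is_the_only_natural_infinitary_sum}. Your extra care about the continuity axiom (that downward closedness of $\D$ makes the relevant suprema agree) and about the incompatibility of independently chosen witnesses is a welcome elaboration of points the paper treats as immediate, but it is not a different argument.
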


\begin{proof}
We want to show that $\Sum \restriction \D$ is natural, and the rest will follow from Proposition~\ref{prop:sum_is_the_only_natural_infinitary_sum}.

First, it is clear that $\Sum \restriction \D$ satisfies Axioms~\ref{ax_sum:extend_plus} and~\ref{ax_sum:continuous} since $\Sum$ does, and it satisfies Axiom~\ref{ax_sum:dom_downward_closed} by definition of $\D$ and by the first half of Axiom~\ref{ax_measure:additive}.
It is also immediate to check that it satisfies Axiom~\ref{ax_sum:commutative}, by Axiom~\ref{ax_measure:additive} and since $\bigcup$ is commutative.
    
    Finally, for Axiom~\ref{ax_sum:associative} notice that if $\bar{s}=(s^\alpha)_{\alpha<\gamma}\in \pre{<\On}{\D}$ and $\Conc(\bar{s})\in \D$, and $\{A^\alpha_i\mid \alpha<\gamma, i<\leng(s^\alpha)\}$ are disjoint sets in $\MMM$ witnessing that $\Conc(\bar{s})\in \D$, then $\langle \bigcup_{i<\leng(s^\alpha)} A^\alpha_i\mid \alpha<\gamma\rangle$ is still a sequence of disjoint sets in $\MMM$ of length $\gamma<\lambda^+$. Thus, 
    \[
    \langle \Sum(s^\alpha)\mid \alpha<\gamma\rangle=\langle \mu(\bigcup_{i<\leng(s^\alpha)} A^\alpha_i)\mid \alpha<\gamma\rangle\in \D
    \]
    and 
\begin{align*}
\Sum(\Conc(\bar{s}))&=\mu(\bigcup_{\alpha<\gamma}\bigcup_{i<\leng(s^\alpha)}A^\alpha_i)=\\
&=\Sum(\langle \mu(\bigcup_{i<\leng(s^\alpha)} A^\alpha_i)\mid \alpha<\gamma\rangle)=\\
&=\Sum(\langle \Sum(s^\alpha)\mid \alpha<\gamma\rangle)
 \end{align*}
    as wanted.
\end{proof}

Thus, without loss of generality, 
\begin{center}
\textit{
 for the rest of the paper we fix a positively totally ordered monoid $\SS=(S, 0, +, \leq)$ and work with the infinitary sum $\Sum=\sum$ from Definition~\ref{def:sum},} 
\end{center}
unless specified otherwise.
Furthermore, by restricting the domain of $\sum$ to $\D$, in practice we can always assume that $\sum$ is natural (on the relevant part of its domain).

\subsection{Properties of measures} 
Every classical measure is also countable subadditive, i.e., $\mu(\bigcup \V) \leq \sum_{V\in \V} \mu(V)$ for every countable family of measurable sets $\V\subseteq \MMM$.
We can define an analogue property in this context.

\begin{definition}
    A weak $\lambda^+$-measure space $(X,\MMM, \mu)$ is \markdef{$\lambda^+$-subadditive} if for every family $\V\subseteq \MMM$ of size $|\V|\leq \lambda$, we have $\mu(\bigcup \V) \leq \sum_{V\in \V} \mu(V)$.
\end{definition}

In general, from the axioms of Definition~\ref{def:axioms_measure} alone we cannot conclude that every weak $\lambda^+$-measure space $(X,\MMM, \mu)$ is {$\lambda^+$-subadditive}.

\begin{example}
    Let $\tau$ be the usual euclidean topology on $\RR$. Let $\QQ+\pi=\{q+\pi\mid q\in \QQ\}$, and let
    \[
    \MMM=\{O\cup D\mid O\in \tau, D\in \{ \emptyset, \RR\setminus\QQ, \RR\setminus(\QQ+\pi)\}\}.
    \]
    Then, it is immediate to check that $\MMM$ is a weak $\omega^+$-measurable structure on $\RR$.
    Let $\mu_L:\MMM_L\to \RR_\infty$ be the usual Lebesgue measure on $\RR$.
    For every $M\in \MMM$, define 
    $\mu(M)=\mu_L(\int{M})$.
    
    It is easy to check that $\mu$ satisfies Axiom~\ref{ax_measure:emptyset}-\ref{ax_measure:non-trivial}, since $\mu_L$ does. Also, for every $A,B\in \MMM$ such that $A\subseteq B$, we have $\int{A}\subseteq \int{B}$ and $\mu(A)=\mu_L(\int{A})\leq \mu_L(\int{B})=\mu(B)$, thus Axiom~\ref{ax_measure:decreasing} holds as well.
    Since no point is measurable in $\MMM$, then $\mu$ satisfies Axiom~\ref{ax_measure:point-regular}.
    Finally, if $\A$ is a family of disjoint measurable sets of size $\geq 2$, then $\A\subseteq\tau$, since $\RR\setminus\QQ$ and $\RR\setminus(\QQ+\pi)$ are dense in $\RR$ and have non-empty intersection. Thus $\mu$ satisfies Axiom~\ref{ax_measure:additive}.
    However, 
    \[
    \mu((\RR\setminus\QQ)\cup (\RR\setminus(\QQ+\pi)))=\mu(\RR)=\infty\nleq 0=\mu(\RR\setminus\QQ)+ \mu(\RR\setminus(\QQ+\pi)),
    \]
    thus $\mu$ is not even finitely subadditive.
\end{example}

However, in every weak $\lambda^+$-measure space $(X,\MMM,\mu)$ where $\MMM$ is sufficiently nice, the $\lambda^+$-measure $\mu$ is indeed subadditive.

\begin{definition}
    Given a weak $\lambda^+$-measure space $(X,\MMM, \mu)$, we say that a family $\V\subseteq \MMM$ is \markdef{$\lambda^+$-partitionable} if there exists a partition $\P\subseteq \MMM$ of $\bigcup \V$ of size less than $\lambda^+$ refining $\V$.
    
    We say that $(X,\MMM, \mu)$ is \markdef{$\lambda^+$-partitionable} if every family $\V\subseteq \MMM$ of size less than $\lambda^+$ is $\lambda^+$-partitionable.
\end{definition}

Every weak $\lambda^+$-measure space $(X,\MMM, \mu)$ where $\MMM$ is closed under complements (and thus, a $\lambda^+$-algebra) is also $\lambda^+$-partitionable. However, these are not the only cases of $\lambda^+$-partitionable, weak $\lambda^+$-measure spaces (see Proposition~\ref{prop:minimal_measure_spaces_are_partitionable}).

In these spaces, subadditivity holds in the expected way.

\begin{proposition}\label{prop:measure_of_union_leq_sum_of_measures}
Let $(X,\MMM,\mu)$ be a weak $\lambda^+$-measure space. Then, for every $\lambda^+$-partitionable family $\V\subseteq \MMM$ we have $\mu(\bigcup \V) \leq \sum_{V\in \V} \mu(V)$.
\end{proposition}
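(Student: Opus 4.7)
The plan is to reduce the subadditivity inequality to a single instance of additivity~\ref{ax_measure:additive} after suitably relabeling the partition witnessing that $\V$ is $\lambda^+$-partitionable. Concretely, I would fix a partition $\P\subseteq\MMM$ of $\bigcup\V$ of size $\gamma<\lambda^+$ refining $\V$, then pick for every $P\in\P$ some $V_P\in\V$ with $P\subseteq V_P$, and set $\P_V=\{P\in\P\mid V_P=V\}$ for each $V\in\V$. In this way $\{\P_V\mid V\in\V\}$ partitions $\P$ into at most $\gamma$ disjoint pieces, and each $\bigcup\P_V$ lies in $\MMM$ (since $|\P_V|\leq\gamma<\lambda^+$ and $\MMM$ is closed under such unions) and is a subset of $V$.

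With this decomposition in hand, the first step is the local comparison: applying \ref{ax_measure:additive} to the disjoint family $\P_V$ and then monotonicity \ref{ax_measure:decreasing} yields
\[
\sum_{P\in\P_V}\mu(P)=\mu\Bigl(\bigcup\P_V\Bigr)\leq\mu(V)
\]
for every $V\in\V$. The second step is to apply \ref{ax_measure:additive} twice to $\bigcup\V$, first through the fine partition $\P$ itself and then through the coarser disjoint family $\{\bigcup\P_V\mid V\in\V,\,\P_V\neq\emptyset\}$, so as to rewrite
\[
\mu\Bigl(\bigcup\V\Bigr)=\sum_{P\in\P}\mu(P)=\sum_{V\in\V}\Bigl(\sum_{P\in\P_V}\mu(P)\Bigr).
\]
Combining this identity with the first step, together with the infinitary-translation invariance of $\sum$ (Proposition~\ref{prop:sum_is_natural_infinitary_sum}\ref{prop:sum_is_natural_infinitary_sum-2}), yields $\mu(\bigcup\V)\leq\sum_{V\in\V}\mu(V)$, as required.

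The main obstacle I anticipate is the middle equality in the previous display, which is an associative rearrangement of $\sum$: since $\sum$ is not assumed to be natural on all of $\SS$, associativity is a priori unavailable. This is precisely what Proposition~\ref{prop:sum_is_natural_on_measurable_part} is for: $\sum$ restricted to the family $\D$ of sequences arising as $\mu$-values on disjoint subfamilies of $\MMM$ of length $<\lambda^+$ is natural, and every sequence appearing in the rearrangement belongs to $\D$ because $\gamma<\lambda^+$ and each $|\P_V|\leq\gamma$. Once associativity on $\D$ is invoked in this controlled way, the rest of the argument is a direct combination of axioms \ref{ax_measure:decreasing} and \ref{ax_measure:additive} with the order-theoretic properties of $\sum$.
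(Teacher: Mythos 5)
Your decomposition is exactly the paper's: refine $\V$ by the partition $\P$, group the pieces as $\P_V$ via a choice of $V_P\supseteq P$, compare $\mu(\bigcup\P_V)\leq\mu(V)$, and finish with \ref{ax_measure:additive} and infinitary-translation invariance. The one place where your argument as written breaks down is the size of $\V$: the statement (and its later applications, via Remark~\ref{rmk:families_of_open_sets_are_partitionable} and Corollary~\ref{cor:sets_of_measure_zero_are_lambda_ideal}) allows $\V$ to be of arbitrary size; only $\P$ is required to have size at most $\lambda$. When $|\V|\geq\lambda^{+}$, you cannot apply \ref{ax_measure:additive} to the coarse disjoint family indexed by all of $\V$, and the sequences $\langle \mu(\bigcup\P_V)\mid V\in\V\rangle$ and $\langle \sum_{P\in\P_V}\mu(P)\mid V\in\V\rangle$ have length $\geq\lambda^{+}$, so they need not lie in $\D$; your claim that ``every sequence appearing in the rearrangement belongs to $\D$'' fails in this case, and moreover $\sum_{V\in\V}\mu(V)$ need not exist in $\SS$ at all (the inequality is meant in the weak sense of Remark~\ref{rmk:compare_sup}). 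The repair is exactly what the paper does: since the nonempty $\P_V$ are pairwise disjoint subfamilies of $\P$, at most $\lambda$ many of them are nonempty; apply \ref{ax_measure:additive} only to $(\bigcup\P_V)_{V\in\V,\ \P_V\neq\emptyset}$, then pass to the sum over all of $\V$ using that $0=\mu(\emptyset)$ is the neutral element of $\sum$ (Proposition~\ref{prop:sum_is_natural_infinitary_sum}\ref{prop:sum_is_natural_infinitary_sum-1}), and conclude with infinitary-translation invariance (Proposition~\ref{prop:sum_is_natural_infinitary_sum}\ref{prop:sum_is_natural_infinitary_sum-2}), read in the sense of Remark~\ref{rmk:compare_sup}.

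Two smaller remarks. First, your worry about associativity is a red herring: since both the fine and the coarse sums arise from applying \ref{ax_measure:additive} to partitions of $\bigcup\V$, the middle identity is obtained by routing through $\mu(\bigcup\V)$ and substituting $\mu(\bigcup\P_V)=\sum_{P\in\P_V}\mu(P)$ termwise, so Proposition~\ref{prop:sum_is_natural_on_measurable_part} need not be invoked at all. Second, the inner application of \ref{ax_measure:additive} to each $\P_V$ is also unnecessary: the paper simply uses \ref{ax_measure:decreasing} to get $\mu(\bigcup\P_V)\leq\mu(V)$ and never expands $\mu(\bigcup\P_V)$ as a sum.
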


Recall that we follow the convention introduced in Remark~\ref{rmk:compare_sup} and preceding paragraph: the fact that we write $\mu(\bigcup \V) \leq \sum_{V\in \V} \mu(V)$ does not imply that $\sum_{V\in \V} \mu(V)$ exists in $\SS$.

\begin{proof}
    Let $\P\subseteq \MMM$ be a family of disjoint sets of size $<\lambda^+$ refining $\V$ such that $\bigcup \P=\bigcup \V$. 
    Since $\P$ refines $\V$, for every $P\in\P$ there is $V\in \V$ such that $P\subseteq V$. Using the axiom of choice, we can find a partition $\{\P_V\mid V\in\V\}$ of $\P$ such that $\P=\bigcup_{V\in \V} \P_V$ and $P\subseteq V$ for every $P\in \P_V$. 
    Thus, $(\bigcup\P_V)_{V\in \V}$ is still a partition of $\bigcup \V$ refining $\V$ with $\leq \lambda$ elements different from $\emptyset$, and each $\bigcup\P_V$ is measurable since $\MMM$ is closed under unions of size $\leq \lambda$. 
    By Axiom~\ref{ax_measure:decreasing}, we get $\mu(\bigcup \P_V)\leq \mu(V)$ for every $V\in \V$, and so
    \[
    \mu(\bigcup \V)=\mu(\bigcup_{\substack{V\in \V, \\ \P_V\neq \emptyset}}\bigcup \P_V)=\sum_{\substack{ V\in \V, \\ \P_V\neq \emptyset}} \mu(\bigcup \P_V)=\sum_{V\in \V} \mu(\bigcup \P_V)\leq \sum_{V\in \V} \mu(V)
    \]
    by Axiom~\ref{ax_measure:additive} and by Proposition~\ref{prop:sum_is_natural_infinitary_sum} (using that $\sum$ is infinitary-translation invariant and $0=\mu(\emptyset)$ is the neutral element of $\sum$).
\end{proof}

\begin{corollary}\label{cor:partitionable_implies_subadditive}
Let $(X,\MMM,\mu)$ be a $\lambda^+$-partitionable, weak $\lambda^+$-measure space. Then, $\mu$ is $\lambda^+$-subadditive.
\end{corollary}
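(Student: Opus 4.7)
The plan is to observe that this is essentially an immediate unfolding of the definitions combined with Proposition~\ref{prop:measure_of_union_leq_sum_of_measures}. Fix a family $\V\subseteq \MMM$ with $|\V|\leq \lambda$, i.e.\ of size strictly less than $\lambda^+$. By the hypothesis that $(X,\MMM,\mu)$ is $\lambda^+$-partitionable (as a space), every such family is $\lambda^+$-partitionable (as a family). Therefore Proposition~\ref{prop:measure_of_union_leq_sum_of_measures} applies directly to $\V$ and yields
\[
\mu\bigl(\bigcup \V\bigr)\leq \sum_{V\in \V}\mu(V),
\]
which is exactly the defining inequality of $\lambda^+$-subadditivity. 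Since this holds for every such $\V$, we conclude that $\mu$ is $\lambda^+$-subadditive.

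There is no real obstacle here: the work has already been carried out in Proposition~\ref{prop:measure_of_union_leq_sum_of_measures}, where the partition refining $\V$ is grouped by the axiom of choice into measurable pieces $\bigcup \P_V$ and Axioms~\ref{ax_measure:decreasing} and~\ref{ax_measure:additive} are combined with the infinitary-translation invariance of $\sum$. The only thing the corollary contributes is the reindexing of \enquote{$\lambda^+$-partitionable space} into \enquote{every family of the relevant cardinality is partitionable}, which is immediate from the definition. One should just be careful, as the authors note after Proposition~\ref{prop:measure_of_union_leq_sum_of_measures}, that the inequality is interpreted in the generalized sense of Remark~\ref{rmk:compare_sup}: the statement does not assert that $\sum_{V\in \V}\mu(V)$ exists in $\SS$, but only that any upper bound of the summands also bounds $\mu(\bigcup \V)$.
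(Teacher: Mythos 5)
Your proposal is correct and matches the paper exactly: the corollary is stated without a separate proof precisely because, as you observe, a family of size $\leq\lambda$ is a family of size $<\lambda^+$, so $\lambda^+$-partitionability of the space makes Proposition~\ref{prop:measure_of_union_leq_sum_of_measures} apply verbatim. Your remark about interpreting the inequality in the sense of Remark~\ref{rmk:compare_sup} is also in line with the paper's convention.
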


Notice that by Fact~\ref{fct:strong_zero_dimensionality} and by our assumption on the weight of $X$, every family of open sets is partitionable.

\begin{remark}\label{rmk:families_of_open_sets_are_partitionable}
Let $(X,\MMM,\mu)$ be a weak $\lambda^+$-measure space. Then every family of open sets is $\lambda^+$-partitionable.
\end{remark}

Therefore, by this and Proposition~\ref{prop:measure_of_union_leq_sum_of_measures}, we obtain the following.

\begin{corollary}\label{cor:sets_of_measure_zero_are_lambda_ideal}
Let $(X,\MMM,\mu)$ be a weak $\lambda^+$-measure space. Then, the family of measure zero open sets $\N=\{V\in \tau\mid \mu(V)=0\}$ is closed under unions of any size.
\end{corollary}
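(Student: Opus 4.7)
The plan is to combine Remark~\ref{rmk:families_of_open_sets_are_partitionable} with Proposition~\ref{prop:measure_of_union_leq_sum_of_measures} and observe that the bound coming from partitionability does not depend on the size of the original family $\V$.

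More precisely, let $\V\subseteq \N$ be an arbitrary family of measure-zero open sets; it can have any cardinality. Since the elements of $\V$ are open, by Remark~\ref{rmk:families_of_open_sets_are_partitionable} the family $\V$ is $\lambda^+$-partitionable; here I use crucially the hypothesis that $X$ has weight at most $\lambda$, which, via Fact~\ref{fct:strong_zero_dimensionality}, ensures that the refining partition $\P\subseteq \B(X)$ of $\bigcup \V$ can be chosen of size at most $\lambda<\lambda^+$, even though $|\V|$ itself may be arbitrarily large.

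Now I would apply Proposition~\ref{prop:measure_of_union_leq_sum_of_measures} to conclude
\[
\mu\Bigl(\bigcup \V\Bigr)\leq \sum_{V\in \V}\mu(V).
\]
Since $\mu(V)=0$ for every $V\in \V$, each finite partial sum on the right-hand side is $0$, so by the convention on $\sup$--comparisons from Remark~\ref{rmk:compare_sup}, the inequality above forces $\mu(\bigcup \V)\leq 0$. Positivity of $\SS$ (together with axiom~\ref{ax_measure:decreasing} applied to $\emptyset\subseteq \bigcup \V$) gives $\mu(\bigcup \V)\geq 0$, hence $\mu(\bigcup \V)=0$. Finally $\bigcup \V$ is open as a union of open sets, so $\bigcup \V\in \N$.

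There is essentially no obstacle here: the whole point of the preceding work in this subsection is that $\lambda^+$-partitionability is a property of $\bigcup \V$ rather than of $\V$ itself, and families of open sets in our spaces are always $\lambda^+$-partitionable. The only subtle point is the remark that $\sum$ of a sequence of zeros equals $0$ in the sense of the Dedekind-completion convention used for the inequality in Proposition~\ref{prop:measure_of_union_leq_sum_of_measures}, which is immediate since every partial sum is exactly $0$.
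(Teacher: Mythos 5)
Your proof is correct and follows exactly the paper's intended argument: the corollary is stated as an immediate consequence of Remark~\ref{rmk:families_of_open_sets_are_partitionable} and Proposition~\ref{prop:measure_of_union_leq_sum_of_measures}, and your write-up just spells out that combination, including the correct handling of the $\sup$-comparison convention when all terms vanish. Nothing is missing.
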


Another important property of classical measure spaces is that adding a set of measure zero should not increase the measure of a set.
Here, we get something similar under the additional assumption that the weak $\lambda^+$-measure space is $\lambda^+$-subadditive.

\begin{definition}
    Let $(X,\MMM, \mu)$ be a weak $\lambda^+$-measure space. A subset $N\subseteq X$ is said to be \markdef{essentially null} if for every $M, M'\in \MMM$, we have that 
    \[
    M\setminus N=M'\setminus N \qquad \text{ implies } \qquad \mu(M)=\mu(M').
    \]
\end{definition}
 
\begin{remark}\label{rmk:essentially_null_closed_under_subsets}
    Let $(X,\MMM, \mu)$ be a weak $\lambda^+$-measure space. If $N\subseteq X$ is {essentially null}  and $N'\subseteq N$, then $N'$ is {essentially null} as well, i.e., essentially null sets are closed under subsets. 
    
\end{remark}

\begin{lemma}\label{lem:null_are_essentially_null_if_subadditive}
    Let $(X,\MMM, \mu)$ be a $\lambda^+$-subadditive, weak $\lambda^+$-measure space.
    Then, every set of measure zero is also essentially null.
\end{lemma}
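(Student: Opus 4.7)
The plan is to show the symmetric inclusions $M \subseteq M' \cup N$ and $M' \subseteq M \cup N$, and then use monotonicity plus $\lambda^+$-subadditivity (applied to two-element families) to chain the inequalities $\mu(M) \leq \mu(M' \cup N) \leq \mu(M') + \mu(N) = \mu(M')$ and its mirror image. Concretely, suppose $N \in \MMM$ with $\mu(N) = 0$, and let $M, M' \in \MMM$ with $M \setminus N = M' \setminus N$. If $x \in M$, then either $x \in N$ (giving $x \in M' \cup N$) or $x \in M \setminus N = M' \setminus N \subseteq M'$; so $M \subseteq M' \cup N$, and by symmetry $M' \subseteq M \cup N$.

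Next I would invoke Axiom~\ref{ax_measure:decreasing} (monotonicity) to obtain $\mu(M) \leq \mu(M' \cup N)$, noting that $M' \cup N \in \MMM$ since $\MMM$ is closed under unions of size $\leq \lambda$ (in fact finite unions here). Since $\mu$ is $\lambda^+$-subadditive and $\{M', N\}$ has size $2 \leq \lambda$, we get $\mu(M' \cup N) \leq \mu(M') + \mu(N)$; here the finite sum on the right is just the monoid operation, interpreted via $\sum$ on length-$2$ sequences via compatibility (Axiom~\ref{ax_sum:extend_plus}). Using $\mu(N) = 0$ and the fact that $0$ is the neutral element of $+$, this simplifies to $\mu(M) \leq \mu(M')$. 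The symmetric argument with the roles of $M$ and $M'$ swapped gives $\mu(M') \leq \mu(M)$, and antisymmetry of $\leq$ yields $\mu(M) = \mu(M')$, as required.

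There is essentially no obstacle here; the only subtle point to flag is that $\lambda^+$-subadditivity is used on the trivial two-element family $\{M', N\}$, so the conclusion would already follow from mere finite subadditivity. No appeal to point-regularity (Axiom~\ref{ax_measure:point-regular}) or to the infinitary structure of $\sum$ beyond its compatibility with $+$ is needed.
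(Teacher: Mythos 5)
Your proof is correct and follows essentially the same route as the paper's: establish $M\subseteq M'\cup N$ and $M'\subseteq M\cup N$, then chain $\mu(M)\leq\mu(M'\cup N)\leq\mu(M')+\mu(N)=\mu(M')$ using monotonicity and subadditivity, and conclude by symmetry. Your extra remarks (membership of $M'\cup N$ in $\MMM$, and that only finite subadditivity is used) are accurate but not points of divergence.
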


\begin{proof}
    Let $M, M', N\in \MMM$ be such that $\mu(N)=0$ and $M\setminus N=M'\setminus N$. 
    Then $M\subseteq M'\cup N$ and $M'\subseteq M\cup N$. 
    Thus, if $\mu$ is {$\lambda^+$-subadditive}, we get
    \[
    \mu(M)\leq\mu(M'\cup N)\leq\mu(M')+\mu(N)=\mu(M')+0=\mu(M')
    \]
    and viceversa, therefore $\mu(M)=\mu(M')$, as wanted.
\end{proof}

While these properties seem important enough that we might want to assume them, in practice we won't need to: they are automatically satisfied in a substructure of any weak $\lambda^+$-measure space (see Proposition~\ref{prop:minimal_measure_spaces_are_partitionable}).

\subsection{Minimal \texorpdfstring{$\lambda^+$}{lambda+}-measurable spaces}

In order to prove our main impossibility theorem, we introduce a simple class of weakly $\lambda^+$-measurable spaces, to which we can always restrict when needed.

\begin{definition}
   A \markdef{minimal $\lambda^+$-measurable space} is a weakly $\lambda^+$-measurable space $(X,\MMM)$ such that for every $M\in \MMM$ there is a partition $M=D\cup O$ satisfying that $|D|\leq \lambda$, $x\in \MMM$ for every $x\in D$, and $O\in \tau$ is open.

   A \markdef{minimal $\lambda^+$-measure space} is a weak $\lambda^+$-measure space $(X,\MMM, \mu)$ where $(X,\MMM)$ is a {minimal $\lambda^+$-measurable space}.
\end{definition}

The term \textit{minimal} comes from the fact that every weak $\lambda^+$-measure space contains a unique minimal $\lambda^+$-measure space that measures exactly the same points. This can be regarded as the \textit{core}, or \emph{kernel}, of the space.

Notice that given a weakly $\lambda^+$-measurable space $(X,\MMM)$, if $D$ and $O$ are subsets of $X$ satisfying that $|D|\leq \lambda$, $x\in \MMM$ for every $x\in D$, and $O\in \tau$, then we get $D\in \MMM$ and $O\in \MMM$, by definition of weakly $\lambda^+$-measurability, and therefore $D \cup O\in \MMM$.
Thus, in a minimal $\lambda^+$-measurable space $(X,\MMM)$
we have $M\in \MMM$ if and only if there is a partition $M=D\cup O$ satisfying that $|D|\leq \lambda$, $x\in \MMM$ for every $x\in D$, and $O\in \tau$ is open.

Given $A\subseteq X$, let $\MMM^{\lambda}_p(A)$ be the minimal family of subsets of $X$ containing all open sets and all points of $A$, and closed under unions of size at most $\lambda$.
It is easy to check that $(X,\MMM^\lambda_p(A))$ is a minimal $\lambda^+$-measurable space, and that if $(X,\MMM)$ is any other weakly $\lambda^+$-measurable space satisfying $A\subseteq\{x\in X\mid x\in \MMM\}$, then necessarily $\MMM^\lambda_p(A)\subseteq \MMM$.
Therefore, $\MMM^\lambda_p(A)$ can be equivalently described as
\[
\MMM^\lambda_p(A):=\bigcap\{\MMM:(X,\MMM)\text{ is a weakly }\lambda^{+}\text{-measurable space}\,\land\,  x\in \MMM \text{ for all }x\in A\}.
\]
In particular, every minimal $\lambda^+$-measurable space $(X,\MMM)$ is of the form $\MMM=\MMM^\lambda_p(A)$ for $A=\{x\in X\mid x\in \MMM\}$, and every weakly $\lambda^+$-measurable space $(X,\MMM)$ contains a unique minimal $\lambda^+$-measurable subspace $(X,\MMM^\lambda_p(A))$ measuring the same points $A=\{x\in X\mid x\in \MMM\}$ of $\MMM$.

Furthermore, if $(X,\MMM,\mu)$ is a weak $\lambda^+$-measure space and $\MMM'\subseteq \MMM$ is such that $(X,\MMM')$ is weakly $\lambda^+$-measurable, then $(X,\MMM',\mu)$ is also a weak $\lambda^+$-measure space.
This observation allows us to introduce the following definition.

\begin{definition}
    The \markdef{kernel} of a weak $\lambda^+$-measure space $(X,\MMM,\mu)$ is the minimal $\lambda^+$-measure space $\ker(X,\MMM,\mu)=(X,\ker(\MMM),\mu)$, where 
    \[
    \ker(\MMM)=\MMM^\lambda_p(\{x\in X\mid x\in \MMM\})\subseteq \MMM
    \]
    is the unique minimal $\lambda^+$-measurable structure measuring the same points of $\MMM$.
\end{definition}

As a consequence, without loss of generality, we can restrict our study to minimal $\lambda^+$-measurable spaces: if we show that finding such a minimal structure is impossible, then we get automatically that finding anything else is impossible.

Minimal $\lambda^+$-measurable spaces have a nicer behavior than most weakly $\lambda^+$-mea\-sur\-able spaces, which makes them ideal candidates to work with.

\begin{proposition}\label{prop:minimal_measurable_spaces_closed_under_intersections}
    For every minimal $\lambda^+$-measurable space $(X,\MMM)$ we have that $\MMM$ is closed under intersections of size less than $\kappa$.
\end{proposition}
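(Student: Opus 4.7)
The plan is to unpack the definition of minimality and then exploit the crucial asymmetry between ``open part'' and ``point part'' of each measurable set. Fix a family $(M_i)_{i<\gamma}$ with $\gamma<\kappa$ and each $M_i\in\MMM$. By minimality of $(X,\MMM)$, write $M_i=D_i\cup O_i$ with $D_i$ a set of (measurable) points of cardinality $\leq\lambda$ and $O_i\in\tau$. Set $O:=\bigcap_{i<\gamma}O_i$; since the bounded topology on $\pre{\kappa}{\lambda}$ is $\kappa$-additive and $\gamma<\kappa$, the set $O$ is open, hence $O\in\MMM$.

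The key step is the decomposition
\[
\bigcap_{i<\gamma}M_i \;=\; O\,\cup\, R, \qquad R:=\Bigl(\bigcap_{i<\gamma}M_i\Bigr)\setminus O,
\]
together with the claim that $R\subseteq \bigcup_{i<\gamma}D_i$. Indeed, if $x\in R$ then $x\in M_i$ for every $i<\gamma$, but $x\notin O$, so there exists some $j<\gamma$ with $x\notin O_j$; from $x\in M_j=D_j\cup O_j$ we conclude $x\in D_j$. Thus $R$ is contained in the union of the $D_i$'s, hence $R$ consists entirely of measurable points and has cardinality at most $|\gamma|\cdot\lambda=\lambda$ (using $|\gamma|<\kappa\leq\lambda$ and that $\lambda$ is infinite). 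So $R$ is a union of at most $\lambda$ measurable singletons, giving $R\in\MMM$; closure of $\MMM$ under unions of size $\leq\lambda$ then yields $O\cup R\in\MMM$, as desired.

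There is essentially no obstacle here once the right decomposition is chosen. The naive symmetric approach of distributing $\bigcap_{i<\gamma}(D_i\cup O_i)$ over $2^{|\gamma|}$ cross terms is hopeless for infinite $\gamma$. The trick that makes everything work is the observation that any point in the intersection which \emph{fails} to lie in some $O_j$ is automatically forced into $D_j$; this confines the ``non-open leftover'' $R$ to the small set $\bigcup_i D_i$, so we never need to manipulate an open set minus a large point set.
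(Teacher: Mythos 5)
Your proof is correct and follows essentially the same route as the paper: intersect the open parts (using $\kappa$-additivity of the bounded topology), observe that the leftover $R=\bigl(\bigcap_{i<\gamma}M_i\bigr)\setminus O$ is contained in $\bigcup_{i<\gamma}D_i$, hence is a set of at most $\lambda$ measurable points, and conclude by closure of $\MMM$ under unions of size $\leq\lambda$. No gaps; this matches the paper's argument step for step.
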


\begin{proof}
Recall that $X\subseteq \pre{\kappa}{\lambda}$. Let $(X,\MMM)$ be a minimal $\lambda^+$-measurable space, and let $(M_i)_{i<\gamma}$ be a sequence of elements of $\MMM$ of length $\gamma<\kappa$. For every $i<\gamma$, let  $D_i,O_i\in \MMM$ be such that $M_i=D_i\cup O_i$, $|D_i|\leq \lambda$, $x\in \MMM$ for every $x\in D_i$, and $O_i\in \tau$. 
Let $M=\bigcap_{i<\gamma} M_i$. Then, if $O=\bigcap_{i<\gamma} O_i$, we have that $O$ is open, by $\kappa$-additivity of $\pre{\kappa}{\lambda}$. 
Also, 
\[
D=M\setminus O\subseteq \bigcup_{i<\gamma} D_i
\]
has size $\leq \lambda$, since $\gamma< \kappa\leq \lambda$, and $x\in \MMM$ for every $x\in D$.
Thus $M=D\cup O\in \MMM$ as well, as wanted.
\end{proof}

In minimal $\lambda^+$-measure spaces, all families of measurable sets are essentially made of open sets, modulo a set of small size. By Fact~\ref{fct:strong_zero_dimensionality}, we get then that they are $\lambda^+$-partitionable.

\begin{proposition}\label{prop:minimal_measure_spaces_are_partitionable}
    Every minimal $\lambda^+$-measure space on $X$ is $\lambda^+$-partitionable.
\end{proposition}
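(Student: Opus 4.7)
The plan is to take any family $\V \subseteq \MMM$ of size at most $\lambda$ and build a refining partition of $\bigcup \V$ of size at most $\lambda$ whose pieces all lie in $\MMM$. The strategy splits each member of $\V$ into its ``open part'' and its ``small discrete part'' via the definition of a minimal $\lambda^+$-measurable space, then handles these two parts separately using Fact~\ref{fct:strong_zero_dimensionality} for the open contribution and the fact that singletons are available in $\MMM$ for the discrete contribution.

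Concretely, for each $V \in \V$, invoke minimality to write $V = D_V \cup O_V$ with $|D_V| \leq \lambda$, every $x \in D_V$ measurable, and $O_V \in \tau$. Set $D = \bigcup_{V \in \V} D_V$ and $O = \bigcup_{V \in \V} O_V$, so that $\bigcup \V = O \cup D$, $|D| \leq \lambda \cdot \lambda = \lambda$, and $O$ is open. Apply Fact~\ref{fct:strong_zero_dimensionality} to $\{O_V \mid V \in \V\}$ to get a family $\P_0 \subseteq \B(X)$ of pairwise disjoint basic clopen cones refining $\{O_V \mid V \in \V\}$ with $\bigcup \P_0 = O$. Since $\P_0$ is a family of pairwise disjoint non-empty open sets in $X$, and $X$ has weight at most $\lambda$, Proposition~\ref{prop:weight=cellularity} gives $\cellularity(X) \leq \lambda$ and hence $|\P_0| \leq \lambda$.

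Now define
\[
\P = \P_0 \cup \{\{x\} \mid x \in D \setminus O\}.
\]
Then $\P_0$ partitions $O$ (by Fact~\ref{fct:strong_zero_dimensionality}) and the singletons partition $D \setminus O$, so $\P$ is a partition of $\bigcup \V$. Each $P \in \P_0$ lies in $\MMM$ as an open set, and each singleton $\{x\}$ with $x \in D$ lies in $\MMM$ because $x$ belongs to some $D_V$ and all elements of $D_V$ are in $\MMM$ by minimality. Moreover $\P$ refines $\V$: a cone $P \in \P_0$ is contained in some $O_V \subseteq V$, and a singleton $\{x\}$ with $x \in D_V$ is contained in $V$. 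Finally $|\P| \leq |\P_0| + |D| \leq \lambda + \lambda = \lambda < \lambda^+$, which establishes $\lambda^+$-partitionability.

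There is no serious obstacle here; the only point worth flagging is the size bound on $\P_0$, which is not automatic from Fact~\ref{fct:strong_zero_dimensionality} alone and specifically requires the weight hypothesis on $X$ (equivalently, the cellularity bound from Proposition~\ref{prop:weight=cellularity}). Everything else follows by bookkeeping and the defining decomposition of minimal $\lambda^+$-measurable spaces.
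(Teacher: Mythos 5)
Your proof is correct and follows essentially the same route as the paper's: decompose each member of $\V$ into its open and small measurable parts, refine the open parts by a disjoint family of basic clopen cones via Fact~\ref{fct:strong_zero_dimensionality}, cover the leftover points by singletons, and bound the sizes using the weight hypothesis on $X$. The point you flag about the size of the clopen refinement is exactly how the paper justifies it as well.
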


\begin{proof}
Let $(X,\MMM, \mu)$ be a minimal $\lambda^+$-measure space. 
Consider $\V\subseteq \MMM$ of size $\leq \lambda$.
Then, for each $M\in \V$ we can find two disjoint sets $O(M)\in \tau$ and $D(M)\in \MMM$ such that $M=O(M)\cup D(M)$, $|D(M)|\leq \lambda$, and $x\in \MMM$ for every $x\in D(M)$.

By Fact~\ref{fct:strong_zero_dimensionality}, we can find a clopen partition $\P_1$ of $O=\bigcup_{M\in \V} O(M)$ refining $\{O(M)\mid M\in\V\}$ (and thus $\V$).
Notice that $\P_1\subseteq \MMM$, since $\MMM$ contains all open subsets of $X$.
Also, $|\P_1|\leq \lambda$, since we assumed that $X$ has weight $\leq \lambda$.

Let also $D=\bigcup \V\setminus O$, and $D'=\bigcup_{M\in \V} D(M)$. Notice that $D\subseteq D'$: this implies both that $|D|\leq\lambda$, and that $x\in \MMM$ for every $x\in D$. Therefore, $\P_2=\{\{x\}\mid x\in D\}$ is a partition of $D$ refining $\V$.

Thus, $\P_1\cup \P_2$ is a partition of $\bigcup \V$ refining $\V$, as wanted.
\end{proof}

By Corollary~\ref{cor:partitionable_implies_subadditive}, we get the following.

\begin{corollary}\label{cor:minimal_measure_spaces_are_subadditive}
   Every minimal $\lambda^+$-measure space on $X$ is $\lambda^+$-subadditive.
\end{corollary}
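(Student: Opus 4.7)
The plan is essentially to chain the two immediately preceding results. Given a minimal $\lambda^+$-measure space $(X,\MMM,\mu)$ on $X$, I would first invoke Proposition~\ref{prop:minimal_measure_spaces_are_partitionable} to conclude that $(X,\MMM,\mu)$ is $\lambda^+$-partitionable. Then I would apply Corollary~\ref{cor:partitionable_implies_subadditive} directly to obtain that $\mu$ is $\lambda^+$-subadditive, which is precisely the desired conclusion.

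There is no substantive obstacle here: all the real content has already been established in the preceding material. The structural work is carried out in Proposition~\ref{prop:minimal_measure_spaces_are_partitionable}, where one decomposes each measurable set $M$ in a family $\V \subseteq \MMM$ of size $\leq \lambda$ as a disjoint union of an open part $O(M)$ and a \enquote{small} part $D(M)$ of cardinality $\leq \lambda$ consisting of measurable points, then uses the strong zero-dimensionality of $X \subseteq \pre{\kappa}{\lambda}$ (Fact~\ref{fct:strong_zero_dimensionality}) together with the weight bound $\weight(X)\leq\lambda$ to refine $\{O(M)\mid M\in\V\}$into a clopen partition of size $\leq \lambda$, and finally glues this with the singletons coming from $\bigcup_{M\in\V} D(M)$. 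The additive and monotonicity content then lives in Proposition~\ref{prop:measure_of_union_leq_sum_of_measures}, whose argument only needs $\lambda^+$-partitionability together with Axioms~\ref{ax_measure:decreasing} and~\ref{ax_measure:additive} and the fact that $0$ is the neutral element of $\sum$. Hence the corollary is a genuine one-line deduction from the chain just proved.
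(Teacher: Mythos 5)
Your proposal is correct and matches the paper exactly: the paper derives this corollary precisely by combining Proposition~\ref{prop:minimal_measure_spaces_are_partitionable} with Corollary~\ref{cor:partitionable_implies_subadditive}, with no further argument needed. Your summary of where the underlying content lives (the partition construction and Proposition~\ref{prop:measure_of_union_leq_sum_of_measures}) is also accurate.
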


\subsection{Extensions and restrictions of measures}

Given a weak $\lambda^+$-measure space $(Y,\MMM, \mu)$, we first define a way to induce a structure on a superspace $X\supseteq Y$.

\begin{definition}\label{def:induced_measure_on_superspace}
    Given two spaces $Y\subseteq X$ and a weak $\lambda^+$-measure space $(Y,\MMM, \mu)$, define the set
    \[
    \MMM\uparrow X=\{A\subseteq X\mid A\cap Y\in \MMM\},
    \] 
    and let
    \[
    \mu\uparrow X:\MMM\uparrow X \to \SS
    \]
    be the function given by 
    \[
    (\mu\uparrow X)(B)=\mu(B\cap Y) \text{ for all } B\in \MMM\uparrow X.
    \]
\end{definition}

Conversely, given a weak $\lambda^+$-measure space $(X,\MMM, \mu)$, we define the a way to induce a structure on a subspace $Y\subseteq X$. 

\begin{definition}
    Given two spaces $Y\subseteq X$ and a weak $\lambda^+$-measure space $(X,\MMM, \mu)$, define
    \[
    \MMM\downarrow Y=\{M\cap Y\mid M\in \MMM\}
    \]
    and let $\mu\downarrow Y$ be the partial function given by
    \[
    (\mu\downarrow Y)(A)=\inf\{\mu(M)\mid M\in \MMM, M\cap Y=A\}
    \]
    for all $A\in \MMM\downarrow Y$ such that the above $\inf$ exists.
\end{definition}

When we extend a measure from $Y$ to $X$ using $\uparrow$, the set $X\setminus Y$ is always essentially null. 
For these sets, the operations $\uparrow$ and $\downarrow$ are then the inverses of each other.

\begin{lemma}\label{lem:uparrow_and_downarrow_inverse_of_eachother}
Let $Z\subseteq Y \subseteq X$ be sets and let $(Y,\MMM, \mu:\MMM\to \SS)$ be a weak $\lambda^+$-measure space such that $Z$ is essentially null in it.
Then, we have 
\[
\MMM=(\MMM\uparrow X)\downarrow Y, \qquad \mu=(\mu\uparrow X)\downarrow Y,
\]
and
\[
\MMM\subseteq (\MMM\downarrow Z)\uparrow Y, \qquad \mu\subseteq (\mu\downarrow Z)\uparrow Y.
\]
\end{lemma}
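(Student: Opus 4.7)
The lemma decomposes into two pairs of statements---up-then-down on the $X$-side and down-then-up on the $Z$-side---both of which I would handle by unpacking the definitions of $\uparrow$ and $\downarrow$. The first pair, $\MMM = (\MMM\uparrow X)\downarrow Y$ and $\mu = (\mu\uparrow X)\downarrow Y$, is pure definition-chasing and uses no hypothesis at all. Every $M \in \MMM$ lies in $Y$, so $M = M \cap Y$ and $M \in \MMM \uparrow X$; conversely, every element of $(\MMM\uparrow X)\downarrow Y$ has the form $A \cap Y$ for some $A$ with $A \cap Y \in \MMM$, and so already lies in $\MMM$. The measure identity follows analogously: for $A \in \MMM$, the set $\{(\mu\uparrow X)(B) : B \in \MMM\uparrow X,\ B \cap Y = A\}$ equals $\{\mu(A)\}$, since $(\mu\uparrow X)(B) = \mu(B \cap Y) = \mu(A)$ for each such $B$, so the defining infimum exists and evaluates to $\mu(A)$.

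The set inclusion $\MMM \subseteq (\MMM \downarrow Z)\uparrow Y$ is equally automatic: for $N \in \MMM$ we have $N \subseteq Y$ and $N \cap Z \in \MMM \downarrow Z$ by the very definition of $\MMM \downarrow Z$, so $N \in (\MMM \downarrow Z)\uparrow Y$; again, essential nullness is not invoked here.

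The only substantive point is the measure inclusion $\mu \subseteq (\mu\downarrow Z)\uparrow Y$, which is where essential nullness enters the argument. For $N \in \MMM$ the goal is to show that the defining infimum
\[
\inf\{\mu(M) : M \in \MMM,\ M \cap Z = N \cap Z\}
\]
exists in $\SS$ and equals $\mu(N)$; this will simultaneously establish $N \cap Z \in \dom(\mu\downarrow Z)$ and the desired equality $(\mu\downarrow Z)\uparrow Y(N) = \mu(N)$. The plan is to observe that $M \cap Z = N \cap Z$ is equivalent to $M \triangle N$ being disjoint from $Z$, so any two competitors in the infimum differ only on a portion of $Y$ controlled by the essentially null hypothesis; invoking the definition of essentially null then forces $\mu(M) = \mu(N)$ for every such $M$, collapsing the infimum to the constant value $\mu(N)$. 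The main (and essentially only) obstacle is the careful bookkeeping needed to match the form ``$M \setminus E = M' \setminus E$'' appearing in the definition of essentially null with the agreement condition arising here; once the right identification is made, the argument reduces to a single application of essential nullness.
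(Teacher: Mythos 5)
Your overall route is the same as the paper's: everything is definition-unwinding, the identities $\MMM=(\MMM\uparrow X)\downarrow Y$, $\mu=(\mu\uparrow X)\downarrow Y$ and the inclusion $\MMM\subseteq(\MMM\downarrow Z)\uparrow Y$ use no hypothesis at all, and essential nullness is invoked exactly once, to collapse the infimum $\inf\{\mu(M)\mid M\in\MMM,\ M\cap Z=N\cap Z\}$ to the constant value $\mu(N)$. Up to that point your proposal matches the paper's proof step for step.

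The one piece you defer --- the ``bookkeeping'' matching $M\cap Z=N\cap Z$ with the pattern $M\setminus E=M'\setminus E$ in the definition of essentially null --- is exactly where the difficulty sits, and your claim that it reduces to a single application of essential nullness \emph{of $Z$} does not go through as stated. For $M,N\subseteq Y$, the condition $M\cap Z=N\cap Z$ is equivalent to $M\setminus(Y\setminus Z)=N\setminus(Y\setminus Z)$, so the set whose essential nullness the argument actually uses is $Y\setminus Z$, not $Z$: under the paper's Definition, ``$Z$ is essentially null'' only neutralizes differences \emph{inside} $Z$, whereas your competitors differ precisely on $Y\setminus Z$. (Indeed $\emptyset$ is always essentially null, yet $(\mu\downarrow\emptyset)\uparrow Y$ is identically $0$, so the conclusion $\mu\subseteq(\mu\downarrow Z)\uparrow Y$ fails for $Z=\emptyset$ by Axiom~\ref{ax_measure:non-trivial}.) To be fair, the paper's own proof makes the same silent identification, reading the hypothesis as ``sets agreeing on $Z$ have equal measure,'' and every later application of the lemma verifies essential nullness of the complement ($X\setminus Y$, or $X\setminus\supp(\mu)$) rather than of the set being restricted to. So your proposal is faithful to the paper's argument, but to make the key step correct you must carry out the identification explicitly and use the hypothesis in the form ``$Y\setminus Z$ is essentially null in $(Y,\MMM,\mu)$''; as written, asserting that the stated hypothesis forces $\mu(M)=\mu(N)$ for all competitors is a genuine gap.
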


\begin{proof}
    The relations $\MMM=(\MMM\uparrow X)\downarrow Y$ and $\MMM\subseteq (\MMM\downarrow Z)\uparrow Y$ follow directly from the definition. For the equality $\mu=(\mu\uparrow X)\downarrow Y$ it is enough to notice that
    \begin{align*}
    ((\mu\uparrow X)\downarrow Y)(A)&=\inf\{(\mu\uparrow X)(M)\mid M\in \MMM\uparrow X, M\cap Y=A\}=\\
    &=\inf\{\mu(A)\mid M\in \MMM\uparrow X, M\cap Y=A\}=\mu(A)
    \end{align*}
    for every $A\in \MMM$ (and in particular the $\inf$ exists for every $A\in \MMM$).
    Similarly, for the inclusion $\mu\subseteq (\mu\downarrow Z)\uparrow Y$, the fact that $Z$ is essentially null implies that $\mu(M)=\mu(A)$ for every $A,M\in \MMM$ such that $M\cap Z=A\cap Z$, and thus 
    \begin{align*}
    ((\mu\downarrow Z)\uparrow Y)(A)=(\mu\downarrow Z)(A\cap Z)&=\inf\{\mu(M)\mid M\in \MMM, M\cap Z=A\cap Z\}=\\
    &=\inf\{\mu(A)\mid M\in \MMM, M\cap Z=A\cap Z\}=\mu(A)
    \end{align*}
    for every $A\in \MMM$,  and in particular the $\inf$ exists for every $A\in \MMM$.
\end{proof}

The following lemmata give sufficient conditions for $(X,\MMM\uparrow X, \mu\uparrow X)$ or $(Y,\MMM \downarrow Y, \mu\downarrow Y)$ to be weak $\lambda^+$-measure spaces.

\begin{lemma}\label{lem:extending_measure_spaces}
    For every topological space $X$ and every weak $\lambda^+$-measure space $(Y,\MMM, \mu)$ such that $Y\subseteq X$, we have that $(X,\MMM\uparrow X)$ is a weakly $\lambda^+$-measurable space, and $\mu\uparrow X$ satisfies Axioms~\ref{ax_measure:emptyset}-\ref{ax_measure:additive}.
    
    Furthermore, if $Y$ is closed in $X$, then $(X,\MMM\uparrow X, \mu\uparrow X)$ is a weak $\lambda^+$-measure space.
\end{lemma}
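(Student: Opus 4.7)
The plan is to verify the required conditions directly from the definitions. First I would check that $(X,\MMM\uparrow X)$ is a weakly $\lambda^+$-measurable space: any open $U\subseteq X$ gives $U\cap Y$ open in the subspace $Y$, hence in $\MMM$, so $U\in\MMM\uparrow X$; and closure under $\leq\lambda$-unions follows from the identity $(\bigcup_i A_i)\cap Y=\bigcup_i(A_i\cap Y)$ together with the corresponding property of $\MMM$.

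Next I would verify axioms \ref{ax_measure:emptyset}--\ref{ax_measure:additive} for $\mu\uparrow X$. Axiom \ref{ax_measure:emptyset} is immediate since $\emptyset\cap Y=\emptyset$; axiom \ref{ax_measure:non-trivial} reads $(\mu\uparrow X)(X)=\mu(Y)>0$, which holds as $\mu$ satisfies \ref{ax_measure:non-trivial} on $Y$; axiom \ref{ax_measure:decreasing} follows from $A\subseteq B\Rightarrow A\cap Y\subseteq B\cap Y$ and the monotonicity of $\mu$; and for axiom \ref{ax_measure:additive}, if $(A_i)_{i<\gamma}\subseteq\MMM\uparrow X$ is disjoint with $\gamma<\lambda^+$, then $(A_i\cap Y)_{i<\gamma}$ is a disjoint family in $\MMM$, so by the axiom \ref{ax_measure:additive} for $\mu$ we have $\langle\mu(A_i\cap Y)\mid i<\gamma\rangle\in\dom(\sum)$ and $\mu(\bigcup_i(A_i\cap Y))=\sum_i\mu(A_i\cap Y)$, which rewrites exactly as the statement for $\mu\uparrow X$ via $(\bigcup_iA_i)\cap Y=\bigcup_i(A_i\cap Y)$.

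The main (and essentially only) subtle point is axiom \ref{ax_measure:point-regular}, which is where the additional hypothesis that $Y$ is closed in $X$ enters. Given $x\in X$ with $\{x\}\in\MMM\uparrow X$ and an open local basis $\{A_i\mid i<\gamma\}$ at $x$ of order type $\gamma<\lambda^+$, I split into two cases. If $x\in Y$, then $\{A_i\cap Y\mid i<\gamma\}$ is an open local basis at $x$ in the subspace $Y$ of order type at most $\gamma$, and the point-regularity of $\mu$ on $Y$ gives $\mu(x)=\inf_{i<\gamma}\mu(A_i\cap Y)=\inf_{i<\gamma}(\mu\uparrow X)(A_i)=(\mu\uparrow X)(x)$ (after discarding repeated values, which does not affect the infimum). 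If instead $x\notin Y$, then since $Y$ is closed the set $X\setminus Y$ is an open neighborhood of $x$, so some $A_{i_0}$ in the local basis is contained in $X\setminus Y$; hence $A_{i_0}\cap Y=\emptyset$ and $(\mu\uparrow X)(A_{i_0})=0$. As all values of $\mu\uparrow X$ lie in $\SS^+\cup\{0\}$, this forces $\inf_{i<\gamma}(\mu\uparrow X)(A_i)=0=\mu(\emptyset)=(\mu\uparrow X)(x)$.

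None of the steps is computationally heavy. The only genuine obstacle is recognizing why closedness of $Y$ is essential: without it, a point $x\in X\setminus Y$ could have every neighborhood meeting $Y$ in a set of positive $\mu$-measure, in which case $\inf_i(\mu\uparrow X)(A_i)>0=(\mu\uparrow X)(x)$ and \ref{ax_measure:point-regular} would fail.
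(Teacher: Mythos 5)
Your proposal is correct and follows essentially the same route as the paper: the weakly $\lambda^+$-measurable structure and Axioms~\ref{ax_measure:emptyset}--\ref{ax_measure:additive} are verified by pulling everything back along $A\mapsto A\cap Y$, and Axiom~\ref{ax_measure:point-regular} is handled by the same case split $x\in Y$ versus $x\notin Y$, with closedness of $Y$ used exactly as in the paper to produce an open neighborhood of $x$ of measure zero. Your write-up merely spells out the details the paper calls immediate, so there is nothing to correct.
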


\begin{proof}
Since $\MMM$ is closed under unions of size $\leq\lambda$, so is $\MMM\uparrow X$. Besides, $\MMM\uparrow X$ contains all open sets of $X$, since the restriction of an open set of $X$ to $Y$ is still open in $Y$ and $\MMM$ contains all open sets of $Y$.
It is also immediate to check that $\mu\uparrow X$ satisfies Axioms~\ref{ax_measure:emptyset}-\ref{ax_measure:additive}, since $\mu$ satisfies them. 

Now if $Y$ is closed in $X$, then for every $x\in X$ we have that either $x\in Y$ and we are done because $\mu$ satisfies Axiom~\ref{ax_measure:point-regular}; or $x\notin Y$ and thus 
\[(\mu\uparrow X)(x)=0=(\mu\uparrow X)(X\setminus Y),\] 
which shows once again that $\mu\uparrow X$ satisfies Axiom~\ref{ax_measure:point-regular} for $x$.
\end{proof}

\begin{lemma}\label{lem:measure_induced_on_clopen_subspace}
    Let $(X,\MMM, \mu)$ be a weak $\lambda^+$-measure space such that $\MMM$ is closed under finite intersections. Let $Y\in \MMM$ be a set of positive measure $\mu(Y)>0$.
    Then, $(Y,\MMM \downarrow Y, \mu\downarrow Y)$ is a weak $\lambda^+$-measure space.
\end{lemma}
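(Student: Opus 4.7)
The plan is to observe that $\mu \downarrow Y$ collapses to a mere restriction of $\mu$, which reduces the problem to verifying the measure axioms almost directly. First I would note that, since $Y \in \MMM$ and $\MMM$ is closed under finite intersections, $M \cap Y \in \MMM$ for every $M \in \MMM$; conversely, every $A \in \MMM$ with $A \subseteq Y$ equals $A \cap Y$, so
\[
\MMM \downarrow Y = \{A \in \MMM : A \subseteq Y\}.
\]
For such an $A$, any $M \in \MMM$ with $M \cap Y = A$ satisfies $A \subseteq M$, hence $\mu(A) \leq \mu(M)$ by~\ref{ax_measure:decreasing}; taking $M = A$ witnesses that the infimum is attained and yields $(\mu \downarrow Y)(A) = \mu(A)$.

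With this reduction in hand, $(Y, \MMM \downarrow Y)$ is clearly a weakly $\lambda^+$-measurable space: all open subsets of $Y$ arise as restrictions of open subsets of $X$ (which lie in $\MMM$), and closure of $\MMM \downarrow Y$ under unions of size $\leq \lambda$ follows from $(\bigcup_i M_i) \cap Y = \bigcup_i (M_i \cap Y)$ together with the corresponding closure of $\MMM$. Axioms~\ref{ax_measure:emptyset}, \ref{ax_measure:non-trivial} (using $\mu(Y) > 0$), \ref{ax_measure:decreasing}, and~\ref{ax_measure:additive} for $\mu \downarrow Y$ are then inherited immediately from the corresponding properties of $\mu$, since $\mu \downarrow Y$ agrees with $\mu$ on its domain.

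The main obstacle is axiom~\ref{ax_measure:point-regular}, because an arbitrary open local basis at $x$ in $Y$ need not come from a local basis of $x$ in $X$. I plan to bridge this gap by exploiting that $X \subseteq \pre{\kappa}{\lambda}$ has character at most $\kappa \leq \lambda$, so that any point $x \in Y$ admits an open local basis $\{U_j : j < \kappa\}$ in $X$. Given an arbitrary open local basis $\{A_i : i < \gamma\}$ at $x$ in $Y$ with $\gamma < \lambda^+$ and $\{x\} \in \MMM \downarrow Y$ (equivalently, $\{x\} \in \MMM$), each $U_j \cap Y$ is an open neighborhood of $x$ in $Y$, hence contains some $A_{i(j)}$, so that
\[
\inf_{i < \gamma} \mu(A_i) \leq \mu(A_{i(j)}) \leq \mu(U_j \cap Y) \leq \mu(U_j)
\]
for every $j < \kappa$. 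Taking the infimum over $j$ and applying~\ref{ax_measure:point-regular} for $\mu$ in $X$ to the admissible basis $\{U_j\}$ (of order type $\kappa \leq \lambda < \lambda^+$) yields $\inf_i \mu(A_i) \leq \mu(x)$, while the reverse inequality is immediate from~\ref{ax_measure:decreasing} applied to $\{x\} \subseteq A_i$. This establishes~\ref{ax_measure:point-regular} and completes the argument.
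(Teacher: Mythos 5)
Your proposal is correct and follows essentially the same route as the paper: both arguments hinge on observing that, since $Y\in\MMM$ and $\MMM$ is closed under finite intersections, $\MMM\downarrow Y=\{A\in\MMM\mid A\subseteq Y\}$ and $\mu\downarrow Y=\mu\restriction(\MMM\downarrow Y)$, after which the axioms are inherited from $\mu$. The only difference is that you spell out the verification of Axiom~\ref{ax_measure:point-regular} (comparing an arbitrary local basis in $Y$ with a $\kappa$-sized basis of cones in $X$), a check the paper dismisses as immediate; your argument for it is sound.
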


\begin{proof}
    Since $Y\in \MMM$ and $\MMM$ is closed under intersections, we get $M\cap Y\in \MMM$ for every $M\in \MMM$, and so $\MMM\downarrow Y=\{A\in \MMM\mid A\subseteq Y\}\subseteq \MMM$.
    Also, $\mu(M\cap Y)\leq \mu(M)$ for every $M\in\MMM$ by Axiom~\ref{ax_measure:decreasing}, thus $\inf\{\mu(M)\mid M\in \MMM, M\cap Y=A\}=\mu(A)$ exists for every $A\in \MMM\downarrow Y$, and so $\mu\downarrow Y= \mu\restriction(\MMM\downarrow Y)$. Therefore, it is immediate to check that $(Y,\MMM\downarrow Y)$ is a weakly $\lambda^+$-measurable space, since $(X,\MMM)$ is, and that $\mu\downarrow Y$ is a $\lambda^+$-measure on it, since $\mu$ is.
\end{proof}

\begin{lemma}\label{lem:measure_induced_on_essentially_null_set}
    Let $(X,\MMM, \mu)$ be a $\lambda^+$-partitionable, weak $\lambda^+$-measure space, and let $Y\subseteq X$ be such that $X\setminus Y$ is essentially null.
    Then, $\MMM \downarrow Y \subseteq \dom(\mu\downarrow Y)$ and $(Y,\MMM \downarrow Y, \mu\downarrow Y)$ satisfies Axioms~\ref{ax_measure:emptyset}-\ref{ax_measure:additive}.
\end{lemma}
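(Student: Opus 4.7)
The plan is to first observe that the hypothesis that $X \setminus Y$ is essentially null collapses the defining infimum of $\mu \downarrow Y$ to a single value. Indeed, for any $A \in \MMM \downarrow Y$, if $M, M' \in \MMM$ both satisfy $M \cap Y = M' \cap Y = A$, then $M \setminus (X \setminus Y) = M' \setminus (X \setminus Y)$, so $\mu(M) = \mu(M')$ by definition of essentially null. Hence the infimum in the definition of $(\mu\downarrow Y)(A)$ is attained and equal to $\mu(M)$ for every witnessing $M \in \MMM$ with $M \cap Y = A$. This shows $\MMM \downarrow Y \subseteq \dom(\mu \downarrow Y)$.

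With this reduction in hand, Axioms~\ref{ax_measure:emptyset} and~\ref{ax_measure:non-trivial} are immediate by taking $M = \emptyset$ and $M = X$, respectively, and recalling $\mu(\emptyset)=0$ and $\mu(X)>0$. For Axiom~\ref{ax_measure:decreasing}, given $A \subseteq B$ in $\MMM \downarrow Y$ with witnesses $M_A, M_B \in \MMM$, I would note that $(M_A \cup M_B) \cap Y = A \cup B = B$, so by the previous paragraph $(\mu\downarrow Y)(B) = \mu(M_A \cup M_B) \geq \mu(M_A) = (\mu\downarrow Y)(A)$, using monotonicity of $\mu$.

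The main obstacle is Axiom~\ref{ax_measure:additive}: given a disjoint family $(A_i)_{i<\gamma} \subseteq \MMM \downarrow Y$ of length $\gamma < \lambda^+$, an arbitrary choice of witnesses $M_i \in \MMM$ with $M_i \cap Y = A_i$ need not be pairwise disjoint, since the $M_i$ may overlap on $X \setminus Y$, and $\MMM$ is not assumed closed under intersection. To fix this, I would apply $\lambda^+$-partitionability of $(X,\MMM,\mu)$ to the family $\V = \{M_i \mid i < \gamma\}$, obtaining a partition $\P \subseteq \MMM$ of $\bigcup \V$ of size $\leq \lambda$ refining $\V$. Using the axiom of choice, split $\P$ into disjoint subfamilies $(\P_i)_{i < \gamma}$ such that $P \subseteq M_i$ for every $P \in \P_i$, and set $N_i = \bigcup \P_i \in \MMM$.

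Then the $N_i$ are pairwise disjoint and $N_i \subseteq M_i$, so $N_i \cap Y \subseteq A_i$; conversely, any point $x \in A_i \subseteq M_i \subseteq \bigcup \V$ lies in some $P \in \P_j$, hence in $M_j$, forcing $x \in A_i \cap A_j$ and thus $j = i$ by disjointness of the $A_i$. So $N_i \cap Y = A_i$, and $(\bigcup_i N_i) \cap Y = \bigcup_i A_i \in \MMM\downarrow Y$. Applying Axiom~\ref{ax_measure:additive} for $\mu$ on $X$ to $(N_i)_{i<\gamma}$ then yields $\langle \mu(N_i) \mid i<\gamma\rangle \in \dom(\sum)$ and $(\mu\downarrow Y)(\bigcup_i A_i) = \mu(\bigcup_i N_i) = \sum_i \mu(N_i) = \sum_i (\mu\downarrow Y)(A_i)$, completing the verification.
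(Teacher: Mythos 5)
Your proposal is correct and follows essentially the same route as the paper's proof: use essential nullity of $X\setminus Y$ to see that $\mu\downarrow Y$ is well defined (the infimum collapses), verify Axioms~\ref{ax_measure:emptyset}--\ref{ax_measure:decreasing} by choosing appropriate witnesses and using monotonicity of $\mu$, and for Axiom~\ref{ax_measure:additive} invoke $\lambda^+$-partitionability of the chosen witnesses and regroup the partition into disjoint measurable sets $N_i\in\MMM$ with $N_i\cap Y=A_i$, then apply Axiom~\ref{ax_measure:additive} for $\mu$. The verification that $N_i\cap Y=A_i$ (using disjointness of the $A_i$) is exactly the key point the paper also checks, so there is nothing to add.
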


\begin{proof}
    By definition of essentially null set, we get that $\mu(M)=\mu(M')$ for all $M,M'\in \MMM$ such that $M\cap Y=M'\cap Y$, thus $\mu\downarrow Y$ is defined on the whole $\MMM\downarrow Y$. 
    It is then immediate to check that $(Y,\MMM\downarrow Y)$ is a weakly $\lambda^+$-measurable space, since $(X,\MMM)$ is.
    It is also easy to check that $\mu\downarrow Y$ satisfies  Axioms~\ref{ax_measure:emptyset} and \ref{ax_measure:non-trivial}, since $\mu$ does.

    Let $A,B\in \MMM \downarrow Y$ be such that $A\subseteq B$, and let $M_A, M_B\in \MMM$ be such that $A=M_A\cap Y$ and $B=M_B\cap Y$. Notice that $Y\cap(M_A\cup M_B)=B$, and $M_A\cup M_B\in \MMM$ since $\MMM$ is closed under unions. Thus, we may assume $M_A\subseteq M_B$. 
    Then, by Axiom~\ref{ax_measure:decreasing} applied to $\mu$ we have
    \[
    (\mu\downarrow Y)(A)=\mu(M_A)\leq \mu(M_B)=(\mu\downarrow Y)(B)
    \] 
    and Axiom~\ref{ax_measure:decreasing} holds for $\mu\downarrow Y$ as well.

    Now let $\A\subseteq \MMM\downarrow Y$ be a family of disjoint measurable sets of size $|\A|<\lambda^+$. Let $\A'=\{M_A\mid A\in \A\}\subseteq \MMM$ be a family of measurable sets such that $M_A\cap Y=A$ for every $A\in \A$. Since $(X,\MMM,\mu)$ is $\lambda^+$-partitionable, there is a family of disjoint sets $\P\subseteq \MMM$ refining $\A'$ and such that $\bigcup \P=\bigcup \A'$. Since $\P$ refines $\A'$, using the axiom of choice we can partition $\P$ into families $\{\P_A\mid A\in \A\}$ such that $P\in\P_A$ implies $P\subseteq M_A$.
    Notice that since $\A$ is made of disjoint sets, we have that $P\cap A\neq \emptyset$ implies $P\in \P_A$ for every $P\in \P$. 
    Since $Y\cap \bigcup \P=\bigcup \A$, then we get $Y\cap \bigcup \P_A=A$.
    Therefore, 
    \begin{align*}
        (\mu\downarrow Y) (\bigcup \A)&=\mu(\bigcup\A')=\mu(\bigcup_{A\in \A} \bigcup\P_A)= \sum_{A\in \A}\mu(\bigcup\P_A)=\\
        &=\sum_{A\in \A}(\mu\downarrow Y)(\bigcup\P_A\cap Y)=\sum_{A\in \A}(\mu\downarrow Y)(A)
    \end{align*}
    and Axiom~\ref{ax_measure:additive} is satisfied.
\end{proof}

Without additional assumptions, Axiom~\ref{ax_measure:point-regular} may fail in $(Y, \MMM \downarrow Y, \mu \downarrow Y)$. 
However, in practice, Axiom~\ref{ax_measure:point-regular} holds for all relevant pairs $Y \subseteq X$. 
For instance, if every point that is measurable in $\MMM \downarrow Y$ is also measurable in $\MMM$, the axiom follows directly from the definition of $\mu \downarrow Y$. 
Similarly, if $\MMM$ is minimal, any point that is measurable in $\MMM \downarrow Y$ but not in $\MMM$ must be isolated, and hence satisfies Axiom~\ref{ax_measure:point-regular} trivially.
Altogether, we get the following.

\begin{remark}\label{rmk:measure_induced_on_essentially_null_set}
    Let $(X,\MMM, \mu)$ be a weak $\lambda^+$-measure space, and let $Y\subseteq X$.
    Assume that either $\MMM$ is minimal, or $\MMM$  measures all points of $Y$.
    Then, $(Y,\MMM \downarrow Y, \mu\downarrow Y)$ satisfies Axiom~\ref{ax_measure:point-regular}.
\end{remark}

By  Corollary~\ref{cor:partitionable_implies_subadditive} and Lemma~\ref{lem:null_are_essentially_null_if_subadditive}, every measure zero set of a $\lambda^+$-partitionable space is essentially null. Thus, by 
Lemma~\ref{lem:measure_induced_on_essentially_null_set} and Remark~\ref{rmk:measure_induced_on_essentially_null_set}, we get the following.

\begin{corollary}\label{cor:measure_induced_on_subspace_with_measure_zero_complement}
    Let $(X,\MMM, \mu)$ be a $\lambda^+$-partitionable, weak $\lambda^+$-measure space, and let $Y\subseteq X$ be such that $X\setminus Y\in \MMM$, and $\mu(X\setminus Y)=0$.
    Assume that one of the following holds:
\begin{itemize}
    \item $\MMM$ is minimal; 
    \item $\MMM$  measures all points of $Y$.
\end{itemize}
    Then, $(Y,\MMM \downarrow Y, \mu\downarrow Y)$ is a weak $\lambda^+$-measure space.
\end{corollary}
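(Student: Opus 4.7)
The plan is to assemble the corollary directly from the already-proved results, using the hypothesis $\mu(X\setminus Y)=0$ as the bridge between the two pieces.

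First, I would observe that because $(X,\MMM,\mu)$ is $\lambda^+$-partitionable, Corollary~\ref{cor:partitionable_implies_subadditive} gives that $\mu$ is $\lambda^+$-subadditive. Combined with Lemma~\ref{lem:null_are_essentially_null_if_subadditive}, this upgrades every measure-zero set to an essentially null set; in particular, the hypothesis $X\setminus Y\in\MMM$ with $\mu(X\setminus Y)=0$ implies that $X\setminus Y$ is essentially null in the sense of the definition preceding Remark~\ref{rmk:essentially_null_closed_under_subsets}. This is exactly the hypothesis needed to invoke Lemma~\ref{lem:measure_induced_on_essentially_null_set}.

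Second, applying Lemma~\ref{lem:measure_induced_on_essentially_null_set} to the pair $Y\subseteq X$ gives that $\MMM\downarrow Y\subseteq \dom(\mu\downarrow Y)$, that $(Y,\MMM\downarrow Y)$ is weakly $\lambda^+$-measurable, and that $\mu\downarrow Y$ satisfies axioms \ref{ax_measure:emptyset}--\ref{ax_measure:additive}. The only axiom of Definition~\ref{def:axioms_measure} that remains to verify is point-regularity, \ref{ax_measure:point-regular}, which is precisely the content of Remark~\ref{rmk:measure_induced_on_essentially_null_set}: under either of the two alternative hypotheses of the corollary (either $\MMM$ is minimal, or $\MMM$ measures every point of $Y$), the remark guarantees that $(Y,\MMM\downarrow Y,\mu\downarrow Y)$ satisfies \ref{ax_measure:point-regular}. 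Putting the two steps together, $(Y,\MMM\downarrow Y,\mu\downarrow Y)$ satisfies all five axioms and is thus a weak $\lambda^+$-measure space.

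There is no real obstacle here: the statement is essentially a bookkeeping corollary, and the only delicate point is recognizing that the two bullets in the hypothesis are precisely what Remark~\ref{rmk:measure_induced_on_essentially_null_set} requires, while the measure-zero assumption on $X\setminus Y$ is precisely what promotes the set to being essentially null so that Lemma~\ref{lem:measure_induced_on_essentially_null_set} applies. No additional computation or construction is needed.
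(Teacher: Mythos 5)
Your proof is correct and follows exactly the route the paper takes: Corollary~\ref{cor:partitionable_implies_subadditive} and Lemma~\ref{lem:null_are_essentially_null_if_subadditive} make $X\setminus Y$ essentially null, then Lemma~\ref{lem:measure_induced_on_essentially_null_set} gives Axioms~\ref{ax_measure:emptyset}--\ref{ax_measure:additive} and Remark~\ref{rmk:measure_induced_on_essentially_null_set} supplies Axiom~\ref{ax_measure:point-regular} under either bullet. Nothing to add.
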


Recall that minimal $\lambda^+$-measure spaces are closed under finite intersections, by Proposition~\ref{prop:minimal_measurable_spaces_closed_under_intersections}, and are  $\lambda^+$-partitionable, by Proposition~\ref{prop:minimal_measure_spaces_are_partitionable}. Thus, by Lemma~\ref{lem:measure_induced_on_clopen_subspace} and Corollary~\ref{cor:measure_induced_on_subspace_with_measure_zero_complement}, we get the following.

\begin{corollary}\label{cor:measure_induced_on_subspace_of_minimal_spaces}
    Let $(X,\MMM, \mu)$ be a minimal $\lambda^+$-measure space, and let $Y\subseteq X$ be such that one of the following holds: 
    \begin{itemize}
        \item $Y\in \MMM$ and $\mu(Y)>0$,
        \item $X\setminus Y\in \MMM$ and $\mu(X\setminus Y)=0$.
    \end{itemize}
    Then, $(Y,\MMM \downarrow Y, \mu\downarrow Y)$ is a minimal $\lambda^+$-measure space.
\end{corollary}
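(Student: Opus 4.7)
The plan is to combine the two preceding results that already give a weak $\lambda^+$-measure space in each of the two cases, and then separately verify that the induced structure is again minimal.

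First, I would dispense with the verification that $(Y,\MMM\downarrow Y,\mu\downarrow Y)$ is a weak $\lambda^+$-measure space. By Proposition~\ref{prop:minimal_measurable_spaces_closed_under_intersections} the minimal structure $\MMM$ is closed under finite intersections, so in the first case ($Y\in\MMM$ with $\mu(Y)>0$) Lemma~\ref{lem:measure_induced_on_clopen_subspace} applies directly. In the second case ($X\setminus Y\in\MMM$ with $\mu(X\setminus Y)=0$), Proposition~\ref{prop:minimal_measure_spaces_are_partitionable} says $(X,\MMM,\mu)$ is $\lambda^+$-partitionable, and the assumption that $\MMM$ is minimal is exactly the extra hypothesis needed to invoke Corollary~\ref{cor:measure_induced_on_subspace_with_measure_zero_complement}. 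In either case we obtain that $(Y,\MMM\downarrow Y,\mu\downarrow Y)$ is a weak $\lambda^+$-measure space.

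It then remains to show that $(Y,\MMM\downarrow Y)$ is minimal, i.e., that every $A\in\MMM\downarrow Y$ can be written as $A=D\cup O$ with $|D|\leq\lambda$, each point of $D$ belonging to $\MMM\downarrow Y$, and $O$ open in $Y$. Fix $A\in\MMM\downarrow Y$ and pick $M\in\MMM$ with $M\cap Y=A$. Since $(X,\MMM)$ is minimal, we may write $M=D'\cup O'$ with $|D'|\leq\lambda$, each singleton $\{x\}$ for $x\in D'$ in $\MMM$, and $O'\in\tau$. Setting $D=D'\cap Y$ and $O=O'\cap Y$ yields a partition of $A$ with $|D|\leq\lambda$ and $O$ open in $Y$. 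Moreover, for each $x\in D$ we have $\{x\}\in\MMM$, and since $\{x\}\subseteq Y$ the equality $\{x\}\cap Y=\{x\}$ gives $\{x\}\in\MMM\downarrow Y$, as required.

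There is no real obstacle here: the heavy lifting (checking the measure axioms, in particular Axiom~\ref{ax_measure:point-regular}, which is the delicate one) was already done in Lemma~\ref{lem:measure_induced_on_clopen_subspace}, Corollary~\ref{cor:measure_induced_on_subspace_with_measure_zero_complement}, and Remark~\ref{rmk:measure_induced_on_essentially_null_set}. The only thing to notice is that minimality transfers immediately from $\MMM$ to $\MMM\downarrow Y$ because intersecting with $Y$ preserves both the \enquote{small discrete} and the \enquote{open} parts of the canonical decomposition of a measurable set.
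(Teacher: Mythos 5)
Your proof is correct and follows essentially the same route as the paper, which obtains the result by combining Proposition~\ref{prop:minimal_measurable_spaces_closed_under_intersections} and Proposition~\ref{prop:minimal_measure_spaces_are_partitionable} with Lemma~\ref{lem:measure_induced_on_clopen_subspace} and Corollary~\ref{cor:measure_induced_on_subspace_with_measure_zero_complement}. Your explicit verification that minimality passes to $\MMM\downarrow Y$ (intersecting the decomposition $M=D'\cup O'$ with $Y$) is a step the paper leaves implicit, and it is carried out correctly.
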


\section{On the existence of (trivial) \texorpdfstring{$\lambda^+$}{lambda+}-measures}\label{sec:on-the-existence-of-trivial-measures}

In this section, we provide explicit examples of $\lambda^+$-measures on (subspaces of) $\pre{\kappa}{\lambda}$ that are, in one way or another, not really satisfactory for the purposes of generalized descriptive set theory.
This will show the optimality of the assumptions in Theorem~\ref{thm:impossibility_theorem}.

\subsection{Dirac-like measures and the exceptionality of the countable case}\label{subsec:Dirac}
In this subsection, we study measures with \enquote{small} support. 

Recall that the support of a (classical) measure is the set of all points that do not have an open neighborhood of measure zero. A \markdef{Dirac measure} is a measure whose support has size $1$.

While useful in many applications, Dirac measures alone cannot produce a theory comparable, for example, to that of the Lebesgue measure on the reals. For this reason, when studying structural properties of Polish spaces, one usually regards all measures with finite support as trivial.

In analogy, in the generalized context we should view as trivial those $\lambda^+$-measures whose support has weight strictly less than $\lambda$. This can be formalized as follows.

\begin{definition}
Let $(X,\MMM,\mu)$ be a weak $\lambda^+$-measure space. The \markdef{support} of $\mu$ is the set  
\[
\supp(\mu)=X\setminus \bigcup\{O\mid O\text{ open}, \mu(O)=0\}.
\]
\end{definition}

\begin{definition}\label{def:Dirac}
Let $\gamma$ be a cardinal and let $(X,\MMM,\mu)$ be a weak $\lambda^+$-measure space.
We say that $\mu$ is \markdef{$\gamma$-Dirac} if $\supp(\mu)$ has weight strictly less than $\gamma$.
\end{definition}

Since for finite (Hausdorff) spaces the weight and the size of the space coincide, it follows that, under this terminology, Dirac measures are exactly the $2$-Dirac ones, and $\omega$-Dirac measures are precisely the finite sums of Dirac measures.
On the other hand, the $\omega_1$-Dirac measures already include all classical measures on (separable) Polish spaces, and in particular they may be continuous and are not necessarily countable sums of Dirac measures.

Of course, we cannot rule out the existence of ($2$-)Dirac or $\omega$-Dirac $\lambda^+$-measures on $X\subseteq \pre{\kappa}{\lambda}$; but any $\omega$-Dirac ($\lambda^+$-)measure is necessarily not continuous, so their existence does not contradict Theorem~\ref{thm:impossibility_theorem}.

However, unlike the classical case, $\omega$-Dirac measures may not be the only \enquote{trivial} (i.e., $\lambda$-Dirac) measures on $\pre{\kappa}{\lambda}$.
In the following result we show that, if $\omega<\lambda<\ccc$, then there also exist \emph{continuous} $\lambda$-Dirac measures on $\pre{\kappa}{\lambda}$, and thus the hypothesis \enquote{$\lambda\geq\ccc$} is necessary in Theorem~\ref{thm:impossibility_theorem}.

\begin{proposition}\label{prop:existence_of_countable_induced_measure}
Assume $\lambda^{<\kappa}=\lambda<\ccc$. Then, there is a continuous $\omega_1$-Dirac weak $\lambda^+$-measure space $(\pre{\kappa}{\lambda},\MMM, \mu)$ on $\pre{\kappa}{\lambda}$. 
\end{proposition}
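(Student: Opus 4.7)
My plan is to observe first that the hypothesis forces $\kappa=\omega$: if $\kappa>\omega$, then $\lambda^{<\kappa}\geq\lambda^{\omega}\geq 2^{\omega}=\ccc$, contradicting $\lambda<\ccc$. So I work on $X=\pre{\omega}{\lambda}$ and transplant the classical Cantor measure. Let $Y=\{x\in X:x(n)\in 2\text{ for every }n<\omega\}$, which is a closed subspace of $X$ homeomorphic to the Cantor space $\pre{\omega}{2}$, and let $\nu$ denote the standard Bernoulli $\tfrac12$--$\tfrac12$ Borel probability measure on $Y$. As the underlying measurable structure I take $\MMM=\MMM^{\lambda}_{p}(X)$, whose elements are exactly the sets of the form $D\cup O$ with $D\subseteq X$, $|D|\leq\lambda$, and $O\subseteq X$ open, and I define
\[
\mu(D\cup O):=\nu(O\cap Y),
\]
valued in the monoid $\RR_{\infty}$.

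The central step will be the well-definedness of $\mu$. Suppose $D_{1}\cup O_{1}=D_{2}\cup O_{2}$: a direct set-theoretic computation gives $O_{1}\triangle O_{2}\subseteq D_{1}\cup D_{2}$, hence
\[
(O_{1}\cap Y)\triangle(O_{2}\cap Y)\subseteq (D_{1}\cup D_{2})\cap Y,
\]
which has cardinality $\leq\lambda<\ccc$. Now $(O_{1}\cap Y)\triangle(O_{2}\cap Y)$ is a Borel subset of the Polish space $Y$, and the perfect set property for Borel sets guarantees that any Borel subset of $Y$ of cardinality strictly less than $\ccc$ is countable, hence $\nu$-null. Therefore $\nu(O_{1}\cap Y)=\nu(O_{2}\cap Y)$. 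I expect this to be the main obstacle of the proof: the assumption $\lambda<\ccc$ enters only here, and without it a small subset of $Y$ could carry positive outer $\nu$-measure.

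With well-definedness in hand, Axioms~\ref{ax_measure:emptyset}--\ref{ax_measure:additive} follow routinely. For monotonicity, if $D_{M}\cup O_{M}\subseteq D_{N}\cup O_{N}$, the same trick shows that $(O_{M}\cap Y)\setminus(O_{N}\cap Y)$ is a Borel subset of $Y$ of size $\leq\lambda$, hence countable and $\nu$-null, so $\nu(O_{M}\cap Y)\leq\nu(O_{N}\cap Y)$. For $\lambda^{+}$-additivity, disjointness of $(A_{i})_{i<\gamma}=(D_{i}\cup O_{i})_{i<\gamma}$ forces the opens $O_{i}$, and in particular the open subsets $O_{i}\cap Y$ of $Y$, to be pairwise disjoint; since $Y$ is second countable, at most countably many of the $O_{i}\cap Y$ are nonempty, and the claim reduces to countable additivity of $\nu$. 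For Axiom~\ref{ax_measure:point-regular} and continuity, the decomposition $\{x\}=\{x\}\cup\emptyset$ gives $\mu(\{x\})=\nu(\emptyset)=0$ for every $x\in X$, while any local basis at $x$ refines to a countable one (since $X$ has character $\omega$) along which the measures of the basic cones shrink to $0$ by continuity of $\nu$.

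Finally, to identify the support: every nonempty basic clopen of $Y$ has strictly positive $\nu$-measure, so an open $U\subseteq X$ satisfies $\mu(U)=0$ if and only if $U\cap Y=\emptyset$. Since $Y$ is closed in $X$, this yields $\supp(\mu)=Y$, which has weight $\omega<\omega_{1}$, so $(X,\MMM,\mu)$ is the desired continuous $\omega_{1}$-Dirac weak $\lambda^{+}$-measure space.
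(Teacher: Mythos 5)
Your proof is correct and follows essentially the same route as the paper's: a classical atomless Borel measure on a closed second-countable subspace, evaluated on the open part of each measurable set in the minimal point-measuring structure, with $\lambda<\ccc$ plus the perfect set property guaranteeing that the size-$\leq\lambda$ discrepancies between open parts are countable and hence null. The only difference is organizational: the paper first builds the measure on $\pre{\omega}{\omega}$ via interiors and then transfers it to $\pre{\omega}{\lambda}$ using the $\uparrow$ operation (Lemma~\ref{lem:extending_measure_spaces}), whereas you define it in one step on $\pre{\omega}{\lambda}$ via traces on a closed copy of the Cantor space, which shifts the same PSP argument into the well-definedness and monotonicity checks.
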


\begin{proof}
Notice that if $\lambda<\ccc$, then $\lambda^{<\kappa}=\lambda$ is equivalent to $\kappa=\omega$.
Let $\mu'$ be a classical continuous measure on $\pre{\omega}{\omega}$.
Let $\MMM$ be the minimal $\lambda^+$-measurable structure on $\pre{\omega}{\omega}$ measuring all its points, and for every $A\in\MMM$, denote by $\int{A}$ the interior of $A$ in $\pre{\omega}{\omega}$, i.e., the union of all open subsets $U\subseteq \pre{\omega}{\omega}$ that are contained in $A$.

\begin{claim}
    The function $\mu:\MMM\to \RR_\infty$ defined by $\mu(M)=\mu'(\int{M})$ for every $M\in \MMM$ is a continuous $\lambda^+$-measure on $(\pre{\omega}{\omega},\MMM)$.
\end{claim}

\begin{proof}
Indeed, $\mu(\emptyset)=0$ and $\mu(\pre{\omega}{\omega})=\mu'(\pre{\omega}{\omega})>0$, and $\mu$ is decreasing since $\mu'$ is.
Therefore, $\mu$ satisfies Axioms~\ref{ax_measure:emptyset}--\ref{ax_measure:decreasing}.
Moreover, for every point $x\in\pre{\omega}{\omega}$ we have $\mu(x)=\mu'(\emptyset)=0=\mu'(x)$, by definition of $\mu$ and by continuity of $\mu'$.
It follows that $\mu$ is continuous and satisfies Axiom~\ref{ax_measure:point-regular}, since $\mu(U)=\mu'(U)$ for every open set $U\subseteq\pre{\omega}{\omega}$.

It only remains to prove Axiom~\ref{ax_measure:additive}. Let $(A_i)_{i<\alpha}\subset \MMM$ be a family of disjoint sets of order type $\alpha< \lambda^+$. 
Notice that either $\int{A_i}=\emptyset$ or $|\int{A_i}|\geq \ccc>\lambda$, since every non-empty open set of $\pre{\omega}{\omega}$ has size $\ccc$, and $\int{A_i}\neq \emptyset$ happens for at most countably many indices $I\subseteq \alpha$, since $\pre{\omega}{\omega}$ is second countable.

We claim $\mu'(\int{\bigcup_{i<\alpha} A_i})=\mu'(\bigcup_{i< \alpha} \int{A_i})$.
Notice that for every $i<\alpha$ we have $|A_i\setminus \int{A_i}|\leq \lambda$,  because $\MMM$ is minimal. Since 
\[
\bigcup_{i< \alpha}( \int{A_i})\subseteq \int{\bigcup_{i<\alpha} A_i}\subseteq \bigcup_{i<\alpha} A_i,
\]
we get 
\begin{align*}
|\int{\bigcup_{i<\alpha} A_i}\setminus \bigcup_{i< \alpha}\int{A_i}| \leq |\bigcup_{i<\alpha} (A_i\setminus \int{A_i})|\leq\sum_{i<\alpha} |A_i\setminus \int{A_i}|\leq \lambda\cdot \lambda=\lambda.
\end{align*}
Since $
\int{\bigcup_{i<\alpha} A_i}$ and $\bigcup_{i< \alpha}\int{A_i}$ are both open in $\pre{\omega}{\omega}$, $\lambda<\ccc$, and Borel subsets of $\pre{\omega}{\omega}$ have the perfect set property, we have 
\[
|\int{\bigcup_{i<\alpha} A_i}\setminus \bigcup_{i< \alpha} \int{A_i}|\leq \omega,
\]
thus $\mu'(\int{\bigcup_{i<\alpha} A_i}\setminus \bigcup_{i< \alpha} \int{A_i})=0$ and $\mu'(\int{\bigcup_{i<\alpha} A_i})=\mu'(\bigcup_{i< \alpha} \int{A_i})$.

Therefore, 
\begin{align*}
\mu(\bigcup_{i<\alpha} A_i)&=
\mu'(\int{\bigcup_{i<\alpha} A_i})=\\
&=\mu'(\bigcup_{i< \alpha} \int{A_i})=\\
&=\mu'(\bigcup_{i\in I} \int{A_i})=\\
&=\sum_{i\in I}
\mu'(\int{A_i})=
\sum_{i<\alpha} \mu(A_i),
\end{align*}
as wanted.
\end{proof}

This proves that if $\lambda<\ccc$ and $\kappa=\omega$, then there exists a continuous minimal $\lambda^+$-measure structure $(\pre{\omega}{\omega},\MMM,\mu)$. Notice that $\pre{\omega}{\omega}$ is closed in $\pre{\kappa}{\lambda}$. 
This implies that $(\pre{\kappa}{\lambda},\MMM\uparrow \pre{\kappa}{\lambda},\mu\uparrow\pre{\kappa}{\lambda})$ is a weak $\lambda^+$-measure space, by Lemma~\ref{lem:extending_measure_spaces}, and that $\supp(\mu\uparrow\pre{\kappa}{\lambda})=\pre{\omega}{\omega}$, thus $\mu$ is $\omega_1$-Dirac, since $\pre{\omega}{\omega}$ is second countable. It is also immediate to check that $(\pre{\kappa}{\lambda},\MMM\uparrow \pre{\kappa}{\lambda},\mu\uparrow\pre{\kappa}{\lambda})$ is continuous as well, since $(\pre{\omega}{\omega},\MMM,\mu)$ is. This concludes the proof.
\end{proof}

Remarkably, the measure obtained in Proposition~\ref{prop:existence_of_countable_induced_measure} comes from a classical measure on a Polish space that is extended to the whole space $\pre{\kappa}{\lambda}$ via the operation $\uparrow$ from Definition~\ref{def:induced_measure_on_superspace}.
Obviously, the same construction could be carried out starting from an arbitrary $\omega^+$-measure taking values in a monoid different from $\RR_\infty$. This naturally raises the question of whether there are any further possibilities, which breaks down into two separate problems:
is every $\lambda$-Dirac $\lambda^+$-measure necessarily $\omega_1$-Dirac?
And is every $\omega_1$-Dirac weak $\lambda^+$-measure space induced by some measure on a \enquote{small} subspace?

We anticipate that the results of the next sections will give a positive answer to the first question: $\omega_1$-Dirac measures are essentially the only continuous $\lambda$-Dirac measure that exist, and they exist only when $\omega<\lambda<\ccc$.  
In fact, on the one hand if $\lambda\geq\ccc$, then not even continuous \textit{$\lambda$-Dirac} $\lambda^+$-measures can exist,  by Theorem~\ref{thm:impossibility_theorem}.
On the other hand, if $\lambda<\ccc$, then every continuous $\lambda^+$-measure is a sum of disjoint $\omega_1$-Dirac $\lambda^+$-measures on its kernel, by  Corollary~\ref{cor:minimal_continuous_below_c_is_disjoint_sum_of_Dirac}.

In the following, we will give a positive answer to the second question as well, i.e., we are going to show that every $\omega_1$-Dirac weak $\lambda^+$-measure space is \enquote{essentially}
induced by some measure on a \enquote{small} subspace. To state this, however, we first need
to make every notion precise: we must specify what kind of
small space we are aiming for (separable, second countable, or Polish), what kind of
measure we consider on this small subspace (an $\omega^+$-measure or a $\lambda^+$-measure), and what we mean by \enquote{essentially}.

Let us first introduce a formal definition and compare it to the other possibilities.

\begin{definition}\label{def:countable-ind}
Given a subspace $Y\subseteq X$, a weak $\lambda^+$-measure space $(X,\MMM, \mu)$ is called \textbf{$Y$-induced} if there exists a weak $\lambda^+$-measure space $(Y,\NNN, \nu)$ such that $\MMM\subseteq \NNN \uparrow X$ and $\mu\subseteq \nu \uparrow X$.

    We say that $(X,\MMM, \mu)$ is \textbf{countable-induced} if it is $Y$-induced for some separable subspace $Y\subseteq X$.
\end{definition}

By Lemma~\ref{lem:uparrow_and_downarrow_inverse_of_eachother}, the structure $(Y,\NNN,\nu)$ can always be taken to be precisely $(Y,\MMM\downarrow Y,\mu\downarrow Y)$.
Indeed, suppose that $Y\subseteq X$ and that $(X,\MMM,\mu)$ and $(Y,\NNN,\nu)$ are weak $\lambda^+$-measure spaces satisfying $\MMM\subseteq \NNN\uparrow X$ and $\mu\subseteq \nu\uparrow X$.
Then $Y$ is essentially null in $(X,\MMM,\mu)$.
It follows that $\MMM\downarrow Y\subseteq (\NNN \uparrow X)\downarrow Y=\NNN$ and $\mu\downarrow Y\subseteq (\nu \uparrow X)\downarrow Y=\nu$, which shows that $(Y,\MMM\downarrow Y,\mu\downarrow Y)$ is itself a weak $\lambda^+$-measure space, and moreover that $\MMM\subseteq (\MMM\downarrow Y)\uparrow X$ and $\mu\subseteq (\mu\downarrow Y)\uparrow X$.

\begin{remark}
    Let $Y\subseteq X$, let $(X,\MMM,\mu)$ be weak $\lambda^+$-measure space. Then $(X,\MMM,\mu)$ is $Y$-induced if and only if $(Y,\MMM\downarrow Y,\mu\downarrow Y)$ is a weak $\lambda^+$-measure space.
\end{remark}

In particular, countable-induced weak $\lambda^+$-measure spaces $(X,\MMM,\mu)$ that come from a classical measure on $Y$ are those where $\mu$ takes values in $\SS=\RR_\infty$ and $\mu\downarrow Y$ can be extended to a $\sigma$-algebra containing $\MMM\downarrow Y$.

In Definition~\ref{def:countable-ind}, we used the density (\textit{separable}) as a way to measure smallness. However, we could have used, equivalently, the weight (\textit{second countable}), or even the cellularity,
by Proposition~\ref{prop:weight=cellularity}.
Moreover, every second countable subspace of $\pre{\kappa}{\lambda}$ is homeomorphic to a subspace of the Baire space $\pre{\omega}{\omega}$.
When $\kappa>\omega$, this is because $\pre{\kappa}{\lambda}$ is $\omega_1$-additive, and hence all its second countable subspaces are necessarily countable and discrete; when $\kappa=\omega$, then $\pre{\kappa}{\lambda}$ and all its subspaces are metrizable and zero-dimensional, and the conclusion follows, for example, from \cite[Theorem~7.8]{KechrisMR1321597}.
In summary, we obtain the following.

\begin{remark}\label{rmk:separable_iff_second_countable_iff_subspace_Baire}
    For $Y\subseteq \pre{\kappa}{\lambda}$, the following are equivalent: 
    \begin{itemize}
        \item $Y$ is separable,
        \item $Y$ is second countable,
        \item $Y$ is homeomorphic to a subspace of $\pre{\omega}{\omega}$.
    \end{itemize}
\end{remark}

In particular, a weak $\lambda^+$-measure space $(X,\MMM, \mu)$ is {countable-induced} if and only if it is $Y$-induced for some second-countable subspace $Y\subseteq X$, if and only it is $f[Y]$-induced for some subspace $Y\subseteq \pre{\omega}{\omega}$ and  embedding $f:Y\to X$.

Regarding the choice of the complexity of $Y$ in $X$, we have the following.

\begin{proposition}\label{prop:countable_induced_iff_Polish_induced}
    Let $(X,\MMM, \mu)$ be a continuous weak $\lambda^+$-measure space. 
    Then, $(X,\MMM, \mu)$  is countable-induced if and only if it is $Y$-induced for some separable closed $Y\subseteq X$.

    In particular, if $X$ is $\Gdelta{}$ in $\pre{\kappa}{\lambda}$, then $(X,\MMM, \mu)$ is countable-induced if and only if it is $Y$-induced by a \textnormal{Polish} subspace $Y\subseteq X$.
\end{proposition}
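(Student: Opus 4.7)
The plan is to establish the two nontrivial ``only if'' directions; the reverse implications are immediate since Polish implies separable and closed.

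Suppose $(X,\MMM,\mu)$ is $Y$-induced by a separable $Y\subseteq X$ witnessed by some $(Y,\NNN,\nu)$. Let $Z$ be the closure of $Y$ in $X$; this $Z$ is closed in $X$ and separable, so it is the natural candidate. I would define $\NNN':=\NNN\uparrow Z$ and $\nu':=\nu\uparrow Z$, and check that $(Z,\NNN',\nu')$ is a weak $\lambda^+$-measure space. The inducing relations $\MMM\subseteq\NNN'\uparrow X$ and $\mu\subseteq\nu'\uparrow X$ follow at once from the identity $M\cap Z\cap Y=M\cap Y$ for $M\subseteq X$, so all the content lies in verifying axioms \ref{ax_measure:emptyset}--\ref{ax_measure:point-regular} for $\nu'$. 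That $\NNN'$ contains the open sets of $Z$ and is closed under unions of size $\leq\lambda$ is immediate, and axioms \ref{ax_measure:emptyset}--\ref{ax_measure:additive} transfer directly from $\nu$, using for \ref{ax_measure:non-trivial} that $\nu'(Z)=\nu(Y)>0$, and for \ref{ax_measure:additive} that disjoint sets in $Z$ restrict to disjoint sets in $Y$.

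The main obstacle is axiom \ref{ax_measure:point-regular} at points $x\in Z\setminus Y$. For such $x$ one has $\{x\}\cap Y=\emptyset$ and thus $\nu'(x)=0$, and given a local basis $\{A_i\mid i<\gamma\}$ at $x$ in $Z$ with $\gamma<\lambda^+$, one must show $\inf_{i<\gamma}\nu'(A_i)=0$. This is exactly where the continuity of $\mu$ enters: since $\mu(x)=0$ and $\{\Nbhd_{x\restriction\alpha}(X)\mid \alpha<\kappa\}$ is a local basis at $x$ in $X$ of size $\leq\kappa\leq\lambda$, axiom \ref{ax_measure:point-regular} applied to $\mu$ gives $\inf_{\alpha<\kappa}\mu(\Nbhd_{x\restriction\alpha}(X))=0$. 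For each $\alpha$ the set $\Nbhd_{x\restriction\alpha}(X)\cap Z$ is open in $Z$ and contains $x$, hence contains some $A_{i(\alpha)}$; by monotonicity
\[
\nu'(A_{i(\alpha)})\leq \nu'(\Nbhd_{x\restriction\alpha}(X)\cap Z)=\nu(\Nbhd_{x\restriction\alpha}(X)\cap Y)=\mu(\Nbhd_{x\restriction\alpha}(X)),
\]
and passing to the infimum yields $\inf_i\nu'(A_i)=0$. For $x\in Y$, the restriction $\{A_i\cap Y\mid i<\gamma\}$ is a local basis at $x$ in $Y$, so \ref{ax_measure:point-regular} for $\nu$ applies directly.

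For the ``in particular'' statement, assume further that $X$ is $\Gdelta{}$ in $\pre{\kappa}{\lambda}$. By what has just been proved, $(X,\MMM,\mu)$ is $Z$-induced by some closed separable $Z\subseteq X$; it remains to show $Z$ is Polish. If $\kappa>\omega$, then $Z$ is separable, hence second countable by Proposition~\ref{prop:weight=cellularity}; as $\pre{\kappa}{\lambda}$ is $\omega_1$-additive and $T_1$, such a subspace must be countable and discrete, and hence Polish. If $\kappa=\omega$, then $\pre{\omega}{\lambda}$ is completely ultrametrizable, so every closed set in it is $\Gdelta{}$; since $Z$ is closed in the $\Gdelta{}$ set $X$, it is $\Gdelta{}$ in $\pre{\omega}{\lambda}$, hence completely metrizable by Alexandrov's theorem, and being separable, it is Polish.
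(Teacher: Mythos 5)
Your proof is correct and takes essentially the same route as the paper: pass to $Z=\cl(Y)$, transfer the structure to $Z$ via $\uparrow$, use the continuity of $\mu$ (together with monotonicity and the cone bases) to recover Axiom~\ref{ax_measure:point-regular} on $Z$, and then split the \enquote{Polish} case into $\kappa>\omega$ (separable subspaces are countable discrete) and $\kappa=\omega$ ($Z$ is $\Gdelta{}$ in $\pre{\omega}{\lambda}$, hence completely metrizable and Polish). The only difference is cosmetic: the paper cites Lemma~\ref{lem:extending_measure_spaces} for Axioms~\ref{ax_measure:emptyset}--\ref{ax_measure:additive} and treats point-regularity in one line, whereas you verify these steps by hand, in fact in more detail.
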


\begin{proof}
Let $Y\subseteq X$ be a separable subspace such that $(Y,\NNN, \nu)$ is a weak $\lambda^+$-measure space and $\MMM\subseteq \NNN \uparrow X$ and $\mu\subseteq \nu \uparrow X$.
Then, $\cl(Y)$ is separable as well. Also, by Definition~\ref{def:induced_measure_on_superspace} of $\uparrow$ we get $\MMM\subseteq \NNN \uparrow X=(\NNN \uparrow \cl(Y))\uparrow X$ and $\mu\subseteq \nu \uparrow X=(\nu \uparrow \cl(Y))\uparrow X$.

Thus, to prove the first part of the statement it is enough to check that $(\cl(Y),\NNN\uparrow \cl(Y), \nu\uparrow \cl(Y))$ is a weak $\lambda^+$-measure space.
By Lemma~\ref{lem:extending_measure_spaces}, we get that Axioms~\ref{ax_measure:emptyset}-\ref{ax_measure:additive} are satisfied.
Now for every $x\in \cl(Y)$, since $(X,\MMM, \mu)$ is continuous we have $x\in \MMM$ and $\mu(x)=0$, thus
        \[
        \mu(x)=\inf_{i<\gamma}\mu(A_i)=\inf_{i<\gamma}(\nu\uparrow \cl(Y))(A_i)
        \]
for every (open) local basis $\{A_i\mid i<\gamma\}$ at $x$ of order type $\gamma< \lambda^{+}$, and Axiom~\ref{ax_measure:point-regular} holds as wanted.

For the second part of the statement, recall that every countable, discrete space is Polish. Thus, when $\kappa>\omega$, any separable subspace of $X\subseteq \pre{\kappa}{\lambda}$ is Polish.
If instead $\kappa=\omega$ and $X$ is $\Gdelta{}$ in $\pre{\omega}{\lambda}$, then every closed subspace of $X$ is $\Gdelta{}$ in $\pre{\omega}{\lambda}$ as well, thus Polish (see, e.g., \cite[Theorem~4.3.23]{EngelkingGenTopMR1039321}).
\end{proof}

We note that for $\gamma$-Dirac measures as well we can relax the hypotheses on the complexity of the subset of small weight. 

\begin{proposition}
Let $(X,\MMM, \mu)$ be a weak $\lambda^+$-measure space, and let $\gamma$ be a cardinal. Then, $\mu$ is $\gamma$-Dirac if and only if there is a subspace $Y\subseteq X$ (not necessarily closed) of weight $\weight(Y)<\gamma$ such that $X\setminus Y$ is measurable and $\mu(X\setminus Y)=0$.
\end{proposition}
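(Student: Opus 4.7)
The plan is to establish both implications by identifying the \enquote{small} subspace $Y$ essentially with the support of $\mu$ (and its closure), exploiting on one side that open measure-zero sets form a $\lambda$-complete ideal (Corollary~\ref{cor:sets_of_measure_zero_are_lambda_ideal}) and on the other side that density and weight coincide on subspaces of $\pre{\kappa}{\lambda}$ (Proposition~\ref{prop:weight=cellularity}).

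For the forward direction, assume $\mu$ is $\gamma$-Dirac and take $Y=\supp(\mu)$, which by Definition~\ref{def:Dirac} has weight strictly less than $\gamma$. Since $X\setminus Y=\bigcup\{O\mid O\text{ open},\,\mu(O)=0\}$, Corollary~\ref{cor:sets_of_measure_zero_are_lambda_ideal} immediately yields that $X\setminus Y$ is itself an open set of measure zero; in particular $X\setminus Y\in\MMM$ with $\mu(X\setminus Y)=0$, as required.

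For the converse, suppose $Y\subseteq X$ satisfies $\weight(Y)<\gamma$, $X\setminus Y\in\MMM$, and $\mu(X\setminus Y)=0$. Set $V=\int{X\setminus Y}=X\setminus\cl(Y)$; this is open and contained in $X\setminus Y$, so by monotonicity (Axiom~\ref{ax_measure:decreasing}) we have $\mu(V)\leq \mu(X\setminus Y)=0$. Hence $V$ is an open set of measure zero and is disjoint from $\supp(\mu)$, giving $\supp(\mu)\subseteq X\setminus V=\cl(Y)$. Monotonicity of weight under subspaces then yields $\weight(\supp(\mu))\leq \weight(\cl(Y))$. Since $Y$ is dense in $\cl(Y)$, any dense subset of $Y$ is dense in $\cl(Y)$, so $\density(\cl(Y))\leq \density(Y)$. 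Applying Proposition~\ref{prop:weight=cellularity} to both $Y$ and $\cl(Y)$ as subspaces of $\pre{\kappa}{\lambda}$, we conclude
\[
\weight(\supp(\mu))\leq \weight(\cl(Y))=\density(\cl(Y))\leq \density(Y)=\weight(Y)<\gamma,
\]
so $\mu$ is $\gamma$-Dirac. No step poses a genuine obstacle; the only mildly subtle point is replacing $Y$ (which is not assumed closed) with $\cl(Y)$ before invoking Proposition~\ref{prop:weight=cellularity}, which is where the standing hypothesis $X\subseteq\pre{\kappa}{\lambda}$ is used.
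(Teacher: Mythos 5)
Your proof is correct and follows essentially the same route as the paper's: take $Y=\supp(\mu)$ and Corollary~\ref{cor:sets_of_measure_zero_are_lambda_ideal} for the forward direction, and for the converse pass to $\cl(Y)$, use monotonicity (Axiom~\ref{ax_measure:decreasing}) to see $X\setminus\cl(Y)$ is open of measure zero so that $\supp(\mu)\subseteq\cl(Y)$, and conclude via $\weight=\density$ on subspaces of $\pre{\kappa}{\lambda}$ (Proposition~\ref{prop:weight=cellularity}) together with $\density(\cl(Y))\leq\density(Y)$. No gaps.
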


\begin{proof}
    If $\mu$ is $\gamma$-Dirac, then $\supp(\mu)\subseteq X$ has weight $\weight(\supp(\mu))<\gamma$ by definition, and $X\setminus \supp(\mu)$ is measurable since it is open, thus $\mu(X\setminus Y)=0$ by Corollary~\ref{cor:sets_of_measure_zero_are_lambda_ideal}.

    On the other hand, given a subspace $Y\subseteq X$ of weight $\weight(Y)<\gamma$ such that $X\setminus Y$ is measurable and $\mu(X\setminus Y)=0$, we have that $X\setminus \cl(Y)$ is measurable as well, since it is open, and $\mu(X\setminus \cl(Y))\leq \mu(X\setminus Y)=0$ by Axiom~\ref{ax_measure:decreasing}. Since for any subspace of $\pre{\kappa}{\lambda}$ the weight and the density coincide, by Proposition~\ref{prop:weight=cellularity}, and any set has the same density of its closure, we get $\weight(\cl(Y))=\density(\cl(Y))=\density(Y)=\weight(Y)<\gamma$. The fact that $X\setminus \cl(Y)$ is an open set of measure zero implies that  $\supp(\mu)\subseteq \cl(Y)$, thus $\weight(\supp(\mu))\leq \weight(\cl(Y))<\gamma$, as wanted.
\end{proof}

Finally, regarding the choice between $\omega^+$-measures and $\lambda^+$-measures in Definition~\ref{def:countable-ind}, note that we adopted the more restrictive notion, since every $\lambda^+$-measure is automatically an $\omega^+$-measure. 
Therefore, showing that every $\omega_1$-Dirac measure is $Y$-induced by a weak $\lambda^+$-measure space for some separable $Y\subseteq X$ is more informative than showing that every $\omega_1$-Dirac measure is \enquote{$Y$-induced} by a weak $\omega^+$-measure space for some separable subspace $Y\subseteq X$. 

Nevertheless, we notice that the two choices are equivalent for $\lambda^+$-partitionable weak $\lambda^+$-measure spaces.

\begin{proposition}
    Let $(X,\MMM, \mu)$ be a $\lambda^+$-partitionable weak $\lambda^+$-measure space. Then $(X,\MMM, \mu)$ is countable-induced if and only if there exists a weak $\omega^+$-measure space $(Y,\NNN, \nu)$ for some $Y\subseteq X$ such that $\MMM\subseteq \NNN \uparrow X$ and $\mu\subseteq \nu \uparrow X$. 
\end{proposition}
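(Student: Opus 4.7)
The forward implication is essentially trivial: every weak $\lambda^+$-measure space is automatically a weak $\omega^+$-measure space, since closure of the measurable structure under unions of size $\leq \lambda$ entails closure under countable unions, and Axiom~\ref{ax_measure:additive} for sequences of order type $<\lambda^+$ specializes to sequences of countable length. Similarly, Axiom~\ref{ax_measure:point-regular} applied to local bases of any order type $<\lambda^+$ in particular applies to those of order type $<\omega_1$. So any $(Y,\NNN,\nu)$ witnessing that $(X,\MMM,\mu)$ is countable-induced (with $Y$ separable) also witnesses the right-hand side.

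For the converse, the plan is to show that the natural candidate $(Y, \MMM\downarrow Y, \mu\downarrow Y)$ is itself a weak $\lambda^+$-measure space witnessing that $(X,\MMM,\mu)$ is $Y$-induced. First I would observe that the condition $\mu \subseteq \nu\uparrow X$ forces $X\setminus Y$ to be essentially null in $(X,\MMM,\mu)$: if $M, M'\in\MMM$ satisfy $M\cap Y = M'\cap Y$ then $\mu(M)=\nu(M\cap Y)=\nu(M'\cap Y)=\mu(M')$. Once this is established, Lemma~\ref{lem:uparrow_and_downarrow_inverse_of_eachother} gives the required inclusions $\MMM\subseteq (\MMM\downarrow Y)\uparrow X$ and $\mu\subseteq (\mu\downarrow Y)\uparrow X$, while Lemma~\ref{lem:measure_induced_on_essentially_null_set} (applied using the $\lambda^+$-partitionability hypothesis) guarantees that $\MMM\downarrow Y\subseteq \dom(\mu\downarrow Y)$ and that $(Y,\MMM\downarrow Y,\mu\downarrow Y)$ satisfies Axioms~\ref{ax_measure:emptyset}--\ref{ax_measure:additive}.

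The main technical step is to verify Axiom~\ref{ax_measure:point-regular}, point-regularity with respect to local bases of order type $<\lambda^+$, and this is where the separability of $Y$ (carried over from $(Y,\NNN,\nu)$ being a witness in the right-hand side) enters. Since $\MMM\subseteq\NNN\uparrow X$, one checks $\MMM\downarrow Y\subseteq \NNN$ and $(\mu\downarrow Y)(A)=\nu(A)$ for every $A\in\MMM\downarrow Y$. Given $y\in Y$ with $\{y\}\in\MMM\downarrow Y$, by Remark~\ref{rmk:separable_iff_second_countable_iff_subspace_Baire} we may fix a countable local basis $\{B_n:n<\omega\}$ at $y$ in $Y$, and $\nu$'s point-regularity (as a weak $\omega^+$-measure) yields $\nu(y)=\inf_n\nu(B_n)$. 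For any other local basis $\{A_i:i<\gamma\}$ at $y$ with $\gamma<\lambda^+$, for each $n$ one can pick $i_n$ with $A_{i_n}\subseteq B_n$, giving $\inf_i\nu(A_i)\leq\nu(B_n)$ and hence $\inf_i\nu(A_i)\leq\nu(y)$; the reverse inequality is immediate from Axiom~\ref{ax_measure:decreasing}. This closes the argument.

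I expect no serious obstacle here: once one commits to $(Y,\MMM\downarrow Y,\mu\downarrow Y)$ as the candidate structure, Lemmata~\ref{lem:uparrow_and_downarrow_inverse_of_eachother} and~\ref{lem:measure_induced_on_essentially_null_set} take care of everything except point-regularity, and point-regularity reduces cleanly to the countable case via separability. The only subtlety to watch is that the axiom need only be verified for \emph{measurable} singletons $\{y\}\in\MMM\downarrow Y$, which automatically lie in $\NNN$ via the inclusion $\MMM\downarrow Y\subseteq\NNN$, so $\nu(y)$ is well defined and agrees with $(\mu\downarrow Y)(y)$.
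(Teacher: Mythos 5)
Your argument is correct, and mathematically it is the same proof as the paper's, organized through different lemmas. The paper takes $\NNN'=\MMM\downarrow Y$ and keeps $\nu$ as the measure, checking Axiom~\ref{ax_measure:additive} inline: lift a disjoint family of $\MMM\downarrow Y$ to $\MMM$, refine it by a partition using $\lambda^+$-partitionability, regroup, and use $\mu(M)=\nu(M\cap Y)$. You take $(Y,\MMM\downarrow Y,\mu\downarrow Y)$ instead, but since $\mu\subseteq\nu\uparrow X$ forces $(\mu\downarrow Y)(A)=\nu(A)$ for every $A\in\MMM\downarrow Y$, your candidate coincides with the paper's; and your route to Axiom~\ref{ax_measure:additive} --- observe that $X\setminus Y$ is essentially null and invoke Lemma~\ref{lem:measure_induced_on_essentially_null_set} --- simply delegates the same partition argument to that lemma. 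Your essential-nullity observation is precisely what the proof of Lemma~\ref{lem:uparrow_and_downarrow_inverse_of_eachother} uses, so that application goes through; in fact the inducing inclusions also follow directly from $((\mu\downarrow Y)\uparrow X)(M)=\nu(M\cap Y)=\mu(M)$ for $M\in\MMM$. Where you differ usefully is Axiom~\ref{ax_measure:point-regular}: the paper disposes of it with \enquote{$\nu$ satisfies it since it is an $\omega^+$-measure}, but the $\omega^+$ version of the axiom only constrains countable local bases, so the upgrade to bases of order type $<\lambda^+$ genuinely requires $Y$ to be separable, hence second countable (Remark~\ref{rmk:separable_iff_second_countable_iff_subspace_Baire}), exactly as in your comparison-of-infima argument; your write-up makes this step explicit. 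Finally, you are right to read the right-hand side with $Y$ separable, as in the discussion preceding the statement: without that requirement the right-hand side would be trivially witnessed by $Y=X$, $\nu=\mu$, while not every $\lambda^+$-partitionable weak $\lambda^+$-measure space is countable-induced, and both your proof and the paper's need the separability of $Y$ to conclude that $(X,\MMM,\mu)$ is countable-induced.
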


\begin{proof}
    One direction is clear, as every weak $\lambda^+$-measure is automatically a weak $\omega^+$-measure as well. 
    So assume there exists a weak $\omega^+$-measure space $(Y,\NNN, \nu)$ for some $Y\subseteq X$ such that $\MMM\subseteq \NNN \uparrow X$ and $\mu\subseteq \nu \uparrow X$.
    Let $\NNN'=\MMM \downarrow X$, by assumption we have $\NNN'\subseteq \NNN$, and $\MMM\subseteq \NNN' \uparrow X$. Therefore, it is enough to prove that $(Y,\NNN', \nu)$ is a weak $\lambda^+$-measure space. 
    
    Since  $\MMM$ is closed under unions of size $\lambda$, we get that $\NNN'=\MMM\downarrow Y$ is as well, and thus $(Y,\NNN')$ is a weakly $\lambda^+$-measurable space. Also, $\nu$ satisfies Axioms~\ref{ax_measure:emptyset}-\ref{ax_measure:decreasing} and Axiom~\ref{ax_measure:point-regular}, since it is an $\omega^+$-measure. 
    So we only need to check Axiom~\ref{ax_measure:additive}. Let $(A_i)_{i<\gamma}$ be a family of disjoint sets of $\NNN'$ of length $\gamma<\lambda^+$. Then, there is a family of sets $(A'_i)_{i<\gamma}$ in $\MMM$ such that $A'_i\cap Y=A_i$ for every $i<\gamma$. Since $(X,\MMM, \mu)$ is $\lambda^+$-partitionable and $\gamma<\lambda^+$, there is a family $\P$ of disjoint sets of $\MMM$ of size $|\P|<\lambda^+$ refining $\{A'_i\mid i<\gamma\}$. Let $\P_0=\{P\in \P\mid P\cap Y\subseteq A_0\}$ and for $0<i<\gamma$, let $\P_i=\{P\in \P\mid P\cap Y\subseteq A_i, P\cap Y\neq \emptyset\}$. Since both $\P$ and $\{A_i=A'_i\cap Y\mid i<\gamma\}$ are made of disjoint sets, and $\P$ refines $\{A'_i\mid i<\gamma\}$, we get that $\{\P_i\mid i<\gamma\}$ is a partition of $\P$ and $\bigcup \P_i\cap Y=A_i$.
    Therefore, 
    \begin{align*}    
    \nu(\bigcup_{i<\gamma} A_i)=\nu(\bigcup\P\cap Y)&=\mu(\bigcup\P)=\sum_{i<\gamma}\mu(\bigcup \P_i)=\sum_{i<\gamma}\nu(\bigcup \P_i\cap Y)=\sum_{i<\gamma}\nu(A_i)
    \end{align*}
    as wanted.
\end{proof}

Without any assumption on $\MMM$, we cannot expect an ($\omega_1$-Dirac) weak $\lambda^+$-measure space $(X,\MMM,\mu)$ to be countable-induced, simply because weakly $\lambda^+$-mea\-sur\-able spaces allow too much
freedom in adding sets: we can always \enquote{dirty} the measurable structure $\MMM$ by
adding sets to it in such a way that it cannot be restricted to any separable subset.

\begin{proposition}\label{prop:ugly_dirac_measure}
    Assume $\omega<\lambda<\ccc$. 
Then, there is an  $\omega_1$-Dirac, continuous weak $\lambda^+$-measure space $(\pre{\omega}{\lambda},\MMM,\mu)$ on $\pre{\omega}{\lambda}$ that is not countable-induced.
\end{proposition}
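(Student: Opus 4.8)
The plan is to build the required space directly, by \enquote{dirtying} the countable-induced example of Proposition~\ref{prop:existence_of_countable_induced_measure} with a Bernstein set spread across the fibres of $\pre{\omega}{\lambda}$. Since $\lambda^{<\kappa}=\lambda<\ccc$ forces $\kappa=\omega$, I work on $X=\pre{\omega}{\lambda}$. Fix the closed copy $S=\pre{\omega}{\omega}\subseteq X$ of the Baire space, a nonatomic Borel probability measure $\mu_0$ on $S$ of full support, and a Bernstein set $V\subseteq S$ with $W:=S\setminus V$; recall that $V$ and $W$ both have empty interior and full outer $\mu_0$-measure, and that each meets every uncountable closed subset of $S$ in a set of size $\ccc$. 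For every $\beta$ with $\omega\le\beta<\lambda$ put $R_\beta=\{x\in X: x(0)=\beta\text{ or }x(1)=\beta\}\subseteq X\setminus S$, and set $A_\beta=V\cup R_\beta$ and $B_\beta=W\cup R_\beta$. Let $\MMM$ be the closure under unions of size $\le\lambda$ of the family consisting of all open sets, all singletons, and all the sets $A_\beta,B_\beta$, and define $\mu\colon\powerset(X)\to\RR_\infty$ by $\mu(M)=1$ if $V\subseteq M$ or $W\subseteq M$, and $\mu(M)=\mu_0(\int[S]{M\cap S})$ otherwise. This is manifestly well defined.

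Next I would verify the \enquote{easy} axioms and the two structural requirements. An open $O$ is $\mu$-null iff $O\cap S=\emptyset$, so $\bigcup\{O\text{ open}:\mu(O)=0\}=X\setminus S$ and hence $\supp(\mu)=S$, which has weight $\omega$; thus $\mu$ is $\omega_1$-Dirac. Every singleton lies in $\MMM$ with $\mu$-measure $0$ (it contains neither $V$ nor $W$, and its trace has empty interior), so $\mu$ is continuous; monotonicity \ref{ax_measure:decreasing} is immediate from the two cases, since a set can only be enlarged to a container; and \ref{ax_measure:emptyset}, \ref{ax_measure:non-trivial} and \ref{ax_measure:point-regular} are routine, the last because the basic cones at a point shrink to $\mu_0$-measure $0$.

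The main difficulty is additivity \ref{ax_measure:additive}. Call $M$ a \emph{container} if $V\subseteq M$ or $W\subseteq M$. The key claim is that a disjoint family from $\MMM$ contains at most one container: two sets containing $V$ (resp.\ $W$) share $V$ (resp.\ $W$); and if $M\supseteq V$ and $M'\supseteq W$ are disjoint, then their traces must be exactly $V$ and $W$, which forces $M$ to contain some $A_\beta$ and $M'$ some $B_{\beta'}$, whence $M\cap M'\supseteq R_\beta\cap R_{\beta'}\neq\emptyset$ because the $R_\beta$ are pairwise intersecting, a contradiction. Given this, for a disjoint family $(M_i)$ I would argue as follows. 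If some $M_{i_0}$ is a container (measure $1$), every other $M_i$ is disjoint from it, so its trace lies in the opposite Bernstein half and has empty interior, giving $\mu(M_i)=0$ and $\mu(\bigcup_i M_i)=1$. If no $M_i$ is a container but $\bigcup_i M_i$ is, say $\bigcup_i M_i\supseteq V$, then each $M_i$ is a union of open sets and $\le\lambda$ points, and covering the Bernstein set $V$ by $\le\lambda$ such sets forces the closed set $S\setminus\bigcup_i(\text{open part of }M_i)$ to contain at most $\lambda<\ccc$ points of $V$, hence (being closed and meeting $V$ in fewer than $\ccc$ points) to be countable; so the open parts cover $S$ up to a countable set and $\sum_i\mu(M_i)=1=\mu(\bigcup_i M_i)$. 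Finally, if neither any $M_i$ nor $\bigcup_i M_i$ is a container, all the $M_i$ are clean and additivity is exactly that of the measure in Proposition~\ref{prop:existence_of_countable_induced_measure}. The Bernstein/perfect-set dichotomy together with the pairwise intersection of the $R_\beta$ are what make all these cases close, and this is where I expect the real work to lie.

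Finally, to see that $(X,\MMM,\mu)$ is not countable-induced, let $Y\subseteq X$ be separable; then $Y\subseteq\pre{\omega}{A_0}$ for some countable $A_0\subseteq\lambda$. Choose $\beta\in[\omega,\lambda)\setminus A_0$, so that $R_\beta$ is disjoint from $\pre{\omega}{A_0}\supseteq Y$. Then $A_\beta\cap Y=V\cap Y$ and $B_\beta\cap Y=W\cap Y$ are disjoint in $Y$ with union $(A_\beta\cup B_\beta)\cap Y$, while $\mu(A_\beta)=\mu(B_\beta)=\mu(A_\beta\cup B_\beta)=1$. If $(X,\MMM,\mu)$ were $Y$-induced by some weak $\lambda^+$-measure space $(Y,\NNN,\nu)$, then $\nu(A_\beta\cap Y)=\nu(B_\beta\cap Y)=1$ and, by \ref{ax_measure:additive} applied to this disjoint pair, $\nu((A_\beta\cup B_\beta)\cap Y)=2$; but $\nu((A_\beta\cup B_\beta)\cap Y)=\mu(A_\beta\cup B_\beta)=1$, a contradiction. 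Hence $(X,\MMM,\mu)$ is $Y$-induced for no separable $Y$, that is, it is not countable-induced, which completes the construction.
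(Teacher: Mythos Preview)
Your construction is correct and shares the paper's core idea: start from the countable-induced measure of Proposition~\ref{prop:existence_of_countable_induced_measure} and adjoin the two halves of a Bernstein set in $S=\pre{\omega}{\omega}$, spread across $X$ so that no separable trace can carry an additive restriction. The implementations differ, however. The paper keeps the uniform formula $\mu'(M)=\mu_0(\int[S]{M\cap S})$ and makes the dirty sets co-separable (each of the form $(\pre{\omega}{\lambda}\setminus Y)\cup D_i$); this way any two dirty sets intersect, and the single extra lemma $\mu'(M\cup D_i)=\mu(M)$ suffices for additivity. You instead redefine $\mu$ on containers to be $1$ and use the pairwise-intersecting sets $R_\beta$ as the spreading device, which forces you into the three-case additivity analysis; your Case~B (clean pieces whose union is a container) is the genuinely new work, and it does go through via the Bernstein property and ccc of $S$ as you outline. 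The trade-off is that the paper's uniform formula avoids Case~B entirely, while your version gives a slightly more concrete obstruction to countable-inducedness ($1+1\neq 1$ rather than $0+0\neq\mu_0(S)$). One small point worth making explicit in your write-up: your claim that a separable $Y\subseteq\pre{\omega}{\lambda}$ lies in some $\pre{\omega}{A_0}$ with $A_0$ countable is true, but it uses that a countable dense set determines all initial segments of points of $Y$ (equivalently, Proposition~\ref{prop:weight=cellularity}); stating this would tighten the final paragraph.
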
 

\begin{proof}   
    By Proposition~\ref{prop:existence_of_countable_induced_measure} and its proof, there is an $\omega_1$-Dirac weak $\lambda^+$-measure space  $(X,\MMM,\mu)$ on $X=\pre{\omega}{\lambda}$ such that $\MMM=\MMM^\lambda_p(\pre{\omega}{\omega})\uparrow X$ is the extension to $X$ of the minimal $\lambda^+$-measurable structure $\MMM^\lambda_p(\pre{\omega}{\omega})$ on $\pre{\omega}{\omega}$ measuring all points of $\pre{\omega}{\omega}$, and $\mu(M)=\mu_c(\int[\pre{\omega}{\omega}]{M\cap \pre{\omega}{\omega}})$ for every $M\in\MMM$, where $\mu_c$ is a classical measure on $\pre{\omega}{\omega}$.

    Let $D_0\subseteq\pre{\omega}{\omega}$ be a (Bernstein) set such that for every uncountable closed $C\subseteq\pre{\omega}{\omega}$ we have $|D_0\cap C|=|C\setminus D_0|=\ccc$. This can be built in the same way as a classical Bernstein set (see, e.g., \cite[Example 8.24]{KechrisMR1321597}): let $\{C_\xi:\xi<\ccc\}$ be an enumeration listing $\ccc$-many times every uncountable closed subsets of $\pre{\omega}{\omega}$. Pick recursively elements $a_\xi,b_\xi\in C_\xi$ so that all these points are pairwise distinct. Then $D_0=\{a_\xi\mid \xi<\ccc\}$ is as required.

    Let $D_1=\pre{\omega}{\omega}\setminus D_0$, then we have $|D_1\cap C|=|C\setminus D_1|=\ccc$ as well for every uncountable closed $C\subseteq\pre{\omega}{\omega}$. Let $D_2=\pre{\omega}{\omega}$, and define
    \[
    \D=\{(\pre{\omega}{\lambda}\setminus Y)\cup D_i\mid i\in \{0,1,2\}, \pre{\omega}{\omega}\subseteq Y\subseteq \pre{\omega}{\lambda}, Y \text{ separable}\}.
    \]
    Then $\D$ is closed under arbitrary unions, and thus 
    \[
    \MMM'=\MMM\cup\{D\cup M\mid M\in \MMM, D\in\D\}
    \]
    is a weakly $\lambda^+$-measurable structure on $X$.
    
    Define $\mu'(M)=\mu(\int[\pre{\omega}{\omega}]{M\cap \pre{\omega}{\omega}})$ for every $M\in \MMM'$.
    Notice that this way, we have $\mu'(M)=\mu(M)$ for every $M\in\MMM$.
    It is easy to check then that $\mu'$ satisfies Axioms~\ref{ax_measure:emptyset}-\ref{ax_measure:non-trivial} and Axiom~\ref{ax_measure:point-regular}, since $\mu$ does. Also, for all $A,B\in \MMM$ such that $A\subseteq B$, we get $\int{A}\subseteq \int{B}$ as well, and thus $\mu'(A)=\mu(\int{A})\leq \mu(\int{B})=\mu'(B)$ and Axiom~\ref{ax_measure:decreasing} holds.
    
    To prove Axiom~\ref{ax_measure:additive}, first we claim that $\mu'(M\cup D_i)=\mu(M)$ for every $M\in \MMM$ and $i\in\{0,1\}$.
    By definition of $\mu'$, this amounts to check that $\mu(A)=\mu(B)$ for $A=\int[\pre{\omega}{\omega}]{(M\cup D_i)\cap \pre{\omega}{\omega}}$ and $B=\int[\pre{\omega}{\omega}]{M\cap \pre{\omega}{\omega}}$.
    Let $F$ be the set of points $x\in \pre{\omega}{\omega} \setminus B$ such that there is open neighborhood $V$ of $x$ in $\pre{\omega}{\omega}$ satisfying $|V\setminus B|\leq \omega$. Then $F$ is countable, since $\pre{\omega}{\omega}$ is second countable.
    By construction, if $x\in \pre{\omega}{\omega}\setminus (F\cup B)$, then every clopen neighborhood $V$ of $x$ in $\pre{\omega}{\omega}$ satisfies $|V\setminus B|\geq \ccc$. 
    Since $V\setminus  B$ is closed in $\pre{\omega}{\omega}$, we get that $|(V\setminus  B)\setminus D_i|\geq \ccc$. 
    Since $|(M\cap \pre{\omega}{\omega})\setminus \int[\pre{\omega}{\omega}]{M\cap \pre{\omega}{\omega}}|\leq \lambda<\ccc$ for every $M\in \MMM$, by definition of minimal $\lambda^+$-measure space, we get that $|(V\setminus  M)\setminus D_i|\geq \ccc$. Therefore, $V$ is not contained in $M\cup  D_i$. Since this holds for an arbitrary clopen neighborhood of $x$, and $\pre{\omega}{\omega}$ is zero-dimensional, we get $x\notin A$.
    This shows that $A\subseteq F\cup B$, thus $
    |A\setminus B|\leq \omega$.
    Therefore, $\mu(A)=\mu(B)$, as wanted.

    Now let $(A_i)_{i<\gamma}$ be a sequence of disjoint measurable sets of length $\gamma<\lambda^+$.
    Notice that $\lambda>\omega$ implies that $\pre{\omega}{\lambda}$ is not separable, so there are no pairwise disjoint sets from $\D$. Therefore, at most one set of $(A_i)_{i<\gamma}$ belongs to $\MMM'\setminus \MMM$: without loss of generality, assume $A_0\in \MMM'\setminus \MMM$. 
    Since $A_0\notin \MMM$ and by definition of $\MMM$ and $\MMM'$, we get that $A_0=A'_0\cup D_i$ for some $i\in\{0,1\}$ and $A_0\in \MMM$.
    Then, $\bigcup_{i<\alpha} A_i=A'_0\cup \bigcup_{0<i<\alpha} A_i \cup D_i$, and thus 
    \[
    \mu'(\bigcup_{i<\alpha} A_i)=\mu'(A'_0\cup \bigcup_{0<i<\alpha} A_i\cup D_i)=\mu(A'_0\cup \bigcup_{0<i<\alpha} A_i)
    \]
    by the previous argument and since $A'_0\cup \bigcup_{0<i<\alpha} A_i\in \MMM$. Since $\mu$ is a $\lambda^+$-measure and $\{A'_0\}\cup\{A_i\mid 0<i<\gamma\}$ is still a family of disjoint open sets in $\MMM$, then Axiom~\ref{ax_measure:additive} follows.

    Finally, notice that $\supp(\mu')=\supp(\mu)=\pre{\omega}{\omega}$, so $\mu'$ is still $\omega_1$-Dirac. However, for every separable subspace $Y\subseteq \pre{\omega}{\lambda}$ (and in particular for $Y=\supp(\mu')$), we have that $(Y,\MMM\downarrow Y, \mu\downarrow Y)$ cannot be a weak $\lambda^+$-measure space.
    This is because $A_0=(\pre{\omega}{\lambda}\setminus (\pre{\omega}{\omega}\cup Y))\cup D_0$ and $A_1=(\pre{\omega}{\lambda}\setminus (\pre{\omega}{\omega}\cup Y))\cup D_1$ are such that $A_0\downarrow Y=D_0$ and $A_1\downarrow Y=D_1$ are disjoint, while at the same time we have $\mu(A_0)=\mu(A_1)=0$ and $\mu(A_0\cup A_1)=\mu(\pre{\omega}{\omega})>0$ (and $\mu(M)=\mu(\pre{\omega}{\omega})>0$ for every $M\in \MMM$ such that $M\cap Y=(A_0\cup A_1)\cap Y=\pre{\omega}{\omega}$).
    Therefore $(X,\MMM, \mu)$ is not countable-induced.
\end{proof}

This is not a real obstacle, however, as it is clear that even the weak $\lambda^+$-measure space $(X,\MMM, \mu)$ from Proposition~\ref{prop:ugly_dirac_measure} is \enquote{essentially} countable-induced, in the sense that its \textit{kernel} is.
More in general, we are going to show that if $(X,\MMM, \mu)$ is $\lambda$-partitionable, then it is $\omega_1$-Dirac if and only if it is countable-induced.

\begin{proposition}\label{prop:countable-induced_iff_Dirac}
    Let $(X,\MMM, \mu)$ be a continuous $\lambda^+$-partitionable weak $\lambda^+$-measure space. Then, the following are equivalent:
    \begin{enumerate-(a)}
        \item\label{prop:countable-induced_iff_Dirac-1} $(X,\MMM, \mu)$ is countable-induced.
        \item\label{prop:countable-induced_iff_Dirac-2} $(X,\MMM, \mu)$ is $\omega_1$-Dirac.
    \end{enumerate-(a)}
\end{proposition}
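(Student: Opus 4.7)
My plan is to prove each direction by directly unfolding definitions and invoking results already established.

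For \ref{prop:countable-induced_iff_Dirac-1}$\Rightarrow$\ref{prop:countable-induced_iff_Dirac-2}, I would fix a separable $Y\subseteq X$ and a weak $\lambda^+$-measure space $(Y,\NNN,\nu)$ witnessing that $(X,\MMM,\mu)$ is $Y$-induced. Any open $U\subseteq X\setminus\cl(Y)$ satisfies $U\cap Y=\emptyset$, so $\mu(U)=(\nu\uparrow X)(U)=\nu(\emptyset)=0$, which gives $\supp(\mu)\subseteq \cl(Y)$. Since a countable dense subset of $Y$ remains dense in $\cl(Y)$, the closure $\cl(Y)$ is still separable. Applying Proposition~\ref{prop:weight=cellularity} to $\cl(Y)\subseteq \pre{\kappa}{\lambda}$ yields $\weight(\cl(Y))=\density(\cl(Y))\leq \omega$, and monotonicity of the weight under subspaces then gives $\weight(\supp(\mu))\leq \omega<\omega_1$. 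Hence $\mu$ is $\omega_1$-Dirac.

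For \ref{prop:countable-induced_iff_Dirac-2}$\Rightarrow$\ref{prop:countable-induced_iff_Dirac-1}, I would take $Y=\supp(\mu)$ as the candidate inducing subspace. The assumption $\weight(Y)<\omega_1$ together with Proposition~\ref{prop:weight=cellularity} gives $\density(Y)=\weight(Y)\leq \omega$, so $Y$ is separable. By definition, the complement $X\setminus Y$ is a union of measure-zero open sets, and Corollary~\ref{cor:sets_of_measure_zero_are_lambda_ideal} then guarantees $\mu(X\setminus Y)=0$. Continuity of $\mu$ ensures that every point of $Y$ is measurable, which is exactly what Corollary~\ref{cor:measure_induced_on_subspace_with_measure_zero_complement} needs to produce a weak $\lambda^+$-measure space $(Y,\MMM\downarrow Y,\mu\downarrow Y)$. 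The Remark right after Lemma~\ref{lem:uparrow_and_downarrow_inverse_of_eachother} then yields $\MMM\subseteq (\MMM\downarrow Y)\uparrow X$ and $\mu\subseteq (\mu\downarrow Y)\uparrow X$, so $(X,\MMM,\mu)$ is $Y$-induced and hence countable-induced.

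I do not expect any real obstacle, as both implications reduce to bookkeeping on already proved results. The only subtle point worth flagging is that the continuity hypothesis is used in the backward direction precisely to supply the \enquote{$\MMM$ measures all points of $Y$} alternative in Corollary~\ref{cor:measure_induced_on_subspace_with_measure_zero_complement}, so that no minimality hypothesis on $\MMM$ is needed.
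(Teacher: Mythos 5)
Your forward direction is correct and is a mild variant of the paper's: instead of first upgrading the inducing subspace to a closed one via Proposition~\ref{prop:countable_induced_iff_Polish_induced}, you pass to $\cl(Y)$ directly, note $\supp(\mu)\subseteq\cl(Y)$ because $\mu$ vanishes on open sets disjoint from $Y$, and use Proposition~\ref{prop:weight=cellularity} to convert separability into second countability. That works (and does not even use continuity), so no complaint there.

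The backward direction, however, has a gap at the last step. Corollary~\ref{cor:measure_induced_on_subspace_with_measure_zero_complement} gives you that $(Y,\MMM\downarrow Y,\mu\downarrow Y)$ is a weak $\lambda^+$-measure space, but that alone does not yield $\mu\subseteq(\mu\downarrow Y)\uparrow X$: since $(\mu\downarrow Y)(M\cap Y)$ is defined as an infimum, it could a priori be strictly smaller than $\mu(M)$ for some $M\in\MMM$, unless you know that $\mu(M)$ depends only on $M\cap Y$, i.e., that $X\setminus Y$ is essentially null. The ``Remark'' you invoke cannot supply this: there is no remark immediately after Lemma~\ref{lem:uparrow_and_downarrow_inverse_of_eachother}, and the unnumbered remark after Definition~\ref{def:countable-ind} (``$Y$-induced iff $(Y,\MMM\downarrow Y,\mu\downarrow Y)$ is a weak $\lambda^+$-measure space'') is only justified in the paper in the ``only if'' direction, which presupposes exactly the $Y$-inducedness you are trying to prove; its ``if'' direction is precisely the missing essential-nullity step in disguise (and fails for general weak $\lambda^+$-measure spaces without it). The repair is short and is where the $\lambda^+$-partitionability hypothesis earns its keep a second time: from partitionability and Corollary~\ref{cor:partitionable_implies_subadditive}, $\mu$ is $\lambda^+$-subadditive, so by Lemma~\ref{lem:null_are_essentially_null_if_subadditive} the measure-zero set $X\setminus Y$ is essentially null, and then Lemma~\ref{lem:uparrow_and_downarrow_inverse_of_eachother} gives $\MMM\subseteq(\MMM\downarrow Y)\uparrow X$ and $\mu\subseteq(\mu\downarrow Y)\uparrow X$; this is exactly how the paper concludes.
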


\begin{proof}
    First, assume $(X,\MMM, \mu)$ is {countable-induced}. By Proposition~\ref{prop:countable_induced_iff_Polish_induced} there is a closed, separable $Y\subseteq X$ and a weak $\lambda^+$-measure space  $(Y,\NNN, \nu)$ such that $\MMM\subseteq \NNN \uparrow X$, and $\mu\subseteq \nu \uparrow X$.
    Then, $X\setminus Y$ is open, thus measurable, and $\mu(X\setminus Y)=0$, therefore $\supp(\mu)\subseteq Y$. By Remark~\ref{rmk:separable_iff_second_countable_iff_subspace_Baire}, we get that  $\supp(\mu)$ is second countable as wanted.

    Conversely, assume that  $Y=\supp(\mu)$ is second countable, thus separable. Since $(X,\MMM, \mu)$ is continuous, in particular $\MMM$ measures all points of $X$.
    By Corollary~\ref{cor:sets_of_measure_zero_are_lambda_ideal}, we get that $X\setminus Y$ has measure $0$, and thus $(Y,\MMM\downarrow Y, \mu\downarrow Y)$ is a weak $\lambda^+$-measure space by Corollary~\ref{cor:measure_induced_on_subspace_with_measure_zero_complement}. By Corollary~\ref{cor:partitionable_implies_subadditive} and Lemma~\ref{lem:null_are_essentially_null_if_subadditive} we have that $X\setminus Y$ is essentially null. The inclusions $\MMM\subseteq (\MMM\downarrow Y) \uparrow X$ and $\mu\subseteq (\mu\downarrow Y) \uparrow X$ follow then by Lemma~\ref{lem:uparrow_and_downarrow_inverse_of_eachother}.
\end{proof}

Since the kernel of a continuous, $\omega_1$-Dirac weak $\lambda^+$-measure space is a continuous, $\omega_1$-Dirac minimal $\lambda^+$-measure space, and every minimal $\lambda^+$-measure space is $\lambda^+$-partitionable by Proposition~\ref{prop:minimal_measure_spaces_are_partitionable}, we get the following.

\begin{corollary}
    Every continuous, $\omega_1$-Dirac weak $\lambda^+$-mea\-sure space  $(X,\MMM, \mu)$ has a countable-induced kernel $\ker(X,\MMM, \mu)$.
\end{corollary}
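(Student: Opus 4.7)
The plan is to reduce the statement directly to Proposition~\ref{prop:countable-induced_iff_Dirac} applied to the kernel. By construction, $\ker(X,\MMM,\mu)=(X,\ker(\MMM),\mu)$ is a minimal $\lambda^+$-measure space: the structure $\ker(\MMM)=\MMM^\lambda_p(\{x\in X\mid x\in\MMM\})$ is by definition a minimal weakly $\lambda^+$-measurable structure contained in $\MMM$, and the observation immediately preceding Definition~\ref{def:Dirac} ensures that the restriction of $\mu$ to this smaller $\lambda^+$-algebra remains a $\lambda^+$-measure. Then Proposition~\ref{prop:minimal_measure_spaces_are_partitionable} gives that the kernel is $\lambda^+$-partitionable, which is the first hypothesis of Proposition~\ref{prop:countable-induced_iff_Dirac}.

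Next, I would verify that the continuity and the $\omega_1$-Dirac property both transfer from $(X,\MMM,\mu)$ to $\ker(X,\MMM,\mu)$. Continuity transfers because, by the very definition of $\ker(\MMM)$, the points measured by $\MMM$ and by $\ker(\MMM)$ are exactly the same set $\{x\in X\mid x\in\MMM\}$, so if every singleton has measure $0$ in $\MMM$, the same holds in $\ker(\MMM)$. For the $\omega_1$-Dirac property, recall that
\[
\supp(\mu)=X\setminus\bigcup\{O\mid O\text{ open},\,\mu(O)=0\}
\]
depends only on the measures of open sets. Since every weakly $\lambda^+$-measurable structure contains all open subsets of $X$, and since $\mu$ agrees on $\ker(\MMM)$ and $\MMM$ on their common domain (in particular on open sets), the supports coincide. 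Hence $\ker(\MMM)$ inherits the fact that $\supp(\mu)$ has weight $<\omega_1$.

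With these three properties established---$\lambda^+$-partitionability, continuity, and being $\omega_1$-Dirac---Proposition~\ref{prop:countable-induced_iff_Dirac} applied to the kernel yields that $\ker(X,\MMM,\mu)$ is countable-induced, which is the desired conclusion. I do not foresee any substantive obstacle: the argument is purely bookkeeping, checking that the passage from $\MMM$ to $\ker(\MMM)$ respects each of the hypotheses of Proposition~\ref{prop:countable-induced_iff_Dirac}, and the only mild point worth emphasizing is that the definition of support uses only open sets, which is what makes the $\omega_1$-Dirac condition pass to the kernel without alteration.
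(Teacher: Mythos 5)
Your proposal is correct and follows essentially the same route as the paper, which derives the corollary in one sentence by noting that the kernel is a continuous, $\omega_1$-Dirac, minimal $\lambda^+$-measure space (hence $\lambda^+$-partitionable by Proposition~\ref{prop:minimal_measure_spaces_are_partitionable}) and then invoking Proposition~\ref{prop:countable-induced_iff_Dirac}; your explicit checks that continuity and the support transfer to $\ker(\MMM)$ are exactly the bookkeeping the paper leaves implicit. Only a terminological slip: $\ker(\MMM)$ is a weakly $\lambda^+$-measurable structure (closed under unions of size $\leq\lambda$), not a $\lambda^+$-algebra, and the fact that $\mu$ restricted to it is still a weak $\lambda^+$-measure is the observation preceding the definition of the kernel rather than the one preceding Definition~\ref{def:Dirac}.
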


\subsection{A possibility theorem for monoids with non-continuous sum}\label{subsec:measures_with_non-continuous_sum}

In Theorem~\ref{thm:impossibility_theorem} we use an infinitary sum satisfying Axioms~\ref{ax_sum:extend_plus} and~\ref{ax_sum:continuous}.  Clearly, 
Axiom~\ref{ax_sum:extend_plus} cannot be dropped.
In the following we show that we cannot drop Axiom~\ref{ax_sum:continuous} either.

For the purpose of this subsection only and with a small abuse of notation, we call {weak $\lambda^+$-measure space} even a triple $(X,\MMM,\mu)$ that satisfies all axioms and requirements of a weak $\lambda^+$-measure space, except that $\mu:\powerset(X)\to (\SS,\Sum)$ takes values in a positively totally ordered  monoid $\SS$ with a function $\Sum$ that does not necessarily satisfy Axiom~\ref{ax_sum:continuous}.
Notice that for such a measure we cannot apply the tools we developed in Section~\ref{sec:measures}.

First, without Axiom~\ref{ax_sum:continuous}, there are subspaces of $\pre{\kappa}{\lambda}$ having a $\lambda^+$-measure.

\begin{proposition}\label{prop:trivial_measure_non_continuous_sum}
   There is a positively totally ordered monoid $\SS$ with an infinitary function $\Sum$ satisfying Axiom~\ref{ax_sum:extend_plus} but not Axiom~\ref{ax_sum:continuous}, and a subspace $X\subseteq \pre{\kappa}{\lambda}$ for which there exists a continuous weak $\lambda^+$-measure structure $(X,\MMM,\mu)$ on it.
\end{proposition}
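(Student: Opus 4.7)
The plan is to exhibit an explicit example. I take $X$ to be a sufficiently large discrete subspace of $\pre{\kappa}{\lambda}$, equip it with a Boolean-valued measure that merely distinguishes \emph{small} from \emph{large} subsets, and engineer the failure of continuity of $\Sum$ to occur on a single sequence of length $\lambda^+$, which (crucially) lies just outside the scope of Axiom~\ref{ax_measure:additive}.

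Concretely, when $|\pre{\kappa}{\lambda}|=\lambda^{\kappa}\geq\lambda^{+}$ (the typical case, holding for example whenever $\kappa\geq\cof(\lambda)$ by König's theorem; in the remaining pathological case one first passes to the homeomorphic copy $\pre{\kappa}{{\lambda'}}$ given by Proposition~\ref{prop:lambda<kappa=lambda_is_necessary}), pick $X=\{x_\alpha : \alpha<\lambda^{+}\}\subseteq\pre{\kappa}{\lambda}$ with the $x_\alpha$ lying in pairwise disjoint basic cones, so that $X$ is discrete. Take $\SS=(\{0,1\},0,\max,\leq)$, $\MMM=\powerset(X)$, and set $\mu(A)=0$ if $|A|\leq\lambda$ and $\mu(A)=1$ if $|A|\geq\lambda^{+}$. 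Define the partial function $\Sum:\pre{<\On}{\SS}\to\SS$ by $\Sum(s)=\max_{i<\leng(s)}s_i$ for every $s$ of length strictly less than $\lambda^{+}$, together with the single extra assignment $\Sum(\langle 0:i<\lambda^{+}\rangle)=1$.

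The verifications are routine. Axiom~\ref{ax_sum:extend_plus} holds because $\Sum$ coincides with $+=\max$ on finite sequences, while Axiom~\ref{ax_sum:continuous} fails at $\langle 0:i<\lambda^{+}\rangle$ whose proper initial segments all have $\Sum=0$. Continuity of $\mu$ and Axiom~\ref{ax_measure:point-regular} are immediate because $X$ is discrete and singletons are small; axioms~\ref{ax_measure:emptyset}--\ref{ax_measure:decreasing} are obvious. The only axiom requiring care is~\ref{ax_measure:additive}: for a disjoint family $(A_i)_{i<\gamma}\subseteq\MMM$ with $\gamma<\lambda^{+}$, the sequence $\langle\mu(A_i)\mid i<\gamma\rangle$ has length $<\lambda^{+}$ hence lies in $\dom(\Sum)$, and a cardinality pigeonhole (using $\gamma\cdot\lambda\leq\lambda<\lambda^{+}$) shows $|\bigcup_{i}A_i|>\lambda$ iff some $|A_j|>\lambda$; in both cases one checks $\mu(\bigcup_i A_i)=\max_i\mu(A_i)=\Sum\langle\mu(A_i)\mid i<\gamma\rangle$.

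The main obstacle is conceptual rather than technical. The singleton decomposition $X=\bigcup_{\alpha<\lambda^{+}}\{x_\alpha\}$ lies just outside the range of~\ref{ax_measure:additive} (which only requires disjoint families of length $<\lambda^{+}$), so $\mu(X)$ is never forced to equal $\Sum\langle 0:\alpha<\lambda^{+}\rangle$ by (M4), and the non-natural value we assign to $\Sum$ on that single sequence is never tested; a shorter $X$ with $|X|\leq\lambda$ would bring the singleton decomposition into (M4) and force the same $\Sum\langle 0:i<|X|\rangle$ to equal both $\mu(X)>0$ (via the singleton partition) and $\mu(\emptyset)=0$ (via the all-empty family of the same length), which is impossible. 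Thus the role of Axiom~\ref{ax_sum:continuous} in the forthcoming impossibility theorem is precisely to propagate the equality $\Sum\langle 0:i<\gamma\rangle=0$ up through all ordinal lengths; dropping it opens the door to examples exactly as above.
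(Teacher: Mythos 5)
There is a genuine gap: the witness space $X$ you use need not exist. A discrete subspace of $\pre{\kappa}{\lambda}$ has size at most the weight $\lambda^{<\kappa}$ of $\pre{\kappa}{\lambda}$: for each $x_\alpha\in X$ you need a basic cone $\Nbhd_{s_\alpha}$ with $\Nbhd_{s_\alpha}\cap X=\{x_\alpha\}$, and (cones being comparable or disjoint) the map $x_\alpha\mapsto \Nbhd_{s_\alpha}$ is injective into a family of pairwise disjoint cones, of which there are at most $|\pre{<\kappa}{\lambda}|=\lambda^{<\kappa}$. So your $X$ of size $\lambda^{+}$ exists only when $\lambda^{<\kappa}>\lambda$; it never exists when $\kappa=\omega$, nor under the standard hypothesis $\lambda^{<\kappa}=\lambda$, while the proposition carries no cardinal assumption beyond the blanket $\kappa\leq\lambda$, $\kappa$ regular. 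Your sufficient condition $\lambda^{\kappa}\geq\lambda^{+}$ concerns the cardinality of the whole space, which is irrelevant here, and the fallback through Proposition~\ref{prop:lambda<kappa=lambda_is_necessary} buys nothing: the homeomorphic copy $\pre{\kappa}{{\lambda'}}$ with $\lambda'=\lambda^{<\kappa}$ has the same bound on discrete subspaces, and when $\lambda^{<\kappa}=\lambda$ it is the same obstruction. Moreover, even where it exists, such an $X$ has weight $\lambda^{+}$, stepping outside the paper's standing convention that the subspaces under consideration have weight at most $\lambda$ (and this proposition is meant as an optimality remark for Theorem~\ref{thm:impossibility_theorem}, which lives in that setting). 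Finally, your own closing paragraph shows the approach cannot be repaired by shrinking $X$: with $|X|\leq\lambda$ the singleton decomposition falls inside the scope of Axiom~\ref{ax_measure:additive} and forces a contradiction, so pushing the discontinuity of $\Sum$ up to length $\lambda^{+}$ intrinsically requires a discrete space of size $\lambda^{+}$, which is exactly what is unavailable in general.

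For comparison, the paper's proof keeps the discrete-space idea but changes the monoid and the measure so that only an \emph{infinite} discrete subspace is needed (which always exists, e.g.\ countably many points in disjoint cones, and has weight $\leq\lambda$). It takes $\SS=\{0,\infty\}\subseteq\RR_\infty$, declares $\Sum(\bar b)=0$ exactly when $\bar b$ is finite and constantly $0$ and $\Sum(\bar b)=\infty$ otherwise (so continuity already fails at the all-zero $\omega$-sequence), and sets $\mu(M)=0$ iff $M$ is finite on $\MMM=\powerset(X)$. The jump of $\Sum$ at infinite lengths is then matched by the jump of $\mu$ at infinite sets, because an infinite disjoint family of measurable sets within the scope of Axiom~\ref{ax_measure:additive} has infinite union; no length-$\lambda^{+}$ escape hatch, and hence no cardinal hypothesis, is needed. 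Your verifications of the axioms, granted the existence of your $X$, are fine; the failure is in the existence and generality of the witness, which is the heart of the statement.
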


\begin{proof}
    Let $X\subseteq \pre{\kappa}{\lambda}$ be an infinite discrete subspace of $\pre{\kappa}{\lambda}$. Consider the submonoid $\SS=\{0,\infty\}$ of $\RR_\infty$.
    Given $\bar{b}\in \pre{<\On}{\SS}$, define $\Sum(\bar{b})=0$ if $\bar{b}$ is finite and constantly $0$, and $\Sum(\bar{b})=\infty$ otherwise.
    Define $\mu:\powerset(X)\to\SS$ by setting $\mu(M)=0$ if $M$ is finite, and $\mu(M)=\infty$ otherwise. Then, it is clear that $\mu$ satisfies Axioms~\ref{ax_measure:emptyset}-\ref{ax_measure:decreasing}, and it satisfies trivially Axiom~\ref{ax_measure:point-regular} since every point is open. Also,  $\mu$ satisfies Axiom~\ref{ax_measure:additive} by the very definition of $\Sum$. Therefore, 
    $(X,\powerset(X),\mu)$ is a (weak) $\lambda^+$-measure space. 
\end{proof}

The measure constructed in Proposition~\ref{prop:trivial_measure_non_continuous_sum} is pathological in multiple ways, one of them being that it is $1$-Dirac: every point of $X$ has an open neighborhood of measure $0$, hence $\supp(\mu)=\emptyset$. This observation alone might be regarded as sufficient motivation to exclude monoids with non-continuous functions $\Sum$. Nevertheless, for completeness, we also analyze what happens when we restrict our attention to non-Dirac measures, or to measures on $X=\pre{\kappa}{\lambda}$.

First, the argument from Proposition~\ref{prop:trivial_measure_non_continuous_sum} cannot be extended to the whole space $\pre{\kappa}{\lambda}$, since no $1$-Dirac measure exists on this space.

\begin{proposition}\label{prop:no_1-dirac_measures_on_Baire}
Assume $\lambda^{<\kappa}=\lambda$. Let $\SS$ be a positively totally ordered monoid with an infinitary function $\Sum$ satisfying Axiom~\ref{ax_sum:extend_plus} (but not necessarily Axiom~\ref{ax_sum:continuous}).
Then there is no $1$-Dirac, continuous weak $\lambda^+$-measure structure on $\pre{\kappa}{\lambda}$.
\end{proposition}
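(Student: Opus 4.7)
The plan is to derive a contradiction by evaluating $\Sum$ on the constant-zero sequence of length $\lambda$ in two different ways through Axiom~\ref{ax_measure:additive}, once obtaining $0$ and once obtaining $\mu(\pre{\kappa}{\lambda})>0$.

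Suppose $(\pre{\kappa}{\lambda},\MMM,\mu)$ is a $1$-Dirac, continuous weak $\lambda^+$-measure space. Being $1$-Dirac forces $\weight(\supp(\mu))<1$, hence $\supp(\mu)=\emptyset$, so every $x\in\pre{\kappa}{\lambda}$ has a basic clopen neighborhood of measure $0$: given $x$ and an open $O\ni x$ with $\mu(O)=0$, any $s_x\in\pre{<\kappa}{\lambda}$ with $x\in\Nbhd_{s_x}\subseteq O$ satisfies $\mu(\Nbhd_{s_x})=0$ by Axiom~\ref{ax_measure:decreasing}.

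Fix one such $s_0$. The decomposition $\Nbhd_{s_0}=\bigcup_{a<\lambda}\Nbhd_{s_0\conc\langle a\rangle}$ is a disjoint partition of order type $\lambda<\lambda^+$ whose pieces all have measure $0$ (by Axiom~\ref{ax_measure:decreasing}), so Axiom~\ref{ax_measure:additive} yields
\[
\Sum(\langle 0\mid a<\lambda\rangle)=\mu(\Nbhd_{s_0})=0.
\]
For the other side, I would apply Fact~\ref{fct:strong_zero_dimensionality} to the cover $\{\Nbhd_{s_x}\mid x\in\pre{\kappa}{\lambda}\}$ to produce a disjoint family $\P\subseteq\B(\pre{\kappa}{\lambda})$ refining it with $\bigcup\P=\pre{\kappa}{\lambda}$; each $P\in\P$ then has measure $0$. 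The hypothesis $\lambda^{<\kappa}=\lambda$ gives $|\P|\leq |\B(\pre{\kappa}{\lambda})|\leq\lambda$, and if $|\P|<\lambda$ I replace one piece $\Nbhd_t\in\P$ with its further partition $\{\Nbhd_{t\conc\langle a\rangle}\mid a<\lambda\}$ (still of measure $0$) to bring the cardinality up to exactly $\lambda$. Enumerating this refined partition with order type $\lambda$ as $(P_i)_{i<\lambda}$, Axiom~\ref{ax_measure:additive} yields
\[
\mu(\pre{\kappa}{\lambda})=\Sum(\langle\mu(P_i)\mid i<\lambda\rangle)=\Sum(\langle 0\mid i<\lambda\rangle).
\]
Since $\Sum$ is a single-valued function, its value on the constant-zero sequence of length $\lambda$ must match the one computed from $\Nbhd_{s_0}$, namely $0$, giving $\mu(\pre{\kappa}{\lambda})=0$ and contradicting Axiom~\ref{ax_measure:non-trivial}.

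The delicate point is that, without the continuity Axiom~\ref{ax_sum:continuous}, there is no a priori reason for $\Sum$ of a constant-zero sequence of infinite length to vanish — indeed Proposition~\ref{prop:trivial_measure_non_continuous_sum} exhibits monoids where it does not. The argument sidesteps this by producing two genuinely different disjoint partitions whose $\mu$-sequences happen to coincide; the hypothesis $\lambda^{<\kappa}=\lambda$ (bounding the basis size) together with the self-similarity of $\pre{\kappa}{\lambda}$ (every basic clopen set splits into $\lambda$ further basic clopen pieces) is precisely what allows the two order types to be forced to match.
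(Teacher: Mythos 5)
Your proof is correct and takes essentially the same approach as the paper's: both evaluate $\Sum$ on the constant-zero sequence of length $\lambda$ in two ways — once via a size-$\lambda$ clopen partition of the whole space into measure-zero cones (obtained from Fact~\ref{fct:strong_zero_dimensionality}, the bound $\lambda^{<\kappa}=\lambda$, and splitting one cone into its successors to pad the size to exactly $\lambda$), and once via the successor-splitting of a single measure-zero cone — contradicting Axiom~\ref{ax_measure:non-trivial}. The only difference is the order in which the two computations are carried out.
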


\begin{proof}
Assume, toward a contradiction, that $(\pre{\kappa}{\lambda}, \MMM, \mu)$ is a continuous $1$-Dirac weak $\lambda^+$-measure structure.
In particular, the family $\V=\{V\in \tau \mid \mu(V)=0\}$ is a cover of $\pre{\kappa}{\lambda}$.
Let $\P$ be a partition of $\pre{\kappa}{\lambda}$ refining $\V$, as given by Fact~\ref{fct:strong_zero_dimensionality}.
Then, $\mu(P)=0$ for every $P\in\P$, by Axiom~\ref{ax_measure:decreasing}, and $|\P|\leq\lambda$ since we assumed $\lambda^{<\kappa}=\lambda$.
Without loss of generality, we may assume $|\P|=\lambda$, for example by splitting one clopen cone of $\P$ into its successors.

By Axioms~\ref{ax_measure:non-trivial} and~\ref{ax_measure:additive}, we obtain
\[
0<\mu(\pre{\kappa}{\lambda})
=\mu\Bigl(\bigcup_{P\in\P} P\Bigr)
=\Sum((0)_{i<\lambda}),
\]
and hence $\Sum((0)_{i<\lambda})>0$.
On the other hand, if $s\in\pre{<\kappa}{\lambda}$ is such that $\Nbhd_s\in\P$, then by Axiom~\ref{ax_measure:decreasing} we have $\mu(\Nbhd_{s\conc\alpha})=0$ for every $\alpha<\lambda$.
By Axiom~\ref{ax_measure:additive}, this yields
\[
0=\mu(\Nbhd_s)
=\mu\Bigl(\bigcup_{\alpha<\lambda}\Nbhd_{s\conc\alpha}\Bigr)
=\Sum((0)_{i<\lambda}),
\]
a contradiction.
\end{proof}

Notice that the contrast between Propositions~\ref{prop:trivial_measure_non_continuous_sum} and~\ref{prop:no_1-dirac_measures_on_Baire} also shows that the results we developed in Section~\ref{sec:measures} -- like Lemma~\ref{lem:extending_measure_spaces} -- cannot be applied in general to a function $\Sum$ that does not satisfy Axiom~\ref{ax_sum:continuous}.

Restricting our attention to non-Dirac measures, the next result shows that when $\lambda=\lambda^{\kappa}$, non-Dirac continuous weak $\lambda^+$-measure spaces cannot exist, even without assuming that $\Sum$ is continuous.

\begin{proposition}\label{prop:no_non-Dirac_measures_without_continuous_sum}
Assume $\lambda=\lambda^{\kappa}$. Let $X\subseteq \pre{\kappa}{\lambda}$, and let $\SS$ be a  
positively totally ordered monoid with an infinitary function $\Sum$ satisfying Axiom~\ref{ax_sum:extend_plus} (but not necessarily Axiom~\ref{ax_sum:continuous}).
Then there is no non-$1$-Dirac, continuous weak $\lambda^+$-measure structure on $X$.
\end{proposition}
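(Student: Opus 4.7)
The plan is to argue by contradiction: suppose $(X,\MMM,\mu)$ is a continuous non-$1$-Dirac weak $\lambda^+$-measure space with $X\subseteq\pre{\kappa}{\lambda}$ and $\lambda=\lambda^{\kappa}$. Pick any $x\in\supp(\mu)$, so that $\mu(C_\alpha)>0$ for every cone $C_\alpha=\Nbhd_{x\restriction\alpha}(X)$ in the canonical local basis at $x$, while $\mu(\{x\})=0$ by continuity. The aim is to show that the values $\mu(C_\alpha)$ stabilize and collapse to $0$, yielding a contradiction.

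The key use of the hypothesis $\lambda=\lambda^{\kappa}$ is that it forces $|X|\leq|\pre{\kappa}{\lambda}|=\lambda$, hence $|C_\alpha|\leq\lambda<\lambda^+$ for every $\alpha<\kappa$. Since $\mu$ is continuous, every singleton lies in $\MMM$ with measure $0$, so any enumeration of $C_\alpha$ yields a disjoint family of $|C_\alpha|<\lambda^+$ measurable singletons partitioning $C_\alpha$. Axiom~\ref{ax_measure:additive} then gives
\[
\mu(C_\alpha)=\Sum((0)_{i<|C_\alpha|}).
\]
The crucial observation is that the constant zero sequence of length $\gamma$ is a single sequence depending only on the cardinal $\gamma$; hence $\Sum((0)_{i<\gamma})$ is a well-defined element of $\SS$, the same for every subset of cardinality $\gamma$.

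Now the sequence of cardinals $(|C_\alpha|)_{\alpha<\kappa}$ is weakly decreasing (as $C_{\alpha+1}\subseteq C_\alpha$), and cardinals are well-ordered, so it stabilizes at some $\gamma\leq\lambda$ from some $\alpha_0<\kappa$ onward. Consequently $\mu(C_\alpha)=\Sum((0)_{i<\gamma})$ is one and the same element of $\SS$ for all $\alpha\geq\alpha_0$. Point-regularity (Axiom~\ref{ax_measure:point-regular}) combined with continuity yields $\mu(\{x\})=\inf_{\alpha<\kappa}\mu(C_\alpha)=0$; since $\mu(C_\alpha)$ is weakly decreasing in $\alpha$ and constant equal to $\Sum((0)_{i<\gamma})$ on the tail, the infimum coincides with this constant value, giving $\Sum((0)_{i<\gamma})=0$ and hence $\mu(C_{\alpha_0})=0$, contradicting $x\in\supp(\mu)$.

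The main delicate point is the absence of Axiom~\ref{ax_sum:continuous}, which prevents us from reasoning about $\Sum$ via convergence of partial sums or from quoting the tools of Section~\ref{sec:measures} (e.g.\ Proposition~\ref{prop:sum_is_natural_on_measurable_part}). The argument sidesteps this by invoking $\Sum$ only on constant zero sequences, whose value depends solely on the length; this turns the purely combinatorial stabilization of the cardinals $|C_\alpha|$ into a genuine stabilization of the values $\mu(C_\alpha)$ in $\SS$, which is then collapsed to $0$ by point-regularity alone.
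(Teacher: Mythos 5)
Your proposal is correct and follows essentially the same route as the paper's proof: both hinge on the observation that, by Axiom~\ref{ax_measure:additive} applied to the null singletons, every open neighbourhood $A$ of a support point satisfies $\mu(A)=\Sum((0)_{i<|A|})$ with $|A|\leq\lambda^{\kappa}=\lambda$, a value depending only on the cardinal, and then derive a contradiction with Axiom~\ref{ax_measure:point-regular}. The only cosmetic difference is that the paper works with $\delta=\min\{|A|\mid A\text{ open neighbourhood of }x\}$ and monotonicity, whereas you obtain the same effect by the eventual stabilization of the cardinals $|C_\alpha|$ along the cone basis.
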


\begin{proof}
    Assume by contradiction that $\mu$ is a continuous non-$1$-Dirac measure on $X$, i.e., all points of $X$ are measurable of measure zero, and $\supp(\mu)\neq \emptyset$.
    Let $x\in \supp(\mu)$. 
    Let $\delta=\min\{|A|\mid A\text{ open neighborhood of }x\}$. 
    Notice that for every open neighborhood $A$ of $x$, we have that $|A|\leq |\pre{\kappa}{\lambda}|=\lambda$, by $\lambda=\lambda^{\kappa}$. Also, $\mu(A)>0$, by $x\in \supp(\mu)$. Therefore, 
    \[
    \mu(A)=\Sum((0)_{i<|A|})>0
    \]
    by Axiom~\ref{ax_measure:additive}.
    In particular, this shows that $\Sum((0)_{i<\delta})>0$. 
    By the minimality of $\delta$, for every open neighborhood $B$ of $x$ we can find an open $B'\subseteq B$ such that $|B'|=\delta$, and thus 
    \[
    \mu(B)\geq \mu(B')= \Sum((0)_{i<\delta})>0
    \]
    by Axiom~\ref{ax_measure:decreasing}.
    Let $(A_i)_{i<\kappa}$ be a local basis of $x$ of length $\kappa$ (for example $A_i=\Nbhd_{x\restriction i}$ for every $i<\kappa$). 
    By the previous argument, we get
    \[
    \inf_{i<\kappa}\mu(A_i)=\Sum((0)_{i<\delta})>0=\mu(x).
    \]
    Since the hypothesis $\lambda=\lambda^{\kappa}$ implies $\kappa<\lambda$, this shows that $\mu$ does not satisfy Axiom~\ref{ax_measure:point-regular}, contradiction.
\end{proof}

In particular, Propositions~\ref{prop:no_1-dirac_measures_on_Baire} and~\ref{prop:no_non-Dirac_measures_without_continuous_sum} together show that, when $\lambda=\lambda^{\kappa}$, there is no continuous weak $\lambda^+$-measure structure on $\pre{\kappa}{\lambda}$ at all.

\begin{corollary}
Assume $\lambda=\lambda^{\kappa}$. Let $\SS$ be a  
positively totally ordered monoid with an infinitary function $\Sum$ satisfying Axiom~\ref{ax_sum:extend_plus} (but not necessarily Axiom~\ref{ax_sum:continuous}).
Then there is no continuous weak $\lambda^+$-measure structure on $\pre{\kappa}{\lambda}$.
\end{corollary}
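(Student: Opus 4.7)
The plan is straightforward: the corollary is an immediate combination of the two preceding propositions, so I would proceed by a simple case split on whether the hypothetical measure is $1$-Dirac or not.

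First, I would note that the cardinal hypothesis $\lambda = \lambda^{\kappa}$ implies $\lambda^{<\kappa} = \lambda$, which is exactly the assumption needed in Proposition~\ref{prop:no_1-dirac_measures_on_Baire}. Indeed, since $\kappa$ is a (regular, infinite) cardinal with $\kappa \leq \lambda$, we have $\lambda \leq \lambda^{<\kappa} \leq \lambda^{\kappa} = \lambda$.

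Now suppose, toward a contradiction, that $(\pre{\kappa}{\lambda}, \MMM, \mu)$ is a continuous weak $\lambda^+$-measure space, where $\mu$ takes values in $(\SS, \Sum)$ with $\Sum$ satisfying only Axiom~\ref{ax_sum:extend_plus}. There are two cases. If $\mu$ is $1$-Dirac, i.e.\ $\supp(\mu) = \emptyset$, then since $\lambda^{<\kappa} = \lambda$ holds, Proposition~\ref{prop:no_1-dirac_measures_on_Baire} applies directly and yields a contradiction. If instead $\mu$ is not $1$-Dirac, i.e.\ $\supp(\mu) \neq \emptyset$, then taking $X = \pre{\kappa}{\lambda}$ in Proposition~\ref{prop:no_non-Dirac_measures_without_continuous_sum} gives a contradiction as well, since the hypothesis $\lambda = \lambda^{\kappa}$ is precisely what that proposition requires.

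There is no real obstacle here: the work has already been done in the two propositions, whose hypotheses together exactly cover the two exhaustive cases (Dirac vs.\ non-Dirac) under the assumption $\lambda = \lambda^{\kappa}$. The only thing one needs to verify carefully is that $\lambda = \lambda^{\kappa}$ implies $\lambda^{<\kappa} = \lambda$ so that Proposition~\ref{prop:no_1-dirac_measures_on_Baire} is applicable; this is immediate from monotonicity of cardinal exponentiation.
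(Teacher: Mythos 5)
Your proof is correct and is exactly the argument the paper intends: the corollary is stated as an immediate consequence of Propositions~\ref{prop:no_1-dirac_measures_on_Baire} and~\ref{prop:no_non-Dirac_measures_without_continuous_sum}, split according to whether $\supp(\mu)=\emptyset$ or not, with the observation that $\lambda=\lambda^{\kappa}$ gives $\lambda\leq\lambda^{<\kappa}\leq\lambda^{\kappa}=\lambda$ so both propositions apply.
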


Under the additional hypothesis $\lambda<\lambda^{\kappa}$ (which always holds when $\kappa=\cof(\lambda)$, our main case of interest), using a non-continuous function $\Sum$ instead makes it possible to obtain measures on the whole space $\pre{\kappa}{\lambda}$.

\begin{theorem}\label{thm:possibility_theorem_non_continuous_sum}
Assume $\lambda^{<\kappa}=\lambda<\lambda^{\kappa}$. There is a non-$\lambda$-Dirac, continuous weak $\lambda^+$-measure structure on $\pre{\kappa}{\lambda}$ for some positively totally ordered monoid $\SS$ with an infinitary function $\Sum$ satisfying Axiom~\ref{ax_sum:extend_plus} but not Axiom~\ref{ax_sum:continuous}.
\end{theorem}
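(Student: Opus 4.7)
The plan is to exploit the strict inequality $\lambda<\lambda^\kappa$ to construct a monoid $\SS$ with a discontinuous infinitary sum $\Sum$, and then define a continuous weak $\lambda^+$-measure $\mu$ on $X=\pre{\kappa}{\lambda}$ whose support is all of $X$. As underlying set I would take $\SS := \{0\}\cup\{c_\alpha:\alpha<\kappa\}$ with $c_0>c_1>\cdots$ strictly decreasing and coinitial in $\SS^+$, equipped with the tropical (idempotent) operation $c_\alpha + c_\beta := c_{\min(\alpha,\beta)}$. This makes $\SS$ a positively totally ordered monoid with $\inf_{\alpha<\kappa}c_\alpha=0$.

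The heart of the construction is the infinitary sum. For an ordinal $\gamma$ of cardinality at most $\lambda$, let $n(\gamma)$ be the exponent of the leading term of $\gamma$ in Cantor normal form base $\lambda$ (so $n(\lambda^n)=n$, and $n(\gamma)=0$ for $\gamma<\lambda$). On constant sequences I would set
\[
\Sum((c_\beta)_{i<\gamma}) := c_{\beta-n(\gamma)},
\]
with the convention that $c_{\beta-n(\gamma)}:=c_0$ when $\beta<n(\gamma)$. Mixed sequences are handled by combining \ref{ax_sum:extend_plus}, tropical absorption, and associativity on block decompositions of the indexing ordinal. Axiom \ref{ax_sum:extend_plus} is automatic from idempotency of $+$ on finite sequences, while \ref{ax_sum:continuous} fails at length $\lambda$: $\sup_{\gamma<\lambda}\Sum((c_{\beta+1})_{i<\gamma})=c_{\beta+1}<c_\beta=\Sum((c_{\beta+1})_{i<\lambda})$.

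The measure lives on the minimal $\lambda^+$-measurable structure $\MMM$ on $X$ that measures every point and every open set (so $\mu$ is automatically continuous). Set $\mu(\Nbhd_s):=c_{|s|}$ on basic cones and $\mu(\{x\}):=0$ on points, and extend to $\MMM$ via $\mu(D\cup O):=\mu(O)$ whenever $|D|\leq\lambda$ and $O$ is open, with $\mu(O)$ computed by decomposing $O$ into disjoint basic cones and applying $\Sum$. Axioms \ref{ax_measure:emptyset}--\ref{ax_measure:decreasing} are immediate. For \ref{ax_measure:additive}, a partition of $\Nbhd_s$ into $\lambda^n$-many sub-cones at common depth $|s|+n$ gives $\Sum((c_{|s|+n})_{i<\lambda^n})=c_{|s|}=\mu(\Nbhd_s)$ by the scaling rule, and mixed partitions are handled by tropical absorption. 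For \ref{ax_measure:point-regular}, $\mu(x)=0=\inf_{\alpha<\kappa}c_\alpha=\inf_{\alpha<\kappa}\mu(\Nbhd_{x\restr\alpha})$ by coinitiality. Since every basic cone has positive measure, $\supp(\mu)=X$; using $\lambda^{<\kappa}=\lambda$, this support has weight $\lambda$, so $\mu$ is not $\lambda$-Dirac.

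The main obstacle is verifying that $\mu$ is well-defined on open sets independently of the chosen cone decomposition. This reduces to a consistency check between instances of the scaling rule at different values of $n$, ensured by associativity on block decompositions of the indexing ordinal (using that $\lambda^n\cdot\lambda^m=\lambda^{n+m}$ as ordinals of the same cardinality $\lambda$). The hypothesis $\lambda<\lambda^\kappa$ is essential here: it makes $|\Nbhd_s|=\lambda^\kappa>\lambda$, so no cone is partitionable into at most $\lambda$ singletons within the bound of \ref{ax_measure:additive}. Without this gap, \ref{ax_measure:additive} would force $\mu(\Nbhd_s)$ to depend only on $|\Nbhd_s|$ through $\Sum((0)_{i<|\Nbhd_s|})$, reproducing the obstructions of Propositions~\ref{prop:no_1-dirac_measures_on_Baire} and~\ref{prop:no_non-Dirac_measures_without_continuous_sum} and ruling out any such $\mu$.
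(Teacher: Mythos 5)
There is a genuine gap, and it is fatal to the approach rather than a repairable technicality. Your infinitary sum is a function of the sequence of monoid values alone, and on constant sequences its value depends on the order type of the index set; but Axiom~\ref{ax_measure:additive} quantifies over \emph{all} sequences $(A_i)_{i<\gamma}$ of disjoint measurable sets, hence over all re-enumerations of a given disjoint family (this is exactly how Proposition~\ref{prop:sum_is_natural_on_measurable_part} forces $\Sum$ to be commutative on the range of $\mu$). With $\mu(\Nbhd_u)=c_{|u|}$ this is contradictory: fix $s$ with $|s|=\beta$ and $t$ with $|t|=\beta+1$. The family $\{\Nbhd_{s\conc u}\mid u\in\pre{2}{\lambda}\}$ is a disjoint clopen partition of $\Nbhd_s$ of cardinality $\lambda\cdot\lambda=\lambda$, so it can be enumerated injectively in order type $\lambda$; its value sequence is the constant sequence $(c_{\beta+2})_{i<\lambda}$, so Axiom~\ref{ax_measure:additive} forces $\Sum((c_{\beta+2})_{i<\lambda})=\mu(\Nbhd_s)=c_{\beta}$. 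But the immediate-successor partition $\{\Nbhd_{t\conc\alpha}\mid \alpha<\lambda\}$ of $\Nbhd_t$, also enumerated in order type $\lambda$, has the same value sequence and forces $\Sum((c_{\beta+2})_{i<\lambda})=\mu(\Nbhd_t)=c_{\beta+1}$. (Symmetrically, enumerating the immediate successors of $t$ in order type $\lambda\cdot\lambda$ clashes with your scaling rule in the other direction.) So no function of the value sequence can make your $\mu$ additive: the real problem is not the well-definedness of $\mu$ on open sets, which you flag, but that $c_{|s|}$ remembers only the depth of a cone, while the same constant value sequence arises from partitions of cones of different measures. The secondary point that $c_{\beta-n(\gamma)}$ is meaningless (or trivial) for limit $\beta$ when $\kappa>\omega$ is minor by comparison.

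This is precisely the obstruction the paper's construction is built to evade: there the monoid is the much larger $\{e\}\cup(\kappa^{\ast}\times\NN^{\lambda})$ with lexicographic order, and $\mu(M)=\bigl(\tfrac{1}{h(\Exp(\int{M}))},\chi_{\Exp(\int{M})}\bigr)$, so the value of $\mu$ encodes (the expansion of) the interior of $M$ via the characteristic function $\chi$; the infinitary $\Sum$ then decodes the open sets from the summands and computes the expansion of their union, which is independent of the enumeration, and the hypothesis $\lambda<\lambda^{\kappa}$ enters only to guarantee that open sets differing by at most $\lambda$ points have the same expansion. If you insist on a small tropical monoid like yours, the natural sum on it is the continuous $\sup$ of Proposition~\ref{prop:small_measures_big_space}, which yields only a $\delta$-measure for regular $\delta\leq\lambda$, not a $\lambda^{+}$-measure; to get a $\lambda^{+}$-measure the summands themselves must remember which sets they came from, as in the paper's proof.
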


\begin{proof}
Let $X=\pre{\kappa}{\lambda}$.
Let $\LL=\kappa^\ast=\{\frac{1}{\alpha}\mid \alpha\in \kappa\}$ be $\kappa$ with reverse order, i.e. the total order given by $\frac{1}{\alpha}\leq \frac{1}{\beta}$ if $\alpha\geq \beta$. Then, $\LL=(\LL,\max,\leq)$ is a totally ordered semigroup.

Let $\{s_\alpha\mid \alpha<\lambda\}$ be a well-ordering of $\pre{<\kappa}{\lambda}$.
Consider $\SS'=\LL\times\NN^\lambda$ with lexicographic order and point-wise sum (where $\NN=(\omega,0,+,\leq)$ represent as usual the positively totally ordered monoid of natural numbers), and let $\SS=\SS'\cup\{e\}$ be the monoid obtained by adding an identity $e$ (and minimum of the order) to the semigroup $\SS'$. Then 
\[
\SS=(\{\frac{1}{\alpha}\mid \alpha\in \kappa\},e, \ast, \leq)
\]
is a positively totally ordered monoid. The operation $\ast$ can be defined explicitly by 
\[
\left(\frac{1}{\alpha},(n_\epsilon)_{\epsilon<\lambda}\right)\ast \left(\frac{1}{\beta},(m_\epsilon)_{\epsilon<\lambda}\right)= \left(\frac{1}{\min(\alpha,\beta)},(n_\epsilon+m_\epsilon)_{\epsilon<\lambda}\right)
\]
and the order is the lexicographic one.
Notice that $\Deg(\SS)=\kappa$, as $(\frac{1}{\alpha}, (0)_{\epsilon<\lambda})_{\alpha<\kappa}$ is strictly decreasing and coinitial in $\SS$.

For every open set $O$, define
\[
    \Exp(O)=\Exp_{\aleph_0}(O)=\bigcup\{\Nbhd_s\mid s\in \pre{<\kappa}{\lambda}, |\{ \alpha<\lambda\mid \Nbhd_{s\conc \alpha}\cap O\neq\emptyset \}|\geq \aleph_0 \}
\]
to be the \textit{($\aleph_0$-)expansion}\footnote{The choice of $\aleph_0$ is arbitrary: the same argument would work using $\Exp_{\delta}(O)$ for any infinite regular cardinal $\delta \leq \lambda$.} of $O$. 
Notice that $\Exp(\Exp(O))=\Exp(O)\supseteq O$, and $\Exp(\Nbhd_s)=\Nbhd_s$ for every $s\in \pre{<\kappa}{\lambda}$.
Notice that 
\begin{equation}\label{eq:strange_condition-cup_exp}    
\Exp(O_0\cup...\cup O_n)=\Exp(O_0)\cup...\cup\Exp(O_n) 
\end{equation}
for every finite family of open sets $\{O_0,...,O_n\}$.
Let
\[
h(O)=\min\{\beta\mid \Nbhd_s\subseteq O \text{ for some } s\in \pre{\beta}{\lambda}\}
\]
be the {height} of $O$. 
Let 
\[
\A(O)=\{s\in \pre{<\kappa}{\lambda}\mid \Nbhd_s\subseteq O\};
\]
this way, $O=\bigcup_{s\in \A(O)}\Nbhd_s$. Define
\[
\chi_O(\alpha)=\begin{cases}
    0 \qquad &\text{ if } s_\alpha\notin \A(O),\\
    1 &\text{ otherwise}
\end{cases}
\]
to be the \textit{characteristic function} of $\A(O)$.
Notice that for every finite family of open sets $\{O_1,...,O_n\}$ and every $\Nbhd_s\subseteq \bigcup_{i\leq n} O_i$, there is $i\leq n$ such that $\Nbhd_s\subseteq \Exp(O_i)$. This and equation~\eqref{eq:strange_condition-cup_exp} together imply that
\begin{equation}\label{eq:strange_condition-sum_chi}
\chi_{O_1\cup... \cup O_n}=\chi_{O_1}+... +\chi_{O_n}
\end{equation}
for every finite family of disjoint open sets $\{O_1,...,O_n\}$ such that $\Exp(O_i)=O_i$ for every $i\leq n$.

For every $b=(\frac{1}{\alpha},(n_\epsilon)_{\epsilon<\lambda})\in \SS$, let $O(b)=\bigcup\{\Nbhd_{s_\epsilon}\mid n_\epsilon>0\}$ (where we set $\bigcup\emptyset=\emptyset$).
It is clear then that
\begin{equation}\label{eq:strange_condition-chi_inverse_O}
    O\left(\frac{1}{\alpha},\chi_U\right)=U
\end{equation}
for every open set $U\in\tau$ and every $\alpha<\kappa$.

Given $\bar{b}=(b_\epsilon)_{\epsilon<\nu}\in \pre{<\On}{\SS}$, define
\[
    \Sum(\bar{b})=
    \begin{cases}
        b_0 \ast ... \ast b_\nu  \qquad &\text{ if } \nu \text{ is finite},\\
        \left(\frac{1}{h(\Exp(\bigcup_{\epsilon<\nu} O(b_\epsilon)))},\chi_{\Exp(\bigcup_{\epsilon<\nu} O(b_\epsilon))}\right) 
        &\text{ otherwise.}
    \end{cases}
    \]

We say that $b=(\frac{1}{\alpha},(n_i)_{i<\lambda})\in \SS$ is \textbf{adequate} if it is of the form $(\frac{1}{h(O)},\chi_O)$ for some $O\in \tau$ such that $O=\Exp(O)$ (necessarily, $O=\Exp(O(b))$).
Given $\bar{b}=(b_\epsilon)_{\epsilon<\nu}\in \pre{<\On}{\SS}$, for every $\epsilon<\nu$ let $b_\epsilon=(\frac{1}{\alpha_\epsilon},(n_i^\epsilon)_{i<\lambda})\in \SS$ denote the $\epsilon$-coordinate of $\bar{b}$.
We say that $\bar{b}$ is \textbf{good} if every $b_\epsilon$ is adequate and for every $\epsilon\neq \epsilon'$ we have $O(b_\epsilon)\cap O(b_{\epsilon'})=\emptyset$. 

Then, by equations~\eqref{eq:strange_condition-cup_exp} and~\eqref{eq:strange_condition-sum_chi} and we get
\begin{equation}\label{eq:strange_condition-finite_sum}
b_0 \ast ... \ast b_\nu=\left(\frac{1}{h(\Exp(\bigcup_{\epsilon<\nu} O(b_\epsilon)))},\chi_{\Exp(\bigcup_{\epsilon<\nu} O(b_\epsilon))}\right)
\end{equation}
for every finite, good $\bar{b}=(b_\epsilon)_{\epsilon<\nu}\in \pre{<\On}{\SS}$.

Notice that $\Sum$ is not continuous. For example, assume without loss of generality that $s_i=\langle i\rangle$ for every $i<\omega$, and let $A_\alpha=\bigcup_{i<\alpha}\Nbhd_{\langle i\rangle}$ for every $\alpha\leq \omega$. Then, we have $\Exp(A_n)=A_n$ for every $i\leq n$ (by equation~\eqref{eq:strange_condition-cup_exp}), while $\Exp(A_\omega)=X$. Therefore, we get
\[
\Sum_{i<\omega}s_i=\left(\frac{1}{h(X)},\chi_{X}\right)=\left(\frac{1}{0},(1)_{i<\lambda}\right),
\]
\[
 \sup_{n<\omega}\Sum_{i<n}s_i=\sup_{n<\omega} \left(\frac{1}{h\left(A_n\right)},\chi_{A_n}\right)
 =\left(\frac{1}{h\left(A_\omega\right)},\chi_{A_\omega}\right)
 =\left(\frac{1}{1},\chi_{A_\omega}\right).
\]

We show now that there is a continuous weak $\lambda^+$-measure structure $(X,\MMM,\mu)$ where the measure takes values in $\SS$ with the  (\textit{non-continuous!}) sum $\Sum$ we defined.

Let $\MMM$ be the minimal $\lambda^+$-measurable structure on $X$ measuring all points and open sets of $X$. 
We define a measure $\mu:\MMM\to \SS$ by setting $\mu(M)=e$ for every $M\in \MMM$ with $\int{M}=\emptyset$, and 
\[
\mu\left(M\right)=\left(\frac{1}{h\left(\Exp\left(\int{M}\right)\right)},\chi_{\Exp\left(\int{M}\right)}\right)
\]
for every other $M\in\MMM$. It follows by definition that $\mu(M)$ is adequate for every $M\in \MMM$ with $\int{M}\neq\emptyset$.

We claim $\mu$ is as wanted.

Indeed, Axioms~\ref{ax_measure:emptyset}-\ref{ax_measure:non-trivial} follow by definition. 

Axiom~\ref{ax_measure:decreasing} follows from the fact that $\Exp(U)\subseteq\Exp(V)$, $\chi_U\leq \chi_V$, and $h(U)\geq h(V)$ whenever $U\subseteq V$.

In order to prove Axiom~\ref{ax_measure:additive}, notice first that given two open sets $O,U$, we have either $O\cap U=\emptyset$ or $|O\cap U|=\lambda^\kappa>\lambda$. Thus if $O$ and $U$ differ from a set of size $\leq \lambda$, we have $\Exp(O)=\Exp(U)$.
In particular, this shows that 
\begin{equation}\label{eq:strange_condition-measures_with_small_difference}
\mu(O\cup D)=\mu(O)=\mu(O)\ast e=\mu(O)\ast \mu(D)
\end{equation}
for every open $O$ and every set $D$ of size $\leq \lambda$ disjoint from $O$.

Secondly, notice that 
\begin{equation}\label{eq:strange_condition_union}
    \Exp(\bigcup_{\alpha<\gamma} O_\alpha)=\Exp(\bigcup_{\alpha<\gamma} \Exp(O_\alpha))
\end{equation} 
for every family of open sets $\{O_\alpha\mid \alpha<\gamma\}$. 
The inclusion $\subseteq$ is clear. 
For the other inclusion $\supseteq$, notice that $\Exp(\bigcup_{\alpha<\gamma} O_\alpha)\supseteq\bigcup_{\alpha<\gamma} \Exp(O_\alpha)$, thus 
\[
\Exp(\bigcup_{\alpha<\gamma} O_\alpha)=\Exp(\Exp(\bigcup_{\alpha<\gamma} O_\alpha))\supseteq\Exp(\bigcup_{\alpha<\gamma} \Exp\left(O_\alpha\right)),
\]
as wanted.

Now given a family of disjoint measurable sets $(A_\alpha)_{\alpha<\gamma}$ of length $\gamma<\lambda^+$, let $O_\alpha=\int{A_\alpha}$, let $O=\bigcup_{\alpha<\gamma}O_\alpha$ and $M=\bigcup_{\alpha<\gamma} A_\alpha$. By definition of $\MMM$, we have $|M\setminus O|\leq \lambda$, thus $\mu(M)=\mu(O)$ by equation~\eqref{eq:strange_condition-measures_with_small_difference}.
Since the $A_\alpha$ are disjoint and $\mu(A_\alpha)$ is adequate for every $\alpha<\gamma$, it follows from equation~\eqref{eq:strange_condition-chi_inverse_O} that $(\mu(A_\alpha))_{\alpha<\gamma}$ is good.
Therefore, applying the definition of $\Sum$ if $\gamma$ is infinite, or equation~\eqref{eq:strange_condition-finite_sum} if $\gamma$ is finite, and using equations~\eqref{eq:strange_condition_union} and~\eqref{eq:strange_condition-chi_inverse_O}, we get
\begin{align*}
\mu(M)&=\mu(O)=\\
&=\left(\frac{1}{h(\Exp(O))},\chi_{\Exp(O)}\right)=\\
&=\left(\frac{1}{h(\Exp(\bigcup_{\alpha<\gamma} O_\alpha))},\chi_{\Exp(\bigcup_{\alpha<\gamma} O_\alpha)}\right)= \\
&=\left(\frac{1}{h(\Exp(\bigcup_{\alpha<\gamma} \Exp(O_\alpha)))},\chi_{\Exp(\bigcup_{\alpha<\gamma} \Exp(O_\alpha))}\right)= \\
&=\Sum_{\alpha<\gamma} \left(\frac{1}{h(\Exp(O_\alpha))},\chi_{\Exp(O_\alpha)}\right)=\\
&=\Sum_{\alpha<\gamma} \mu(A_\alpha)
\end{align*}
which proves Axiom~\ref{ax_measure:additive}.

For Axiom~\ref{ax_measure:point-regular}, it is enough to notice that for every $x\in X$, we have that $(\mu(\Nbhd_{x\restriction \alpha}))_{\alpha<\kappa}=(\frac{1}{\alpha}, \chi_{\Nbhd_{x\restriction \alpha}})_{\alpha<\kappa}$ is coinitial in $\SS^+$.

Finally, notice that $\mu(O)\neq e$ for every non-empty, open set $O\in \MMM$, thus $\supp(\mu)=\pre{\kappa}{\lambda}$. Since $\pre{\kappa}{\lambda}$ has weight $\lambda$, we get that $\mu$ is not $\lambda$-Dirac, as wanted.
\end{proof}

In a certain sense, this result points in the opposite direction of Theorem~\ref{thm:impossibility_theorem}, apparently suggesting that some analogues of measures could be defined in this setting.
However, Theorem~\ref{thm:possibility_theorem_non_continuous_sum} should be compared more closely to Proposition~\ref{prop:small_measures_big_space} than to Theorem~\ref{thm:impossibility_theorem}. The similarity between the proofs of Theorem~\ref{thm:possibility_theorem_non_continuous_sum} and Proposition~\ref{prop:small_measures_big_space} shows that the measure obtained in Theorem~\ref{thm:possibility_theorem_non_continuous_sum} is essentially a $\delta$-measure\footnote{In the specific case of the measure $\mu$ constructed in the proof of Theorem~\ref{thm:possibility_theorem_non_continuous_sum}, an $\omega$-measure.} for some $\delta<\lambda^+$, which is forced to behave like a $\lambda^+$-measure by making it \enquote{jump} when needed through the non-continuity of $\Sum$.
For this reason, it should not be regarded as a natural generalization of classical measures.

More in general, we observe that without continuity of the sum, we can hardly expect a measure to satisfy any reasonable form of regularity.

\section{On the non-existence of continuous \texorpdfstring{$\lambda^+$}{lambda+}-measures}\label{sec:on-the-non-existence-of-continuous-measures}

In the following section we are going to prove the Impossibility Theorem (Theorem~\ref{thm:impossibility_theorem}), the main result of our paper.

We are going to use multiple times the following easy corollary of Corollary~\ref{cor:sets_of_measure_zero_are_lambda_ideal}, so we state it explicitly to avoid repetitions. 

\begin{corollary}\label{cor:no_partition_into_cones_of_measure_zero}
    Let $(X,\MMM,\mu)$ be a weak $\lambda^+$-measure space and let $U\subseteq X$ be an open set in $X$ such that $\mu(U)>0$. Then, for every family of open sets $\V$ of $X$ of measure zero there is $x\in U\setminus \bigcup \V$.
\end{corollary}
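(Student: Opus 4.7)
The plan is to apply Corollary~\ref{cor:sets_of_measure_zero_are_lambda_ideal} directly to the family $\V$. Let me sketch the steps.

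First, I would set $W = \bigcup \V$. Since every member of $\V$ is open and $X$ is a topological space, $W$ is open in $X$, and in particular $W$ lies in the domain $\MMM$ of $\mu$ (because $\MMM$ contains all open sets by Definition~\ref{def:weakly-lambda-measurable-space}). Moreover, since each $V \in \V$ is open with $\mu(V)=0$, Corollary~\ref{cor:sets_of_measure_zero_are_lambda_ideal} tells us that the family of measure-zero open sets is closed under unions of \emph{arbitrary} size; applying this to $\V$ gives $\mu(W)=0$.

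Next, I would argue by contradiction. Suppose every point $x \in U$ lies in $\bigcup \V = W$. Then $U \subseteq W$. By Axiom~\ref{ax_measure:decreasing} (monotonicity), this forces $\mu(U) \leq \mu(W) = 0$, which contradicts the hypothesis $\mu(U) > 0$. Hence there must exist $x \in U \setminus \bigcup \V$, which is exactly the statement.

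There is essentially no obstacle here: the entire argument reduces to invoking the \emph{arbitrary-size} closure of the measure-zero open ideal provided by Corollary~\ref{cor:sets_of_measure_zero_are_lambda_ideal}, which is the point of isolating that corollary in the first place. The only subtlety worth flagging is to note explicitly that $W$ is itself open (so it is measurable, and monotonicity applies), and that the closure property we need is truly closure under unions of any cardinality --- no cardinality restriction on $|\V|$ is needed in the hypothesis.
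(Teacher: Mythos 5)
Your proof is correct and is essentially identical to the paper's: both invoke Corollary~\ref{cor:sets_of_measure_zero_are_lambda_ideal} to conclude $\mu(\bigcup\V)=0$ and then use monotonicity (Axiom~\ref{ax_measure:decreasing}) to rule out $U\subseteq\bigcup\V$. Your extra remark that $\bigcup\V$ is open, hence measurable, is a harmless and sensible clarification.
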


\begin{proof}
By Corollary~\ref{cor:sets_of_measure_zero_are_lambda_ideal} we have that $\mu(\bigcup \V)=0$, thus $U\nsubseteq \bigcup \V$ by Axiom~\ref{ax_measure:decreasing}.
\end{proof}

We split the proof in multiple cases, depending on the algebraic structure of $\SS$ and on the value of $\kappa$.

\subsection{Measures in monoids of wrong degree}

\begin{lemma}\label{lem:impossibility_for_monoids_of_wrong_degree}
Assume $\Deg(\SS)\neq \kappa$. Then,  for every weak $\lambda^+$-measure space $(X,\MMM,\mu)$ and for every open set $U$ of positive measure $\mu(U)>0$ there is $x\in U$ such that either $x$ is not measurable or $\mu(x)>0$.
\end{lemma}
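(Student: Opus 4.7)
Suppose towards a contradiction that every $x\in U$ is measurable with $\mu(x)=0$. The strategy is to show that under the hypothesis $\Deg(\SS)\neq\kappa$, every such $x$ admits a \emph{basic} open neighborhood of measure zero; these neighborhoods then cover $U$ with open sets of measure $0$, contradicting Corollary~\ref{cor:no_partition_into_cones_of_measure_zero} applied to the open set $U$ of positive measure. So the whole argument reduces to the following local claim: for every $x\in U$ there exists $\alpha<\kappa$ with $\mu(\Nbhd_{x\restriction\alpha}(X))=0$.

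To prove the local claim, fix $x\in U$ and consider the sequence $\langle \mu(\Nbhd_{x\restriction\alpha}(X))\mid \alpha<\kappa\rangle$. Since the cones $\Nbhd_{x\restriction\alpha}(X)$ form an open local basis at $x$ of order type $\kappa<\lambda^{+}$, axiom~\ref{ax_measure:point-regular} gives
\[
\inf_{\alpha<\kappa}\mu(\Nbhd_{x\restriction\alpha}(X))=\mu(x)=0.
\]
The sequence is decreasing by axiom~\ref{ax_measure:decreasing}. Split into two cases according to $\Deg(\SS)\lessgtr\kappa$.

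If $\Deg(\SS)>\kappa$, suppose towards a contradiction that $\mu(\Nbhd_{x\restriction\alpha}(X))>0$ for every $\alpha<\kappa$. Then $A=\{\mu(\Nbhd_{x\restriction\alpha}(X))\mid \alpha<\kappa\}\subseteq\SS^+$ has size at most $\kappa$; and since the infimum of this set is $0$, for every $\varepsilon\in\SS^+$ there is $\alpha<\kappa$ with $\mu(\Nbhd_{x\restriction\alpha}(X))<\varepsilon$, so $A$ is coinitial in $\SS^+$. This contradicts $|A|\leq\kappa<\Deg(\SS)$.

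If $\Deg(\SS)<\kappa$, fix a coinitial sequence $(a_i)_{i<\delta}$ in $\SS^+$ with $\delta=\Deg(\SS)$. For each $i<\delta$, using that the infimum of the decreasing sequence is $0$, there exists $\alpha_i<\kappa$ with $\mu(\Nbhd_{x\restriction\alpha_i}(X))<a_i$. Now, and this is where the hypothesis $\kappa\neq\Deg(\SS)$ really bites, $\delta<\kappa$ together with the regularity of $\kappa$ yields $\alpha^{*}:=\sup_{i<\delta}\alpha_i<\kappa$. By monotonicity $\mu(\Nbhd_{x\restriction\alpha^{*}}(X))\leq a_i$ for all $i<\delta$; since $(a_i)_{i<\delta}$ is coinitial in $\SS^+$, this forces $\mu(\Nbhd_{x\restriction\alpha^{*}}(X))=0$.

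The main obstacle, conceptually, is the second case: just knowing that the infimum is $0$ does not immediately produce a \emph{single} small basic neighborhood, since the sequence may only approach $0$ asymptotically. The combination of the regularity of $\kappa$, the strict inequality $\delta<\kappa$, and the coinitiality of $(a_i)_{i<\delta}$ is precisely what allows one to collapse the asymptotic approach into an actual zero value at some level $\alpha^{*}<\kappa$.
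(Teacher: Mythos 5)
Your proof is correct and is essentially the paper's argument in contrapositive form: the paper applies Corollary~\ref{cor:no_partition_into_cones_of_measure_zero} once to pick a single $x\in U$ avoiding all null open sets and then deduces from Axioms~\ref{ax_measure:decreasing} and~\ref{ax_measure:point-regular} that the cone measures at $x$ form a decreasing sequence coinitial in $\SS^+$, forcing $\Deg(\SS)=\kappa$, while your two cases simply spell out that same coinitiality/regularity computation pointwise before invoking the same corollary at the end. (One cosmetic nit: in the case $\Deg(\SS)<\kappa$ you should carry the strict inequality $\mu(\Nbhd_{x\restriction\alpha^{*}}(X))\le\mu(\Nbhd_{x\restriction\alpha_i}(X))<a_i$, which is what your choice of $\alpha_i$ actually gives, rather than the stated $\le a_i$; the strict bound is what makes the contradiction with coinitiality of $(a_i)_{i<\delta}$ immediate.)
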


\begin{proof}
Let $\V=\{V\in \tau\mid \mu(V)=0\}$ be the set of open sets of $X$ of measure $0$.  
By Corollary~\ref{cor:no_partition_into_cones_of_measure_zero}, there is $x\in U\setminus \bigcup\V$. By definition of $\V$, we get $\mu(A)>0$ for every $A\in \tau$ such that $x\in A$.
In particular, for every $\alpha<\beta<\kappa$ we have $0<\mu(\Nbhd_{x\restriction \beta}(X))\leq \mu(\Nbhd_{x\restriction \alpha}(X))$ by Axiom~\ref{ax_measure:decreasing}, thus the sequence $\langle \mu(\Nbhd_{x\restriction \alpha}(X))\mid \alpha<\kappa\rangle$ is decreasing in $\SS^+$. 
Now, if $x$ is measurable of measure $0$, this also implies that $\langle \mu(\Nbhd_{x\restriction \alpha}(X))\mid \alpha<\kappa\rangle$ is coinitial in $\SS^+$ by Axiom~\ref{ax_measure:point-regular}, hence $\Deg(\SS)=\kappa$.
\end{proof}

\subsection{Measures in non-\texorpdfstring{$0$}{0}-continuous monoids}

Recall that a positively totally ordered monoid $\SS$ is not $0$-continuous if and only if it contains an element $c$ such that $b+b\geq c$ for all $b\in \SS^+$. 

\begin{lemma}\label{lem:impossibility_for_non-0-continuous}
    Let $(X,\MMM,\mu)$ be a weak $\lambda^+$-measure space. Let $U\subseteq X$ be an open subset of $X$ of positive measure $\mu(U)>0$ whose value satisfies $b+b\geq \mu(U)$ for every $b\in \SS^+$. 
    Then, one of the following holds:
    \begin{itemize}
        \item $U$ contains a point $x\in U$ that is not measurable, 
        \item $U$ contains a measurable point $x\in U$  of measure $\mu(x)=\mu(U)$,
        \item $U$ contains two measurable points $x,y\in U$ such that $\mu(x)+\mu(y)=\mu(U)$.
    \end{itemize} 
\end{lemma}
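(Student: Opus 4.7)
My plan is to argue by contradiction, assuming every point of $U$ is measurable, no point $x \in U$ satisfies $\mu(x) = \mu(U)$, and no pair $x, y \in U$ satisfies $\mu(x) + \mu(y) = \mu(U)$. The first observation is that for any two disjoint basic clopen cones $V, W \subseteq U$ both of positive measure, one has $\mu(V) + \mu(W) = \mu(U)$: applying the hypothesis $b + b \geq \mu(U)$ to $\min(\mu(V), \mu(W))$ and invoking weak translation invariance yields $\mu(V) + \mu(W) \geq \mu(U)$, while $\mu(V) + \mu(W) \leq \mu(U)$ is immediate from Axioms~\ref{ax_measure:additive} and~\ref{ax_measure:decreasing}. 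A direct consequence is that if a basic clopen cone $V \subseteq U$ satisfies $0 < \mu(V) < \mu(U)$, then $V$ admits exactly one immediate sub-cone of positive measure, and that sub-cone inherits the full measure $\mu(V)$: two positive sub-cones would already sum to at least $\mu(U)$, exceeding $\mu(V)$.

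Next, I will use Corollary~\ref{cor:no_partition_into_cones_of_measure_zero} to pick $x_0 \in \supp(\mu) \cap U$ and consider the non-increasing, positive sequence $\mu(\Nbhd_{x_0 \restriction \alpha}(X))$ for $\alpha < \kappa$ large enough that these cones lie in $U$. If every such measure equals $\mu(U)$, Axiom~\ref{ax_measure:point-regular} forces $\mu(x_0) = \mu(U)$, against the failure of the second conclusion. Otherwise, once the measure drops below $\mu(U)$ at some level $\alpha_0$, iterating the consequence forces the positive sub-cone at each subsequent level to be the one containing $x_0$, with constant measure $\mu(\Nbhd_{x_0 \restriction \alpha_0}(X))$; closure of measure-zero open sets under small unions (Corollary~\ref{cor:sets_of_measure_zero_are_lambda_ideal}) handles limit levels by expressing $\Nbhd_{x_0 \restriction \alpha_0}(X) \setminus \{x_0\}$ as a union of fewer than $\kappa$ measure-zero open sets, and point-regularity then gives $\mu(x_0) = \mu(\Nbhd_{x_0 \restriction \alpha_0}(X)) < \mu(U)$.

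With $\mu(x_0) < \mu(U)$, Axiom~\ref{ax_measure:additive} forces $\mu(U \setminus \Nbhd_{x_0 \restriction \alpha_0}(X)) > 0$, so Corollary~\ref{cor:no_partition_into_cones_of_measure_zero} produces a second point $x_1 \in \supp(\mu)$ in this complement. The same dichotomy applied to $x_1$ yields (again avoiding the second conclusion) a basic cone $\Nbhd_{x_1 \restriction \beta_0}(X)$ disjoint from $\Nbhd_{x_0 \restriction \alpha_0}(X)$ with $\mu(x_1) = \mu(\Nbhd_{x_1 \restriction \beta_0}(X))$. The first observation applied to these two disjoint positive cones then gives $\mu(x_0) + \mu(x_1) = \mu(U)$, contradicting the failure of the third conclusion.

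The main obstacle will be the stabilization step: without continuity of the monoid operation, one cannot naively pass to the limit in $\mu(V_0^\alpha) + \mu(V_1^\beta) = \mu(U)$ as $\alpha, \beta \to \kappa$ to deduce $\mu(x_0) + \mu(x_1) = \mu(U)$. The hypothesis $b + b \geq \mu(U)$ resolves this by concentrating any cone in $U$ of measure strictly less than $\mu(U)$ along a single branch of the tree, so its measure is attained exactly at a fixed level rather than being merely approached in the limit; the passage to points is then automatic.
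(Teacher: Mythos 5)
Your proof is correct and takes essentially the same route as the paper's: both exploit the hypothesis $b+b\geq\mu(U)$ to show that two disjoint positive-measure open subsets of $U$ must have measures summing to (at least) $\mu(U)$, use this to concentrate the measure of a suitable small cone at a support point via point-regularity, and conclude by applying additivity to two such disjoint cones. The only difference is presentational: where you iterate the \enquote{unique positive immediate sub-cone} fact along the branch of $x_0$ and take a union of null siblings over all levels, the paper gets the same conclusion in one step by observing that every clopen subset of $N_i\setminus\{x_i\}$ must be null (and your count \enquote{fewer than $\kappa$} is off --- it is a union over $\kappa$-many levels --- but harmlessly so, since Corollary~\ref{cor:sets_of_measure_zero_are_lambda_ideal} covers unions of any size).
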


\begin{proof}
    If $U$ contains a non-measurable point we are done, so assume instead that all points of $U$ are measurable. 
    Also, if there is $x\in U$ such that $\mu(U)=\mu(O)$ for every open neighborhood $O$ of $x$ contained in $U$,
    then we get $\mu(x)=\mu(U)$, by Axiom~\ref{ax_measure:point-regular}, and we are done. So let $c=\mu(U)$ and suppose that every $x\in U$ has an open neighborhood $O$ of measure $\mu(O)<c$.

Let $\V=\{V\in \tau\mid \mu(V)=0\}$ be the set of open sets of $X$ of measure $0$. Then, $U\setminus \bigcup \V$ is non-empty, by Corollary~\ref{cor:no_partition_into_cones_of_measure_zero}.
    Let $x_1\in U\setminus \bigcup \V$.
    Then, $\mu(x_1) < c$, by Axiom~\ref{ax_measure:decreasing} and since $\mu(O)<c$ for some neighborhood $O$ of $x_1$.
    This also implies that $\mu(U\setminus \{x_1\})>0$, since otherwise 
    \[
    \mu(U)=\mu(x_1)+\mu(U\setminus \{x_1\})=\mu(x_1)+0=\mu(x_1)<c=\mu(U),
    \]
    contradiction.
    Thus, there is $x_2\in (U\setminus \{x_1\})\setminus\bigcup \V$, by Corollary~\ref{cor:no_partition_into_cones_of_measure_zero}.
    Let $N_1, N_2\subseteq U$ be disjoint cones such that $x_i\in N_i$ and $0<\mu(N_i)<c$ for each $i\in\{1,2\}$.
    
    Fix $i\in\{1,2\}$. Notice that
    $\mu(V)=0$ for every clopen $V\subseteq N_i\setminus \{x_i\}$, for otherwise both $\mu(V)\in \SS^+$ and $\mu(N_i\setminus V)\in \SS^+$, and thus $\mu(N_i)=\mu(V)+ \mu(N_i\setminus V)\geq c>\mu(N_i)$ by assumption on $c$, contradiction. 
    Then, $\mu(N_i)=\mu(O) + \mu(N_i\setminus O)= \mu(O)$ for every clopen neighborhood $O$ of $x_i$ contained in $N_i$, and thus $\mu(x_i)=\mu(N_i)$, as before.
    
    Then, by Axiom~\ref{ax_measure:decreasing} and by the assumption on $c$, we get 
    \[
    \mu(U)=c\leq \mu(N_1)+\mu(N_2)= \mu(N_1\cup N_2)\leq \mu(U).
    \]
    and thus $\mu(U)=\mu(N_1)+\mu(N_2)=\mu(x_1)+\mu(x_2)$ as wanted.
\end{proof}

\begin{corollary}\label{cor:impossibility_for_non-0-continuous}
    Assume $\SS$ is not $0$-continuous. Then, for every weak $\lambda^+$-measure space $(X,\MMM,\mu)$ and for every open set $U$ of positive measure $\mu(U)>0$ there is $x\in U$ such that either $x$ is not measurable or $\mu(x)>0$.
\end{corollary}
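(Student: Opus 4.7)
The plan is to reduce the corollary to Lemma~\ref{lem:impossibility_for_non-0-continuous} by locating a suitably small open subset inside $U$. Since $\SS$ is not $0$-continuous, Lemma~\ref{lem:three_types_of_monoids}\,\ref{lem:three_types_of_monoids - pathological} yields a witness $c\in\SS^{+}$ satisfying $b+b\geq c$ for every $b\in\SS^{+}$. Crucially, any open $U'$ with $\mu(U')\leq c$ automatically fulfils the hypothesis ``$b+b\geq\mu(U')$ for every $b\in\SS^{+}$'' of Lemma~\ref{lem:impossibility_for_non-0-continuous}. So it suffices to exhibit an open $U'\subseteq U$ with $0<\mu(U')\leq c$; then the lemma's trichotomy directly supplies the desired point in $U'\subseteq U$.

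I would argue by contradiction: assume every $x\in U$ is measurable with $\mu(x)=0$. First, apply Corollary~\ref{cor:no_partition_into_cones_of_measure_zero} to the family $\V$ of \emph{all} open sets of measure zero, obtaining $x\in U\setminus\bigcup\V$; by construction every open neighbourhood of such an $x$ has positive measure. Pick $\alpha_{0}<\kappa$ with $\Nbhd_{x\restriction\alpha_{0}}(X)\subseteq U$. The sequence $\{\Nbhd_{x\restriction\alpha}(X)\mid\alpha_{0}\leq\alpha<\kappa\}$ is an open local base at $x$ of order type $\leq\kappa\leq\lambda<\lambda^{+}$, so Axiom~\ref{ax_measure:point-regular} gives
\[
0=\mu(x)=\inf_{\alpha_{0}\leq\alpha<\kappa}\mu(\Nbhd_{x\restriction\alpha}(X)).
\]
Since $c>0$, the value $c$ cannot be a lower bound of this family, so there is $\alpha\geq\alpha_{0}$ with $\mu(\Nbhd_{x\restriction\alpha}(X))<c$; and this cone has positive measure because $x$ is in the support. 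Setting $U':=\Nbhd_{x\restriction\alpha}(X)$ produces the required small-measure open subset of $U$.

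Finally, applying Lemma~\ref{lem:impossibility_for_non-0-continuous} to $U'$ splits into three cases: $U'$ contains a non-measurable point; $U'$ contains a measurable point $x'$ with $\mu(x')=\mu(U')>0$; or $U'$ contains measurable points $x',y'$ with $\mu(x')+\mu(y')=\mu(U')>0$, forcing at least one of them to have positive measure. Each case contradicts the standing assumption, completing the proof. The only subtle step is the ``inf'' argument: one must interpret the infimum in Axiom~\ref{ax_measure:point-regular} as the greatest lower bound in the totally ordered $\SS$, which makes it immediate that a fixed $c>0$ cannot remain a lower bound of a family whose infimum is~$0$, so a term of the local base genuinely drops below~$c$.
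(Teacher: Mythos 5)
Your proof is correct and follows essentially the same route as the paper: extract the witness $c$ of non-$0$-continuity, use Axiom~\ref{ax_measure:point-regular} together with Corollary~\ref{cor:no_partition_into_cones_of_measure_zero} to locate an open $U'\subseteq U$ with $0<\mu(U')\leq c$, and then invoke Lemma~\ref{lem:impossibility_for_non-0-continuous}. The only (immaterial) difference is that the paper first covers $U$ by open sets of measure $\leq c$ and then picks one of positive measure, whereas you first pick a point outside all null open sets and then shrink along its cone local base.
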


\begin{proof}
    Let $c$ be such that $b+b\geq c$ for every $b\in \SS^+$, that is, a witness of the non-$0$-continuity of $\SS$.
    Assume by contradiction that every point of $U$ is measurable of measure $0$.
    Then, $\V=\{V\in \tau\mid V\subseteq U, \mu(V)\leq c\}$ is a cover of $U$, by Axiom~\ref{ax_measure:point-regular}.
    Let also $\V'=\{V\in \tau\mid \mu(V)=0\}$ be the set of open sets of $X$ of measure $0$, then we can find $U'\in \V\setminus \V'$, by Corollary~\ref{cor:no_partition_into_cones_of_measure_zero}.
    The result then follows from Lemma~\ref{lem:impossibility_for_non-0-continuous} applied to $U'$. 
\end{proof}

\subsection{Measures in non-Archimedean monoids}

\begin{lemma}\label{lem:impossibility_for_non-Archimedean_monoids}
Let $(X,\MMM,\mu)$ be a weak $\lambda^+$-measure space. 
Assume $U\in \tau$ and $a,b\in \SS$ are such that $a\ll b<\mu(U)$.
Then, there is $x\in U$ such that either $x$ is not measurable or it has measure $\mu(x)\geq a$.
\end{lemma}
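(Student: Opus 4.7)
The plan is to argue by contradiction, assuming every $x \in U$ is measurable with $\mu(x) < a$. Since the conclusion is vacuous when $a = 0$, I would also assume $a > 0$. For each $x \in U$, the basic cones $\Nbhd_{x\restriction \alpha}(X)$ for $\alpha < \kappa$ eventually contained in $U$ form a local basis at $x$ of size at most $\kappa \leq \lambda < \lambda^+$. Applying Axiom~\ref{ax_measure:point-regular} to this basis gives $\mu(x) = \inf_\alpha \mu(\Nbhd_{x\restriction \alpha}(X)) < a$; hence $a$ is not a lower bound of the values $\mu(\Nbhd_{x\restriction \alpha}(X))$, and some cone $V_x \subseteq U$ in the local basis must satisfy $\mu(V_x) < a$.

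I would then use Fact~\ref{fct:strong_zero_dimensionality} to refine the open cover $\{V_x : x \in U\}$ of $U$ to a pairwise disjoint family $\P \subseteq \B(X)$ of basic cones with $\bigcup \P = U$. Monotonicity (Axiom~\ref{ax_measure:decreasing}) forces $\mu(P) < a$ for every $P \in \P$, while Proposition~\ref{prop:weight=cellularity} bounds $|\P| \leq \lambda < \lambda^+$. Axiom~\ref{ax_measure:additive} then yields
\[
\mu(U) = \sum_{P \in \P} \mu(P),
\]
where, by Proposition~\ref{prop:sum_is_natural_on_measurable_part}, the infinitary sum coincides with $\sum$ from Definition~\ref{def:sum}, namely the supremum of all finite subsums.

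The crux is to convert the algebraic relation $a \ll b$ into a bound on this supremum. For any finite choice $\alpha_0 < \dots < \alpha_k$ of indices, weak translation invariance applied to the inequalities $\mu(P_{\alpha_i}) < a$ yields $\mu(P_{\alpha_0}) + \dots + \mu(P_{\alpha_k}) \leq (k+1)\cdot a < b$ by the hypothesis $a \ll b$. Taking the supremum gives $\mu(U) \leq b$, contradicting $b < \mu(U)$. The only conceptually delicate point is precisely this last one: one does not need to estimate the (possibly large) infinite sum directly, only to observe that every finite partial sum is bounded by $b$ independently of its length, which is exactly what the relation $a \ll b$ provides.
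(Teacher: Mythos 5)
Your proposal is correct and is essentially the paper's own argument: the paper runs it contrapositively (showing the cover $\V=\{V\in\tau\mid \mu(V)\leq a\}$ cannot cover $U$, then invoking Axiom~\ref{ax_measure:point-regular} at a point outside $\bigcup\V$), while you assume the negation pointwise and invoke point-regularity at the start, but the core steps coincide. In both cases one uses Fact~\ref{fct:strong_zero_dimensionality} to get a disjoint refinement $\P$ of cones of measure below $a$, the bound $|\P|\leq\lambda$ from the weight/cellularity of $X$, and the observation that every finite subsum of $\lambda$-many terms $\leq a$ is below $b$ because $a\ll b$, so $\mu(U)=\sum_{P\in\P}\mu(P)\leq b<\mu(U)$, a contradiction.
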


\begin{proof}
Assume that $x\in \MMM$ for every $x\in U$, as otherwise we are done.
Define $\V=\{V\in \tau\mid \mu(V)\leq a\}$. 
Suppose by contradiction that $\V$ covers $U$.
By Fact~\ref{fct:strong_zero_dimensionality} we can find a clopen partition $\P$ of $U$  refining $\V$. By Axioms~\ref{ax_measure:decreasing} and~\ref{ax_measure:additive} and by Proposition~\ref{prop:sum_is_natural_infinitary_sum}\ref{prop:sum_is_natural_infinitary_sum-2}, it follows that 
\[
\mu(U)= \sum_{P\in \P} \mu(P)\leq \sum_{i<\lambda} a\leq b<\mu(U),
\]
contradiction.
Thus $\V$ cannot cover $U$. Let $x\in U\setminus \bigcup \V$. This implies $\mu(A)>a$ for every $A\in \tau$ such that $x\in A$. Therefore, we have $\mu(x)=\inf\{A\in \tau\mid x\in A\}\geq a$ by Axiom~\ref{ax_measure:point-regular}, as wanted.
\end{proof}

Notice that if $\bar{a}=\langle a_i\mid i<\delta\rangle$ is a nowhere Archimedean decreasing sequence of limit length that is coinitial in $\SS^+$, then we have that for every $c>0$ there are $i, j\in \SS^+$ such that $0<a_i\ll a_j< c$. Thus, we get the following.

\begin{corollary}\label{cor:impossibility_for_non-Archimedean_monoids}
    Assume $\SS^+$ has infinite degree and contains a nowhere Archimedean decreasing coinitial sequence. Then, for every weak $\lambda^+$-measure space $(X,\MMM,\mu)$ and for every open set $U$ of positive measure $\mu(U)>0$ there is $x\in U$ such that either $x$ is not measurable or $\mu(x)>0$.
\end{corollary}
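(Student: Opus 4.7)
The plan is to derive the corollary directly from Lemma~\ref{lem:impossibility_for_non-Archimedean_monoids} by exploiting the nowhere Archimedean coinitial sequence to produce the two scales $a\ll b$ that the lemma requires, placed below any prescribed positive measure.

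Let $(X,\MMM,\mu)$ be a weak $\lambda^+$-measure space and fix an open set $U\subseteq X$ with $c:=\mu(U)>0$. Let $\bar{a}=\langle a_i\mid i<\delta\rangle$ be the nowhere Archimedean decreasing coinitial sequence in $\SS^+$ given by the hypothesis. Since $\Deg(\SS)$ is infinite, $\delta$ is an infinite (regular) cardinal, hence a limit ordinal, so $\bar{a}$ has no last term. First I would use coinitiality of $\bar{a}$ in $\SS^+$ to pick $j<\delta$ with $0<a_j<c$. Then, since $\bar{a}$ is decreasing and nowhere Archimedean, any index $i$ with $j<i<\delta$ (which exists because $\delta$ is a limit ordinal) satisfies $0<a_i\ll a_j$; this is exactly the parenthetical remark preceding the corollary in the excerpt.

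With these values in hand, I would invoke Lemma~\ref{lem:impossibility_for_non-Archimedean_monoids} with the chosen open set $U$ and the scalars $a:=a_i$ and $b:=a_j$, which satisfy $a\ll b<\mu(U)$ by construction. The lemma then delivers a point $x\in U$ that is either non-measurable, in which case we are done, or measurable with $\mu(x)\geq a_i>0$, in which case the conclusion $\mu(x)>0$ follows since $a_i\in\SS^+$. This yields exactly the dichotomy in the statement.

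There is no real obstacle here; the whole content of the corollary is the extraction of a suitable pair $(a,b)$ below the prescribed value $c=\mu(U)$, which is immediate from the combined coinitiality and nowhere Archimedean properties of $\bar{a}$. The only point worth double checking is that $\delta$ is a limit ordinal (guaranteed by $\Deg(\SS)\geq\omega$) so that indices strictly above $j$ exist; everything else is a direct application of Lemma~\ref{lem:impossibility_for_non-Archimedean_monoids}.
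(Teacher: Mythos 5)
Your proof is correct and is essentially the paper's own argument: the paper proves the corollary by the remark preceding it, extracting $0<a_i\ll a_j<\mu(U)$ from the coinitial nowhere Archimedean sequence and then invoking Lemma~\ref{lem:impossibility_for_non-Archimedean_monoids} with $a=a_i$, $b=a_j$, exactly as you do. The only negligible nuance is that coinitiality as defined gives some $a_k\leq c$ rather than $a_k<c$, but passing to any later index of the (strictly) decreasing sequence yields the strict inequality, and your own observation that later terms satisfy $a_i\ll a_j$ already covers this.
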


\subsection{Dirac-like measures above the continuum}

\begin{lemma}\label{lem:impossibility_for_countable_induced}
    Assume $\lambda\geq \ccc$. Let $(X,\MMM,\mu)$ be an $\omega_1$-Dirac, weak $\lambda^+$-measure space. 
    Then, for every open set $U$ of positive measure $\mu(U)>0$ there is $x\in U$ such that either $x$ is not measurable or $\mu(x)>0$.
\end{lemma}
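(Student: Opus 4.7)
The plan is to argue by contradiction: assuming every $x \in U$ is measurable with $\mu(x) = 0$, I will exhibit a measurable set of measure zero that contains $U$, contradicting $\mu(U) > 0$.

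First, I would set $Y := U \cap \supp(\mu)$. Since $\mu$ is $\omega_1$-Dirac, $\supp(\mu)$ (and hence $Y$) has countable weight. By Remark~\ref{rmk:separable_iff_second_countable_iff_subspace_Baire}, $Y$ embeds homeomorphically into $\pre{\omega}{\omega}$, so $|Y| \leq \ccc \leq \lambda$. Under the contradiction hypothesis every singleton $\{y\}$ with $y \in Y$ belongs to $\MMM$ with $\mu(\{y\}) = 0$; since $\MMM$ is closed under unions of size at most $\lambda$, we obtain $Y \in \MMM$, and Axiom~\ref{ax_measure:additive} applied to the disjoint family $(\{y\})_{y\in Y}$ gives $\mu(Y) = 0$.

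Next, for each $x \in U \setminus \supp(\mu)$ the definition of support supplies an open neighborhood $O_x \subseteq U$ with $\mu(O_x) = 0$. Setting $W := \bigcup_{x \in U \setminus \supp(\mu)} O_x$, we have that $W$ is open, contains $U \setminus \supp(\mu)$, and satisfies $\mu(W) = 0$ by Corollary~\ref{cor:sets_of_measure_zero_are_lambda_ideal}. Putting the pieces together, $U \subseteq W \cup Y$, so Axiom~\ref{ax_measure:decreasing} gives $\mu(U) \leq \mu(W \cup Y)$. Writing $W \cup Y = W \sqcup (Y \setminus W)$ as a disjoint union of measurable sets (note that $Y \setminus W$ is a $\leq \lambda$-union of measurable singletons, hence lies in $\MMM$ with $\mu(Y \setminus W) \leq \mu(Y) = 0$ by Axiom~\ref{ax_measure:decreasing}), Axiom~\ref{ax_measure:additive} yields $\mu(W \cup Y) = \mu(W) + \mu(Y \setminus W) = 0$, contradicting $\mu(U) > 0$.

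The hypothesis $\lambda \geq \ccc$ enters only to force $|Y| \leq \lambda$, allowing $Y$ to be measured as a $\lambda^+$-small union of singletons; this is the single delicate point in the argument. Everything else is a direct application of monotonicity, additivity, and the already-established fact (Corollary~\ref{cor:sets_of_measure_zero_are_lambda_ideal}) that arbitrary unions of open null sets are null.
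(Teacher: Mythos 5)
Your proof is correct and follows essentially the same route as the paper: bound $U\cap\supp(\mu)$ by $\ccc\leq\lambda$ via second countability of the support, kill the off-support part using Corollary~\ref{cor:sets_of_measure_zero_are_lambda_ideal}, and apply Axiom~\ref{ax_measure:additive} to $\leq\lambda$-many null singletons; the only cosmetic difference is that you build a null measurable superset $W\cup Y$ and invoke monotonicity, whereas the paper covers $U$ directly by the disjoint family $\{U\setminus\supp(\mu)\}\cup\{\{x\}\mid x\in U\cap\supp(\mu)\}$. (The phrase ``$O_x\subseteq U$'' is not literally given by the definition of support, but intersecting with $U$ --- or simply dropping the requirement --- fixes this without affecting anything.)
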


\begin{proof}
    Assume that $(X,\MMM,\mu)$ is $\omega_1$-Dirac, and thus $\supp(\mu)$ is second countable.
    Recall that $\supp(\mu)$ is closed, thus $U\setminus \supp(\mu)$ is measurable.
    By Corollary~\ref{cor:sets_of_measure_zero_are_lambda_ideal}, we get that $\mu(X\setminus \supp(\mu))=0$, thus $\mu(U\setminus \supp(\mu))=0$ as well by Axiom~\ref{ax_measure:decreasing}.
    By Remark~\ref{rmk:separable_iff_second_countable_iff_subspace_Baire}, we get that $|\supp(\mu)|\leq |\pre{\omega}{\omega}|\leq \ccc\leq \lambda$.
    If all points of $U$ are measurable of measure $0$, then we have that $\A=\{U\setminus \supp(\mu)\}\cup\{\{x\}\mid x\in U\cap \supp(\mu)\}$ is a cover of $U$ of size $|\A|\leq \lambda$ made of disjoint sets of measure $0$. 
    By Axiom~\ref{ax_measure:additive}, we would get
    \[
    \mu(U)=\mu(U\setminus \supp(\mu)) + \sum_{x\in U\cap \supp(\mu)}\mu(x)=0,
    \]
    contradiction.
\end{proof}

\subsection{Proof of the Impossibility Theorem}

We prove the Impossibility Theorem~\ref{thm:impossibility_theorem} in a stronger, although more technical, form.

First, the results obtained so far already allow us to conclude that when $\kappa$ is uncountable, there is no continuous weak $\lambda^+$-measure space $(X,\MMM,\mu)$ on any subspace $X\subseteq \pre{\kappa}{\lambda}$ of weight $\lambda$.

\begin{proposition}\label{lem:impossibility_for_kappa_uncountable}
    Assume $\kappa>\omega$.
    Then, there is no continuous weak $\lambda^+$-measure space $(X,\MMM,\mu)$.
\end{proposition}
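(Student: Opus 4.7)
The plan is to argue by contradiction, reducing to the four \enquote{impossibility} lemmas already proved in this section, each of which says that certain hypotheses on $\SS$ (or on $\mu$) force every open set of positive measure to contain either a non-measurable point or a point of strictly positive measure; this is incompatible with continuity, once we observe that $X$ itself is open of positive measure by Axiom~\ref{ax_measure:non-trivial}.

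Concretely, suppose $(X,\MMM,\mu)$ is a continuous weak $\lambda^+$-measure space, where $\mu$ takes values in a positively totally ordered monoid $\SS$. First I would split into cases according to $\Deg(\SS)$. If $\Deg(\SS)\neq\kappa$, Lemma~\ref{lem:impossibility_for_monoids_of_wrong_degree} applied to $U=X$ immediately produces a point of $X$ violating continuity, contradiction. Otherwise $\Deg(\SS)=\kappa$, which is uncountable by hypothesis.

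In this remaining case I would invoke the trichotomy of Lemma~\ref{lem:three_types_of_monoids}. Since $\Deg(\SS)=\kappa>\omega$, alternative~\ref{lem:three_types_of_monoids - Archimedean} (which requires $\Deg(\SS)=\omega$) is excluded, so $\SS$ falls into one of the other two alternatives. If $\SS$ satisfies~\ref{lem:three_types_of_monoids - pathological}, i.e.\ $\SS$ is not $0$-continuous, Corollary~\ref{cor:impossibility_for_non-0-continuous} applied to $U=X$ again yields a point contradicting continuity. If instead $\SS$ satisfies~\ref{lem:three_types_of_monoids - non-Archimedean}, then $\SS^+$ has infinite degree and contains a nowhere Archimedean decreasing coinitial sequence, so Corollary~\ref{cor:impossibility_for_non-Archimedean_monoids} applied to $U=X$ gives the same contradiction.

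There is no real obstacle here: the four earlier lemmas were precisely tailored to exhaust the possible behaviors of $\SS$, and the assumption $\kappa>\omega$ only serves to rule out the one case (initially Archimedean of countable degree) in which continuous measures can actually exist (cf.\ Proposition~\ref{prop:existence_of_countable_induced_measure}). The whole argument is therefore just a bookkeeping case split, with Lemma~\ref{lem:three_types_of_monoids} ensuring that the cases are exhaustive.
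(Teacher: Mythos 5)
Your argument is correct and essentially identical to the paper's proof: both dispatch the case $\Deg(\SS)\neq\kappa$ via Lemma~\ref{lem:impossibility_for_monoids_of_wrong_degree}, the non-$0$-continuous case via Corollary~\ref{cor:impossibility_for_non-0-continuous}, and use Lemma~\ref{lem:three_types_of_monoids} with $\Deg(\SS)=\kappa>\omega$ to reduce the remaining case to Corollary~\ref{cor:impossibility_for_non-Archimedean_monoids}, applied to $U=X$ (which has $\mu(X)>0$ by Axiom~\ref{ax_measure:non-trivial}), contradicting continuity. The only cosmetic difference is the order of the case split (and you say \enquote{four} impossibility lemmas while only three are needed here), which does not affect correctness.
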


\begin{proof}
If $\SS$ is not $0$-continuous or has finite degree, the conclusion follows from Corollary~\ref{cor:impossibility_for_non-0-continuous}.
If $\Deg(\SS)\neq\kappa$, the conclusion follows from Lemma~\ref{lem:impossibility_for_monoids_of_wrong_degree}.
Thus, we may assume that $\Deg(\SS)=\kappa>\omega$ and that $\SS$ is $0$-continuous.
Then, by Lemma~\ref{lem:three_types_of_monoids} $\SS$ contains a coinitial nowhere Archimedean sequence, and hence by Corollary~\ref{cor:impossibility_for_non-Archimedean_monoids} we are done.
\end{proof}

When $\kappa=\omega$, continuous weak $\lambda^+$-measure spaces may exist.
The following results show, however, that in this case the measure is always trivial, in the sense that it is a sum of $\omega_1$-Dirac measures.

\begin{lemma}\label{lem:splitting_into_dirac}
    Let $(X,\MMM,\mu)$ be a continuous weak $\lambda^+$-measure space. 
    Then, there is a clopen partition $\P$ of $X$ such that $\supp(\mu)\cap P$ is second countable for every $P\in \P$.
\end{lemma}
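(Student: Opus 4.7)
The plan is to reduce to the only nontrivial case (where $\kappa=\omega$ and $\SS$ is initially Archimedean of countable degree) and then bound the branching of a natural tree attached to the support of $\mu$.

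First, I would combine the impossibility results already in hand: by Proposition~\ref{lem:impossibility_for_kappa_uncountable}, Lemma~\ref{lem:impossibility_for_monoids_of_wrong_degree}, Corollary~\ref{cor:impossibility_for_non-0-continuous}, and Corollary~\ref{cor:impossibility_for_non-Archimedean_monoids}, the existence of a continuous $(X,\MMM,\mu)$ forces $\kappa=\omega$, $\Deg(\SS)=\omega$, and $\SS$ to be $0$-continuous with no coinitial nowhere Archimedean sequence. By Lemma~\ref{lem:three_types_of_monoids}, $\SS$ must then be initially Archimedean; otherwise the statement is vacuous. So I fix $a\in\SS^+$ with $(0,a)$ Archimedean, together with a decreasing coinitial sequence $(b_n)_{n<\omega}$ in $\SS^+$ lying inside $(0,a)$.

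Next, for each $x\in X$, continuity of $\mu$ and Axiom~\ref{ax_measure:point-regular} give $\inf_n\mu(\Nbhd_{x\restriction n}(X))=\mu(x)=0$, so some $n_x<\omega$ satisfies $\mu(\Nbhd_{x\restriction n_x}(X))<a$. Applying Fact~\ref{fct:strong_zero_dimensionality} to the resulting open cover, I obtain a clopen partition $\P$ of $X$ consisting of basic clopen cones, each of measure strictly less than $a$.

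The hard part is showing that each $P=\Nbhd_s(X)\in\P$ meets $\supp(\mu)$ in a second countable set. I would associate to $P$ the tree $T(P)=\{t\in\pre{<\omega}{\lambda}:s\subseteq t,\ \mu(\Nbhd_t(X))>0\}$ and prove that every $t\in T(P)$ has at most countably many immediate successors in $T(P)$. If some $T_t:=\{\alpha<\lambda:t\conc\langle\alpha\rangle\in T(P)\}$ were uncountable, then from $T_t=\bigcup_{n<\omega}\{\alpha:\mu(\Nbhd_{t\conc\langle\alpha\rangle}(X))\geq b_n\}$ (using coinitiality of $(b_n)$) pigeonhole would yield $n$ and an infinite $\{\alpha_i:i<\omega\}\subseteq T_t$ with $\mu(\Nbhd_{t\conc\langle\alpha_i\rangle}(X))\geq b_n$; then Axiom~\ref{ax_measure:additive} combined with monotonicity of $\sum$ (Proposition~\ref{prop:sum_is_natural_infinitary_sum}) would give
\[
\mu(\Nbhd_t(X))\geq\sum_{i<\omega}\mu(\Nbhd_{t\conc\langle\alpha_i\rangle}(X))\geq\sum_{i<\omega}b_n=\sup_{k<\omega}k\cdot b_n,
\]
contradicting $\mu(\Nbhd_t(X))\leq\mu(P)<a$ by Archimedeanness of $(0,a)$.

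Hence $T(P)$ has countable branching at every node and height $\omega$, so $|T(P)|\leq\omega$. Any $x\in\supp(\mu)\cap P$ satisfies $\mu(\Nbhd_{x\restriction n}(X))>0$ for every $n\geq\leng(s)$, so $x\restriction n\in T(P)$ and $x\in[T(P)]\cap X$. Since $[T(P)]$ is the body of a countable tree, it is homeomorphic to a subspace of $\pre{\omega}{\omega}$, and by Remark~\ref{rmk:separable_iff_second_countable_iff_subspace_Baire} the set $\supp(\mu)\cap P$ is second countable, as required. The main obstacle is the branching bound: it is precisely there that the initially Archimedean hypothesis (obtained by the opening reduction) is essential, and controlling the interaction between Archimedeanness and the infinitary $\sum$ is the delicate point.
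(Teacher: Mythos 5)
Your reduction to the case $\kappa=\omega$, $\Deg(\SS)=\omega$, $\SS$ initially Archimedean is exactly the paper's opening step, and your tree/branching formulation of the rest is a reasonable variant of the paper's ``essentially countable'' cover. However, the key contradiction in the branching bound has a genuine gap. You fix a single element $a$ witnessing initial Archimedeanness, partition $X$ into cones of measure $<a$, and then claim that $\mu(\Nbhd_t(X))\geq \sup_k k\cdot b_n$ together with $\mu(\Nbhd_t(X))<a$ contradicts Archimedeanness of $(0,a)$. It does not: Archimedeanness of $(0,a)$ only gives, for $b_n<y$ with $b_n,y\in(0,a)$, some $k$ with $k\cdot b_n\geq y$; combined with $\mu(\Nbhd_t(X))\geq k\cdot b_n$ for all $k$ this yields at best an equality, never a strict contradiction. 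Concretely, take $\SS=\{0\}\cup(\QQ\cap(0,1])\cup\{2\}$ with truncated addition $x\oplus y=\min(x+y,1)$ on $[0,1]$ and $2$ absorbing. This is a $0$-continuous, positively totally ordered monoid of degree $\omega$ with $(0,2)$ Archimedean, so $a=2$ is a legitimate choice of witness; yet with $b_n=\tfrac12$ one has $\sup_k k\cdot b_n=1$, so a node with $\mu(\Nbhd_t(X))=1<a$ and infinitely (even uncountably) many successors of measure $\geq\tfrac12$ violates none of the inequalities you derive. In other words, the Archimedean interval may ``saturate'' strictly below its right endpoint, and then the partial sums $k\cdot b_n$ never climb above $\mu(\Nbhd_t(X))$.

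The repair is precisely the two-threshold device used in the paper's proof: choose $a,b\in\SS^+$ with $0<a<b$ and $[0,b]$ Archimedean (both may be taken below the witness of initial Archimedeanness), cover $X$ by open sets of measure $<a$, and show that infinitely many disjoint pieces of measure $\geq c>0$ inside such a set force, via Axioms~\ref{ax_measure:decreasing} and~\ref{ax_measure:additive}, a value $\geq n\cdot c\geq b>a$, a strict contradiction. In your setup this amounts to partitioning into cones of measure $<a'$ for some $a'\in(0,a)$ (e.g.\ $a'=b_0$) and, at a node $t$ with infinitely many successors of measure $\geq b_n$, applying Archimedeanness of $(0,a)$ to the pair $(b_n,a')$ to get $k\cdot b_n\geq a'>\mu(\Nbhd_t(X))$, against $\mu(\Nbhd_t(X))\geq k\cdot b_n$. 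With that change your tree argument goes through and recovers the paper's conclusion (the paper instead covers by sets on which $\mu$ is ``essentially countable'' and applies the same counting of levels); as written, though, the crucial inequality is unjustified.
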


\begin{proof}
    By Corollary~\ref{cor:impossibility_for_non-0-continuous}, we get that $\SS$ is $0$-continuous and of infinite degree. By Corollary~\ref{cor:impossibility_for_non-Archimedean_monoids}, $\SS$ contains no coinitial nowhere Archimedean sequence.
    Thus, by Lemma~\ref{lem:three_types_of_monoids}, we have that $\Deg(\SS)=\omega$ and $\SS$ is initially Archimedean.
    By Lemma~\ref{lem:impossibility_for_monoids_of_wrong_degree}, we have that $\kappa=\Deg(\SS)=\omega$.
    
    Say that $\mu$ is \markdef{essentially countable on $V$} if for every family $\A$ of disjoint open subsets of $V$  there are at most countably many elements of $\A$ of positive measure.
    
    \begin{claim}\label{claim:every_partitionable_measure_with_few_disjoint_sets_of_positive_measure_is_countable_induced}
    Let $\kappa=\omega$. Let $(X,\MMM,\mu)$ be a weak $\lambda^+$-measure space, and let $V\subseteq X$ be open.
    Suppose that $\mu$ is essentially countable on $V$.
    Then, $\supp(\mu)\cap V$ is second countable.
\end{claim}

\begin{proof}
    Let $T=\{s\in \pre{<\omega}{\lambda}\mid \mu(\Nbhd_s)>0, \Nbhd_s\subseteq V\}$.
    By assumption, each level $\Lev_n(T)=\{s\in \pre{n}{\lambda}\mid \mu(\Nbhd_s)>0, \Nbhd_s\cap X\subseteq V\}$ must be countable, since $\{\Nbhd_s\mid s\in \pre{n}{\lambda}\}$ is a family of disjoint open sets.
    This implies that $T=\bigcup_{n\in\omega} \Lev_n(T)$ is countable as well. 
    Since $\{\Nbhd_s\mid s\in \pre{<\omega}{\lambda}\}$ is a basis for the topology of $X$, and $V$ is open, we have that $\supp(\mu)\cap V=[T]$, and that $\{\Nbhd_s\mid s\in T\}$ is a basis for $\supp(\mu)$. Therefore, $\supp(\mu)$ is second countable as wanted. 
\end{proof}

    Let $\V=\{V\in \tau\mid \mu\text{ is essentially countable on } V\}$. Assume first that
    $\V$ does not cover $X$.
    Let $a, b\in \SS^+$ be such that the interval $[0,b]$ is Archimedean and $0<a<b$.
    Let $\A=\{A\in \tau\mid \mu(A)<a\}$, then we have that $\A$ is a cover of $X$, by Axiom~\ref{ax_measure:point-regular} and since $\mu(x)=0$ for every $x\in X$.
    Therefore, there is $A\in \A\setminus \V$. Since $\mu$ is not essentially countable on $A$, there is an uncountable family $\P$ of disjoint clopen subsets of $A$ such that $0<\mu(P)$ for every $P\in \P$. By Axiom~\ref{ax_measure:decreasing}, we also have $\mu(P)\leq\mu(A)<a$ for every $P\in \P$.
    By the pigeonhole principle, since $\Deg(\SS)=\omega$, there is $c\in (0,a)$ such that $\mu(P)>c$ for uncountably many $P\in \P$.
    Since $[0,b]$ is Archimedean and $c\in [0,b]$, we can find $n\in \omega$ such that $n\cdot c\geq b$.
    But then, 
    \[
    \mu(A)\geq \mu(\bigcup \P)=\sum_{P\in \P}\mu(P)\geq n\cdot c\geq b>a,
    \]
    by Axioms~\ref{ax_measure:decreasing} and~\ref{ax_measure:additive}, contradicting that $A\in \A$.

    Thus $\V$ does cover $X$. By Fact~\ref{fct:strong_zero_dimensionality}, we can find a clopen partition $\P$ of $X$ refining $\V$.
    It is easy to check that $\mu$ is still essentially countable on every $P\in \P$, since $\P$ refines $\V$.
    By Claim~\ref{claim:every_partitionable_measure_with_few_disjoint_sets_of_positive_measure_is_countable_induced}, for every $P\in\P$ we have that $\supp(\mu)\cap P$ is second countable, as wanted. 
\end{proof}

\begin{proposition}\label{prop:splitting_into_countable_induced_for_initially_Archimedean_monoids}
    Let $(X,\MMM,\mu)$ be a continuous weak $\lambda^+$-measure space such that $\MMM$ is closed under finite intersections. 
    
    Then, there is a clopen partition $\P$ of $X$ such that $(P,\MMM\downarrow P,\mu\downarrow P)$ is an $\omega_1$-Dirac, continuous, weak $\lambda^+$-measure space for every $P\in \P$ and $\mu$ is the sum of $(\mu\downarrow P)_{P\in \P}$, i.e., for every $M\in\MMM$ we have
    \[
    \mu(M)=\sum_{P\in \P} (\mu\downarrow P)(M\cap P).
    \]
\end{proposition}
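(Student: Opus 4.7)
The plan is to combine Lemma~\ref{lem:splitting_into_dirac}, which provides a clopen partition whose pieces have second countable support, with the tools of Section~\ref{sec:measures} on the restriction operation $\downarrow$. First I would apply Lemma~\ref{lem:splitting_into_dirac} to obtain a clopen partition $\P'$ of $X$ such that $\supp(\mu)\cap P$ is second countable for every $P\in\P'$. Since $\P'$ is a pairwise disjoint family of non-empty open subsets of $X$, and $\weight(X)\leq\lambda$, Proposition~\ref{prop:weight=cellularity} gives $|\P'|\leq\lambda$.

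The first obstacle is that $\P'$ may contain sets of measure zero, and such sets do not yield weak $\lambda^+$-measure spaces under $\downarrow$ (Axiom~\ref{ax_measure:non-trivial} fails). To fix this, split $\P'=\P_0\cup\P_1$ according to whether $\mu(P)>0$ or $\mu(P)=0$. Note $\P_0\neq\emptyset$ by Axiom~\ref{ax_measure:non-trivial}, and the open set $U=\bigcup\P_1$ satisfies $\mu(U)=0$ by Corollary~\ref{cor:sets_of_measure_zero_are_lambda_ideal}. Pick any $P_0\in\P_0$ and let $\P=(\P_0\setminus\{P_0\})\cup\{P_0\cup U\}$; this is still a clopen partition of $X$, every element of which belongs to $\MMM$ (being clopen) and has positive measure.

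For each $P\in\P$, Lemma~\ref{lem:measure_induced_on_clopen_subspace} (applicable since $\MMM$ is closed under finite intersections, $P\in\MMM$, and $\mu(P)>0$) gives that $(P,\MMM\downarrow P,\mu\downarrow P)$ is a weak $\lambda^+$-measure space with $\MMM\downarrow P=\{A\in\MMM:A\subseteq P\}$ and $\mu\downarrow P=\mu\restriction(\MMM\downarrow P)$. Continuity of $\mu\downarrow P$ is immediate from continuity of $\mu$, since the same singletons are measurable and of measure $0$. For the $\omega_1$-Dirac property, observe that since $P$ is open in $X$, the open sets in $P$ of measure $0$ coincide with the open sets of $X$ contained in $P$ of measure $0$; hence $\supp(\mu\downarrow P)=\supp(\mu)\cap P$. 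For $P\in\P_0\setminus\{P_0\}$ this is second countable by construction; for the modified piece $P_0\cup U$, every point of $U$ lies in some element of $\P_1$, which is an open set of measure zero, so $\supp(\mu)\cap U=\emptyset$ and $\supp(\mu\downarrow(P_0\cup U))=\supp(\mu)\cap P_0$, again second countable.

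Finally, the decomposition identity is a direct application of Axiom~\ref{ax_measure:additive}. Given $M\in\MMM$, the family $(M\cap P)_{P\in\P}$ is a disjoint family in $\MMM$ of size $|\P|\leq\lambda$ (using closure under finite intersections and $P\in\MMM$), with union $M$. For $A\in\MMM$ with $A\subseteq P$, we have $(\mu\downarrow P)(A)=\mu(A)$, so
\[
\mu(M)=\sum_{P\in\P}\mu(M\cap P)=\sum_{P\in\P}(\mu\downarrow P)(M\cap P),
\]
as required. The main conceptual step is Lemma~\ref{lem:splitting_into_dirac}; all that remains here is the housekeeping argument that merges the measure-zero blocks into a single positive-measure block so that each factor of the decomposition is genuinely a weak $\lambda^+$-measure space.
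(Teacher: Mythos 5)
Your proposal is correct and follows essentially the same route as the paper: apply Lemma~\ref{lem:splitting_into_dirac}, merge the measure-zero blocks into one chosen positive-measure block to obtain a clopen partition all of whose pieces have positive measure and second countable intersection with $\supp(\mu)$, then invoke Lemma~\ref{lem:measure_induced_on_clopen_subspace} and Axiom~\ref{ax_measure:additive} for the decomposition identity. The only cosmetic difference is that you argue $\supp(\mu)\cap U=\emptyset$ directly from the definition of support, while the paper deduces it via $\mu(U)=0$ and monotonicity; both are fine.
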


\begin{proof}
    By Lemma~\ref{lem:splitting_into_dirac}, there is a clopen partition $\P'$ of $X$ such that $\supp(\mu)\cap P$ is second countable for every $P\in \P'$. Since we assumed that $X$ has weight at most $\lambda$, we get that $|\P'|\leq \lambda$.
    Therefore, since $\P'$ covers $X$ and is made of disjoint sets, there is $A\in\P'$ such that $\mu(A)>0$, by Axioms~\ref{ax_measure:non-trivial} and~\ref{ax_measure:additive}.
    Let 
    \[
    \P=\{P\in \P'\setminus \{A\}\mid \mu(P)>0\}\cup \{ A\cup \bigcup \{P\in \P'\mid \mu(P)=0\}\}.
    \]
    This way, $\mu(P)>0$ for every $P\in P$. 
    Since $|\P'|\leq \lambda$, by Axiom~\ref{ax_measure:additive} we get that $\mu(\bigcup \{P\in \P'\mid \mu(P)=0\})=0$, and thus by Axiom~\ref{ax_measure:decreasing} we have
    \[
    \supp(\mu)\cap (A\cup \bigcup \{P\in \P'\mid \mu(P)=0\})= \supp(\mu)\cap A.
    \]
    This shows that $\supp(\mu)\cap P$ is still second countable for every $P\in \P$.
    By Lemma~\ref{lem:measure_induced_on_clopen_subspace}, we have that  
    $(P,\MMM\downarrow P,\mu\downarrow P)$ is a (continuous) weak $\lambda^+$-measure space for every $P\in \P$. Since $P$ is open, we get that $\supp(\mu\downarrow P)=\supp(\mu)\cap P$ , and thus $(P,\MMM\downarrow P,\mu\downarrow P)$  is $\omega_1$-Dirac as well.
    
    Finally, from the fact that $\MMM$ is closed under finite intersections we get that $M\cap P\in \MMM$ for every $M\in \MMM$ and $(\mu\downarrow P)(M\cap P)=\mu(M\cap P)$, and thus from Axiom~\ref{ax_measure:additive} we get 
    \[
    \mu(M)=\sum_{P\in \P} (\mu\downarrow P)(M\cap P),
    \]
    for every $M\in\MMM$, i.e. $\mu$ is the sum of $(\mu\downarrow P)_{P\in \P}$, as wanted.
\end{proof}

In particular, Proposition~\ref{prop:splitting_into_countable_induced_for_initially_Archimedean_monoids} applies to every continuous minimal $\lambda^+$-measure space, thanks to Proposition~\ref{prop:minimal_measurable_spaces_closed_under_intersections}.

\begin{corollary}\label{cor:minimal_continuous_below_c_is_disjoint_sum_of_Dirac}
    Let $(X,\MMM,\mu)$ be a continuous, minimal $\lambda^+$-measure space. Then, there is a clopen partition $\P$ of $X$ such that $(P,\MMM\downarrow P,\mu\downarrow P)$ is an $\omega_1$-Dirac, continuous weak $\lambda^+$-measure space for every $P\in \P$, and $\mu$ is the sum of the measures $(\mu\downarrow P)_{P\in \P}$.

    In particular, this holds for the kernel $\ker(X,\MMM,\mu)$ of any continuous weak $\lambda^+$-measure space $(X,\MMM,\mu)$.
\end{corollary}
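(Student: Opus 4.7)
The plan is to observe that this corollary is an almost immediate packaging of Proposition~\ref{prop:splitting_into_countable_induced_for_initially_Archimedean_monoids}, whose only non-trivial hypothesis (that $\MMM$ be closed under finite intersections) is automatic for minimal structures.

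First I would invoke Proposition~\ref{prop:minimal_measurable_spaces_closed_under_intersections}, which guarantees that in any minimal $\lambda^+$-measurable space the family $\MMM$ is closed under intersections of size $<\kappa$, and hence in particular under finite intersections. This is exactly the hypothesis required by Proposition~\ref{prop:splitting_into_countable_induced_for_initially_Archimedean_monoids}. Applying it directly to the continuous, minimal $\lambda^+$-measure space $(X,\MMM,\mu)$ delivers a clopen partition $\P$ of $X$ such that $(P,\MMM\downarrow P,\mu\downarrow P)$ is a continuous, $\omega_1$-Dirac, weak $\lambda^+$-measure space for every $P\in\P$, together with the decomposition
\[
\mu(M)=\sum_{P\in \P} (\mu\downarrow P)(M\cap P)
\]
for all $M\in\MMM$, which is exactly the first assertion.

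For the \enquote{in particular} clause, I would verify that the kernel $\ker(X,\MMM,\mu)=(X,\ker(\MMM),\mu)$ of any continuous weak $\lambda^+$-measure space is itself a continuous minimal $\lambda^+$-measure space, after which the first part applies. By the definition of continuity, every $x\in X$ satisfies $x\in\MMM$ and $\mu(x)=0$; therefore the set $A=\{x\in X\mid x\in\MMM\}$ equals $X$, so $\ker(\MMM)=\MMM^{\lambda}_p(X)$ still contains every singleton of $X$, on which $\mu$ still takes value $0$. Hence the kernel is a continuous minimal $\lambda^+$-measure space, and the first part of the corollary applies to it.

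No real obstacle is expected here: the statement is a corollary in the strict sense, reducing the decomposition of an arbitrary continuous weak $\lambda^+$-measure to the minimal (kernel) case, where Proposition~\ref{prop:minimal_measurable_spaces_closed_under_intersections} lets us invoke Proposition~\ref{prop:splitting_into_countable_induced_for_initially_Archimedean_monoids} without the need to assume any closure of $\MMM$ under complements or finite intersections.
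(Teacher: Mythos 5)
Your proposal is correct and follows exactly the paper's route: the paper derives this corollary by noting that minimal $\lambda^+$-measure spaces are closed under finite intersections (Proposition~\ref{prop:minimal_measurable_spaces_closed_under_intersections}), so Proposition~\ref{prop:splitting_into_countable_induced_for_initially_Archimedean_monoids} applies, and the kernel of a continuous weak $\lambda^+$-measure space is itself a continuous minimal $\lambda^+$-measure space. Your verification of the kernel step is also the intended one, so nothing is missing.
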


Corollaries~\ref{cor:embedding_into_Baire_kappa_leq_lambda} and~\ref{cor:minimal_continuous_below_c_is_disjoint_sum_of_Dirac} and  Lemma~\ref{lem:impossibility_for_countable_induced} together lead immediately to Theorem~\ref{thm:impossibility_theorem}, as wanted.

From Theorem~\ref{thm:impossibility_theorem}, we can also obtain the following statement.

\begin{corollary}
    Assume $\lambda\geq\ccc$. Let $(X,\MMM,\mu)$ be a weak $\lambda^+$-measure space. Then, there is a partition $X=C\cup O$ such that $O$ is open, $\mu(O)=0$, and $x\in \MMM$ for every $x\in O$, while $C$ is closed and contains a dense subset $D\subseteq C$ such that for every $x\in D$, either $x$ is not measurable or $\mu(x)>0$.
\end{corollary}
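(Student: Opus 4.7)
The plan is to define
\[
O=\bigcup\{V\in\tau\mid \mu(V)=0\text{ and every }x\in V\text{ lies in }\MMM\}
\]
together with $C:=X\setminus O$ and $D:=\{x\in X\mid x\notin\MMM\text{ or }\mu(x)>0\}$, and then check the required properties. By Corollary~\ref{cor:sets_of_measure_zero_are_lambda_ideal} one immediately has $\mu(O)=0$; each $x\in O$ is measurable by construction and satisfies $\mu(x)=0$ by Axiom~\ref{ax_measure:decreasing}, so $D\subseteq C$ and only density of $D$ in $C$ remains. Suppose, toward a contradiction, that there is an open $W\subseteq X$ with $W\cap C\neq\emptyset$ and $W\cap D=\emptyset$, so every point of $W$ is measurable with measure zero. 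If $\mu(W)=0$, then $W$ belongs to the family defining $O$, giving $W\subseteq O$ and contradicting $W\cap C\neq\emptyset$. Hence $\mu(W)>0$.

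It therefore suffices to show the general claim: under $\lambda\geq\ccc$, any open set of positive measure in a weak $\lambda^+$-measure space contains a bad point (one in $D$). One splits on the structure of $\SS$: if $\Deg(\SS)\neq\kappa$, if $\SS$ is not $0$-continuous, or if $\SS$ admits a coinitial nowhere Archimedean decreasing sequence, the claim is immediate from Lemma~\ref{lem:impossibility_for_monoids_of_wrong_degree}, Corollary~\ref{cor:impossibility_for_non-0-continuous}, and Corollary~\ref{cor:impossibility_for_non-Archimedean_monoids} respectively. By Lemma~\ref{lem:three_types_of_monoids}, the only remaining case has $\kappa=\Deg(\SS)=\omega$ and $\SS$ initially Archimedean. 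In this case one passes to the kernel $(X,\ker(\MMM),\mu)$ and applies Corollary~\ref{cor:measure_induced_on_subspace_of_minimal_spaces} to obtain the minimal weak $\lambda^+$-measure space
\[
(W,\MMM_W,\mu_W):=(W,\ker(\MMM)\downarrow W,\mu\downarrow W),
\]
which is continuous because $\ker(\MMM)$ measures the same points as $\MMM$ and every point of $W$ is good. Proposition~\ref{prop:splitting_into_countable_induced_for_initially_Archimedean_monoids} then yields a clopen partition $\P$ of $W$ whose pieces $(P,\MMM_W\downarrow P,\mu_W\downarrow P)$ are $\omega_1$-Dirac continuous weak $\lambda^+$-measure spaces; Axiom~\ref{ax_measure:additive} forces some $P\in\P$ to have positive measure, and Lemma~\ref{lem:impossibility_for_countable_induced}, invoked precisely with $\lambda\geq\ccc$, produces a point $x\in P$ that is bad for $\mu_W\downarrow P$. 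However, $x\in W$ is measurable in $\MMM$, so $\{x\}$ is measurable in each successive restriction and
\[
(\mu_W\downarrow P)(\{x\})\leq \mu_W(\{x\})\leq \mu(\{x\})=0,
\]
a contradiction that completes the reduction and hence the argument.

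The main obstacle lies in this final case: three nested restrictions (to the kernel, to $W$, and then to the piece $P$) must be carefully coordinated, and one must verify that a point witnessing the innermost Dirac-level failure descends to an honest bad point for the original $(X,\MMM,\mu)$. The passage to $\ker(\MMM)$ is essential because Proposition~\ref{prop:splitting_into_countable_induced_for_initially_Archimedean_monoids} requires closure under finite intersections, which the original $\MMM$ need not satisfy; the hypothesis $\lambda\geq\ccc$ enters the argument only through the last invocation of Lemma~\ref{lem:impossibility_for_countable_induced}.
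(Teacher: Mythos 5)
Your proof is correct and takes essentially the same route as the paper: you use the same open part $O$ (the union of the measure-zero open sets all of whose points are measurable), and you prove density of the bad points in $C=X\setminus O$ by contradiction, passing to the kernel and restricting to an offending open set $W$ of positive measure on which the measure would be continuous. The only difference is that where the paper invokes Theorem~\ref{thm:impossibility_theorem} directly on that restriction, you unwind its proof inline (the case split on $\SS$ via Lemma~\ref{lem:impossibility_for_monoids_of_wrong_degree} and Corollaries~\ref{cor:impossibility_for_non-0-continuous} and~\ref{cor:impossibility_for_non-Archimedean_monoids}, then Proposition~\ref{prop:splitting_into_countable_induced_for_initially_Archimedean_monoids} and Lemma~\ref{lem:impossibility_for_countable_induced}), which is harmless.
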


\begin{proof}
    Let 
    \[
    \V=\{V\in \tau\mid \text{ There exists }x\in V, x\notin \MMM\lor \mu(x)>0 \},
    \]
    \[
    \U=\{U\in \tau\setminus \V\mid \mu(U)=0\}.
    \]
    Suppose by contradiction there is $Y\in \tau\setminus (\U\cup \V)$: then, $\mu(Y)>0$ and for every $x\in Y$, we have $x\in \MMM$ and $\mu(x)=0$.
    Without loss of generality, we may assume that $(X,\MMM,\mu)$ is minimal, up to passing to its kernel.
    By Corollary~\ref{cor:measure_induced_on_subspace_of_minimal_spaces}, we have that $(Y,\MMM\downarrow Y,\mu\downarrow Y)$ is a weak $\lambda^+$-measure space, and $\mu\downarrow Y$ is continuous since $Y\notin \V$, contradicting Theorem~\ref{thm:impossibility_theorem}.
    
    Therefore, we must have that $\U\cup \V$ is a cover of $X$. Let $O=\bigcup \U$: then, $\mu(O)=0$, by Corollary~\ref{cor:sets_of_measure_zero_are_lambda_ideal}, and furthermore $x\in \MMM$ and $\mu(x)=0$ for every $x\in O$, since by definition $U\notin \V$ for every $U\in\U$.
    Let $C=X\setminus O$.
    Now for every $V\in \tau$ such that $V\cap C\neq \emptyset$, we have that $V\in \V$, since $\U\cup \V$ covers $X$, and thus we may chose $x_V\in V$ such that $x\notin \MMM$ or $ \mu(x)>0$. Then, $D=\{x_V\mid V\in \tau, V\cap C\neq \emptyset\}$ is as wanted.
\end{proof}

\section{On the existence of \texorpdfstring{$\lambda$}{lambda}-measures and measurable cardinals}\label{sec:small_meaures_on_big_spaces}

In this section, we briefly investigate $\lambda$-measures on $\pre{\kappa}{\lambda}$ and their relationship with the measurability of $\lambda$.

The next result shows that when $\lambda$ is regular, $\lambda$-measures do exist on $\pre{\kappa}{\lambda}$.
This will allow us to complete the picture concerning the existence and non-existence of measures on $\pre{\kappa}{\lambda}$ in Section~\ref{sec:final_remarks} (see Corollary~\ref{cor:small_measures_big_space}).

The regularity of $\lambda$ is, in a certain sense, a necessary requirement: if $\lambda$ is singular, then every weak $\lambda$-measure space is also a weak $\lambda^+$-measure space. In particular, in this case the existence of a $\lambda$-measure is tied to the existence of a $\lambda^+$-measure, which we already addressed earlier.
For a singular cardinal $\lambda$, we can still show the existence of $\delta$-measures on $\pre{\kappa}{\lambda}$ for every regular cardinal $\delta\leq \lambda$.

\begin{proposition}\label{prop:small_measures_big_space}
    Let $\delta$ be a regular cardinal such that $\delta\leq \lambda$. Then there is a continuous weak $\delta$-measure space $(\pre{\kappa}{\lambda}, \MMM,\mu)$ on $\pre{\kappa}{\lambda}$ with $\supp(\mu)=\pre{\kappa}{\lambda}$.
\end{proposition}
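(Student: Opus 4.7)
The plan is to adapt the construction from the proof of Theorem~\ref{thm:possibility_theorem_non_continuous_sum}, but using the natural continuous sum $\sum$ of Definition~\ref{def:sum} rather than an ad hoc non-continuous one. The key observation is that $\delta$-additivity with $\delta\leq\lambda$ is substantially weaker than $\lambda^+$-additivity: we do not need the monoid to ``jump'' through a non-continuous sum at cardinality $\lambda$, only to behave well below $\delta$.

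Take $\SS=\{\frac{1}{\alpha}\mid \alpha\leq\kappa\}$ with $\max$ as operation, identity $\frac{1}{\kappa}$, and the order $\frac{1}{\alpha}\leq\frac{1}{\beta}$ iff $\alpha\geq \beta$; this is a Dedekind-complete positively totally ordered monoid of degree $\kappa$, on which the natural infinitary sum coincides with $\sup$ and is therefore continuous. For $O\subseteq X=\pre{\kappa}{\lambda}$ define the \emph{$\delta$-height}
\[
h_\delta(O)=\min\{\alpha\leq\kappa\mid \exists s\in \pre{\alpha}{\lambda},\ |\{\beta<\lambda\mid \Nbhd_{s\conc \beta}\cap O\neq \emptyset\}|\geq \delta\},
\]
with the convention $\min\emptyset=\kappa$. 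Let $\MMM=\{O\cup D\mid O\in\tau,\ |D|<\delta\}$, which is the minimal weakly $\delta$-measurable structure on $X$ measuring all points (closed under $<\delta$-unions by regularity of $\delta$), and define $\mu(M)=\frac{1}{h_\delta(\int{M})}$.

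The non-trivial axiom is additivity. For a disjoint family $(A_i)_{i<\gamma}\subseteq\MMM$ with $\gamma<\delta$, write $A_i=U_i\cup D_i$ with $U_i=\int{A_i}$ and $|D_i|<\delta$; then by regularity of $\delta$, the set $B=\int{\bigcup_{i<\gamma} A_i}\setminus \bigcup_{i<\gamma} U_i$ has cardinality $<\delta$. The key observation is that $h_\delta$ is invariant under adding fewer than $\delta$ points, since any such set can be met by at most $<\delta$ successor cones of a fixed node, not enough to push a count past $\delta$ when $\delta$ is regular. Combined with a pigeonhole argument---if $s$ witnesses $|\{\beta\mid \Nbhd_{s\conc\beta}\cap\bigcup_{i<\gamma} U_i\neq\emptyset\}|\geq\delta$, then distributing these $\beta$ among the $\gamma<\delta$ sets $U_i$ and invoking regularity of $\delta$ forces some single $U_j$ to be met by $\geq \delta$ successor cones of $s$---this yields $h_\delta(\int{\bigcup_{i<\gamma} A_i})=\inf_{i<\gamma} h_\delta(\int{A_i})$, and hence $\mu(\bigcup_{i<\gamma} A_i)=\sup_{i<\gamma}\mu(A_i)=\sum_{i<\gamma}\mu(A_i)$ in $\SS$.

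The remaining axioms are straightforward. Monotonicity of $h_\delta$ under inclusion gives M1--M3; point-regularity and continuity follow from $h_\delta(\Nbhd_{x\restriction\alpha})=\alpha$ for each $\alpha<\kappa$ (one checks that only $s=x\restriction\alpha$ itself, or its extensions, has enough successor cones meeting $\Nbhd_{x\restriction\alpha}$), so that $\inf_{\alpha<\kappa}\mu(\Nbhd_{x\restriction\alpha})=\frac{1}{\kappa}=0=\mu(x)$; finally $\supp(\mu)=X$ because every non-empty basic cone $\Nbhd_s$ satisfies $h_\delta(\Nbhd_s)=\leng(s)<\kappa$ and therefore has positive measure. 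The main technical step is the additivity argument, where both the regularity of $\delta$ and the bound $\gamma<\delta\leq\lambda$ are used crucially.
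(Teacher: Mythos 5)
Your construction is correct and is essentially the paper's proof: the paper uses the same monoid $(\kappa+1)^\ast$ with $\sum=\sup$, the same minimal structure, and defines $\mu(M)=\frac{1}{h(\Exp_\delta(\int{M}))}$, where your $\delta$-height is exactly the composite $h_\delta=h\circ\Exp_\delta$, so the measure is literally the same function. The only difference is presentational: you prove additivity by a direct pigeonhole on successor cones plus invariance of $h_\delta$ under adding $<\delta$ points, whereas the paper factors the same regularity-of-$\delta$ argument through distributivity of $\Exp_\delta$ over unions of size $<\delta$ and a height-of-union lemma for $\Exp_\delta$-closed sets.
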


The proof closely follows that of Theorem~\ref{thm:possibility_theorem_non_continuous_sum} and can be viewed as a more direct version of it.

\begin{proof}
Let $\LL=(\kappa+1)^\ast=\{\frac{1}{\alpha}\mid \alpha\in \kappa+1\}$ be $\kappa+1$ with reverse order, i.e. the total order given by $\frac{1}{\alpha}\leq \frac{1}{\beta}$ if $\alpha\geq \beta$. Then, $\LL=(\LL, \frac{1}{\kappa},\max,\leq)$ is a positively totally ordered monoid with identity $\frac{1}{\kappa}$. We consider the infinitary operation $\sum=\sup(=\max)$ on it,  which is total since $\LL$ is Dedekind-complete.

For every open set $O$, define
\[
    \Exp_{\delta}\left(O\right)=\bigcup\{\Nbhd_s\mid s\in \pre{<\kappa}{\lambda}, |\{ \alpha<\lambda\mid \Nbhd_{s\conc \alpha}\cap O\neq\emptyset \}|\geq \delta \}
\]
to be the \textit{$\delta$-expansion} of $O$. 
Then, we get $\Exp_{\delta}(\Exp_{\delta}(O))=\Exp_{\delta}(O)\supseteq O$, and $\Exp_{\delta}(\Nbhd_s)=\Nbhd_s$ for every $s\in \pre{<\kappa}{\lambda}$.   
Also, by construction we have
\begin{equation}\label{eq:strange_condition-cup_exp_lambda+}    
\Exp_{\delta}(\bigcup_{i<\gamma}O_i)=\bigcup_{i<\gamma}\Exp_{\delta}(O_i)
\end{equation}
for every family of open sets $\{O_i\mid i<\gamma\}$ of size $\gamma<\delta$.
 
Define
\[
h\left(O\right)=\min\{\beta<\kappa\mid \Nbhd_s\subseteq O \text{ for some } s\in \pre{\beta}{\lambda}\}.
\]
to be the \textit{height} of $O$.
We claim that
\begin{equation}\label{eq:strange_condition-h_delta}    
h(\bigcup_{i<\gamma}O_i)=\min_{i<\gamma}h(O_i)
\end{equation}
for every family of open sets $\{O_i\mid i<\gamma\}$ such that  $\gamma< \delta$ and $O_i=\Exp_{\delta}(O_i)$ for every $i<\gamma$.
Indeed, let $s\in \pre{<\kappa}{\lambda}$ be such that $\Nbhd_s\subseteq \bigcup_{i<\gamma}O_i$. 
Then, 
\[
|\{ \alpha<\lambda\mid \Nbhd_{s\conc \alpha}\cap \bigcup_{i<\gamma}O_i\neq\emptyset \}|\geq \delta,
\]
and
since $\delta=\cof(\delta)> \gamma$, there is $i<\gamma$ such that 
\[
|\{ \alpha<\lambda\mid \Nbhd_{s\conc \alpha}\cap O_i\neq\emptyset \}|\geq \delta.
\]
Thus, $\Nbhd_s\subseteq \Exp_{\delta}(O_i)=O_i$, as wanted.

Let $\MMM$ be the minimal $\delta$-measurable structure on $\pre{\kappa}{\lambda}$ measuring all points of $\pre{\kappa}{\lambda}$.
Define $\mu(M)=\frac{1}{\kappa}$ for every $M\in \MMM$ with empty interior, while for every other $M\in \MMM$ we define 
\[
\mu(M)=\frac{1}{h(\Exp_{\delta}(\int{M}))}.
\]

Then, Axioms~\ref{ax_measure:emptyset}-\ref{ax_measure:non-trivial} follow by definition. 

Axiom~\ref{ax_measure:decreasing} follows from the fact that $\Exp_{\delta}(U)\subseteq\Exp_{\delta}(V)$ and $h(U)\geq h(V)$ whenever $U\subseteq V$.

To prove Axiom~\ref{ax_measure:additive}, notice first that given two open sets $O,U$, we have either $O\cap U=\emptyset$ or $|O\cap U|\geq\lambda\geq\delta$.
Thus, if the symmetric difference between $O$ and $U$ has size $|O\triangle U|< \delta$, we have $O\cap \Nbhd_s\neq \emptyset$ if and only if $U\cap \Nbhd_s\neq \emptyset$ for every $s\in \pre{<\kappa}{\lambda}$. This shows that $|O\triangle U|< \delta$ implies $\Exp_{\delta}(O)=\Exp_{\delta}(U)$.
Since for every family of measurable sets $\{M_i\mid i<\gamma\}$ of size $\gamma< \delta$ we have 
\[
|\int{\bigcup_{\alpha<\gamma}M_i}\setminus \bigcup_{\alpha<\gamma}\int{M_i}|\leq |\bigcup_{\alpha<\gamma}M_i\setminus \bigcup_{\alpha<\gamma}\int{M_i}|\leq |\bigcup_{\alpha<\gamma}(M_i\setminus \int{M_i})|< \delta,
\]
by the definition of minimal $\delta$-measurable structure and the regularity of $\delta$, then
\begin{equation}\label{eq:exp_interiors_lambda+}
\Exp_{\delta}(\int{\bigcup_{\alpha<\gamma}M_i})=\Exp_{\delta}( \bigcup_{\alpha<\gamma}\int{M_i}).
\end{equation}

Now given a family of disjoint measurable sets $(A_\alpha)_{\alpha<\gamma}$ of length $\gamma<\delta$, by combining equations~\eqref{eq:strange_condition-cup_exp_lambda+}, \eqref{eq:strange_condition-h_delta}, and~\eqref{eq:exp_interiors_lambda+} we obtain
\begin{align*}
\mu(\bigcup_{\alpha<\gamma} A_i)&=\frac{1}{h(\Exp_{\delta}(\int{\bigcup_{\alpha<\gamma} A_i}))}=\\
&=\frac{1}{h(\Exp_{\delta}(\bigcup_{\alpha<\gamma}\int{ A_i}))}=\\
&=\frac{1}{h(\bigcup_{\alpha<\gamma}\Exp_{\delta}(\int{ A_i}))}=\\
&=\sup_{\alpha<\gamma}\frac{1}{h(\Exp_{\delta}(\int{ A_i}))}=\\
&=\sum_{\alpha<\gamma} \mu(A_i).
\end{align*}

Finally, for Axiom~\ref{ax_measure:point-regular} it is enough to notice that for every $x\in X$, we have that $(\mu(\Nbhd_{x\restriction \alpha}))_{\alpha<\kappa}=(\frac{1}{\alpha})_{\alpha<\kappa}$ is coinitial in $\SS^+$.
\end{proof}

Notably, the measure obtained in Proposition~\ref{prop:small_measures_big_space} takes values in a non-Archi\-me\-dean monoid.
Requiring a measure that takes values in $[0,1]$, instead, leads to a characterization of (real-valued) measurable cardinals.

\begin{proposition}\label{prop:real-valued_measurable_cardinal_iff_lambda_measure}
    The following are equivalent:
    \begin{enumerate-(1)}
        \item\label{prop:real-valued_measurable_cardinal_iff_lambda_measure-1} There is a non-countable-induced continuous weak $\lambda$-measure space $(X,\MMM,\mu)$ for some $X\subseteq \pre{\kappa}{\lambda}$ of weight $\lambda$ such that $\mu$ takes values in $[0,1]$.
        \item\label{prop:real-valued_measurable_cardinal_iff_lambda_measure-2} For every non-$\lambda$-Lindel\"of space $X\subseteq \pre{\kappa}{\lambda}$ of weight $\lambda$ there is $\mu:\powerset(Y)\to [0,1]$ such that $(X,\powerset(X),\mu)$ is a non-countable-induced continuous weak $\lambda$-measure space.
        \item\label{prop:real-valued_measurable_cardinal_iff_lambda_measure-3} $\lambda$ is real-valued measurable.
    \end{enumerate-(1)}
\end{proposition}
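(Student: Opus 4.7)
The plan is to establish the cycle $(2) \Rightarrow (1) \Rightarrow (3) \Rightarrow (2)$.

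For $(2) \Rightarrow (1)$, note that the discrete subspace $\{c_\alpha : \alpha < \lambda\} \subseteq \pre{\kappa}{\lambda}$ of constant sequences $c_\alpha(\beta) = \alpha$ has weight $\lambda$ and is not $\lambda$-Lindel\"of, so $(2)$ produces the required measure. For $(3) \Rightarrow (2)$, let $\nu : \powerset(\lambda) \to [0,1]$ witness the real-valued measurability of $\lambda$; given $X$ as in $(2)$, use Corollary~\ref{cor:equivalent_conditions_lindelof} to fix a closed discrete subset $D = \{d_\alpha : \alpha < \lambda\} \subseteq X$ of size $\lambda$, and define $\mu(A) = \nu(\{\alpha < \lambda: d_\alpha \in A\})$ for $A \subseteq X$. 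Axioms \ref{ax_measure:emptyset}--\ref{ax_measure:additive} transfer immediately from $\nu$, using the disjointness of the index sets $\{\alpha : d_\alpha \in A_i\}$. For point-regularity \ref{ax_measure:point-regular}, the key observation is that at each $d_\alpha \in D$ one can select a basic cone $N \ni d_\alpha$ with $N \cap D = \{d_\alpha\}$ (so $\mu(N) = \nu(\{\alpha\}) = 0$), while for $x \notin D$ the open set $X \setminus D$ provides a neighborhood of $\mu$-measure $0$. To verify that $\mu$ is not countable-induced, observe that for any separable $Y \subseteq X$ the set $D \cap Y$ is a discrete subspace of a separable space, hence countable; consequently $\nu$ of the corresponding index set vanishes (since $\nu$ is $\sigma$-additive and vanishes on singletons, using $\lambda > \omega$), contradicting the identity $\mu(D) = \nu(\lambda) = 1$ that $Y$-inducedness would force.

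The heart of the proof is $(1) \Rightarrow (3)$. Let $(X, \MMM, \mu)$ be as in $(1)$. First observe $\lambda > \omega$, since otherwise $X$ has weight $\omega$, hence is separable, and $\mu$ would be trivially countable-induced. Set $U = \bigcup\{V \in \tau : \mu(V) = 0\}$ and, via Fact~\ref{fct:strong_zero_dimensionality}, refine the cover $\{V \in \tau : \mu(V) = 0\}$ of $U$ into a clopen partition $\P = \{V_\beta : \beta < \delta\}$ of $U$ consisting of basic cones, each of measure $0$; by Proposition~\ref{prop:weight=cellularity}, $\delta \leq \lambda$. The argument splits on the value of $\mu(U)$.

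If $\mu(U) > 0$, then $\delta = \lambda$, since otherwise $\lambda$-additivity would force $\mu(U) = \sum_{\beta < \delta} \mu(V_\beta) = 0$. The map $\nu : \powerset(\lambda) \to [0,1]$ defined by $\nu(A) = \mu(\bigcup_{\beta \in A} V_\beta)/\mu(U)$ is well-defined (as each such union is open, hence in $\MMM$), is $\lambda$-additive (inherited from $\mu$), sends $\lambda$ to $1$, and vanishes on singletons, thus witnessing $(3)$. If $\mu(U) = 0$ instead, then by the $\lambda$-analog of Corollary~\ref{cor:measure_induced_on_subspace_with_measure_zero_complement} (applicable since $\mu$ is continuous and $X \setminus \supp(\mu) = U \in \MMM$ has measure $0$), $\mu$ is $\supp(\mu)$-induced; separability of $\supp(\mu)$ would contradict non-countable-inducedness, while non-separability would, via Proposition~\ref{prop:weight=cellularity}, supply $\omega_1$-many pairwise disjoint basic clopen subsets of $\supp(\mu)$. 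The tree structure of $\pre{<\kappa}{\lambda}$ ensures that the corresponding basic cones are pairwise disjoint in $X$, and each has positive measure since it meets $\supp(\mu)$; by pigeonhole, countably many of these have measure exceeding $1/n$ for some $n$, and the $\sigma$-additivity of $\mu$ makes their union of measure $\infty$, contradicting $\mu \leq 1$. Hence $\mu(U) > 0$, and $(3)$ follows. The main subtlety here is that the strong $\lambda^+$-additivity of the measure-zero ideal $\N$ used in Corollary~\ref{cor:sets_of_measure_zero_are_lambda_ideal} is unavailable at the $\lambda$-additive level; the partition-refining step and the cellularity-plus-pigeonhole argument in the second case are designed precisely to sidestep this obstacle.
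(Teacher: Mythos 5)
Your implications \ref{prop:real-valued_measurable_cardinal_iff_lambda_measure-3}$\Rightarrow$\ref{prop:real-valued_measurable_cardinal_iff_lambda_measure-2} and \ref{prop:real-valued_measurable_cardinal_iff_lambda_measure-2}$\Rightarrow$\ref{prop:real-valued_measurable_cardinal_iff_lambda_measure-1} are correct and essentially identical to the paper's (closed discrete transfer of the real-valued measure, plus a discrete witness of non-$\lambda$-Lindel\"ofness). In \ref{prop:real-valued_measurable_cardinal_iff_lambda_measure-1}$\Rightarrow$\ref{prop:real-valued_measurable_cardinal_iff_lambda_measure-3}, your case $\mu(U)>0$ is fine and is in fact a pleasant simplification of the paper's route: you build the measure on $\lambda$ directly from a clopen measure-zero partition of $U=X\setminus\supp(\mu)$ of size $\lambda$, without needing the second-countability of the support, the restriction $\mu\downarrow Y$, or any minimality reduction. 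Your preliminary observation that $\lambda>\omega$ is also a sensible addition.

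The gap is in the case $\mu(U)=0$ with $\supp(\mu)$ separable. There you invoke "the $\lambda$-analog of Corollary~\ref{cor:measure_induced_on_subspace_with_measure_zero_complement}" to conclude that $\mu$ is $\supp(\mu)$-induced, but that corollary has $\lambda^+$-partitionability of $(X,\MMM,\mu)$ as a hypothesis (it is needed, via Corollary~\ref{cor:partitionable_implies_subadditive} and Lemma~\ref{lem:null_are_essentially_null_if_subadditive}, to know that the measure-zero set $U$ is essentially null and hence that $\mu\downarrow\supp(\mu)$ is well defined and additive). An arbitrary weak $\lambda$-measure space need not be partitionable, and you never assume or establish this. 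Moreover the step is not a removable technicality: Proposition~\ref{prop:ugly_dirac_measure} exhibits a continuous weak measure space in which the complement of the (separable) support has measure zero and yet the space is \emph{not} induced by any separable subspace, so "measure-zero complement of a separable support" does not by itself yield inducedness at this level of generality. The paper secures the missing hypothesis by replacing $\MMM$ at the very start of the proof with the minimal structure measuring all points (so that Proposition~\ref{prop:minimal_measure_spaces_are_partitionable} applies), and then derives the contradiction with countable-inducedness for that structure; your argument needs the same reduction (or a direct verification that $(\supp(\mu),\MMM\downarrow\supp(\mu),\mu\downarrow\supp(\mu))$ is a weak $\lambda$-measure space), together with an argument that the reduction is compatible with the non-countable-inducedness hypothesis — which, as the example of Proposition~\ref{prop:ugly_dirac_measure} and its kernel shows, is exactly the delicate point and should not be waved through. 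The non-separable subcase of $\mu(U)=0$ (disjoint cones of positive measure, pigeonhole, countable additivity against $\mu\leq 1$) is correct.
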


\begin{proof}
    \ref{prop:real-valued_measurable_cardinal_iff_lambda_measure-1}$\Rightarrow$\ref{prop:real-valued_measurable_cardinal_iff_lambda_measure-3}. Assume first that $(X,\MMM,\mu)$ is a non-countable-induced, continuous, weak $\lambda$-measure space such that $\mu$ takes values in $[0,1]$. 
    Without loss of generality, we may assume $\MMM$ is the minimal $\lambda$-measurable structure measuring all points of $X$.
    
    The fact that $\mu(X)=1$ and $[0,1]$ is Archimedean together imply that every family of disjoint open sets of positive measure is countable. 
    This and Proposition~\ref{prop:weight=cellularity} together, or the same argument of Claim~\ref{claim:every_partitionable_measure_with_few_disjoint_sets_of_positive_measure_is_countable_induced}, imply that $\supp(\mu)$ is second countable. 
    
    Let $Y=X\setminus \supp(\mu)$, we claim $\mu(Y)>0$. Suppose not: then, by Corollary~\ref{cor:measure_induced_on_subspace_of_minimal_spaces} we have that $(\supp(\mu),\MMM\downarrow \supp(\mu),\mu\downarrow \supp(\mu))$ is a weak $\lambda$-measure space, and by Proposition~\ref{prop:minimal_measure_spaces_are_partitionable}, Corollary~\ref{cor:partitionable_implies_subadditive}, and Lemmata~\ref{lem:null_are_essentially_null_if_subadditive} and~\ref{lem:uparrow_and_downarrow_inverse_of_eachother}, we get that $\MMM\subseteq (\MMM\downarrow \supp(\mu))\uparrow X$ and $\mu\subseteq (\mu\downarrow \supp(\mu))\uparrow X$. This shows that $(X,\MMM,\mu)$ is countable-induced, contradiction.
    
    Therefore, $\mu(Y)>0$. Since $(X,\MMM,\mu)$ is minimal and continuous, by Proposition~\ref{prop:minimal_measurable_spaces_closed_under_intersections} and Lemma~\ref{lem:measure_induced_on_clopen_subspace} we have that $(Y,\MMM\downarrow Y,\mu\downarrow Y)$ is a $\lambda$-measure on $Y$ taking values in $[0,1]$. Since $Y=X\setminus \supp(\mu)$, the family of open sets of measure zero covers $Y$, and by Fact~\ref{fct:strong_zero_dimensionality} there is a clopen partition $\P$ of $Y$ such that $\mu(P)=0$ for every $P\in \P$.
    Notice that $|\P|\leq \lambda$, since $Y\subseteq X$ has weight $\lambda$, and $|\P|=\lambda$ since $\mu$ is a $\lambda$-measure and $\mu(Y)=\mu(\bigcup \P)>0$.
    Let $\{P_i\mid i<\lambda\}$ be an enumeration of $\P$.
    Then, $\mu':\powerset(\lambda)\to [0,1]$ defined by $\mu'(A)=\mu(\bigcup_{i\in A} P)$ is a real-valued measure on $\lambda$. 
    Without loss of generality, we may assume that $\mu'(\lambda)=1$, up to rescaling $\mu'$. Thus, $\lambda$ is real-valued measurable, as wanted.

    \ref{prop:real-valued_measurable_cardinal_iff_lambda_measure-3}$\Rightarrow$\ref{prop:real-valued_measurable_cardinal_iff_lambda_measure-2}. Assume that $\mu':\powerset(\lambda)\to [0,1]$ is a continuous real-valued probabilistic measure on $\lambda$. 
    Let $Z=\{x_\alpha\mid \alpha<\lambda\}$ be a closed discrete subset of $X$ of size $\lambda$ as given by Corollary~\ref{cor:equivalent_conditions_lindelof}.
    Define 
    $\mu:\powerset(Z) \to [0,1]$ by setting 
    \[
    \mu(M)=\mu'(\{\alpha<\lambda\mid x_\alpha\in M\}).
    \]
    Then, it is clear that $(Z, \powerset(Z), \mu)$ is a continuous $\lambda$-measure space, and so is $(X, \powerset(X), \mu \uparrow X)$ by Lemma~\ref{lem:extending_measure_spaces}.
    Finally, since $(x_\alpha)_{\alpha<\lambda}$ is a discrete family, we have that $\{x_\alpha\mid \alpha<\lambda\}\cap Y$ is countable for every second countable $Y\subseteq X$, thus $\mu(Y)=0$ for every second countable $Y\subseteq X$, proving that $\mu$ cannot be countable-induced.

    \ref{prop:real-valued_measurable_cardinal_iff_lambda_measure-2}$\Rightarrow$\ref{prop:real-valued_measurable_cardinal_iff_lambda_measure-1}. It is enough to notice that for every $\kappa\leq \lambda$ there exists a non-$\lambda$-Lindel\"of subspace of $\pre{\kappa}{\lambda}$ of weight $\lambda$ (e.g., the discrete space $X=\{\alpha\conc 0^{(\kappa)}\mid \alpha<\lambda\}$).
\end{proof}

\begin{corollary}\label{cor:measurable_cardinal_iff_lambda_measure}
    Assume $\lambda>\ccc$. The following are equivalent:
    \begin{enumerate-(1)}
\item\label{cor:measurable_cardinal_iff_lambda_measure-1} There is a continuous weak $\lambda$-measure space $(X,\MMM,\mu)$ for some $X\subseteq \pre{\kappa}{\lambda}$ of weight $\lambda$ such that $\mu$ takes values in $[0,1]$.
\item\label{cor:measurable_cardinal_iff_lambda_measure-2} For every non-$\lambda$-Lindel\"of space $X\subseteq \pre{\kappa}{\lambda}$ of weight $\lambda$ there is $\mu:\powerset(Y)\to [0,1]$ such that $(X,\powerset(X),\mu)$ is a continuous weak $\lambda$-measure space.
\item\label{cor:measurable_cardinal_iff_lambda_measure-3} $\lambda$ is measurable.
\end{enumerate-(1)}
\end{corollary}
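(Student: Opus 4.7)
The plan is to deduce this from Proposition~\ref{prop:real-valued_measurable_cardinal_iff_lambda_measure} by combining two observations. The first is Ulam's classical dichotomy: every real-valued measurable cardinal $\lambda > 2^{\aleph_0}$ must in fact be (two-valued) measurable, and conversely every measurable cardinal is real-valued measurable. Thus under the hypothesis $\lambda > \ccc$ the two notions coincide, matching condition (3) of Proposition~\ref{prop:real-valued_measurable_cardinal_iff_lambda_measure} with condition (3) of the Corollary.

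The second and main new observation is that, under $\lambda > \ccc$, every continuous weak $\lambda$-measure $\mu : \MMM \to [0,1]$ on a space $X \subseteq \pre{\kappa}{\lambda}$ of weight $\lambda$ is automatically non-countable-induced, so the qualifier can be dropped from condition (1) of the Proposition. Assume, for contradiction, that $\mu$ is $Y$-induced by some weak $\lambda$-measure $(Y, \NNN, \nu)$ with $Y$ separable. By Remark~\ref{rmk:separable_iff_second_countable_iff_subspace_Baire}, $Y$ embeds into $\pre{\omega}{\omega}$, and hence $|Y| \leq \ccc < \lambda$. As noted after Definition~\ref{def:countable-ind}, we may replace $(Y, \NNN, \nu)$ by $(Y, \MMM \downarrow Y, \mu \downarrow Y)$, which inherits continuity: every $y \in Y$ satisfies $\{y\} \in \NNN$ and $\nu(\{y\}) = 0$. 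Applying the $\lambda$-additivity axiom~\ref{ax_measure:additive} to the singleton partition $\{\{y\} : y \in Y\}$ of $Y$, whose size is $|Y| < \lambda$, forces
\[
\nu(Y) = \sum_{y \in Y} \nu(\{y\}) = 0,
\]
contradicting axiom~\ref{ax_measure:non-trivial}.

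Given these two observations, the equivalences follow directly. For (1)$\Rightarrow$(3), the second observation lets me invoke the implication (1)$\Rightarrow$(3) of Proposition~\ref{prop:real-valued_measurable_cardinal_iff_lambda_measure} to get that $\lambda$ is real-valued measurable, and Ulam's dichotomy then upgrades this to measurability. For (3)$\Rightarrow$(2), measurability implies real-valued measurability, so the implication (3)$\Rightarrow$(2) of Proposition~\ref{prop:real-valued_measurable_cardinal_iff_lambda_measure} yields a (non-countable-induced) continuous weak $\lambda$-measure in $[0,1]$ on any non-$\lambda$-Lindel\"of $X \subseteq \pre{\kappa}{\lambda}$ of weight $\lambda$; dropping the non-countable-induced clause gives exactly (2). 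Finally, (2)$\Rightarrow$(1) is immediate once one notes that non-$\lambda$-Lindel\"of subspaces of $\pre{\kappa}{\lambda}$ of weight $\lambda$ always exist, for instance the closed discrete set $\{\alpha \conc 0^{(\kappa)} : \alpha < \lambda\}$ of size $\lambda$. No serious obstacle is expected; the only delicate point is verifying that continuity really does transfer to the inducing measure on $Y$, which is handled by passing to the canonical choice $(Y, \MMM \downarrow Y, \mu \downarrow Y)$.
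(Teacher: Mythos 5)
Your proposal is correct and follows essentially the same route as the paper: reduce to Proposition~\ref{prop:real-valued_measurable_cardinal_iff_lambda_measure} via Ulam's dichotomy for $\lambda>\ccc$, plus the observation that continuity forces any candidate measure to be non-countable-induced because a separable $Y\subseteq X$ has size at most $\ccc<\lambda$ and hence gets measure zero by Axiom~\ref{ax_measure:additive}. Your verification via $(Y,\MMM\downarrow Y,\mu\downarrow Y)$ is just a more detailed packaging of the paper's one-line argument, so there is nothing substantive to add.
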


\begin{proof}
\ref{cor:measurable_cardinal_iff_lambda_measure-1}$\Rightarrow$\ref{cor:measurable_cardinal_iff_lambda_measure-3}. Let $(X,\MMM,\mu)$ be a continuous weak $\lambda$-measure space such that $\mu$ takes values in $[0,1]$.
Notice that every second countable subset $Y\subseteq X\subseteq \pre{\kappa}{\lambda}$ has size $|Y|\leq \ccc<\lambda$, thus $Y$ is measurable of measure zero. This shows that
$(X,\MMM,\mu)$ cannot be countable-induced.
Hence, the results follows from Proposition~\ref{prop:real-valued_measurable_cardinal_iff_lambda_measure} and the classical result that if $\lambda>\ccc$ is real-valued measurable, then it is measurable (see, e.g., \cite[Corollary 10.10]{jechSetTheory2003}).

\ref{cor:measurable_cardinal_iff_lambda_measure-3}$\Rightarrow$\ref{cor:measurable_cardinal_iff_lambda_measure-2}.  Follows immediately from Proposition~\ref{prop:real-valued_measurable_cardinal_iff_lambda_measure} and the fact that every measurable cardinal $\lambda$ is also real-valued measurable.

\ref{cor:measurable_cardinal_iff_lambda_measure-2}$\Rightarrow$\ref{cor:measurable_cardinal_iff_lambda_measure-1}. Obvious.
\end{proof}

\section{Final remarks}\label{sec:final_remarks}

We conclude the paper with a few final remarks and corollaries that follow easily from the theory developed so far.

First, let us consider the space $\pre{\kappa}{\lambda}$.
This space has weight exactly $\lambda^{<\kappa}$. Thus, if $\lambda^{<\kappa}=\lambda\geq \ccc$, Theorem~\ref{thm:impossibility_theorem} immediately implies that there is no weak $\lambda^+$-measure structure on $\pre{\kappa}{\lambda}$.

\begin{corollary}\label{cor:impossibility_theorem_Baire}
    Assume $\lambda^{<\kappa}=\lambda\geq \ccc$. Then, there is no continuous weak $\lambda^+$-measure space $(\pre{\kappa}{\lambda}, \MMM,\mu)$ on $\pre{\kappa}{\lambda}$.
\end{corollary}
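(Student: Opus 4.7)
The plan is to observe that Corollary~\ref{cor:impossibility_theorem_Baire} is a direct specialization of Theorem~\ref{thm:impossibility_theorem}, once we verify that the cardinal hypothesis $\lambda^{<\kappa} = \lambda$ forces the weight of $\pre{\kappa}{\lambda}$ to equal $\lambda$. The canonical basis $\B = \{\Nbhd_s \mid s \in \pre{<\kappa}{\lambda}\}$ of $\pre{\kappa}{\lambda}$ has cardinality $|\pre{<\kappa}{\lambda}| = \lambda^{<\kappa}$, and it is a standard fact (and in any case $\B$ is a disjoint refinement argument away from being minimal) that this is the weight of $\pre{\kappa}{\lambda}$. Thus the weight of $\pre{\kappa}{\lambda}$ equals $\lambda^{<\kappa} = \lambda \geq \ccc$.

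With this in hand, I would apply Theorem~\ref{thm:impossibility_theorem} with $\gamma = \kappa$, $A = \lambda$, and $X = \pre{\kappa}{\lambda}$, which is trivially a subspace of $\pre{\gamma}{A}$ of weight $\lambda \geq \ccc$. The theorem then yields immediately that no continuous weak $\lambda^+$-measure space of the form $(\pre{\kappa}{\lambda}, \MMM, \mu)$ can exist, which is exactly the statement of the corollary.

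There is no real obstacle here: the only thing to check is the weight computation, which is a routine consequence of the definition of the bounded topology together with the assumption $\lambda^{<\kappa} = \lambda$. The whole argument can be written in two or three lines.
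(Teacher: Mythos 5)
Your proposal is correct and follows exactly the paper's own argument: the paper likewise notes that $\pre{\kappa}{\lambda}$ has weight $\lambda^{<\kappa}=\lambda\geq\ccc$ and then applies Theorem~\ref{thm:impossibility_theorem} directly with $X=\pre{\kappa}{\lambda}$. Nothing further is needed.
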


If instead $\lambda^{<\kappa}>\lambda$, then necessarily $\kappa>\omega$, and hence $\lambda^{<\kappa}\geq \ccc$. Therefore, by the previous corollary and by Proposition~\ref{prop:lambda<kappa=lambda_is_necessary}, we obtain the following.

\begin{corollary}
    Let $\delta=\lambda^{<\kappa}$ be the weight of $\pre{\kappa}{\lambda}$, and assume $\delta>\lambda$. Then, there is no continuous weak $\delta^+$-measure space $(\pre{\kappa}{\lambda}, \MMM,\mu)$ on $\pre{\kappa}{\lambda}$.
\end{corollary}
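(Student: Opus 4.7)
The plan is to deduce this corollary by reducing it to the already established Corollary~\ref{cor:impossibility_theorem_Baire}, using Proposition~\ref{prop:lambda<kappa=lambda_is_necessary} to rewrite $\pre{\kappa}{\lambda}$ in a form where the hypothesis on the cardinal fits.

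First I would argue that the assumption $\delta=\lambda^{<\kappa}>\lambda$ forces $\kappa>\omega$. Indeed, for infinite $\lambda$ and $\kappa=\omega$ we would have $\lambda^{<\omega}=\lambda$, contradicting $\delta>\lambda$. Hence $\kappa\geq\omega_1$, and therefore $\omega<\kappa$ is one of the cardinals appearing in the supremum defining $\lambda^{<\kappa}$, so
\[
\delta=\lambda^{<\kappa}\geq \lambda^{\omega}\geq 2^{\omega}=\ccc.
\]
In particular $\delta\geq\ccc$.

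Next I would apply Proposition~\ref{prop:lambda<kappa=lambda_is_necessary} with $\lambda'=\delta$: the space $\pre{\kappa}{\lambda}$ is homeomorphic to $\pre{\kappa}{\delta}$, and moreover $\delta^{<\kappa}=\delta$. Then Corollary~\ref{cor:impossibility_theorem_Baire}, applied now to the cardinal $\delta$ in place of $\lambda$ (so that the hypothesis $\delta^{<\kappa}=\delta\geq\ccc$ is met), yields that there is no continuous weak $\delta^+$-measure space on $\pre{\kappa}{\delta}$.

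Finally, I would note that the notions of topology, weight, and weak $\delta^+$-measure space are all invariant under homeomorphism (a homeomorphism $h:\pre{\kappa}{\lambda}\to\pre{\kappa}{\delta}$ transports any hypothetical continuous weak $\delta^+$-measure space $(\pre{\kappa}{\lambda},\MMM,\mu)$ into one of the form $(\pre{\kappa}{\delta}, h[\MMM], \mu\circ h^{-1})$, preserving continuity and all axioms~\ref{ax_measure:emptyset}--\ref{ax_measure:point-regular}). Hence the non-existence statement transfers back to $\pre{\kappa}{\lambda}$, concluding the proof. There is no genuine obstacle here: the only tiny subtlety is verifying the cardinal inequality $\delta\geq\ccc$ under the assumption $\delta>\lambda$, which is handled by the first step above.
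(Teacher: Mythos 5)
Your argument is correct and is essentially the paper's own proof: the paper likewise observes that $\delta=\lambda^{<\kappa}>\lambda$ forces $\kappa>\omega$ and hence $\delta\geq\ccc$, and then combines Proposition~\ref{prop:lambda<kappa=lambda_is_necessary} (identifying $\pre{\kappa}{\lambda}$ with $\pre{\kappa}{\delta}$, where $\delta^{<\kappa}=\delta$) with Corollary~\ref{cor:impossibility_theorem_Baire} applied to $\delta$. Your explicit remarks on the cardinal arithmetic and on transporting the measure structure along the homeomorphism are exactly the details the paper leaves implicit.
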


Analogous results hold for the generalized Cantor space $\pre{\lambda}{2}$. 
This space has weight $2^{<\lambda}$. Thus, if $2^{<\lambda}=\lambda>\omega$ (which in particular implies $\lambda^{<\cof(\lambda)}=\lambda\geq \ccc$), then by Theorem~\ref{thm:impossibility_theorem} and Fact~\ref{fct:Cantor_subset_Baire_with_cardinal_assumption} we obtain the following.

\begin{corollary}\label{cor:impossibility_theorem_Cantor}
    Assume $2^{<\lambda}=\lambda>\omega$. Let $Y\subseteq \pre{\lambda}{2}$. Then, there is no continuous weak $\lambda^+$-measure space $(Y, \MMM,\mu)$.
\end{corollary}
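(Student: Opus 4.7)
The plan is to transfer $Y$ into the Baire-space setting via Fact~\ref{fct:Cantor_subset_Baire_with_cardinal_assumption} and then appeal to Theorem~\ref{thm:impossibility_theorem}. Under the hypothesis $2^{<\lambda}=\lambda$, Fact~\ref{fct:Cantor_subset_Baire_with_cardinal_assumption} provides a homeomorphism of $\pre{\lambda}{2}$ onto a closed subspace of $\pre{\cof(\lambda)}{\lambda}$. Composing with the inclusion $Y\hookrightarrow\pre{\lambda}{2}$ identifies $Y$ with a subspace $Y'$ of $\pre{\gamma}{A}$ for $\gamma=\cof(\lambda)$ and $A=\lambda$. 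Since the existence of a continuous weak $\lambda^+$-measure space is a topological invariant, it suffices to rule out such a measure on $Y'$.

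Next I would verify the required cardinal bounds. The inequality $\weight(Y')=\weight(Y)\leq\weight(\pre{\lambda}{2})=2^{<\lambda}=\lambda$ is automatic, and the assumption $\lambda>\omega$ gives $\ccc=2^{\aleph_0}\leq 2^{<\lambda}=\lambda$, so $\lambda\geq\ccc$. In the typical case $\weight(Y')=\lambda$, Theorem~\ref{thm:impossibility_theorem} applies verbatim to $Y'\subseteq\pre{\gamma}{A}$ and immediately gives the desired non-existence.

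The main conceptual obstacle is the possibility that $\weight(Y')<\lambda$, in which case Theorem~\ref{thm:impossibility_theorem} is not directly applicable, since its statement fixes the weight to equal the cardinal indexing the measure. The resolution is to re-run its underlying proof, which only uses $\weight(X)\leq\lambda$ and $\lambda\geq\ccc$: supposing $(Y',\MMM,\mu)$ is a continuous weak $\lambda^+$-measure space, I would pass to its kernel to obtain a continuous minimal $\lambda^+$-measure space of weight at most $\lambda$; then apply Corollary~\ref{cor:minimal_continuous_below_c_is_disjoint_sum_of_Dirac} to split it into a clopen family of continuous $\omega_1$-Dirac weak $\lambda^+$-measure pieces, each of positive measure; and finally invoke Lemma~\ref{lem:impossibility_for_countable_induced} --- whose hypothesis $\lambda\geq\ccc$ has been verified --- to contradict the continuity of any such $\omega_1$-Dirac piece. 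This handles the residual small-weight case uniformly with the generic one, completing the proof.
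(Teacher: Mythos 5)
Your proof is correct and follows the paper's own route: transfer $Y$ via Fact~\ref{fct:Cantor_subset_Baire_with_cardinal_assumption} into $\pre{\cof(\lambda)}{\lambda}$ and conclude by Theorem~\ref{thm:impossibility_theorem}, after observing that $2^{<\lambda}=\lambda>\omega$ gives $\lambda\geq\ccc$ and $\weight(Y)\leq\lambda$. Your explicit handling of the case $\weight(Y)<\lambda$ --- passing to the kernel and combining Corollary~\ref{cor:minimal_continuous_below_c_is_disjoint_sum_of_Dirac} with Lemma~\ref{lem:impossibility_for_countable_induced} --- is precisely the machinery behind Theorem~\ref{thm:impossibility_theorem}, which the paper develops under its standing convention of weight \emph{at most} $\lambda$, so you are filling in a step the paper's one-line citation leaves implicit rather than taking a genuinely different route.
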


If instead $2^{<\lambda}>\lambda$ (and thus $2^{<\lambda}\geq \ccc$), by Proposition~\ref{prop:Cantor_always_subset_of_some_Baire} and Theorem~\ref{thm:impossibility_theorem}, we obtain the following.

\begin{corollary}\label{cor:impossibility_theorem_Cantor_general}
    Let $\delta=2^{<\lambda}$ be the weight of $\pre{\lambda}{2}$, and assume $\delta>\lambda$. Then, there is no continuous weak $\delta^+$-measure space $(\pre{\lambda}{2}, \MMM,\mu)$ on $\pre{\kappa}{\lambda}$.
\end{corollary}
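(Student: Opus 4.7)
The plan is to reduce the statement to Theorem~\ref{thm:impossibility_theorem} via Proposition~\ref{prop:Cantor_always_subset_of_some_Baire}, which provides a homeomorphic embedding of $\pre{\lambda}{2}$ into a Baire-type space $\pre{\cof(\lambda)}{\delta}$ whose second coordinate satisfies the right cardinal equality.

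First I would observe that the hypothesis $\delta=2^{<\lambda}>\lambda$ already imposes $\lambda\geq\omega_1$: if $\lambda=\omega$ then $2^{<\lambda}=\omega=\lambda$, contradicting $\delta>\lambda$. Consequently $\delta=2^{<\lambda}\geq 2^{\omega}=\ccc$, so the continuum hypothesis needed to invoke Theorem~\ref{thm:impossibility_theorem} is automatic.

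Next, by Proposition~\ref{prop:Cantor_always_subset_of_some_Baire} the space $\pre{\lambda}{2}$ is homeomorphic to a closed subspace $Y\subseteq\pre{\cof(\lambda)}{\delta}$, and since the weight of $\pre{\lambda}{2}$ equals $2^{<\lambda}=\delta$ and weight is preserved by homeomorphism, $Y$ itself has weight $\delta$. Now Theorem~\ref{thm:impossibility_theorem} applied with the ordinal $\gamma=\cof(\lambda)$, the set $A=\delta$, and the ambient cardinal (called $\lambda$ in the statement of that theorem) equal to $\delta\geq\ccc$, yields that there is no continuous weak $\delta^+$-measure space on $Y$.

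Finally, the notions of weakly $\delta^+$-measurable space and of continuous weak $\delta^+$-measure on it are purely set-theoretic/topological: they only refer to the open sets of the underlying space and to operations on subsets closed under unions of size $\leq\delta$. Hence a homeomorphism transports any such structure; in particular the existence of a continuous weak $\delta^+$-measure space is a topological invariant. Since $\pre{\lambda}{2}\cong Y$ and no such structure exists on $Y$, none exists on $\pre{\lambda}{2}$, proving the corollary. There is essentially no hard step here; the only thing to double-check is the trivial implication $\delta>\lambda\Rightarrow\delta\geq\ccc$, which ensures that the cardinal hypothesis of Theorem~\ref{thm:impossibility_theorem} is satisfied.
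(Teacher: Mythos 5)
Your argument is correct and is essentially the paper's own proof: the paper derives this corollary precisely from Proposition~\ref{prop:Cantor_always_subset_of_some_Baire} together with Theorem~\ref{thm:impossibility_theorem}, using the same observation that $2^{<\lambda}>\lambda$ forces $\lambda>\omega$ and hence $\delta=2^{<\lambda}\geq\ccc$, with the transfer along the homeomorphism being immediate since the notion of a continuous weak $\delta^+$-measure space is a topological invariant. (One could even apply Theorem~\ref{thm:impossibility_theorem} directly to $\pre{\lambda}{2}\subseteq\pre{\lambda}{2}$, of weight $\delta\geq\ccc$, but your route through the embedding is exactly what the paper does.)
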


While we believe that the appropriate notion of measure on $\pre{\kappa}{\lambda}$ is that of a $\delta^+$-measure, where $\delta=\lambda^{<\kappa}$ is the weight of $\pre{\kappa}{\lambda}$, we notice nonetheless that if for some reason one is specifically interested in continuous weak $\lambda^+$-measure spaces on $\pre{\kappa}{\lambda}$ even when this space has weight greater than $\lambda$, then by Propositions~\ref{prop:lambda<kappa=lambda_is_necessary} and~\ref{prop:small_measures_big_space} (applied to $\delta=\lambda^+\leq \lambda^{<\kappa}$), we get the following.

\begin{corollary}\label{cor:small_measures_big_space}
    Assume $\lambda^{<\kappa}>\lambda$. Then there is a continuous weak $\lambda^+$-measure space $(\pre{\kappa}{\lambda}, \MMM,\mu)$ on $\pre{\kappa}{\lambda}$.
\end{corollary}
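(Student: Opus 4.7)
The plan is to reduce directly to Proposition~\ref{prop:small_measures_big_space} via the homeomorphism of Proposition~\ref{prop:lambda<kappa=lambda_is_necessary}. Set $\lambda' = \lambda^{<\kappa}$, so by hypothesis $\lambda' > \lambda$, which gives $\lambda^+ \leq \lambda'$. Also, $\lambda^+$ is a regular cardinal as a successor. By Proposition~\ref{prop:lambda<kappa=lambda_is_necessary} we have $(\lambda')^{<\kappa} = \lambda'$, which says that $\pre{\kappa}{\lambda'}$ has weight $\lambda'$ and, more importantly for us, that the space $\pre{\kappa}{\lambda}$ is homeomorphic to $\pre{\kappa}{\lambda'}$.

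Now we work inside $\pre{\kappa}{\lambda'}$. Since $\lambda^+$ is regular and $\lambda^+ \leq \lambda'$, Proposition~\ref{prop:small_measures_big_space} (applied with ambient cardinal $\lambda'$ in place of $\lambda$ and with $\delta = \lambda^+$) produces a continuous weak $\lambda^+$-measure space $(\pre{\kappa}{\lambda'}, \MMM', \mu')$ on $\pre{\kappa}{\lambda'}$. Recall that in the paper's conventions a weak $\delta$-measure is one whose measurable family is closed under unions of size $<\delta$, so the output of Proposition~\ref{prop:small_measures_big_space} with $\delta = \lambda^+$ is precisely a weak $\lambda^+$-measure in the sense of Definition~\ref{def:weakly-lambda-measurable-space}.

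Finally, pulling $(\MMM',\mu')$ back through the homeomorphism $\pre{\kappa}{\lambda} \cong \pre{\kappa}{\lambda'}$ yields a continuous weak $\lambda^+$-measure space $(\pre{\kappa}{\lambda}, \MMM, \mu)$, since all properties in Definitions~\ref{def:weakly-lambda-measurable-space} and~\ref{def:axioms_measure} are invariant under homeomorphisms (the axioms only refer to the topology and the set-theoretic operations on $\powerset(X)$). This gives the conclusion.

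There is no real obstacle here: the construction of the measure is entirely done in Proposition~\ref{prop:small_measures_big_space}, and the nontrivial topological identification is handled by Proposition~\ref{prop:lambda<kappa=lambda_is_necessary}. The only care needed is to match the indexing conventions, namely that $\delta$-measure in Proposition~\ref{prop:small_measures_big_space} corresponds to $(\delta^+)$-in-the-superscript in our notation, and to verify that $\lambda^+ \leq \lambda^{<\kappa}$ (which follows from the hypothesis $\lambda^{<\kappa} > \lambda$).
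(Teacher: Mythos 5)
Your proof is correct and is exactly the paper's argument: the corollary is obtained there by combining Proposition~\ref{prop:lambda<kappa=lambda_is_necessary} with Proposition~\ref{prop:small_measures_big_space} applied to the ambient cardinal $\lambda'=\lambda^{<\kappa}$ and $\delta=\lambda^+\leq\lambda^{<\kappa}$, and transferring the resulting measure along the homeomorphism $\pre{\kappa}{\lambda}\cong\pre{\kappa}{{\lambda'}}$. Only your closing remark on indexing is imprecise as literally stated (a weak $\delta$-measure space matches a weak $\lambda^+$-measure space when $\delta=\lambda^+$, not when $\delta^+=\lambda^+$), but your actual argument uses the correct substitution $\delta=\lambda^+$, so nothing is affected.
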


In a similar way, if $2^{<\lambda}>\lambda$, then by  Proposition~\ref{prop:Cantor_always_subset_of_some_Baire}, Proposition~\ref{prop:small_measures_big_space},  and Lemma~\ref{lem:extending_measure_spaces}, we get an analogue result for the generalized Cantor space.

\begin{corollary}\label{cor:small_measures_big_space_Cantor}
    Assume $2^{<\lambda}>\lambda$. Then there is a continuous weak $\lambda^+$-measure space $(\pre{\lambda}{2}, \MMM,\mu)$ on $\pre{\lambda}{2}$.
\end{corollary}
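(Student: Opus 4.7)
The plan is to realize $\pre{\lambda}{2}$ as a space containing a closed copy of a Baire space to which Proposition~\ref{prop:small_measures_big_space} applies, and then lift the measure via Lemma~\ref{lem:extending_measure_spaces}. Set $\lambda'=2^{<\lambda}$. The hypothesis $2^{<\lambda}>\lambda$ yields two key facts I will rely on: first, $\lambda^+\leq\lambda'$; and second, since $2^{<\lambda}=\sup_{\alpha<\lambda}2^\alpha$ strictly exceeds $\lambda$ (and each $2^\alpha$ is a cardinal), there exists some $\alpha_0<\lambda$ with $2^{\alpha_0}\geq\lambda^+$.

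The core step will be to embed $\pre{\cof(\lambda)}{\lambda^+}$ as a closed subspace $Z$ of $\pre{\lambda}{2}$, using the tree-construction idea from Proposition~\ref{prop:Cantor_always_subset_of_some_Baire}. Concretely, I will fix a cofinal sequence $(\lambda_i)_{i<\cof(\lambda)}$ in $\lambda$ with $\lambda_{i+1}-\lambda_i\geq\alpha_0$ for every $i$, and recursively build a tree $T\subseteq\pre{<\lambda}{2}$ closed under initial segments by choosing, for every node at level $\lambda_i$, exactly $\lambda^+$ of the $2^{\lambda_{i+1}-\lambda_i}\geq\lambda^+$ available extensions to level $\lambda_{i+1}$, and including all their initial segments at intermediate levels. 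Then $Z:=[T]$ is pruned and hence closed in $\pre{\lambda}{2}$, and the natural tree-isomorphism between the ``official levels'' of $T$ and $\pre{<\cof(\lambda)}{\lambda^+}$ induces a homeomorphism $Z\cong\pre{\cof(\lambda)}{\lambda^+}$, since the intermediate-level cones of $T$ decompose as finite unions of official-level ones and so contribute nothing new to the generated bounded topology.

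With $Z$ in hand, the conclusion is immediate. Proposition~\ref{prop:small_measures_big_space}, applied with the roles of $(\kappa,\lambda)$ played by $(\cof(\lambda),\lambda^+)$ and with $\delta=\lambda^+$ (regular and $\leq\lambda^+$), produces a continuous weak $\lambda^+$-measure space on $\pre{\cof(\lambda)}{\lambda^+}$, which transports along the homeomorphism to a continuous weak $\lambda^+$-measure space $(Z,\MMM',\mu')$. Since $Z$ is closed in $\pre{\lambda}{2}$, Lemma~\ref{lem:extending_measure_spaces} then yields the desired continuous weak $\lambda^+$-measure space $(\pre{\lambda}{2},\MMM'\uparrow\pre{\lambda}{2},\mu'\uparrow\pre{\lambda}{2})$ on $\pre{\lambda}{2}$.

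The main obstacle I anticipate is the tree-combinatorial verification of the embedding: checking that $[T]$ is genuinely closed in $\pre{\lambda}{2}$, that the assignment from branches of $T$ to elements of $\pre{\cof(\lambda)}{\lambda^+}$ is a well-defined homeomorphism, and that both bases of cones match up (both at the chosen levels $\lambda_i$ and at intermediate levels). These are of the same routine character as the arguments already used in the proofs of Propositions~\ref{prop:lambda<kappa=lambda_is_necessary} and~\ref{prop:Cantor_always_subset_of_some_Baire}, so I expect no genuine difficulty once the indexing is set up carefully.
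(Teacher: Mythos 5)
Your proposal is correct, and its measure-theoretic core is exactly the paper's: apply Proposition~\ref{prop:small_measures_big_space} with $\delta=\lambda^+$ to a Baire-type space sitting as a closed subspace of $\pre{\lambda}{2}$, and then extend via Lemma~\ref{lem:extending_measure_spaces}. The one point where you diverge from the paper's (one-line) proof is the embedding step: the paper cites Proposition~\ref{prop:Cantor_always_subset_of_some_Baire}, whose statement embeds $\pre{\lambda}{2}$ as a closed subspace of $\pre{\cof(\lambda)}{2^{<\lambda}}$ -- the opposite direction from what Lemma~\ref{lem:extending_measure_spaces} needs -- so what is really required is precisely your by-hand construction, via the same tree technique as in Propositions~\ref{prop:lambda<kappa=lambda_is_necessary} and~\ref{prop:Cantor_always_subset_of_some_Baire}, of a closed copy of $\pre{\cof(\lambda)}{\lambda^+}$ inside $\pre{\lambda}{2}$ (using some $\alpha_0<\lambda$ with $2^{\alpha_0}\geq\lambda^+$). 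In that sense your write-up supplies the detail the paper leaves implicit, and the application of Proposition~\ref{prop:small_measures_big_space} with base $\lambda^+$ and $\delta=\lambda^+$ is legitimate (the proposition only needs $\delta$ regular and $\delta\leq$ the base). Two small inaccuracies, neither affecting the argument: an intermediate-level cone $\Nbhd_t(Z)$ is in general a union of up to $\lambda^+$ (not finitely many) official-level cones, but this is harmless since the official cones already form a basis of $Z$ (every point of an intermediate cone lies in an official cone contained in it, provided the cofinal sequence is taken continuous at limits); and at the last step one should note, as in the proof of Proposition~\ref{prop:existence_of_countable_induced_measure}, that the extended measure is still continuous -- points of $Z$ trace to themselves and points outside $Z$ trace to $\emptyset$, so all singletons of $\pre{\lambda}{2}$ are measurable of measure zero.
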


In previous sections, we chose to work with a weakening of measurable spaces, rather than with full $\lambda^+$-algebras. However, the results we obtained clearly extend to the case of $\lambda^+$-algebras as well.
Recall that a {$\lambda^+$-measurable space} is a pair $(X,\MMM)$ where $X$ is a topological space and $\MMM$ is a $\lambda^+$-algebra containing all open subsets of $X$.

As in the classical case, for measures defined on $\lambda^+$-algebras monotonicity can be recovered from the other axioms. 
Indeed, if $\MMM$ is a $\lambda^+$-measurable space and $\mu:\powerset(X)\to \SS$ is any partial function satisfying $\MMM\subseteq \dom(\mu)$ and  Axioms~\ref{ax_measure:additive}, then $\mu(B)=\mu(A\cup (B\setminus A))=\mu(A) + \mu(B\setminus A)\geq\mu(A)$ for any $A\subseteq B$, and thus $\mu$ satisfies Axiom~\ref{ax_measure:decreasing}.

\begin{remark}
Let $(X,\MMM)$ be a $\lambda^+$-measurable space, and let $\SS$ be a positively totally ordered monoid equipped with an infinitary sum $\Sum$.
Let $\mu:\powerset(X)\to \SS$ be a partial function satisfying $\MMM\subseteq \dom(\mu)$ and Axioms~\ref{ax_measure:emptyset}, \ref{ax_measure:non-trivial}, \ref{ax_measure:additive}, and~\ref{ax_measure:point-regular}.
Then, $\mu$ satisfies Axiom~\ref{ax_measure:decreasing} as well, and therefore it is a $\lambda^+$-measure on $(X,\MMM)$.
\end{remark}

Obviously, every $\lambda^+$-algebra is closed under finite intersections, and every $\lambda^+$-measure space is $\lambda^+$-partitionable. Thus, in this setting one can apply Propositions~\ref{prop:splitting_into_countable_induced_for_initially_Archimedean_monoids} and~\ref{prop:countable-induced_iff_Dirac} to any $(X,\MMM,\mu)$ and obtain the following.

\begin{proposition}
    Assume $\lambda^{<\kappa}=\lambda$. Let $(X,\MMM,\mu)$ be a continuous $\lambda^+$-measure space. 
    Then, there is a clopen partition $\P$ of $X$ such that $(P,\MMM\downarrow P,\mu\downarrow P)$ is a countable-induced, ($\omega_1$-Dirac,) continuous, $\lambda^+$-measure space for every $P\in \P$ and $\mu$ is the sum of $(\mu\downarrow P)_{P\in \P}$.
\end{proposition}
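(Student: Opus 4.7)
The plan is to deduce this Proposition as a straightforward combination of the two results that immediately precede it: Proposition~\ref{prop:splitting_into_countable_induced_for_initially_Archimedean_monoids} and Proposition~\ref{prop:countable-induced_iff_Dirac}. The key observation that bridges them is that, since $\MMM$ is now genuinely a $\lambda^+$-algebra (not merely a weakly $\lambda^+$-measurable structure), the restriction $\MMM\downarrow P$ to any clopen $P$ is automatically a $\lambda^+$-algebra again, so the induced structure $(P,\MMM\downarrow P,\mu\downarrow P)$ is itself a $\lambda^+$-measure space and in particular $\lambda^+$-partitionable.

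First, since a $\lambda^+$-algebra is closed under finite intersections, Proposition~\ref{prop:splitting_into_countable_induced_for_initially_Archimedean_monoids} applies and yields a clopen partition $\P$ of $X$ such that each $(P,\MMM\downarrow P,\mu\downarrow P)$ is a continuous, $\omega_1$-Dirac, weak $\lambda^+$-measure space and $\mu(M)=\sum_{P\in\P}(\mu\downarrow P)(M\cap P)$ for every $M\in\MMM$. Each $P\in\P$ is clopen (as observed in the proof of that proposition) and hence lies in $\MMM$, so $\MMM\downarrow P=\{M\in\MMM\mid M\subseteq P\}$ inherits from $\MMM$ closure under $\lambda$-sized unions and intersections, and it is closed under complements relative to $P$ because $P\setminus M\in\MMM$ for all $M\in\MMM$ with $M\subseteq P$. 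Thus $(P,\MMM\downarrow P)$ is a genuine $\lambda^+$-algebra, making $(P,\MMM\downarrow P,\mu\downarrow P)$ an actual $\lambda^+$-measure space.

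Now every $\lambda^+$-measure space is trivially $\lambda^+$-partitionable, so Proposition~\ref{prop:countable-induced_iff_Dirac} applies to each $(P,\MMM\downarrow P,\mu\downarrow P)$: being continuous and $\omega_1$-Dirac, it is also countable-induced. Combining this with the partition and sum decomposition obtained above gives the desired conclusion.

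There is essentially no obstacle here; the only point requiring care is verifying that $\MMM\downarrow P$ remains a $\lambda^+$-algebra, which follows immediately from $P\in\MMM$. The substantive work has already been done in the preceding two propositions, and the present statement simply records that, in the classical $\lambda^+$-algebra setting, the decomposition into $\omega_1$-Dirac pieces is automatically a decomposition into countable-induced pieces.
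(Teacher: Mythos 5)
Your proposal is correct and follows essentially the same route as the paper, which obtains this statement by exactly the same two citations: since a $\lambda^+$-algebra is closed under finite intersections, Proposition~\ref{prop:splitting_into_countable_induced_for_initially_Archimedean_monoids} gives the clopen partition into continuous $\omega_1$-Dirac pieces with the sum decomposition, and since ($\lambda^+$-algebra-based) $\lambda^+$-measure spaces are $\lambda^+$-partitionable, Proposition~\ref{prop:countable-induced_iff_Dirac} upgrades each piece to countable-induced. Your explicit check that $\MMM\downarrow P$ is again a $\lambda^+$-algebra (because $P\in\MMM$ is clopen) is precisely the detail the paper leaves implicit, and it is verified correctly.
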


In particular, from this and Lemma~\ref{lem:impossibility_for_countable_induced}, or directly from Theorem~\ref{thm:impossibility_theorem}, we get the following.

\begin{corollary}\label{cor:impossibility_with_non-weak_structures}
    Assume $\lambda^{<\kappa}=\lambda\geq\ccc$. Let $X\subseteq \pre{\kappa}{\lambda}$. Then, there is no continuous $\lambda^+$-measure space $(X, \MMM,\mu)$.
\end{corollary}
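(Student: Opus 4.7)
The plan is to reduce the $\lambda^+$-measure case to the weak $\lambda^+$-measure case, for which an impossibility theorem has already been proven, via the splitting proposition from Section~\ref{sec:on-the-non-existence-of-continuous-measures}.

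First, I would observe that any $\lambda^+$-algebra contains all open sets and is closed under unions of size at most $\lambda$, so every $\lambda^+$-measure space is automatically a weak $\lambda^+$-measure space; in particular, a continuous $\lambda^+$-measure space is a continuous weak $\lambda^+$-measure space. Moreover, $\lambda^+$-algebras are closed under finite intersections, which is exactly the hypothesis required by Proposition~\ref{prop:splitting_into_countable_induced_for_initially_Archimedean_monoids}.

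Applying that proposition to $(X,\MMM,\mu)$ would yield a clopen partition $\P$ of $X$ such that, for every $P\in\P$, the restriction $(P,\MMM\downarrow P,\mu\downarrow P)$ is an $\omega_1$-Dirac, continuous, weak $\lambda^+$-measure space, and $\mu$ decomposes as the sum $\sum_{P\in\P}(\mu\downarrow P)$. Since $\lambda^{<\kappa}=\lambda$, the space $X$ has weight at most $\lambda$, and hence Proposition~\ref{prop:weight=cellularity} gives $|\P|\leq\lambda$; combined with Axioms~\ref{ax_measure:non-trivial} and~\ref{ax_measure:additive}, this forces some $P\in\P$ to satisfy $\mu(P)>0$.

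Finally, fixing such a $P$ and setting $U=P$, we get an open set of positive measure inside the $\omega_1$-Dirac continuous weak $\lambda^+$-measure space $(P,\MMM\downarrow P,\mu\downarrow P)$. I would then invoke Lemma~\ref{lem:impossibility_for_countable_induced}, whose standing assumption $\lambda\geq\ccc$ is exactly the remaining hypothesis of the corollary, to produce some $x\in P$ that is either non-measurable or of positive measure; both possibilities are ruled out by continuity, yielding the desired contradiction. I do not anticipate any substantive obstacle here, as the corollary is essentially a repackaging of the weak-measure impossibility results once one observes that $\lambda^+$-algebras trivially supply the closure property needed for the splitting step.
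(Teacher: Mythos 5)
Your proposal is correct and is essentially the paper's own argument: the paper derives Corollary~\ref{cor:impossibility_with_non-weak_structures} by noting that a $\lambda^+$-measure space is a weak $\lambda^+$-measure space with $\MMM$ closed under finite intersections (and $\lambda^+$-partitionable), then applying Proposition~\ref{prop:splitting_into_countable_induced_for_initially_Archimedean_monoids} together with Lemma~\ref{lem:impossibility_for_countable_induced} (or, alternatively, quoting Theorem~\ref{thm:impossibility_theorem} directly). Your extra step of isolating a piece $P$ with $\mu(P)>0$ is harmless, and in fact automatic, since each restricted space $(P,\MMM\downarrow P,\mu\downarrow P)$ already satisfies Axiom~\ref{ax_measure:non-trivial}.
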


For (non-weak) $\lambda^+$-measure spaces, this result extends to all topological spaces. Suppose $2^{<\lambda}=\lambda$ (which implies $\lambda^{<\cof(\lambda)}=\lambda$). Then, every $T_0$ space of weight at most $\lambda$ is $\lambda^+$-Borel-isomorphic to a subspace of $\pre{\lambda}{2}$, and thus of $\pre{\cof(\lambda)}{\lambda}$ (see, e.g., \cite[Proposition~3.23]{agostiniGeneralizedBorelSets2025}). Therefore, if $\lambda>\omega$ (which together with $2^{<\lambda}=\lambda$ implies $\lambda\geq \ccc$), this yields the following.

\begin{corollary}\label{cor:impossibility_for_all_top_spaces}
    Assume $2^{<\lambda}=\lambda>\omega$. Let $Y$ be a $T_0$ topological space of weight at most $\lambda$. Then, there is no continuous $\lambda^+$-measure space $(Y, \MMM,\mu)$.
\end{corollary}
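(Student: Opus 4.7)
The plan is to reduce Corollary~\ref{cor:impossibility_for_all_top_spaces} to Corollary~\ref{cor:impossibility_with_non-weak_structures} by transferring the hypothetical continuous $\lambda^+$-measure along a $\lambda^+$-Borel isomorphism into a subspace of $\pre{\cof(\lambda)}{\lambda}$.

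First I would verify the cardinal arithmetic: $2^{<\lambda}=\lambda>\omega$ yields both $\ccc=2^{\omega}\leq 2^{<\lambda}=\lambda$ and $\lambda^{<\cof(\lambda)}=\lambda$ (the latter via the same dichotomy as in the first half of the proof of Proposition~\ref{prop:Cantor_always_subset_of_some_Baire}). Hence the hypotheses of Fact~\ref{fct:Cantor_subset_Baire_with_cardinal_assumption} and of Corollary~\ref{cor:impossibility_with_non-weak_structures} (with $\kappa=\cof(\lambda)$) are both available.

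Next, I would apply \cite[Proposition~3.23]{agostiniGeneralizedBorelSets2025} to the $T_0$ space $Y$ of weight at most $\lambda$, obtaining a $\lambda^+$-Borel isomorphism $f\colon Y\to X$ onto some $X\subseteq \pre{\lambda}{2}$. Composing with the closed embedding of Fact~\ref{fct:Cantor_subset_Baire_with_cardinal_assumption}, I may view $f$ as a $\lambda^+$-Borel isomorphism from $Y$ onto a subspace $X\subseteq\pre{\cof(\lambda)}{\lambda}$. Assuming toward contradiction the existence of a continuous $\lambda^+$-measure space $(Y,\MMM,\mu)$, the push-forward $\nu=f_*(\mu\restriction\Borel_{\lambda^+}(Y))$ is a candidate measure on $(X,\Borel_{\lambda^+}(X))$. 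Since $f$ is a bijection preserving the $\lambda^+$-Borel structure, Axioms~\ref{ax_measure:emptyset}--\ref{ax_measure:additive} and the measurability of each singleton with $\nu$-value zero transfer verbatim.

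The main obstacle is to check Axiom~\ref{ax_measure:point-regular} for $\nu$ relative to the subspace topology of $X$, which in general differs from the topology transported through $f$. I would handle this using the specific structure of $\pre{\cof(\lambda)}{\lambda}$: for each $x\in X$, the cones $\Nbhd_{x\restriction\alpha}(X)$, $\alpha<\cof(\lambda)$, form a continuous decreasing local basis with $\bigcap_\alpha\Nbhd_{x\restriction\alpha}(X)=\{x\}$ (by Hausdorffness and $\cof(\lambda)$-additivity). This yields a disjoint decomposition $X\setminus\{x\}=\bigsqcup_\alpha(\Nbhd_{x\restriction\alpha}(X)\setminus\Nbhd_{x\restriction\alpha+1}(X))$ into $\lambda^+$-Borel sets. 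Applying Axiom~\ref{ax_measure:additive} on the two-piece partition $X=(X\setminus\Nbhd_{x\restriction\beta}(X))\sqcup\Nbhd_{x\restriction\beta}(X)$ together with the continuity of $\sum$ (Axiom~\ref{ax_sum:continuous}), I expect to derive $\inf_\beta\nu(\Nbhd_{x\restriction\beta}(X))=0$ for this canonical basis, which by monotonicity (Axiom~\ref{ax_measure:decreasing}) suffices to establish point-regularity for every other open local basis at $x$ as well.

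Finally, with $(X,\Borel_{\lambda^+}(X),\nu)$ identified as a continuous $\lambda^+$-measure space on a subspace of $\pre{\cof(\lambda)}{\lambda}$, Corollary~\ref{cor:impossibility_with_non-weak_structures} yields the desired contradiction.
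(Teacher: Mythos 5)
Your overall route---push the hypothetical measure along the $\lambda^+$-Borel isomorphism of \cite[Proposition~3.23]{agostiniGeneralizedBorelSets2025} into a subspace of $\pre{\cof(\lambda)}{\lambda}$ (via Fact~\ref{fct:Cantor_subset_Baire_with_cardinal_assumption}) and then invoke Corollary~\ref{cor:impossibility_with_non-weak_structures}---is exactly the paper's argument, and you correctly identify that the only delicate point is Axiom~\ref{ax_measure:point-regular} for the transferred measure, since the Borel isomorphism need not be a homeomorphism. But the way you discharge that point does not work. You try to \emph{derive} point-regularity of $\nu$ at $x$ from Axiom~\ref{ax_measure:additive} together with continuity of $\Sum$, by looking at tail sums of the decomposition of $X\setminus\{x\}$ along the canonical cones. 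Axiom~\ref{ax_sum:continuous} only says that the sum of a limit-length sequence is the supremum of its initial partial sums; it gives no control whatsoever on the tails $\sum_{\beta\le\alpha}\nu\bigl(\Nbhd_{x\restriction\alpha}(X)\setminus\Nbhd_{x\restriction\alpha+1}(X)\bigr)$, which need not decrease to $0$. In fact point-regularity is an independent axiom and cannot follow from Axioms~\ref{ax_measure:emptyset}--\ref{ax_measure:additive} plus measure-zero singletons: on $X=\pre{\cof(\lambda)}{\lambda}$ the function $\nu(A)=0$ if $|A|\le\lambda$ and $\nu(A)=\infty$ otherwise (values in $\RR_\infty$ with the standard $\sum$) satisfies Axioms~\ref{ax_measure:emptyset}--\ref{ax_measure:additive} on the full power set and gives every singleton measure $0$, yet every nonempty open set has size $\lambda^{\cof(\lambda)}>\lambda$, so $\inf_\beta\nu(\Nbhd_{x\restriction\beta}(X))=\infty\neq 0$. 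So the step ``hence $\inf_\beta\nu(\Nbhd_{x\restriction\beta}(X))=0$'' is a genuine gap; note also that your argument never uses Axiom~\ref{ax_measure:point-regular} of the original measure $\mu$ on $Y$, which must enter somewhere.

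The step can be repaired, and this is what the paper's one-line proof implicitly packages into the citation: the canonical Borel isomorphism $f(y)=(\chi_{U_i}(y))_{i<\lambda}$, for $(U_i)_{i<\lambda}$ a basis of $Y$, sends each $U_i$ onto the trace on $X=f[Y]$ of a clopen subset of $\pre{\lambda}{2}$, hence sends open subsets of $Y$ to relatively open subsets of $X$ (while $f^{-1}$ of an open set is merely $\lambda^+$-Borel). Given $x\in X$, an open local basis $\{A_i\mid i<\gamma\}$ at $x$ with $\gamma<\lambda^+$, and $c\in\SS^+$: since $Y$ has weight $\le\lambda$, the point $y=f^{-1}(x)$ has an open local basis of size $\le\lambda$, so Axiom~\ref{ax_measure:point-regular} for $\mu$ together with $\mu(y)=0$ yields an open $B\ni y$ with $\mu(B)<c$; then $f[B]$ is an open neighborhood of $x$, so $A_i\subseteq f[B]$ for some $i$, and since $f^{-1}[A_i]$ and $B$ both lie in $\MMM$ (which, being a $\lambda^+$-algebra containing the open sets, contains all $\lambda^+$-Borel subsets of $Y$), Axiom~\ref{ax_measure:decreasing} gives $\nu(A_i)=\mu(f^{-1}[A_i])\le\mu(B)<c$. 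Hence $\inf_i\nu(A_i)=0$, which is the point-regularity you need before applying Corollary~\ref{cor:impossibility_with_non-weak_structures}. The remaining parts of your proposal (the cardinal arithmetic, the composition with the closed embedding into $\pre{\cof(\lambda)}{\lambda}$, and the transfer of Axioms~\ref{ax_measure:emptyset}--\ref{ax_measure:additive} and of continuity) are fine.
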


As in the classical case, a $\lambda^+$-algebra $\MMM$ on $X$ is generated by a $T_0$ topology on $X$ of weight at most $\lambda$ if and only if $\MMM$ separates the points of $X$ and $\MMM$ is generated by a subfamily of size at most $\lambda$ (see, e.g.,  \cite[Proposition~3.24]{agostiniGeneralizedBorelSets2025}). A pair $(X,\MMM)$ with this property is called a \markdef{$\lambda^+$-Borel space}.
Thus, we get the following.

\begin{corollary}\label{cor:impossibility_for_lambda-Borel_spaces}
    Assume $2^{<\lambda}=\lambda>\omega$. Let $(Y,\MMM)$ be a $\lambda^+$-Borel space. Then, there is no continuous $\lambda^+$-measure on it.
\end{corollary}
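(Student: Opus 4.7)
My plan is to reduce Corollary~\ref{cor:impossibility_for_lambda-Borel_spaces} directly to the previous Corollary~\ref{cor:impossibility_for_all_top_spaces}, exploiting the characterization of $\lambda^+$-Borel spaces recalled immediately before the statement. The whole point of introducing the notion of a $\lambda^+$-Borel space at this stage is that it packages, at the purely measure-theoretic level, exactly the information about the underlying topology that the impossibility theorem needs.

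The first step is purely definitional: given a $\lambda^+$-Borel space $(Y,\MMM)$, by definition $\MMM$ is the $\lambda^+$-algebra generated by some $T_0$ topology $\tau$ on $Y$ of weight at most $\lambda$. I will fix such a topology $\tau$; then $\tau\subseteq\MMM$, so $(Y,\MMM)$ is a $\lambda^+$-measurable space once we view $Y$ as the topological space $(Y,\tau)$.

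The second step is the reduction. Suppose, toward a contradiction, that a continuous $\lambda^+$-measure $\mu$ on $(Y,\MMM)$ exists. Viewing $Y$ as $(Y,\tau)$, the triple $(Y,\MMM,\mu)$ is then a continuous $\lambda^+$-measure space in the sense of Definition~\ref{def:axioms_measure}: the five axioms \ref{ax_measure:emptyset}--\ref{ax_measure:point-regular} are unchanged, and the open local bases referred to in the point-regularity axiom~\ref{ax_measure:point-regular} are automatically open with respect to $\tau$, the only topology on $Y$ that enters the picture. Since $2^{<\lambda}=\lambda>\omega$ and $(Y,\tau)$ is a $T_0$ topological space of weight at most $\lambda$, Corollary~\ref{cor:impossibility_for_all_top_spaces} applies verbatim and rules out the existence of such a $(Y,\MMM,\mu)$, yielding the desired contradiction.

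I do not expect any substantive obstacle: the hard work -- the impossibility Theorem~\ref{thm:impossibility_theorem} for subspaces of $\pre{\kappa}{\lambda}$ and its transfer to arbitrary $T_0$ topological spaces of weight at most $\lambda$ via the $\lambda^+$-Borel embedding of \cite[Proposition~3.23]{agostiniGeneralizedBorelSets2025} -- is already encoded in Corollary~\ref{cor:impossibility_for_all_top_spaces}. The only thing the present corollary contributes is the observation that the extra piece of structure required to invoke that corollary (a $T_0$ generating topology of weight at most $\lambda$) is exactly what the definition of $\lambda^+$-Borel space provides.
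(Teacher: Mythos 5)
Your proposal is correct and follows essentially the same route as the paper: the paper also obtains Corollary~\ref{cor:impossibility_for_lambda-Borel_spaces} directly from Corollary~\ref{cor:impossibility_for_all_top_spaces}, by recalling that a $\lambda^+$-Borel structure $\MMM$ is by definition (equivalently, by the cited characterization) generated by a $T_0$ topology of weight at most $\lambda$, so that any continuous $\lambda^+$-measure on $(Y,\MMM)$ would yield a continuous $\lambda^+$-measure space on that topological space, which is impossible. Your explicit remark that point-regularity is taken with respect to the fixed generating topology is exactly the (implicit) content of the paper's one-line deduction.
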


\bibliographystyle{alpha}
\bibliography{Bibliography}

\end{document}